\newtheorem{theorem}{Theorem}[section]
\newtheorem{lemma}[theorem]{Lemma}
\newtheorem{conj}[theorem]{Conjecture}
\newtheorem{proposition}[theorem]{Proposition}
\newtheorem{corollary}[theorem]{Corollary}
\newtheorem{defn}[theorem]{Definition}
\newtheorem{remark}[theorem]{Remark}
\newcommand{\ooo}{\frak{O}}
\newcommand{\LLp}{\Lambda^{(p)}}
\newcommand{\Gal}{\operatorname{Gal}}
\newcommand{\Fil}{\operatorname{Fil}}
\newcommand{\DD}{\mathbb{D}}
\newcommand{\Dcris}{\mathbb{D}_{\rm cris}}
\newcommand{\Acris}{\mathbb{A}_{\rm cris}}
\newcommand{\BB}{\mathbb{B}}
\newcommand{\NN}{\mathbb{N}}
\newcommand{\QQ}{\mathbb{Q}}
\newcommand{\Qp}{\mathbb{Q}_p}
\newcommand{\Zp}{\mathbb{Z}_p}
\newcommand{\ZZ}{\mathbb{Z}}
\newcommand{\cA}{\mathcal{A}}
\newcommand{\cB}{\mathcal{B}}
\renewcommand{\AA}{\mathbb{A}}
\newcommand{\FFF}{\mathcal{F}}
\newcommand{\uI}{\underline{I}}
\newcommand{\uJ}{\underline{J}}
\newcommand{\fp}{\mathfrak{p}}
\newcommand{\vp}{\varphi}
\newcommand{\calL}{\mathcal{L}}
\newcommand{\calH}{\mathcal{H}}
\newcommand{\calO}{\mathcal{O}}
\newcommand{\Iw}{\mathrm{Iw}}
\newcommand{\HIw}{H^1_{\mathrm{Iw}}}
\newcommand{\GL}{\mathrm{GL}}
\newcommand{\Brig}{\BB_{{\rm rig},K}^+}
\newcommand{\AQp}{\AA_{K}^+}
\newcommand{\col}{\mathrm{Col}}
\newcommand{\image}{\mathrm{Im}}
\newcommand{\pr}{\mathrm{pr}}
\newcommand{\fM}{\mathfrak{M}}
\newcommand{\Iarith}{\mathbb{I}_{\mathrm{arith}}}
\newcommand{\loc}{\mathrm{loc}}
\newcommand{\cc}{\mathfrak{c}}
\newcommand{\ff}{\mathfrak{f}}
\newcommand{\fI}{\mathfrak{I}}
\newcommand{\Qpn}{\QQ_{p,n}}
\newcommand{\Sel}{\mathrm{Sel}}
\newcommand{\Char}{\mathrm{char}}
\newcommand{\Ind}{\mathrm{Ind}}
\begin{document}

\title[Integral Iwasawa theory]{Integral Iwasawa theory of Galois representations for non-ordinary primes}

\begin{abstract}
In this paper, we study the Iwasawa theory of a motive whose Hodge-Tate weights are $0$ or $1$ (thence in practice, of a motive associated to an abelian variety) at a non-ordinary prime, over the cyclotomic tower of a number field that is either totally real or CM. In particular, under certain technical assumptions, we construct Sprung-type Coleman maps on the local Iwasawa cohomology groups and use them to define (one unconditional and other conjectural) integral $p$-adic $L$-functions and cotorsion Selmer groups. This allows us to reformulate Perrin-Riou's main conjecture in terms of these objects, in the same fashion as Kobayashi's $\pm$-Iwasawa theory for supersingular elliptic curves. By the aid of the theory of \emph{Coleman-adapted Kolyvagin systems} we develop here, we deduce parts of Perrin-Riou's main conjecture from an explicit reciprocity conjecture.
\end{abstract}

\author{K\^az\i m B\"uy\"ukboduk}
\address{K\^az\i m B\"uy\"ukboduk\newline
Ko\c{c} University, Mathematics  \\
Rumeli Feneri Yolu, 34450 Sariyer \\ 
Istanbul, Turkey}
\email{kbuyukboduk@ku.edu.tr}

\author{Antonio Lei}
\address{Antonio Lei\newline
D\'epartement de Math\'ematiques et de Statistique\\
Universit\'e Laval, Pavillion Alexandre-Vachon\\
1045 Avenue de la M\'edecine\\
Qu\'ebec, QC\\
Canada G1V 0A6}
\email{antonio.lei@mat.ulaval.ca}
\thanks{The first author is partially supported by the Turkish Academy of Sciences and T\"UB\.ITAK}

\maketitle

\section{Introduction}

Fix forever an odd rational prime $p$. Let $F$ be either a totally real or a CM number field which is unramified at all primes above $p$. Let $\mathcal{M}_{/F}$ be a motive defined over $F$ which has coefficients in $\QQ$ and whose Hodge-Tate weights are $0$ or $1$. The goal of this article is to study the cyclotomic Iwasawa theory of $\mathcal{M}$ for primes $p$ such that the $p$-adic realization of $\mathcal{M}$ is crystalline but non-ordinary, much in the spirit of the integral theory initiated by Pollack~\cite{pollack03} and Kobayashi~\cite{kobayashi03}. 

The archetypical example of a motive that fits in our treatment is the motive associated to an abelian variety $A$ defined over $F$ which has supersingular reduction at all primes above $p$. In the case when $F=\QQ$ and the variety $A$ is one-dimensional (i.e., an elliptic curve) the plus/minus theory of Kobayashi and Pollack provides us with a satisfactory set of results. Our initial objective writing this article and its companion~\cite{buyukboduklei1} was to extend their work to the general study of supersingular abelian varieties. 

We first follow the ideas due to Sprung \cite{sprung09} to construct \emph{signed} Coleman maps (in \S\ref{sec:coleman} below) for a class of $p$-adic Galois representations that verify certain conditions. We incorporate this construction with Perrin-Riou's (conjectural) treatment of $p$-adic $L$-functions so as to
\begin{itemize}
\item provide a definition of the signed (integral) $p$-adic $L$-functions attached to motives at non-ordinary primes (see particularly Definition \ref{def:signedpadicLfunc} and Theorem~\ref{thm:decomposepadicL}), conditional on the \emph{Explicit Reciprocity Conjecture}~\ref{conj:Tspecialelement} for the Kolyvagin determinants (as defined in Appendix~\ref{sec:appendixKSexists}), 
\item formulate a signed main conjecture in this setting (Conjecture~\ref{myTCP}) that is equivalent to Perrin-Riou's main conjecture~\cite[\S4]{perrinriou95};
\item utilizing the theory of Coleman-adapted Kolyvagin systems that we develop in Appendix~\ref{sec:appendixKSexists} and assuming the Explicit Reciprocity Conjecture~\ref{conj:Tspecialelement}, verify one containment of the signed main conjecture  (see Theorem~\ref{thm:PRmainconj}) and  deduce a similar result on Perrin-Riou's main conjecture.
\end{itemize}
Note that although we work and state our results in the realm of motives, one of our hypothesis (denoted by (H.F.-L.) below) would essentially force us to restrict our attention to abelian varieties. 

We shall explain our results in detail below. Let us first introduce some notation. 

\subsection{Setup and notation}
For any field $k$, let $\overline{k}$ denote a fixed separable closure of $k$ and $G_k:=\Gal(\overline{k}/k)$ denote its absolute Galois group. Fix forever a $G_F$-stable $\ZZ_p$-lattice $T$ contained inside $\mathcal{M}_p$,  the $p$-adic realization of $\mathcal{M}$.  Let $\mathcal{M}^*(1)$ denote the dual motive and write $T^\dagger=\textup{Hom}(T,\ZZ_p(1))$ for the Cartier dual of $T$.

Let $g:=\dim_{\QQ_p}\left(\textup{Ind}_{F/\QQ} \mathcal{M}_p\right)$ and let $g_+:=\dim_{\QQ_p}\left(\textup{Ind}_{F/\QQ}\, \mathcal{M}_p\right)^{+}$, the dimension of the $+1$-eigenspace under the action of a fixed complex conjugation on $\textup{Ind}_{F/\QQ}\, \mathcal{M}_p$. Set $g_-=g-g_+$. Similarly for any prime $\frak{p}$ of $F$ above $p$, define $g_\frak{p}:=\dim_{\QQ_p}\left(\textup{Ind}_{F_\frak{p}/\QQ_p} \mathcal{M}_p\right)$ so that $g=\sum_{\frak{p}|p}\,g_\frak{p}$.  

For any unramified extension $K$ of $\QQ_p$ that contains $F$, we write $\DD_K(T)$ for its Dieudonn\'e module over $K$, namely $(\Acris\otimes T)^{G_K}$, where $\Acris$ is one of Fontaine's ring. We shall fix a $\ZZ_p$-basis $\frak{B}=\{v_i\}$ of this module. 

\subsubsection{Iwasawa algebras}

Let $\Gamma$ be the Galois group $\Gal(\Qp(\mu_{p^\infty})/\Qp)$. Given any unramified extension $K$ of $\Qp$, we shall abuse notation and write $\Gamma$ for the Galois group $\Gal(K(\mu_{p^\infty})/K)$ as well. We may decompose $\Gamma$ as $\Delta\times\overline{\langle\gamma\rangle}$, where $\Delta$ is cyclic of order $p-1$ and $\overline{\langle\gamma\rangle}$ is isomorphic to the additive group $\Zp$. We write $\Lambda$ for the Iwasawa algebra $\Zp[[\Gamma]]$. We may identify it with the set of power series $\sum_{n\ge0,\sigma\in\Delta}a_{n,\sigma}\cdot\sigma\cdot(\gamma-1)^n$ where $a_{n,\sigma}\in\Zp$. We shall identify $\gamma-1$ with the indeterminate $X$.

 For $n\ge0$, we write $\Qpn=\Qp(\mu_{p^n})$ and $G_n=\Gal(\Qpn/\Qp)$. Denote $ \Zp[G_n]$ by $\Lambda_n$. We have in particular $\Lambda=\varprojlim\Lambda_n$. For any field $k$, define $\HIw(k,T)$ to be $\varprojlim H^1(k(\mu_{p^n}),T)$, where the limit is taken with respect to the corestriction maps.

We define $\calH$ to be the set of elements $\sum_{n\ge0,\sigma\in\Delta}a_{n,\sigma}\cdot\sigma\cdot(\gamma-1)^n$ where $a_{n,\sigma}\in\Qp$ are such that the power series $\sum_{n\ge0}a_{n,\sigma}X^n$ converges on the open unit disc for all $\sigma\in\Delta$.

Let $|\bullet|_p$ denote the normalised $p$-adic norm with $|p|_p=1/p$. For a real number $h\ge0$ and an element $F=\sum_{n\ge0,\sigma\in\Delta}a_{n,\sigma}\cdot\sigma\cdot(\gamma-1)^n\in\calH$, if $\sup_{n\ge1}\frac{|a_{n,\sigma}|_p}{n^h}<\infty$ for all $\sigma\in\Delta$, we say that $F$ is $O(\log^h)$. 


\subsubsection{Isotypic components and characteristic ideals}

Let $M$ be a $\Lambda$-module, $\eta$ a Dirichlet character modulo $p$. We write $e_\eta=\frac{1}{p-1}\sum_{\sigma\in \Delta}\eta(\sigma)^{-1}\sigma\in\Zp[\Delta]$. The $\eta$-isotypic component of $M$ is defined to be $e_\eta\cdot M$ and denoted by $M^\eta$. Note that we may regard $M^\eta$ as a $\Zp[[X]]$-module.

Following \cite{perrinriou95}, we write $e_+$ and $e_-$ for the idempotents $(1+c)/1$ and $(1-c)/2$ respectively, where $c$ is the complex conjugation of $\Delta$. For any $\Lambda$-module $M$, we write $M_\pm=e_\pm M$.

Given an element $F=\sum_{n\ge0,\sigma\in\Delta}a_{n,\sigma}\cdot\sigma\cdot(\gamma-1)^n$ of $\calH$, we shall identify $e_\eta\cdot F$ with the element
\[
\sum_{n\ge0}\left(\sum_{\sigma\in\Delta}a_{n,\sigma}\eta(\sigma)\right)X^n
\in\Qp[[X]].
\]

Given a finitely generated torsion $\Zp[[X]]$-module $N$, we write $\Char_{\Zp[[X]]}N$ for its characteristic ideal. 


\subsection{Statements of the results}
\begin{theorem}[Corollary~\ref{cor:decompose} and (\ref{eq:decomp}) below]\label{thm:introcolemanmaps} Let $\frak{p}$ be a prime of $F$ above $p$. Fix a $\ZZ_p$-basis $\{v_i\}$ of $\DD_{F_\frak{p}}(T)$. Assume that the Hodge-Tate weights of $T|F_\fp$ are inside $\{0,1\}$ and that the Frobenius on $\DD_{F_\fp}(T)$ have slope inside $(-1,0]$ and $1$ is not an eigenvalue. There exists a $\Lambda$-module homomorphism 
$$\textup{Col}_{T|F_\frak{p}}:\HIw(F_\frak{p},T)\longrightarrow \Lambda^{\oplus g_\frak{p}}$$ 
and a matrix $M_{T|F_\frak{p}} \in M_{g_\frak{p}\times g_\fp}(\mathcal{H})$ such that we have the following decomposition of Perrin-Riou's regulator map $\calL_T^{F_\frak{p}}$ (defined as in \S\ref{S:PRexp} below):
$$\calL_T^{F_\frak{p}}=\begin{pmatrix}
v_1&\cdots& v_{g_\frak{p}}
\end{pmatrix}\cdot M_{T|F_\frak{p}}\cdot\col_{T|F_\frak{p}}\,.$$
Here, $\begin{pmatrix}
v_1&\cdots& v_{g_\frak{p}}
\end{pmatrix}$ and $\textup{Col}_{T|F_\frak{p}}$ are regarded as a row vector and a column vector respectively.
\end{theorem}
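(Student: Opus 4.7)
The plan is to adapt Sprung's construction for supersingular elliptic curves \cite{sprung09} to the current setting using Wach modules, in the spirit of Lei--Loeffler--Zerbes. The starting point will be Berger's theorem: under the hypotheses on the Hodge--Tate weights (contained in $\{0,1\}$) and on the Frobenius slopes of $T|F_\fp$, the $(\vp,\Gamma)$-module of $T|F_\fp$ admits a Wach module $\mathbb{N}(T|F_\fp)$, free of rank $g_\fp$ over $\AA_{F_\fp}^+$, whose reduction modulo $X$ recovers $\DD_{F_\fp}(T)$. I would begin by lifting the chosen basis $\{v_i\}$ of $\DD_{F_\fp}(T)$ to a basis $\{n_i\}$ of $\mathbb{N}(T|F_\fp)$; the assumption that $1$ is not an eigenvalue of $\vp$ is what allows this lift to interact compatibly with the $\vp$-structure in the subsequent comparisons.

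Next, I would define $\col_{T|F_\fp}$ via Fontaine's isomorphism $\HIw(F_\fp,T)\cong \mathbb{N}(T|F_\fp)^{\psi=1}$. For $z$ in the source, applying $(1-\vp)$ and decomposing the result in the basis $\{\vp(n_i)\}$ produces a vector of elements of $(\AA_{F_\fp}^+)^{\psi=0}$; the Mellin transform identifies this ring with $\Lambda$, so $\col_{T|F_\fp}(z)$ is naturally a column vector in $\Lambda^{\oplus g_\fp}$ and integrality is automatic. The matrix $M_{T|F_\fp}$ is then read off from Berger's reformulation of Perrin-Riou's regulator in terms of the Wach module: it is essentially the change-of-basis matrix between $\{v_i\}$ and the image of $\{n_i\}$ under an infinite product of Frobenius operators. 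The asserted factorization of $\calL_T^{F_\fp}$ would then follow from the two parallel descriptions of the regulator.

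The main obstacle will be proving that $M_{T|F_\fp}$ has entries in $\calH$, that is, that the relevant infinite product of Frobenius matrices on $\mathbb{N}(T|F_\fp)$ converges on the open unit disc. This is a quantitative growth estimate, and the slope hypothesis in $(-1,0]$ is precisely what guarantees convergence into $\calH$ as opposed to some larger Fr\'echet ring. The condition that $1\notin\operatorname{Spec}(\vp|_{\DD_{F_\fp}(T)})$, beyond enabling the Wach-basis lift above, will also be invoked repeatedly to invert operators of the form $1-\vp$ on suitable quotients when passing between the Wach-module and Dieudonn\'e-module incarnations of the regulator.
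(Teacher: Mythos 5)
Your strategy is sound and would produce the same objects, but it is a genuinely different route from the proof in the main text; in fact it is essentially the alternative the authors themselves develop in Appendix~A. The paper's proof builds $M_{T|F_\fp}$ directly as the limit of the explicit products $M_n=(C_\vp)^{n+1}C_n\cdots C_1$ attached to the block decomposition \eqref{eq:Aphi} of $C_\vp$ (Proposition~\ref{prop:matrix}), and then constructs $\col_{T|F_\fp}$ by a Sprung-style finite-level descent: Proposition~\ref{prop:integral} shows $\vp^{-n-1}\calL^K_T(z)$ is integral modulo $\omega_n$, Proposition~\ref{prop:factorisation} divides out $C_n,\dots,C_1$ one at a time using \cite[Proposition~4.8]{leiloefflerzerbes11} (the regulator lands in $\Fil^0$ when evaluated at characters of conductor $p^{n+1}$), and Theorem~\ref{thm:decomposereg} passes to the limit via the $o(\log)$ uniqueness principle; the Wach module enters only through Lemma~\ref{lem:integralityPR}, which supplies the integrality input. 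Your route makes the Wach module primary: the Coleman maps are the coordinates of $(1-\vp)(h^1_T)^{-1}(z)$ in the $\Lambda$-basis $(1+\pi)\vp(n_i)$ of $(\vp^*\NN(T))^{\psi=0}$, and $M_{T|F_\fp}$ is the change-of-basis matrix between $\{v_i\}$ and $\{n_i\}$. The main text's approach never needs to exhibit a Wach basis and yields the congruences $M_m\equiv M_n\bmod\omega_n$ that are used repeatedly later; yours makes integrality automatic and the relation to the regulator conceptually transparent.

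Two points in your sketch need more care. First, you cannot take an \emph{arbitrary} lift of $\{v_i\}$: the construction requires a basis $\{n_i\}$ of $\NN(T|F_\fp)$ whose Frobenius matrix is precisely $P_1=C\cdot\mathrm{diag}(I_{rd_0},q^{-1}I_{r(d-d_0)})$, and producing such a basis --- equivalently, producing the matrix $G_\gamma$ of the $\Gamma$-action and invoking Berger's equivalence of categories, as in Appendix~A --- already requires the convergence of the very infinite product you defer to the end. So the growth estimate is an input to the existence of the Wach basis, not merely a final verification. Second, the hypothesis that $1$ is not an eigenvalue of $\vp$ is not what enables the lift; the block shape of $C_\vp$ comes from the Fontaine--Laffaille condition (H.F.-L.) via \eqref{eq:FL3} and \eqref{eq:FL4}. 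That hypothesis (with the slope condition) ensures no eigenvalue of $\vp$ is an integral power of $p$, which is what justifies the concrete description \eqref{eq:regulator} of the regulator and the simplified form of Perrin-Riou's exponential; your use of it to invert $1-\vp$ is legitimate. Finally, a bookkeeping remark: $\NN(T|F_\fp)$ has rank $d$ over $\AA_{F_\fp}^+$, not $g_\fp$, and $(\AA_{F_\fp}^+)^{\psi=0}\cong\calO_{F_\fp}\otimes_{\Zp}\Lambda$, so to land in $\Lambda^{\oplus g_\fp}$ you must additionally fix a $\Zp$-basis of $\calO_{F_\fp}$; this is why the paper works throughout with a $\Zp$-basis of $\DD_{F_\fp}(T)$ of cardinality $g_\fp=[F_\fp:\Qp]\,d$.
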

See \S\ref{subsec:imagesofcol} and Corollary~\ref{cor:intersectionofcolemanmaps} for a very detailed discussion on the  kernels and images of the Coleman maps $\textup{Col}_{T|F_\frak{p}}$. In particular, we are able to prove (see propositions~\ref{prop:iageofcolemanis} and \ref{prop:admissiblebasisexists} below) that the Coleman maps are pseudo-surjective if we choose the basis $\{v_i\}$ suitably.

In addition to the assumptions on $T$ above, assume that the following hypotheses hold true: 
\begin{itemize}
\item[(H.Leop)] $T$ satisfies the weak Leopoldt conjecture, as stated in \cite[\S1.3]{perrinriou95}.
\item[(H.nA)] For every prime $\fp$ of $F$ above $p$. we have 
$$H^0(F_\fp,T/pT)=H^2(F_\fp,T/pT)=0\,.$$
\end{itemize}

Let $\DD_{p}(T)$ be the direct sum $\oplus_{\fp|p}\DD_{F_\fp}(T)$. We assume until the end that the following (weak) form of the \emph{Panchishkin condition} holds true:
\begin{itemize}
\item[(H.P.)] $\dim \,\left( \textup{Fil}^0 \DD_p(T)\otimes\QQ_p\right)\,=g_-$\,.
\end{itemize}
\begin{remark}
Note that the hypotheses \textup{(H.nA)} and \textup{(H.P.)} hold true for the $p$-adic Tate-module of an abelian variety defined over $F$. The hypothesis \textup{(H.Leop)} is expected to hold for any $T$.
\end{remark}

{\begin{remark}
\label{rem:folkloreimpliesslefdualandHPisfree}
Suppose $\mathcal{M}$ is irreducible and (pure) of weight $w$. Let $r_i$ denote the total multiplicity of the Hodge-Tate weight $i$ of the representation $\mathcal{M}_p$ for $i=0,1$. Then
\begin{equation}\label{eqn:weights}\displaystyle{2r_1=2\,\sum_i\, ir_i = wg\,.}
\end{equation}
Furthermore, if we further assumed the truth of Tate's conjecture for $\mathcal{M}_p$, it would follow that $r_0=r_1$. This combined with (\ref{eqn:weights}) shows that $w=1$ 
and $r_0=r_1=g/2$; and Faltings' theorem comparing Hodge and Hodge-Tate weights shows that $g_-=g_+=g/2$. In particular, the condition (H.P.) is automatically verified in our setting if we assume the truth of Tate's conjecture.
\end{remark}}
Let $\underline{I} \subset \{1,\cdots, g\}$ be any subset of size $g_-$. Using the Coleman maps $\textup{Col}_{T|F_\frak{p}}$, we may define (see Definiton \ref{def:signedpadicLfunc}) the \emph{multi-signed} (integral) $p$-adic $L$-function 
$$L_{\uI}(\mathcal{M}^*(1)) \in \Lambda\,.$$ 
We do not provide its precise definition here in the introduction but contend ourselves to the remark that its definition relies on the truth of the \emph{explicit reciprocity conjecture for the Kolyvagin determinants}~(Conjecture~\ref{conj:Tspecialelement}), which we implicitly assume henceforth in this introduction.
We may also use the Coleman maps to define the \emph{multi-signed Selmer groups} $\Sel_{\uI}(T^\dagger/F(\mu_{p^\infty}))$ as in Definition~\ref{def:modifiedselmergroup}.

Suppose until the end of this Introduction that the basis of $\DD_p(T)$ we have fixed as in the statement of Theorem~\ref{thm:introcolemanmaps} is strongly admissible in the sense of Definition~\ref{def:admissiblebasis}. We prove in Appendix~\ref{appendix:linearalgebra} that a strongly admissible basis always exists.
\begin{theorem}[Theorem~\ref{thm:TmyCPeqivtoCP} below]
\label{thm:modifiedmainconjequivalanttoPRsmainconj}
For every even Dirichlet character $\eta$ of $\Delta$ and every $\underline{I}$ as above, the following assertion is equivalent to $\eta$-part of Perrin-Riou's Main Conjecture~\ref{TCP}:
\begin{equation}\label{eqn:signedmainconj}
\Char_{\Zp[[X]]}\left(\Sel_{\uI}(T^\dagger/F(\mu_{p^\infty}))^{\vee,\eta}\right)=L_{\uI}(\mathcal{M}^*(1))^\eta\cdot\Zp[[X]]\,.
\end{equation}
\end{theorem}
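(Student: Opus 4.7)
The plan is to derive the equivalence between Perrin-Riou's Main Conjecture~\ref{TCP} and the multi-signed identity (\ref{eqn:signedmainconj}) by leveraging the factorization of the regulator established in Theorem~\ref{thm:introcolemanmaps}, and then to perform determinantal bookkeeping on both sides of the resulting identity.

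I would begin by recalling that after fixing an even character $\eta$ of $\Delta$, Perrin-Riou's Main Conjecture~\ref{TCP} asserts an equality in the $\eta$-component of $\calH$ between the characteristic series of the $\eta$-component of a Bloch--Kato type dual Selmer group and the $\eta$-component of the image under the full regulator $\calL_T = \bigoplus_{\fp\mid p}\calL_T^{F_\fp}$ of the Kolyvagin-determinant class supplied by Conjecture~\ref{conj:Tspecialelement}. Theorem~\ref{thm:introcolemanmaps} factors this regulator as a product $(v_1,\ldots,v_g)\cdot \mathbf{M}\cdot\bigoplus_{\fp\mid p}\col_{T|F_\fp}$, where $\mathbf{M} := \mathrm{diag}(M_{T|F_\fp})\in \mathrm{Mat}_{g\times g}(\calH)$ is block-diagonal in the chosen bases. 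Stripping off the Dieudonn\'e basis row, Perrin-Riou's identity becomes an equality of characteristic ideals involving the $\eta$-part of $\mathbf{M}\cdot\bigoplus\col_{T|F_\fp}$ applied to the Kolyvagin-determinant class.

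Next I would align the two sides of (\ref{eqn:signedmainconj}) with this reformulated PR identity. On the analytic side, by Definition~\ref{def:signedpadicLfunc} the multi-signed $L$-function $L_{\uI}(\mathcal{M}^*(1))$ is (up to normalisation) the $\uI$-projection of $\bigoplus\col_{T|F_\fp}$ applied to the Kolyvagin-determinant class, so the $\eta$-part of the image of $\calL_T$ above acquires a factor $\det(\mathbf{M}_{\uI^c})^\eta$ relative to $L_{\uI}(\mathcal{M}^*(1))^\eta$, where $\mathbf{M}_{\uI^c}$ is the $g_+\times g_+$ minor of $\mathbf{M}$ on the rows complementary to $\uI$. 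On the algebraic side, the multi-signed Selmer group $\Sel_{\uI}(T^\dagger/F(\mu_{p^\infty}))$ is defined (Definition~\ref{def:modifiedselmergroup}) by imposing, as local conditions at primes above $p$, the vanishing of the components of $\col$ indexed by $\uI$; using local Tate duality together with the pseudo-surjectivity of the Coleman maps guaranteed by the strong admissibility of $\{v_i\}$ (Propositions~\ref{prop:iageofcolemanis} and~\ref{prop:admissiblebasisexists}), the $\eta$-characteristic ideal of $\Sel_{\uI}(T^\dagger/F(\mu_{p^\infty}))^{\vee,\eta}$ differs from that of the PR dual Selmer group by precisely the same factor $\det(\mathbf{M}_{\uI^c})^\eta$. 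Cancelling this common factor on both sides then yields the asserted equivalence.

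The principal obstacle will be the determinantal bookkeeping underlying the previous paragraph: one must verify that the same minor $\det(\mathbf{M}_{\uI^c})$ governs simultaneously (i) the passage between the PR local condition at $p$ and the $\uI$-signed local condition defining $\Sel_{\uI}$, and (ii) the passage between the image of $\calL_T$ and the $\uI$-component of $\bigoplus\col_{T|F_\fp}$ applied to a Kolyvagin-determinant class. This hinges on the compatibility of $\mathbf{M}$ with the Tate pairing between $\HIw(F_\fp,T)$ and $\HIw(F_\fp,T^\dagger)$, on the precise description of the kernels and images of $\col_{T|F_\fp}$ collected in Corollary~\ref{cor:intersectionofcolemanmaps}, and on strong admissibility of $\{v_i\}$ ensuring that the $\eta$-component of $\det(\mathbf{M}_{\uI^c})$ is not a zero-divisor, so that the cancellation is legitimate in $\Zp[[X]]$.
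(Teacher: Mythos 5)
There is a genuine gap, and it sits precisely at the step you yourself flag as the "determinantal bookkeeping": the single common factor $\det(\mathbf{M}_{\uI^c})^\eta$ that you propose to cancel does not exist on either side. On the analytic side, the relation between $L_p(\mathcal{M}^*(1))$ and the signed $L$-functions is Theorem~\ref{thm:decomposepadicL}: the coefficient of each basis vector $\wedge_{i\in\uI'}v_i$ is the \emph{sum} $\sum_{\uJ\in\fI_p}\det(M_T^{\uI',\uJ})L_{\uJ}(\mathcal{M}^*(1))$ over $g_-\times g_-$ minors (rows $\uI'$, columns $\uJ$), not a single product of $L_{\uI}$ with a complementary $g_+\times g_+$ minor. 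Already in Kobayashi's case ($g=2$, $g_-=g_+=1$, $M_T$ anti-diagonal as in Appendix~\ref{appendix:kobandrob}) the factor multiplying $L_{\{1\}}$ is the off-diagonal entry $\log^-=\det(M_T^{\{2\},\{1\}})$, while your complementary minor $\det(M_T^{\{2\},\{2\}})$ is zero. On the algebraic side the proposed relation is worse: the entries of $M_T$ lie in $\calH$ but not in $\Lambda$ (its determinant is $(\log(1+X)/pX)^{r(d-d_0)}$ up to a unit, Proposition~\ref{prop:matrix}), so $\det(\mathbf{M}_{\uI^c})^\eta$ cannot be a ratio of characteristic ideals of torsion $\Zp[[X]]$-modules; moreover, in the non-ordinary setting there is no cotorsion "Bloch--Kato type dual Selmer group" on the Perrin-Riou side to compare $\Sel_{\uI}$ with --- Conjecture~\ref{TCP} is formulated via $\Iarith(T)=\det{}_\Lambda(\image(\calL_{\loc}))\otimes\det{}_\Lambda(H^2_{\Iw,S}(F,T))^{-1}$, not via a Selmer group.

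The quantity that actually appears on both sides and cancels is arithmetic rather than analytic: it is $\ff_2\ff_\cc^{-1}$, where $\ff_2=\Char_{\Lambda_+}(H^2_{\Iw,S}(F,T)_+)$ and $\ff_\cc=\Char_{\Lambda_+}\bigl(H^1_{\Iw,S}(F,T)_+/(\cc_1,\ldots,\cc_{g_-})\bigr)$. Assuming Conjecture~\ref{conj:Tspecialelement}, Proposition~\ref{propn:TCP} shows that the $\eta$-part of Conjecture~\ref{TCP} is equivalent to $\ff_2^\eta=\ff_\cc^\eta$. Independently, the Poitou--Tate sequence \eqref{PTsequence} --- whose second term is identified with $\image(\col_T^{\uI})^\eta$, the injectivity of $\loc$ being exactly the conclusion of Proposition~\ref{prop:specialelementstransversaltopmsubmodules} for the chosen $\uI$ --- can be quotiented by $\textup{span}_\Lambda\{\cc_i\}$ and $\textup{span}_\Lambda\{\col_T^{\uI}(\cc_i)\}$; taking determinants yields $\Char_{\Zp[[X]]}\bigl(\Sel_{\uI}(T^\dagger/F(\mu_{p^\infty}))^{\vee,\eta}\bigr)=\ff_2^\eta(\ff_\cc^\eta)^{-1}\cdot L_{\uI}(\mathcal{M}^*(1))^\eta/X^{n(\uI,\eta)}$, with $n(\uI,\eta)=0$ by Lemma~\ref{lem:Tdet} and strong admissibility. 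The equivalence is then immediate, and the logarithmic matrix never has to be inverted or cancelled: it is absorbed once and for all into the definitions of $\col_T^{\uI}$ and $L_{\uI}$. I would encourage you to rebuild your argument around the sequence \eqref{PTsequence} and Proposition~\ref{propn:TCP} rather than around minors of $M_T$.
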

The assertion~(\ref{eqn:signedmainconj}) in the statement of Theorem~\ref{thm:modifiedmainconjequivalanttoPRsmainconj} will be referred to as the \emph{signed main conjecture}.

In Appendix~\ref{sec:appendixKSexists}, we develop the theory of Coleman-adapted Kolyvagin systems and prove the existence of what we call an $\mathbb{L}$-restricted Kolyvagin system (see Theorem~\ref{thm:appendixBmain}). Using these objects we define a canonical submodule $\frak{K}(T)\subset\HIw(F_p,T)$, the \emph{module of Kolyvagin determinants}\footnote{We expect that this module should be closely related to the higher rank Kolyvagin systems as studied in \cite{mrksrankr}.}. Assuming the Reciprocity Conjecture~\ref{conj:Tspecialelement} on Kolyvagin determinants, we are able to prove the following portion of the signed main conjecture and Perrin-Riou's main conjecture:
\begin{theorem}[See Theorem~\ref{thm:PRmainconj} and its proof below]\label{thm:mainINTRO} Under the hypotheses of Theorem~\ref{thm:modifiedmainconjequivalanttoPRsmainconj} and the hypotheses \textup{\textbf{(H1)-(H4)}} of \cite[\S3.5]{mr02} on $T$, the containment
$$L_{\uI}(\mathcal{M}^*(1))^\eta\cdot\Zp[[X]]\subset \Char_{\Zp[[X]]}\left(\Sel_{\uI}(T^\dagger/F(\mu_{p^\infty}))^{\vee,\eta}\right)
$$ in \textup{(\ref{eqn:signedmainconj})} and the containment 
\begin{equation}\label{PRmainconjcontainment}e_\eta\cdot L_p(\mathcal{M}^*(1))\cdot\Lambda\subset e_\eta\cdot\Iarith(\mathcal{M})\end{equation}
in the statement of Perrin-Riou's Main Conjecture~\ref{TCP} hold true for every even Dirichlet character $\eta$ of $\Delta$.
\end{theorem}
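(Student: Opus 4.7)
The plan is to prove the first containment (the $\eta$-part of the signed main conjecture) by means of the Coleman-adapted Kolyvagin system machinery developed in Appendix~\ref{sec:appendixKSexists}, and then to deduce the second containment (for Perrin-Riou's main conjecture) from the equivalence recorded in Theorem~\ref{thm:modifiedmainconjequivalanttoPRsmainconj}.

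First I would invoke Theorem~\ref{thm:appendixBmain}, which under the hypotheses \textbf{(H1)-(H4)} of \cite[\S3.5]{mr02} produces an $\mathbb{L}$-restricted Kolyvagin system for $T$, where the Selmer structure $\mathbb{L}$ at primes above $p$ is cut out by the Coleman maps $\col_{T|F_\fp}$ and the subset $\uI\subset\{1,\ldots,g\}$, and by the usual unramified conditions away from $p$. By construction (Definition~\ref{def:modifiedselmergroup}), the cotorsion object associated with this Selmer structure at the Iwasawa-theoretic level is exactly $\Sel_{\uI}(T^\dagger/F(\mu_{p^\infty}))^\vee$; hypothesis (H.Leop) underlies this identification, while (H.nA) ensures torsion-freeness of the relevant local cohomology so that the Coleman maps truly cut out a Selmer condition of the expected rank on every $\eta$-isotypic component.

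Next I would run the standard $\Lambda$-adic Kolyvagin bound isotypic component by isotypic component. For every even Dirichlet character $\eta$ of $\Delta$, this yields
\begin{equation*}
\col_{T|F_\fp}\!\left(\mathfrak{K}(T)\right)^\eta\cdot\Zp[[X]]\ \subset\ \Char_{\Zp[[X]]}\!\left(\Sel_{\uI}(T^\dagger/F(\mu_{p^\infty}))^{\vee,\eta}\right)\!.
\end{equation*}
The Explicit Reciprocity Conjecture~\ref{conj:Tspecialelement} identifies the left-hand side with $L_{\uI}(\mathcal{M}^*(1))^\eta\cdot\Zp[[X]]$, which is essentially the content of Definition~\ref{def:signedpadicLfunc}, establishing the first containment. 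The second containment then follows from the equivalence of Theorem~\ref{thm:modifiedmainconjequivalanttoPRsmainconj}, whose proof in one direction relies on a substitution into the factorisation $\calL_T^{F_\fp}=\begin{pmatrix}v_1&\cdots&v_{g_\fp}\end{pmatrix}\cdot M_{T|F_\fp}\cdot\col_{T|F_\fp}$ of Theorem~\ref{thm:introcolemanmaps} together with the strong admissibility of the basis $\{v_i\}$.

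The principal obstacle is the calibration of the Kolyvagin-system bound in the second step: one must verify that the $\mathbb{L}$-restricted Kolyvagin system produced by the appendix behaves, on each $\eta$-isotypic component, like a rank-one Kolyvagin system for the Mazur-Rubin machine, so that its image under $\col_{T|F_\fp}$ controls the dual Selmer group sharply. The pseudo-surjectivity of $\col_{T|F_\fp}$ on $\eta$-components (Propositions~\ref{prop:iageofcolemanis} and~\ref{prop:admissiblebasisexists}) and the non-triviality of $\mathfrak{K}(T)^\eta$ delivered by the Kolyvagin-determinant construction are the essential tools here.
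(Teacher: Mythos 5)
Your proposal is correct and follows essentially the same route as the paper: Theorem~\ref{thm:appendixBmain} supplies the $\mathbb{L}$-restricted Kolyvagin system whose leading term bounds the dual Selmer group, Poitou--Tate duality together with the pseudo-surjectivity of the signed Coleman maps (strong admissibility) converts this into the containment $L_{\uI}(\mathcal{M}^*(1))^\eta\cdot\Zp[[X]]\subset \Char_{\Zp[[X]]}\bigl(\Sel_{\uI}(T^\dagger/F(\mu_{p^\infty}))^{\vee,\eta}\bigr)$ via the relation between the Kolyvagin determinant $\cc$ and the generator $\kappa_1$ encoded in diagram~(\ref{eqn:diagramappendixC}), and the Perrin--Riou containment is then extracted from Theorem~\ref{thm:TmyCPeqivtoCP}. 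The only caveat is that for the last step one must run through the \emph{proof} of that equivalence (the determinant identities coming from the Poitou--Tate sequence) rather than the bare equivalence of the two equalities, since an equivalence of equalities does not formally transfer a one-sided containment; this is exactly how the paper concludes.
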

\begin{remark}
\label{rem:hypoholdtrueINTRO}
See \cite{buyukboduklei1} for an example where we obtain an explicit version of Theorem~\ref{thm:mainINTRO}. In loc.cit., we study more closely the motive attached to the Hecke character associated to a CM abelian variety that has supersingular reduction at all primes above $p$. In this particular case, the hypotheses  \textup{\textbf{(H1)-(H4)}} of \cite[\S3.5]{mr02}, \textup{(H.F.-L.)}, \textup{(H.S.)}, \textup{(H.P.)} and \textup{(H.nA)} hold true. The (conjectural) special elements in that setting are expected to be a form of (conjectural) Rubin-Stark elements.
\end{remark}

\begin{remark}
In order to deduce the containment \eqref{PRmainconjcontainment} for odd characters $\eta$ of $\Delta$, one needs to replace $g_-$ with $g_+$ everywhere. Note also that upon studying the motive $\mathcal{M}\otimes\omega$ (where $\omega$ is the Teichm\"uller character) in place of $\mathcal{M}$, one may reduce the consideration for odd characters to the case of even characters.

To deduce the assertion \eqref{PRmainconjcontainment} for every character $\eta$ of $\Delta$ (and therefore, by the semi-simplicity of $\ZZ_p[\Delta]$, to conclude with the containment $\Lambda\cdot L_p(A^\vee)\subset \Iarith(A)$ in Conjecture~\ref{TCP}), we would need in our proof that $g_-=g_+$, as a result of our running hypothesis \textup{(H.P.)}\,. Note that this condition holds true for motives associated to abelian varieties. 
\end{remark}

\section*{Acknowledgement}
The authors would like to thank Robert Pollack and Florian Sprung for answering their questions during the preparation of this work, as well as Byoung Du Kim for his comments on an earlier version of this article.


\section{Construction of Coleman maps}


In this section, we generalize the construction of \emph{signed} Coleman maps in \cite{kobayashi03,sprung09} to higher dimensional $p$-adic representations that satisfy certain hypotheses. These maps decompose the regulator map of Perrin-Riou, which we recall below.


\subsection{Perrin-Riou's regulator map}\label{S:PRexp}

Let $T$ be a free $\Zp$-module of rank $d$ that is equipped with a crystalline continuous action by the absolute Galois group of a finite unramified extension $K$ of $\Qp$ whose Hodge-Tate weights are all non-negative. 

Let $r=[K:\Qp]$. Recall that we write $\DD_K(T)$ for its Dieudonn\'e module and $\HIw(K,T):=\varprojlim H^1(K(\mu_{p^n}),T)$.

Let
\[
\langle\sim,\sim\rangle_n:H^1((K(\mu_{p^n}),T)\times H^1((K(\mu_{p^n}),T^*(1))\rightarrow\Zp
\]
be the local Tate pairing for $n\ge0$. This gives a pairing
\begin{align*}
\langle\sim,\sim\rangle:\HIw(K,T)\times \HIw(K,T^*(1))&\rightarrow\Lambda\\
((x_n)_n,(y_n)_n)&\mapsto\left(\sum_{\sigma\in G_n}\langle x_n,y_n^\sigma\rangle_n\cdot\sigma\right)_n,
\end{align*}
which can be extended $\calH$-linearly to a pairing
\[
\langle\sim,\sim\rangle:\calH\otimes_\Lambda\HIw(K,T)\times \calH\otimes_\Lambda\HIw(K,T^*(1))\rightarrow\calH.
\]

Let 
$$\calL_T^K:\HIw(K,T)\rightarrow\calH\otimes_{\Zp}\DD_K(T)$$ 
be Perrin-Riou's $p$-adic regulator given as in \cite[Definition~3.4]{leiloefflerzerbes11}. In the case where the eigenvalues of $\vp$ on $\DD_K(T)$ are not powers of $p$, we may describe this map concretely as follows. Fix a $\Zp$-basis $v_1,\ldots, v_{rd}$ of $\DD_K(T)$ and let $v_1',\ldots,v_{rd}'$ be the dual basis of $\DD_K(T^*(1))$. For $i\in\{1,\ldots,rd\}$, we write $\calL_{T,i}^K:\HIw(T)\rightarrow\calH$ for the map obtained by composing $\calL^K_T$ and the projection of $\calH\otimes\DD_K(T)$ to the $v_i$-component. The Colmez-Perrin-Riou reciprocity law (stated in \cite{perrinriou94} and proved in \cite{colmez98}) implies that
\begin{equation}
\calL^K_{T,i}(z)=\langle z,\Omega_{T^*(1)}( v_i')\rangle,
\label{eq:regulator}
\end{equation}
where $\Omega_{T^*(1)}$ is the Perrin-Riou exponential map
\[
\Omega_{T^*(1)}:\calH\otimes_{\Zp}\DD(T^*(1))\rightarrow\calH\otimes_{\Zp}\HIw(T^*(1))
\]
defined in \cite{perrinriou94}.  Note that our assumption on the eigenvalues of $\vp$ means that we may state the properties of Perrin-Riou's exponential map in a slightly simpler way than \cite{perrinriou94}. Recall that if $\theta$ is a Dirichlet character of conductor $p^n$, \cite[Lemma~3.5]{lei09} implies that
\begin{equation}\label{PRinterpolation}
\theta(\calL^K_{T,i}(z))=
\begin{cases}
\left[\exp^*_0(z),(1-p^{-1}\vp^{-1})(1-\vp)^{-1}v_i'\right]&\text{if $n=0$,}\\
\frac{1}{\tau(\theta^{-1})}\left[\sum_{\sigma\in G_n}\theta^{-1}(\sigma)\exp^*_n(z^\sigma),\vp^{-n}(v_i')\right]&\text{otherwise}
\end{cases}
\end{equation}
where $[\sim,\sim]$ is the natural pairing
\[
\DD_K(T)\times\DD_K(T^*(1))\rightarrow\Zp,
\]
which is extended linearly to
\[
\Qpn\otimes_{\Zp}\DD_K(T)\times\Qpn\otimes_{\Zp}\DD_K(T^*(1))\rightarrow\Qpn.
\]

In order to define the \emph{signed} Coleman maps, we assume further that $T$ verifies the following conditions.

\begin{itemize}
\item[(H.F.-L.)] The Hodge-Tate weights of $T$ are $0$ and $1$.
\item[(H.S.)] The slopes of $\vp$ on $\DD_K(T)$ lie in the interval $(-1,0]$ and $1$ is not an eigenvalue.
\end{itemize}

\begin{remark}
These hypotheses ensure that the eigenvalues of $\vp$ are not integral powers of $p$.
\end{remark}
\begin{remark} Note that both of these hypotheses are satisfied by the $p$-adic Tate module of an abelian variety which has supersingular reduction at all primes above $p$. In fact, note that the hypothesis \textup{(H.F.-L.)} would essentially restrict the extent of our treatment to abelian varieties.  
\end{remark}
\begin{remark} 
The hypothesis (H.F.-L.) implies that $T$ is Fontaine-Laffaille. Hence,
\begin{equation}\label{eq:FL3}
\vp(\DD_K(T))\subset\frac{1}{p}\DD_K(T)\quad\text{and}\quad\vp(\Fil^0\DD_K(T))\subset\DD_K(T)
\end{equation}
Moreover,
\begin{equation}\label{eq:FL4}
\DD_K(T)=p\vp(\DD_K(T))+\vp(\Fil^0\DD_K(T))
\end{equation}
\end{remark}

\subsection{Logarithmic matrix}\label{S:matrix}

We fix a $\Zp$-basis $v_1,v_2,\ldots,v_{rd}$ of $\DD_K(T)$ such that $v_1,\ldots,v_{rd_0}$ is a basis of $\Fil^0\DD_K(T)$. Let $C_\vp$ be the matrix of $\vp$ with respect to this basis. By \eqref{eq:FL3} and \eqref{eq:FL4}, $C_\vp$ is of the form
{
\begin{equation}\label{eq:Aphi}
C\left(
\begin{array}{c|c}
I_{rd_0}&0\\ \hline
0&\frac{1}{p}I_{r(d-d_0)}
\end{array}
\right)
\end{equation}}
for some $C\in \GL_{rd}(\Zp)$. We note in particular that $C_\vp^{-1}$ is defined over $\Zp$. 

For $n\ge1$, we write $\Phi_{p^n}(1+X)$ for the cyclotomic polynomial 
\[
\sum_{i=0}^{p-1}(1+X)^{ip^{n-1}}
\]
and $\omega_n(X)=(1+X)^{p^n}-1$.

\begin{defn}
For $n\ge1$, we define
\[
C_n=
\left(
\begin{array}{c|c}
I_{rd_0}&0\\ \hline
0&\Phi_{p^n}(1+X)I_{r(d-d_0)}
\end{array}
\right)C^{-1}
\quad\text{and}\quad
M_n=\left(C_\vp\right)^{n+1}C_{n}\cdots C_{1}.
\]
\end{defn}

\begin{proposition}\label{prop:matrix}
The sequence of matrices $\{M_n\}_{n\ge1}$ converges entry-wise with respect to the sup-norm topology on $\calH$. If $M_T$ denotes the limit of the sequence, each entry of $M_T$ are $o(\log)$. Moreover, $\det(M_T)$ is, up to a constant in $\Zp^\times$, equal to $\left(\frac{\log(1+X)}{pX}\right)^{r(d-d_0)}$.
\end{proposition}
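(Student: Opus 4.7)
The plan is to verify the three claims --- the determinant formula, convergence, and the $o(\log)$ bound on entries --- separately, starting with the easiest.

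For the determinant, multiplicativity yields
\[
\det(M_n) \;=\; \det(C_\vp)^{n+1}\prod_{k=1}^n \det(C_k) \;=\; \det(C)\cdot\Bigl(p^{-(n+1)}\prod_{k=1}^n\Phi_{p^k}(1+X)\Bigr)^{r(d-d_0)},
\]
using $\det(C_\vp)=\det(C)\cdot p^{-r(d-d_0)}$ from \eqref{eq:Aphi} and $\det(C_k)=\Phi_{p^k}(1+X)^{r(d-d_0)}/\det(C)$. The cyclotomic telescoping identity $\prod_{k=1}^n\Phi_{p^k}(1+X)=\omega_n(X)/X$ converts this into $\det(C)\cdot(\omega_n(X)/(p^{n+1}X))^{r(d-d_0)}$. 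Passing to the limit via the classical fact $\omega_n/p^n\to\log(1+X)$ in $\calH$ yields the claimed formula, with $\det(C)\in\Zp^\times$ since $C\in\GL_{rd}(\Zp)$.

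For convergence, the strategy is to show $\{M_n\}$ is Cauchy with respect to each sup-norm $\|\cdot\|_\rho$ on $|X|\leq\rho<1$, exploiting the telescoping identity
\[
M_{n+1}-M_n \;=\; C_\vp^{n+1}\bigl(C_\vp C_{n+1}-I\bigr)P_n, \qquad P_n:=C_n\cdots C_1.
\]
A direct block computation shows $C_\vp C_{n+1}=C\cdot\operatorname{diag}(I,\,p^{-1}\Phi_{p^{n+1}}(1+X)I)\cdot C^{-1}$, so the middle factor is conjugate (via $C$) to the block-diagonal matrix with nonzero block $p^{-1}(\Phi_{p^{n+1}}(1+X)-p)\cdot I$. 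Expanding $\Phi_{p^{n+1}}(1+X)-p=\omega_n(X)\sum_{i=0}^{p-2}(p-1-i)(1+X)^{ip^n}$ exposes the crucial divisibility by $\omega_n$, and since $\omega_n/p^n\to\log(1+X)$ in $\calH$ one has $\|\omega_n\|_\rho=O(p^{-n})$. Combined with $\|P_n\|_\rho\leq1$ (entries of $P_n$ lie in $\Zp[X]$, where $\|\Phi_{p^k}(1+X)\|_\rho\leq 1$ uniformly in $k$) and a polynomial-in-$n$ bound on $\|C_\vp^{n+1}\|_\rho$, this gives $\|M_{n+1}-M_n\|_\rho\to 0$. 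The $o(\log)$ bound on the entries of the limit $M_T$ is then obtained as a refinement of the same analysis, tracking the growth of individual Taylor coefficients against the known $O(\log)$ growth of the coefficients of $\log(1+X)$ together with the fact that the error term $M_T - M_n$ is $O(p^{-n})$ on closed subdisks.

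The main obstacle will be the uniform bound on $\|C_\vp^{n+1}\|_\rho$: although the slope hypothesis \textup{(H.S.)} forces the eigenvalues of $\vp$ on $\DD_K(T)\otimes\Qp$ to have non-negative $p$-adic valuation (so that a $\vp$-stable $\Zp$-lattice in $\DD_K(T)\otimes\Qp$ exists), turning this into an explicit polynomial-in-$n$ bound for the operator norm --- uniformly in $\rho$ --- is the delicate technical point on which the convergence estimate hinges, and one likely proceeds either via Cayley--Hamilton (whose coefficients lie in $\Zp$ by the slope condition) or by exhibiting an explicit $\vp$-stable lattice through the slope filtration.
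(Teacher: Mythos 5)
Your determinant computation and the telescoping identity $M_{n+1}-M_n=C_\vp^{n+1}(C_\vp C_{n+1}-I)P_n$, with the observation that $\omega_n\mid\Phi_{p^{n+1}}(1+X)-p$, are correct and are essentially the paper's argument (the paper phrases it as the congruence $M_m\equiv M_n\bmod\omega_n$ for $m>n$). The gap is in the step you yourself flag as the delicate one, and it stems from a misreading of \textup{(H.S.)}. That hypothesis says the slopes of $\vp$ on $\DD_K(T)$ lie in $(-1,0]$ --- so the eigenvalues of $C_\vp$ may well have \emph{negative} valuation (for a supersingular elliptic curve they have valuation $-1/2$, and indeed $\det(C_\vp)=\det(C)p^{-r(d-d_0)}$ has negative valuation whenever $d>d_0$). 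Consequently no $\vp$-stable $\Zp$-lattice exists in general, the characteristic polynomial of $C_\vp$ does not have $\Zp$-coefficients, and there is no polynomial-in-$n$ bound on $\|C_\vp^{n+1}\|$: the correct bound, and the one the paper uses, is that the entries of $C_\vp^{n+1}$ have absolute value $O(p^{nh})$ for some $h<1$ determined by the Newton polygon. Both of your proposed repair strategies (integral Cayley--Hamilton, or a $\vp$-stable lattice from the slope filtration) therefore fail.

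The argument is salvageable precisely because the exponential growth $p^{nh}$ with $h<1$ is still beaten by the factor $\|\omega_n\|_\rho=O(p^{-n})$ coming from $\Phi_{p^{n+1}}-p$, giving $\|M_{n+1}-M_n\|_\rho=O(p^{-n(1-h)})\to 0$; the condition $h<1$ (i.e.\ slopes $>-1$) is exactly what makes the proposition true, and your write-up never isolates it. The same issue infects your treatment of the $o(\log)$ claim: the paper deduces it from \cite[\S1.2.1]{perrinriou94}, which says that a sequence with $M_m\equiv M_n\bmod\omega_n$ and coefficients growing like $p^{nh}$ converges to an element that is $O(\log^h)$, hence $o(\log)$ \emph{because} $h<1$. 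Your sketch (``tracking Taylor coefficients against the $O(\log)$ growth of $\log(1+X)$'') cannot be completed without first establishing the correct exponent $h<1$ for the coefficient growth of $M_n$.
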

\begin{proof}
For all $m>n$, we have
\[
\Phi_{p^m}(1+X)\equiv p\mod \omega_n,
\]
which implies that
\[
C_m\equiv \left(C_\vp\right)^{-1}\mod\omega_n.
\]
Therefore, we deduce that
\[
M_{m}\equiv M_n\mod\omega_n.
\]

Note that all entries of $C_1\cdots C_{n}$ are in $\Zp[[X]]$. By (H.S.), there exists a constant $h<1$ such that $v_p(\alpha)\ge -h$ for all eigenvalues of $\alpha$ of $C_\vp$. Therefore, all entries of  $\left(C_\vp\right)^{n+1}$ are in $\frac{R}{p^{ nh}}\Zp$ for some constant $R$. The coefficients of the entries of $M_n$ are $O(p^{-nh})$, so the result follows from \cite[\S1.2.1]{perrinriou94}.
\end{proof}

\begin{remark}
The matrix $M_T$ is uniquely determined by the matrix $C$.
\end{remark}

\begin{lemma}\label{lem:evaluatematrix}
If $\eta$ is a character on $\Delta$, then $\eta(M_T)=C_\vp$.
\end{lemma}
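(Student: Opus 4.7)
The plan is to reduce the identity $\eta(M_T) = C_\vp$ to the elementary fact that $\Phi_{p^k}(1)=p$ for every $k\ge 1$, combined with the block form of $C_\vp$ recorded in \eqref{eq:Aphi}. First I would fix the interpretation: a character $\eta$ of $\Delta$ extends to a character of $\Gamma$ by setting $\eta(\gamma)=1$, hence $\eta(X)=\eta(\gamma-1)=0$. Applied entrywise to a matrix with entries in $\Zp[X]\subset\Lambda$, this is simply the substitution $X=0$, because no genuine $\Delta$-part is present (and scalars in $\Zp$ are fixed by $\eta$).

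Next I would compute $\eta(C_k)$ for each $k\ge 1$. By construction, every entry of $C_k$ lies in $\Zp[X]$, so
\[
\eta(C_k)=C_k\big|_{X=0}=\left(\begin{array}{c|c} I_{rd_0} & 0 \\ \hline 0 & \Phi_{p^k}(1)\, I_{r(d-d_0)}\end{array}\right)C^{-1}=\left(\begin{array}{c|c} I_{rd_0} & 0 \\ \hline 0 & p\, I_{r(d-d_0)}\end{array}\right)C^{-1}.
\]
Comparing with \eqref{eq:Aphi}, the right-hand side is exactly $C_\vp^{-1}$, so $\eta(C_k)=C_\vp^{-1}$ for every $k\ge 1$.

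Then I would compute at each finite stage. Since $C_\vp$ has entries in $\Zp$, we have $\eta(C_\vp)=C_\vp$, and therefore
\[
\eta(M_n)=\bigl(C_\vp\bigr)^{n+1}\,\eta(C_n)\cdots\eta(C_1)=\bigl(C_\vp\bigr)^{n+1}\bigl(C_\vp^{-1}\bigr)^{n}=C_\vp
\]
for every $n\ge 1$. Proposition~\ref{prop:matrix} gives entrywise convergence $M_n\to M_T$ in $\calH$, and $\eta$ defines a continuous character of $\Lambda$ extending to the subring of $\calH$ in which the $M_n$ lie; passing to the limit yields $\eta(M_T)=\lim_n \eta(M_n)=C_\vp$, as desired.

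I do not anticipate a genuine obstacle: the only points requiring care are that $\eta$ may be applied entrywise through the limit (which follows from the convergence in Proposition~\ref{prop:matrix}) and that $C_k$ carries no hidden $\Delta$-data beyond the trivial inclusion $\Zp[X]\hookrightarrow \Lambda$, so the $\eta$-evaluation reduces cleanly to setting $X=0$.
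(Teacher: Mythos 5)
Your proof is correct and follows the same route as the paper: evaluate each $C_k$ at $\eta$ using $\eta(\Phi_{p^k}(1+X))=\Phi_{p^k}(1)=p$ to get $\eta(C_k)=C_\vp^{-1}$, deduce $\eta(M_n)=C_\vp$ at each finite stage, and pass to the limit. The paper's proof is a terser version of exactly this argument, so there is nothing to add.
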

\begin{proof}
Since $\eta(\Phi_{p^n})=p$ for all $n\ge1$, we have $\eta(C_n)=(C_\vp)^{-1}$. This implies $\eta(M_n)=C_\vp$, hence the result. 
\end{proof}


\subsection{Decomposing Perrin-Riou's regulator map}\label{sec:coleman}

We shall use the matrix $M_T$ to decompose Perrin-Riou's regulator map in the following sense. For all $z\in\HIw(K,T)$, we shall find $\col^K_T(z)\in\Lambda^{\oplus r d}$ such that
\[
\calL^K_T(z)=\begin{pmatrix}
v_1&\cdots& v_{rd}
\end{pmatrix}\cdot M_T\cdot\col^K_T(z).
\]
Throughout this section, we shall fix an element $z\in\HIw(K,T)$. Its image under Perrin-Riou's regulator has the following interpolation properties.

\begin{lemma}\label{lem:regulatorformula}
If $\theta$ is a Dirichlet character of conductor $p^n$, then
\[
\theta(\calL^K_T(z))=
\begin{cases}
\sum_{i=1}^{rd}\left[\exp^*_0(z),v_i'\right](1-\vp)(1-p^{-1}\vp^{-1})^{-1}(v_i)&\text{if $n=0$,}\\
\frac{p^n}{\tau(\theta^{-1})}\sum_{i=1}^{rd}\left[\sum_{\sigma\in G_n}\theta^{-1}(\sigma)\exp^*_n(z^\sigma),v_i'\right]\vp^{n}(v_i)&\text{otherwise}.
\end{cases}
\]
\end{lemma}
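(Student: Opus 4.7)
The plan is to reduce the lemma to the component-wise interpolation formula (\ref{PRinterpolation}) already recalled from \cite{lei09}, and then convert the projection onto $v_i'$ in $\DD_K(T^*(1))$ into the projection onto $v_i$ in $\DD_K(T)$ via the adjoint (with respect to the natural pairing $[\,,\,]$) of the relevant operator in $\vp$.

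More precisely, since $\{v_i'\}$ is by definition the $\Zp$-basis of $\DD_K(T^*(1))$ dual to $\{v_i\}$ under $[\,,\,]$, and since $\calL^K_T(z)=\sum_i \calL^K_{T,i}(z)\,v_i$ by the very definition of the component maps $\calL^K_{T,i}$ given above, applying $\theta$ and substituting (\ref{PRinterpolation}) yields
\[
\theta(\calL^K_T(z))=
\begin{cases}
\sum_{i}\bigl[\exp^*_0(z),\,(1-p^{-1}\vp^{-1})(1-\vp)^{-1}v_i'\bigr]\cdot v_i&\text{if } n=0,\\[2pt]
\dfrac{1}{\tau(\theta^{-1})}\sum_i\Bigl[\sum_{\sigma\in G_n}\theta^{-1}(\sigma)\exp^*_n(z^\sigma),\,\vp^{-n}(v_i')\Bigr]\cdot v_i&\text{if } n\ge 1.
\end{cases}
\]
The task is then to rewrite the right-hand side so that the operator acts on $v_i\in \DD_K(T)$ rather than on $v_i'\in\DD_K(T^*(1))$. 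For this I will invoke the change-of-basis identity
\[
\sum_{i=1}^{rd}[w,Av_i']\,v_i=\sum_{i=1}^{rd}[w,v_i']\,A^{\dagger}(v_i),
\]
valid for any $\Zp$-linear endomorphism $A$ of $\DD_K(T^*(1))$ with adjoint $A^{\dagger}$ on $\DD_K(T)$ under the pairing $[\,,\,]$, extended linearly to the coefficient extension $\Qpn\otimes\DD_K(\cdot)$. This is just matrix transposition in the dual basis.

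The final step is to identify the adjoint of each of the operators appearing above. The compatibility of the pairing with the Frobenius — namely $[\vp(x),\vp(y)]=p^{-1}[x,y]$, coming from the fact that $\DD(\Zp(1))$ is the $\vp=p^{-1}$-eigenline — translates, for $\vp_{T^*(1)}$ acting on $\DD_K(T^*(1))$, into the adjoint relation
\[
\vp_{T^*(1)}^{\dagger}=p^{-1}\vp_T^{-1}\quad\text{on }\DD_K(T),
\]
and consequently $(\vp_{T^*(1)}^{-n})^{\dagger}=p^{n}\vp_T^{n}$ for every $n\ge 1$. Since $(1-p^{-1}\vp^{-1})(1-\vp)^{-1}$ is a polynomial expression in $\vp$ (so the individual factors commute), taking adjoints factor-by-factor gives
\[
\bigl((1-p^{-1}\vp_{T^*(1)}^{-1})(1-\vp_{T^*(1)})^{-1}\bigr)^{\dagger}=(1-\vp_T)(1-p^{-1}\vp_T^{-1})^{-1}.
\]
Substituting these two adjoint identities into the displayed formula above yields exactly the two cases of the lemma. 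The only mildly delicate point — not really an obstacle, but worth spelling out — is the sign/normalisation in the identification of $\vp$ on $\DD(\Zp(1))$ and the resulting identity $[\vp(x),\vp(y)]=p^{-1}[x,y]$, which has to match the convention under which Perrin-Riou's regulator and the formula (\ref{PRinterpolation}) are stated; once this convention is fixed, the computation is formal.
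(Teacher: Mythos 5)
Your proposal is correct and follows essentially the same route as the paper: both reduce the statement to the componentwise interpolation formula (\ref{PRinterpolation}) and then transfer the operator across the pairing $[\sim,\sim]$ using that the adjoints of $(1-p^{-1}\vp^{-1})(1-\vp)^{-1}$ and $\vp^{-1}$ are $(1-\vp)(1-p^{-1}\vp^{-1})^{-1}$ and $p\vp$ respectively. The paper's proof is just these two adjoint identities stated without the explicit change-of-basis computation, which you correctly supply.
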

\begin{proof}
Note that the adjoints of $(1-p^{-1}\vp^{-1})(1-\vp)^{-1}$ and $\vp^{-1}$ under $[\sim,\sim]$ are $(1-\vp)(1-p^{-1}\vp^{-1})^{-1}$ and $p\vp$ respectively. Hence, the result follows from \eqref{PRinterpolation}.
\end{proof}

\begin{proposition}\label{prop:integral}
For $n\ge1$, there exists a unique $\calL^{(n)}_T(z)\in\Lambda_n\otimes_{\Zp}\DD_K(T)$ such that \[\vp^{-n-1}\left(\calL^K_T(z)\right)\equiv\calL_T^{(n)}(z)\mod\omega_n.\]
\end{proposition}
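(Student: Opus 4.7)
My plan is as follows.

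Uniqueness is immediate: the natural map $\Lambda_n\otimes_{\Zp}\DD_K(T)\hookrightarrow (\calH/\omega_n\calH)\otimes_{\Zp}\DD_K(T)$ is injective (via the identification of $\Lambda_n$ with the appropriate quotient of $\Lambda$ and the embedding $\Lambda\hookrightarrow\calH$), so at most one element of $\Lambda_n\otimes\DD_K(T)$ can have the prescribed image. Thus I focus on existence.

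For existence, I would exploit the decomposition of $\calH/\omega_n\calH$ coming from the factorization $\omega_n(X)=\prod_{k=0}^{n}\Phi_{p^k}(1+X)$. Via the Chinese remainder theorem, $\vp^{-n-1}\calL^K_T(z)\bmod\omega_n$ is determined by its specializations at all Dirichlet characters $\theta$ of $\Gamma$ whose conductor divides $p^n$. For such $\theta$ of conductor $p^k$ ($0\le k\le n$), Lemma~\ref{lem:regulatorformula} gives an explicit formula for $\theta(\calL^K_T(z))$ in terms of $\vp^k(v_i)$ and the Bloch--Kato dual exponentials $\exp^*_k$. Since $\vp^{-n-1}$ acts on the $\DD_K(T)$-factor, passing it through the character yields a formula involving $\vp^{k-n-1}(v_i)$.

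The crucial integrality input is that under hypotheses (H.F.-L.) and (H.S.), the matrix expression \eqref{eq:Aphi} for $C_\vp$ gives $C_\vp^{-1}\in M_{rd}(\Zp)$, so $\vp^{-1}$ preserves $\DD_K(T)$ and hence $\vp^{k-n-1}(v_i)\in\DD_K(T)$ for every $k\le n$. Combined with the standard Fourier-transform description of $\Lambda_n\otimes\DD_K(T)$ as the space of Galois-equivariant collections $\{c_\theta\}_\theta$ of character values satisfying the appropriate integrality, the character values produced above assemble into a genuine element $\calL^{(n)}_T(z)\in\Lambda_n\otimes\DD_K(T)$, which is then by construction congruent to $\vp^{-n-1}\calL^K_T(z)$ modulo $\omega_n$.

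The main obstacle is verifying that the Gauss-sum denominators $p^k/\tau(\theta^{-1})$ appearing in Lemma~\ref{lem:regulatorformula} (which individually are not integral, having $p$-adic valuation $-k/2$) do conspire, together with the sums $\sum_\sigma\theta^{-1}(\sigma)\exp^*_k(z^\sigma)$, to produce a $\Zp$-integral Iwasawa element; this is the standard integrality feature of the Perrin-Riou regulator on integral Iwasawa cohomology applied to a Fontaine-Laffaille representation, and is essentially where the hypothesis (H.F.-L.) is doing the real work.
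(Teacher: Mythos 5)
Your uniqueness argument is fine, but the existence half has a genuine gap, and it sits exactly where the proposition's content lies. You propose to recover $\calL_T^{(n)}(z)$ from the character values $\theta\bigl(\vp^{-n-1}\calL_T^K(z)\bigr)$ for all $\theta$ of conductor dividing $p^n$, and then to "assemble" these into an element of $\Lambda_n\otimes_{\Zp}\DD_K(T)$. Two problems. First, even if every individual character value were integral, this would not force the group-ring element to lie in $\Zp[G_n]\otimes\DD_K(T)$: the inverse Fourier transform divides by $|G_n|$, a power of $p$, so integrality of character values is strictly weaker than integrality in $\Lambda_n$. Second, the integrality of the character values themselves is not established — the quantities $\frac{p^k}{\tau(\theta^{-1})}\sum_\sigma\theta^{-1}(\sigma)\exp^*_k(z^\sigma)$ involve Gauss-sum denominators and non-integral dual exponentials, and your closing sentence defers this to "the standard integrality feature of the Perrin-Riou regulator... applied to a Fontaine-Laffaille representation." That feature \emph{is} Proposition~\ref{prop:integral}; invoking it is circular. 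The observation that $C_\vp^{-1}\in M_{rd}(\Zp)$ handles only the $\DD_K(T)$-factor, not the Iwasawa-coefficient integrality, which is the hard part.

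The paper's proof goes a different route entirely and is where the real work is done: it writes $\calL_T^K=(\fM^{-1}\otimes 1)\circ(1-\vp)\circ(h^1_T)^{-1}$ via Berger's isomorphism $h^1_T$ with $\NN(T)^{\psi=1}$, notes that the Mellin transform $\fM$ matches $\Lambda$-integrality with $\AQp$-integrality and the ideal $(\omega_n)$ with $(\vp^{n+1}(\pi))$, and then reduces to Lemma~\ref{lem:integralityPR}. That lemma is proved in Appendix~A using the explicit integral basis $(1+\pi)\vp(n_i)$ of $(\vp^*\NN(T))^{\psi=0}$ and the recursion $M=C_\vp\cdot\vp(M)\cdot P^{-1}$, which after iteration shows $C_\vp^{-n}\vp(M)$ is integral modulo $\vp^{n+1}(\pi)$. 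If you want to complete your argument without Wach modules, you would need an independent proof of precisely this kind of statement; as written, the proposal does not contain one.
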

\begin{proof}
Recall from \cite[\S3.1]{leiloefflerzerbes11} that the map $\calL^K_T$ is given by
\[
(\fM^{-1}\otimes 1)\circ(1-\vp)\circ (h^1_T)^{-1},
\]
where $\fM$ is the Mellin transform that sends each element of $\calH$ to some convergent power series in $\pi$ and $h^1_T$ is the isomorphism of Berger \cite[\S A]{berger03} between $H^1_{\Iw}(K,T)$ and $\NN(T)^{\psi=1}$, with $\NN(T)$ being the Wach module of $T$. Under Mellin transform, integrality is preserved and the ideal generated by $\omega_n$ corresponds to the one generated by $\vp^{n+1}(\pi)$ (c.f. \cite[Theorem~5.4]{leiloefflerzerbes10}). Hence, the proposition follows from Lemma~\ref{lem:integralityPR} in the appendix.
\end{proof}

We write $\calL_{T,1}^{(n)}(z),\ldots,\calL_{T,rd}^{(n)}(z)$ for the elements in $\Lambda_n$ that are given by the projections of $\calL^{(n)}_T(z)\mod\omega_n$ to the $v_i$-component as $i$ runs from $1$ to $rd$. From Proposition~\ref{prop:integral}, we have the congruence
\begin{equation}\label{eq:Lcongruence}
(C_\vp)^{-n-1}\cdot\begin{pmatrix}
\calL^K_{T,1}(z)\\
\vdots\\
\calL^K_{T,rd}(z)
\end{pmatrix}\equiv
\begin{pmatrix}
\calL_{T,1}^{(n)}(z)\\
\vdots\\
\calL_{T,rd}^{(n)}(z)
\end{pmatrix}
\mod\omega_n.
\end{equation}

For $n\ge1$, we identify $\Lambda_n^{\oplus rd}$ with the column vectors of dimension $rd$ with entries in $\Lambda_n$. Define $h_n$ to be the $\Lambda_n$-endomorphism on $\Lambda_n^{\oplus rd}$ given by the left multiplication by the product of matrices $C_n\cdots C_1$. Let $\pi_n$ denote the projection map $\Lambda_{n+1}^{\oplus rd}\rightarrow\Lambda_{n}^{\oplus rd}$.  

\begin{proposition}\label{prop:factorisation}
For $n\ge1$, there exists a unique element $\col^{(n)}_T(z)\in\Lambda_n^{\oplus rd}/\ker h_n$ such that
\[
\begin{pmatrix}
\calL_{T,1}^{(n)}(z)\\
\vdots\\
\calL_{T,rd}^{(n)}(z)
\end{pmatrix}
\equiv
C_n\cdots C_1\cdot
\col^{(n)}_T
\mod\ker h_n.
\]
\end{proposition}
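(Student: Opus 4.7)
\emph{Proof proposal.} Uniqueness is immediate from the definition of the quotient $\Lambda_n^{\oplus rd}/\ker h_n$: if $y_1,y_2\in\Lambda_n^{\oplus rd}$ both satisfy $h_n(y_i)=\bigl(\calL^{(n)}_{T,i}(z)\bigr)_{1\le i\le rd}$, then $y_1-y_2\in\ker h_n$ and the two represent the same class. The substantive content is therefore existence: showing that the column vector $\bigl(\calL^{(n)}_{T,i}(z)\bigr)_i$ lies in the image $h_n(\Lambda_n^{\oplus rd})=(C_n\cdots C_1)\cdot\Lambda_n^{\oplus rd}$.

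My plan is to combine the congruence~\eqref{eq:Lcongruence} of Proposition~\ref{prop:integral} with the identity $C_n\cdots C_1=(C_\vp)^{-n-1}M_n$ built into the definition of $M_n$. Together, these reduce the sought factorization to producing $y\in\Lambda_n^{\oplus rd}$ satisfying
\[
M_n\cdot y\;\equiv\;\bigl(\calL^K_{T,i}(z)\bigr)_{i}\pmod{\omega_n\cdot(C_\vp)^{n+1}\Lambda_n^{\oplus rd}}.
\]
Since $M_T=\lim_n M_n$ is invertible over the fraction field of $\calH$ by Proposition~\ref{prop:matrix} (its determinant being proportional to $(\log(1+X)/pX)^{r(d-d_0)}$), there is a formal candidate $y=M_n^{-1}\bigl(\calL^K_{T,i}(z)\bigr)_i$; the task becomes to verify that this candidate is integral modulo~$\omega_n$.

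The main obstacle is precisely this integrality claim, and I expect it to flow from the Wach-module framework already invoked in Proposition~\ref{prop:integral}. Under Berger's isomorphism $h^1_T:\HIw(K,T)\xrightarrow{\sim}\NN(T)^{\psi=1}$ composed with the Mellin transform, the matrix $M_T$ encodes a canonical factorization of $(1-\vp)$ on $\NN(T)$ that is compatible with the Hodge filtration on $\DD_K(T)$. The Fontaine-Laffaille hypothesis~(H.F.-L.), used concretely through \eqref{eq:FL3}--\eqref{eq:FL4}, should then ensure that the denominators introduced by $C_\vp^{-1}$ on the part of $\DD_K(T)$ lying outside $\Fil^0$ are cancelled exactly by the $\Phi_{p^k}(1+X)$ factors occupying the lower diagonal blocks of the $C_k$'s in $M_n$.

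An alternative, more hands-on route would be to argue character-by-character: for each Dirichlet character $\theta$ of conductor $p^j$ with $0\le j\le n$, compute $\theta\bigl(\calL^{(n)}_{T,i}(z)\bigr)$ via Lemma~\ref{lem:regulatorformula} and verify directly that the resulting element of $\overline{\Qp}^{\oplus rd}$ lies in the image of $\theta(C_n\cdots C_1)$. Lemma~\ref{lem:evaluatematrix}, which gives $\theta(M_n)=C_\vp$ whenever $\theta$ factors through $\Delta$, is the natural guide here, as it aligns the block structure of $C_\vp$ (whose lower block has slope in $(-1,0)$ by~(H.S.)) with the block structure of each $C_k$; this is exactly the compatibility needed for the inverse image in $\Lambda_n^{\oplus rd}/\ker h_n$ to exist.
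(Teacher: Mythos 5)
You have correctly reduced the proposition to its real content: uniqueness is formal, and existence amounts to showing that the vector $\bigl(\calL^{(n)}_{T,i}(z)\bigr)_i$ is divisible by $C_n\cdots C_1$ inside $\Lambda_n^{\oplus rd}$. But neither of your two routes actually supplies the mechanism that makes this divisibility true, so the proof has a genuine gap at its central step. The paper's argument is an explicit iteration: by \cite[Proposition~4.8]{leiloefflerzerbes11}, for every character $\theta$ of conductor $p^{k+1}$ the value $\theta\bigl(\vp^{-k-1}(\calL^K_T(z))\bigr)$ lies in $\Qpn\otimes\Fil^0\DD_K(T)$, so the components of $\vp^{-k-1}(\calL^K_T(z))$ indexed by $rd_0+1,\ldots,rd$ are divisible by $\Phi_{p^k}(1+X)$ --- and this is exactly the divisibility needed to invert the block of $C_k$ carrying $\Phi_{p^k}(1+X)$ (the rest of $C_k$ being invertible over $\Zp$). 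One first divides by $C_n$ using this at level $n$; the congruence $C_n\equiv C_\vp^{-1}\bmod\omega_{n-1}$ then identifies the quotient $\col_T^{(n,1)}(z)$ with $(C_\vp)^{-n}\cdot(\calL^K_{T,i}(z))_i$ modulo $(\omega_{n-1},\ker C_n)$, which lets one re-apply the same $\Fil^0$-statement at level $n-1$ to divide by $C_{n-1}$, and so on $n$ times. Your first route defers precisely this point to an unspecified ``Wach-module framework'' (``I expect it to flow from\ldots'', ``should then ensure\ldots''), which is an expression of hope rather than an argument.

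Your second, character-by-character route is closer in spirit but still misses the key input and leans on the wrong lemma: Lemma~\ref{lem:evaluatematrix} only concerns characters factoring through $\Delta$ (conductor dividing $p$), whereas the divisibility by $\Phi_{p^k}(1+X)$ for $1\le k\le n$ is governed by the characters of conductor $p^2,\ldots,p^{n+1}$, about which that lemma says nothing. What is needed at those conductors is exactly the $\Fil^0$-property of $\vp^{-k-1}\calL^K_T$ quoted above (equivalently, the fact that $\exp^*$ lands in $\Fil^0$, visible in the interpolation formula of Lemma~\ref{lem:regulatorformula}). Moreover, even granting a pointwise check at each $\theta$, you would still need to assemble the character-wise preimages into a single element of $\Lambda_n^{\oplus rd}$; the paper's iterative division does this automatically because each division is performed integrally in $\Lambda_n$. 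To repair your proposal, replace the appeal to Lemma~\ref{lem:evaluatematrix} by \cite[Proposition~4.8]{leiloefflerzerbes11} and carry out the division one matrix $C_k$ at a time, descending from $k=n$ to $k=1$.
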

\begin{proof}
By \cite[Proposition~4.8]{leiloefflerzerbes11}, if $\theta$ is a Dirichlet character of conductor $p^{n+1}$, then $\theta\left(\vp^{-n-1}\left(\calL^K_T(z)\right)\right)\in\Qpn\otimes_{\Zp}\Fil^0\DD_K(T)$. In other words, $\vp^{-n-1}\left(\calL^K_T(z)\right)$ is of the form $\sum_{i=1}^{rd}F_iv_i$ for some $F_i\in \calH$ where $\Phi_{p^n}(1+X)|F_i$ for $i=rd_0+1,\ldots,rd$. 
But
\[
\vp^{-n-1}\left(\calL^K_T(z)\right)=
\begin{pmatrix}
v_1&\cdots&v_{rd}
\end{pmatrix}\cdot (C_\vp)^{-n-1}\cdot\begin{pmatrix}
\calL^K_{T,1}(z)\\
\vdots\\
\calL^K_{T,rd}(z)
\end{pmatrix}.
\]

Therefore, on combining this with \eqref{eq:Lcongruence}, we deduce that $\calL_{T,rd_0+1}^{(n)}(z),\ldots,\calL_{T,rd}^{(n)}(z)$ are all divisible by $\Phi_{p^n}(1+X)$. Hence, there exists a unique element $\col^{(n,1)}_T(z)\in\Lambda_n^{\oplus rd}/\ker C_n$ such that
\[
\begin{pmatrix}\calL_{T,1}^{(n)}(z)\\ \vdots\\ \calL_{T,rd}^{(n)}(z)\end{pmatrix}
\equiv
C_n\cdot\col^{(n,1)}_T(z)\mod\ker C_n.
\]

But $C_n\equiv \left(C_\vp\right)^{-1}$ (which is defined over $\Zp$) modulo $\omega_{n-1}$, so 
\[
\col^{(n,1)}_T(z)\equiv
\left(C_\vp\right)^{-n}\cdot
\begin{pmatrix}
\calL^K_{T,1}(z)\\
\vdots\\
\calL^K_{T,rd}(z)
\end{pmatrix}\mod (\omega_{n-1},\ker C_n).
\]
Once again, by \cite[Proposition~4.8]{leiloefflerzerbes11}, we may find $\col^{(n,2)}_T(z)\in\Lambda_n^{\oplus rd}/\ker C_nC_{n-1}$ such that
\[
\col_T^{(n,1)}(z)\equiv C_{n-1}\cdot\col_T^{(n,2)}\mod\ker C_nC_{n-1}.
\]
On repeating this for $n$ times, we obtain the result.
\end{proof}

We shall show that the sequence $\left\{\col^{(n)}_T(z)\right\}_{n\ge1}$ gives us an element in $\Lambda^{\oplus rd}$. To do this, we need the following lemmas.

\begin{lemma}
The projection map $\pi_n$ induces a map on the quotients
\[
\pi_n':\Lambda_{n+1}^{\oplus r d}/\ker h_{n+1}\rightarrow \Lambda_{n}^{\oplus rd}/\ker h_{n}.
\]
\end{lemma}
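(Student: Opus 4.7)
The plan is to verify the inclusion $\pi_n(\ker h_{n+1})\subseteq\ker h_n$ by direct matrix manipulation, based on the congruence $C_{n+1}\equiv C_\vp^{-1}\pmod{\omega_n}$ that was already recorded during the proof of Proposition~\ref{prop:matrix} (and which makes sense integrally because $C_\vp^{-1}$ has $\ZZ_p$-entries).

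Take $v\in\Lambda_{n+1}^{\oplus rd}$ with $h_{n+1}(v)=0$ and lift it to $\tilde v\in\Lambda^{\oplus rd}$. The hypothesis then reads $C_{n+1}C_n\cdots C_1\tilde v\in\omega_{n+1}\Lambda^{\oplus rd}$, and since $\omega_{n+1}=\omega_n\,\Phi_{p^{n+1}}(1+X)$ this containment is \emph{a fortiori} inside $\omega_n\Lambda^{\oplus rd}$. Substituting $C_{n+1}\equiv C_\vp^{-1}\pmod{\omega_n}$ therefore yields
\[
C_\vp^{-1}\cdot (C_n\cdots C_1\tilde v)\equiv 0\pmod{\omega_n}.
\]
Because $C_\vp\in\GL_{rd}(\QQ_p)$, left-multiplying by $C_\vp$ over $\QQ_p$ gives $C_n\cdots C_1\tilde v\equiv 0\pmod{\omega_n}$ in $(\Lambda\otimes\QQ_p)^{\oplus rd}$.

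The final step is an integrality upgrade: since $\omega_n$ is monic in $X$, the quotient $\Lambda/\omega_n\Lambda$ is a finite free $\ZZ_p$-module, and so injects into $(\Lambda/\omega_n\Lambda)\otimes\QQ_p$. Applied coordinate-wise, this promotes the preceding $\QQ_p$-congruence to the honest containment $C_n\cdots C_1\tilde v\in\omega_n\Lambda^{\oplus rd}$; projecting to $\Lambda_n^{\oplus rd}$ then produces $h_n(\pi_n(v))=0$, which is exactly the desired inclusion. The only nontrivial point is this last integrality check — guaranteeing that the $\QQ_p$-denominators introduced by inverting $C_\vp^{-1}$ do not obstruct the conclusion — and it is handled cleanly by the monicness of $\omega_n$; everything else is formal bookkeeping around the congruence already recorded in Proposition~\ref{prop:matrix}.
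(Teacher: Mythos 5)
Your argument is correct and is essentially the paper's proof: both hinge on the congruence $C_{n+1}\equiv C_\vp^{-1}\pmod{\omega_n}$ together with the fact that $\Lambda_n$ has no $p$-torsion, which is exactly your "integrality upgrade" (inverting $C_\vp$ only introduces powers of $p$ in the denominator, and $\Lambda/\omega_n\Lambda$ being $\ZZ_p$-free kills them). The paper phrases this directly in the quotient $\Lambda_n$ rather than via lifts to $\Lambda$, but the content is the same.
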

\begin{proof}
Let $x\in\ker h_{n+1}$. Recall that 
\[
C_{n+1}\equiv \left(C_\vp\right)^{-1}\mod\omega_n,
\]
so we have{
\[
\pi_n(C_{n+1}\cdots C_1\cdot x)=\left(
\begin{array}{c|c}
I_{rd_0}&0\\ \hline
0&pI_{r(d-d_0)}
\end{array}\right)C^{-1}C_{n}\cdots C_1(\pi_n(x)).
\] }
Since $\Lambda_{n}$ has no $p$-torsion, we deduce that $\pi_n(x)\in\ker h_{n}$ as required.
\end{proof}

\begin{lemma}\label{lem:limit}
The inverse limit $\varprojlim_{\pi_n'} \left(\Lambda_n^{\oplus rd}/\ker h_n\right)$ is equal to $\Lambda^{\oplus rd}$.
\end{lemma}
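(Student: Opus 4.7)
The plan is to prove that the natural map
\[
\phi\colon \Lambda^{\oplus rd}\;=\;\varprojlim_n\Lambda_n^{\oplus rd}\;\longrightarrow\;\varprojlim_{\pi_n'}\left(\Lambda_n^{\oplus rd}/\ker h_n\right),
\]
induced by the level-wise quotients (well-defined thanks to the previous lemma), is a bijection. Because the transitions on $\{\Lambda_n^{\oplus rd}\}$ are surjective, the six-term exact sequence attached to $0\to\ker h_n\to\Lambda_n^{\oplus rd}\to\Lambda_n^{\oplus rd}/\ker h_n\to 0$ reduces the task to verifying $\varprojlim_n\ker h_n=0$ (for injectivity of $\phi$) and $\varprojlim^1_n\ker h_n=0$ (for surjectivity).

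Injectivity will follow from the non-vanishing of $\det M_T$. If $y\in\Lambda^{\oplus rd}$ has $\pi_n(y)\in\ker h_n$ for every $n$, then, lifting the polynomial matrix $A_n:=C_n\cdots C_1$ to $M_{rd}(\Lambda)$, the condition reads $A_n\cdot y\in\omega_{n-1}\Lambda^{\oplus rd}$. Left-multiplying by the scalar matrix $(C_\vp)^{n+1}\in\GL_{rd}(\Qp)$ and working inside $\calH^{\oplus rd}$ gives $M_n\cdot y\in\omega_{n-1}\calH^{\oplus rd}$. For every Dirichlet character $\theta$ of $\Gamma$ of conductor at most $p^n$ one has $\theta(\omega_{n-1})=0$, whence $\theta(M_n\cdot y)=0$. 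Since $M_n\to M_T$ entry-wise in the sup-norm topology on $\calH$ (Proposition~\ref{prop:matrix}) and evaluation at $\theta$ is continuous, letting $n\to\infty$ yields $\theta(M_T\cdot y)=0$ for every finite-order character $\theta$ of $\Gamma$. Such characters separate elements of $\calH$, so $M_T\cdot y=0$ in $\calH^{\oplus rd}$; as $\det M_T$ is a unit times $(\log(1+X)/pX)^{r(d-d_0)}$ and thus nonzero by Proposition~\ref{prop:matrix}, this forces $y=0$.

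For surjectivity, I plan to establish the Mittag--Leffler property by showing that each restriction $\pi_n\colon\ker h_{n+1}\to\ker h_n$ is surjective. Given $k\in\ker h_n$ and any lift $\tilde k\in\Lambda_{n+1}^{\oplus rd}$, one has $A_n\tilde k\in\omega_{n-1}\Lambda_{n+1}^{\oplus rd}$. Writing $A_{n+1}=D_{n+1}C^{-1}A_n$ with $D_{n+1}=\operatorname{diag}(I_{rd_0},\Phi_{p^{n+1}}I_{r(d-d_0)})$ and exploiting the factorization $\omega_n=\omega_{n-1}\Phi_{p^n}$ in $\Lambda$, the task of producing a correction $\tilde k'\in\omega_{n-1}\Lambda_{n+1}^{\oplus rd}=\ker\pi_n$ with $A_{n+1}(\tilde k-\tilde k')=0$ reduces, after dividing the defining equation by $\omega_{n-1}$, to a linear system modulo $\Phi_{p^n}$ that should be solvable block-wise using the invertibility of $C$ and the Fontaine--Laffaille block form of $C_\vp$ in \eqref{eq:Aphi}.

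The main obstacle is the surjectivity step: injectivity is a clean consequence of the non-degeneracy of $M_T$, but the kernel-lifting argument requires careful block-wise tracking of how the extra factor $D_{n+1}$ interacts with the divisibility by $\omega_{n-1}$ already built into $A_n\tilde k$. Should this direct approach prove too delicate, an alternative is to bypass $\varprojlim^1$ entirely by identifying $\Lambda_n^{\oplus rd}/\ker h_n$ with the image $A_n\Lambda_n^{\oplus rd}\subset\Lambda_n^{\oplus rd}$ and constructing the preimage in $\Lambda^{\oplus rd}$ of a compatible sequence directly, exploiting the sup-norm convergence $M_n\to M_T$ and the invertibility of $M_T$ over $\operatorname{Frac}\calH$ furnished by Proposition~\ref{prop:matrix}.
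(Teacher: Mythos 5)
Your overall skeleton (reduce to $\varprojlim_n\ker h_n=0$ for injectivity and $\varprojlim^1_n\ker h_n=0$ for surjectivity, then kill $\varprojlim\ker h_n$ using $\det M_T\neq 0$) is exactly the paper's, but two steps need repair. First, the assertion that finite-order characters of $\Gamma$ separate elements of $\calH$ is false: $\log(1+X)$ is a nonzero element of $\calH$ vanishing at $\zeta-1$ for every $p$-power root of unity $\zeta$, i.e.\ killed by every finite-order character. What saves you is the growth condition: since $y\in\Lambda^{\oplus rd}$ has bounded entries and the entries of $M_T$ are $o(\log)$ by Proposition~\ref{prop:matrix}, the entries of $M_T\cdot y$ are $o(\log)$, and for such elements vanishing modulo $\omega_n$ for all $n$ (equivalently, at all finite-order characters) does force vanishing --- this is the Perrin-Riou lemma from \cite[\S1.2]{perrinriou94} already invoked in the proof of Theorem~\ref{thm:decomposereg}. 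You must cite the growth bound explicitly; without it the separation claim is simply wrong.

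Second, the surjectivity step is left genuinely open: you only sketch a plan to prove that $\pi_n\colon\ker h_{n+1}\to\ker h_n$ is surjective, and the block-wise correction you describe is neither carried out nor obviously true (the blocks of $C_\vp^{-1}$ involve the non-unit $p$, so the "linear system modulo $\Phi_{p^n}$" is not automatically solvable). This levelwise surjectivity is in any case far more than you need. Each $\ker h_n$ is a closed $\Zp$-submodule of the finitely generated $\Zp$-module $\Lambda_n^{\oplus rd}$, hence compact, and an inverse system of compact groups with continuous transition maps always has vanishing $\varprojlim^1$. That single observation closes the gap and is what the paper's one-line deduction (surjectivity of $\pi_n'$ plus exactness of $\varprojlim$ on compact modules, giving $\varprojlim(\Lambda_n^{\oplus rd}/\ker h_n)\cong\Lambda^{\oplus rd}/\varprojlim\ker h_n$) implicitly relies on. With these two fixes your argument becomes a correct, slightly more explicit version of the paper's proof.
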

\begin{proof}
The map $\pi_n'$ is surjective since $\pi_n$ is so. Hence, we have an isomorphism
\[
\varprojlim \Lambda_n^{\oplus rd}/\ker h_n\cong \Lambda^{\oplus rd}/\varprojlim\ker h_n.
\]
Indeed, if $x$ is an element of  $\Lambda^{\oplus r d}$ that lies inside $\varprojlim\ker h_n$, we have $M_T\cdot x=0$ as elements in $\calH^{\oplus rd}$. But $M_T$ has non-zero determinant, so $x=0$.
\end{proof}

\begin{theorem}\label{thm:decomposereg}
There exists a unique $\col^K_T(z)\in\Lambda^{\oplus r d}$ such that
\[
\begin{pmatrix}
\calL^K_{T,1}(z)\\
\vdots\\
\calL^K_{T,rd}(z)
\end{pmatrix}
=
M_T\cdot
\col^K_T(z).
\]
\end{theorem}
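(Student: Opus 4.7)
The plan is to assemble $\col^K_T(z)$ from the finite-level elements $\col^{(n)}_T(z)$ of Proposition~\ref{prop:factorisation} via the inverse limit identification of Lemma~\ref{lem:limit}, and then verify the decomposition identity by passing to the limit inside $\calH^{\oplus rd}$.

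First, I would establish the coherence of the family $\{\col^{(n)}_T(z)\}_n$ under the maps $\pi_n'$. Multiplying the characterising congruence for $\col^{(n+1)}_T(z)$ by $C_\vp$, reducing modulo $\omega_n$, and using both the congruence $C_\vp\cdot C_{n+1} \equiv I \mod \omega_n$ (an immediate consequence of the definition of $C_{n+1}$) and the compatibility
\[
C_\vp \cdot \begin{pmatrix} \calL^{(n+1)}_{T,1}(z) \\ \vdots \\ \calL^{(n+1)}_{T,rd}(z) \end{pmatrix} \equiv \begin{pmatrix} \calL^{(n)}_{T,1}(z) \\ \vdots \\ \calL^{(n)}_{T,rd}(z) \end{pmatrix} \mod \omega_n
\]
(which follows from Proposition~\ref{prop:integral} applied at levels $n$ and $n+1$), one checks that $\pi_n'(\col^{(n+1)}_T(z))$ satisfies the defining relation for $\col^{(n)}_T(z)$. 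The uniqueness clause of Proposition~\ref{prop:factorisation} then forces $\pi_n'(\col^{(n+1)}_T(z)) = \col^{(n)}_T(z)$, so Lemma~\ref{lem:limit} yields a well-defined $\col^K_T(z) \in \Lambda^{\oplus rd}$.

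To verify the decomposition, I combine \eqref{eq:Lcongruence} with Proposition~\ref{prop:factorisation} and multiply through by $(C_\vp)^{n+1}$ to obtain
\[
\begin{pmatrix} \calL^K_{T,1}(z) \\ \vdots \\ \calL^K_{T,rd}(z) \end{pmatrix} \equiv M_n \cdot \col^K_T(z) \mod \omega_n \cdot \calH^{\oplus rd},
\]
using the reduction of $\col^K_T(z)$ modulo $\omega_n$ as a canonical lift of $\col^{(n)}_T(z)$; here the kernel contribution is absorbed into $\omega_n\cdot\calH^{\oplus rd}$ because $C_n\cdots C_1$ kills $\ker h_n$ modulo $\omega_n$. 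Since $M_n \to M_T$ entry-wise in $\calH$ by Proposition~\ref{prop:matrix} and $\bigcap_n \omega_n\cdot\calH = 0$, letting $n \to \infty$ produces the required equality. Uniqueness follows from the non-vanishing of $\det(M_T)$ established in Proposition~\ref{prop:matrix}, which makes $M_T$ injective as a map $\calH^{\oplus rd}\to\calH^{\oplus rd}$.

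The main technical obstacle will be the coherence step, where one must carefully track elements in the quotients $\Lambda_n^{\oplus rd}/\ker h_n$ and confirm that the kernel terms collapse in precisely the right way to invoke the uniqueness of Proposition~\ref{prop:factorisation}. Everything else is a formal limit argument combined with the invertibility of the logarithmic matrix $M_T$.
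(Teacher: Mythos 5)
Your overall strategy is the same as the paper's: assemble $\col^K_T(z)$ from the finite-level elements of Proposition~\ref{prop:factorisation} via Lemma~\ref{lem:limit}, and pass to the limit in the congruence $\calL^K_T(z)\equiv M_n\cdot\col^{(n)}_T(z)$. Your explicit verification of the coherence $\pi_n'(\col^{(n+1)}_T(z))=\col^{(n)}_T(z)$ is a welcome addition --- the paper silently relies on this when it invokes Lemma~\ref{lem:limit} --- and your observation that $M_n$ annihilates $\ker h_n$ (since $M_n=(C_\vp)^{n+1}C_n\cdots C_1$) correctly disposes of the kernel ambiguity.

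There is, however, one genuine error in your final limit step: the claim that $\bigcap_n\omega_n\cdot\calH=0$ is false. Unlike in $\Lambda$, an element of $\calH$ can be divisible by every $\omega_n$ without vanishing; for instance $\log(1+X)=X\prod_{k\ge1}\Phi_{p^k}(1+X)/p$ satisfies $\log(1+X)/\omega_n\in\calH$ for all $n$, since $\omega_n=X\prod_{k=1}^{n}\Phi_{p^k}(1+X)$. So knowing that $\calL^K_T(z)-M_T\cdot\col^K_T(z)$ lies in $\omega_n\cdot\calH^{\oplus rd}$ for every $n$ does not by itself force it to vanish. The correct conclusion requires a growth bound: by \cite[\S1.2]{perrinriou94}, two elements of $\calH$ that are both $o(\log)$ and congruent modulo $\omega_n$ for all $n$ must be equal. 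This is exactly where Proposition~\ref{prop:matrix} enters the paper's proof in an essential (not merely convergence-theoretic) way: the entries of $M_T$ are $o(\log)$, the entries of $\col^K_T(z)$ lie in $\Lambda$ and are hence bounded, so $M_T\cdot\col^K_T(z)$ is $o(\log)$, as is $\calL^K_T(z)$; their difference is then an $o(\log)$ element divisible by all $\omega_n$, hence zero. With that substitution your argument goes through and coincides with the paper's.
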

\begin{proof}
By Propositions~\ref{prop:integral} and \ref{prop:factorisation}, we have
\[
\begin{pmatrix}
\calL^K_{T,1}(z)\\
\vdots\\
\calL^K_{T,rd}(z)
\end{pmatrix}
\equiv
M_n\cdot\col_T^{(n)}(z)
\mod(\omega_n,\ker h_n).
\]
Recall from \cite[\S1.2]{perrinriou94} that if $F_1$ and $F_2$ are two elements of $\calH$ that are both $o(\log)$ and that $F_1\equiv F_2\mod\omega_n$ for all $n$, then $F_1=F_2$. Therefore, on letting $n\rightarrow\infty$, the theorem follows from Proposition~\ref{prop:matrix} and Lemma~\ref{lem:limit}.
\end{proof}

\begin{corollary}\label{cor:decompose} We have 
$\calL^K_T(z)=\begin{pmatrix}
v_1&\cdots& v_{rd}
\end{pmatrix}\cdot M_T\cdot\col^K_T(z)$.
\end{corollary}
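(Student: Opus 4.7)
The corollary is essentially a matter of repackaging Theorem~\ref{thm:decomposereg} into vector/matrix notation, so my plan is quite short. Recall that the components $\calL^K_{T,i}$ were defined in \S\ref{S:PRexp} as the projections of $\calL^K_T$ onto the $v_i$-component of $\calH\otimes_{\Zp}\DD_K(T)$ with respect to the fixed basis $v_1,\ldots,v_{rd}$. Therefore, by the definition of these component maps, we have the tautological identity
\[
\calL^K_T(z)=\sum_{i=1}^{rd}\calL^K_{T,i}(z)\otimes v_i=\begin{pmatrix} v_1 & \cdots & v_{rd} \end{pmatrix}\cdot\begin{pmatrix}\calL^K_{T,1}(z)\\ \vdots\\ \calL^K_{T,rd}(z)\end{pmatrix}
\]
inside $\calH\otimes_{\Zp}\DD_K(T)$, where the right-hand side is interpreted as the formal matrix product of a row vector of basis elements with a column vector of elements of $\calH$.

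Plugging in the equality of column vectors established in Theorem~\ref{thm:decomposereg}, namely
\[
\begin{pmatrix}\calL^K_{T,1}(z)\\ \vdots\\ \calL^K_{T,rd}(z)\end{pmatrix}=M_T\cdot\col^K_T(z),
\]
and appealing to the associativity of the matrix product, yields the claimed decomposition. Since both the row-vector identity and the application of Theorem~\ref{thm:decomposereg} are immediate, there is no serious obstacle; the only thing to check is that the conventions for identifying $\calH\otimes_{\Zp}\DD_K(T)$ with $\calH^{\oplus rd}$ (used in the proof of Theorem~\ref{thm:decomposereg}) agree with the row-vector notation used in the statement, which is straightforward from the choice of basis.
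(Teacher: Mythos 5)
Your proof is correct and matches the paper's (implicit) argument: the corollary is stated without proof precisely because it is the immediate repackaging of Theorem~\ref{thm:decomposereg} via the defining identity $\calL^K_T(z)=\sum_i \calL^K_{T,i}(z)\otimes v_i$. Nothing further is needed.
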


Note that since $\calL^K_T$ is a $\Lambda$-homomorphism, the map
\begin{align*}
\HIw(K,T)&\rightarrow\Lambda^{\oplus rd}\\
z&\mapsto\col^K_T(z)
\end{align*}
is also a $\Lambda$-homomorphism.

\subsection{Dependence of the choice of basis}

Our construction of the Coleman map $\col^K_T$ depends on the choice of a basis of $\DD_K(T)$. In this section, we investigate this dependence. More precisely, let $v_1,\ldots, v_{rd}$ and $w_1,\ldots,w_{rd}$ be two bases of $\DD_K(T)$ that admit the construction of logarithmic matrices $M_{T,v}$ and $M_{T,w}$ respectively, as given in \S\ref{S:matrix}. Consequently, this results in two Coleman maps $\col^K_{T,v}$ and $\col^K_{T,w}$. We will study the relation between these two maps.

Let $B\in \GL_{rd}(\Zp)$ be the change of basis matrix satisfying
\begin{equation}
\begin{pmatrix}
v_1&\cdots&v_{rd}
\end{pmatrix}=
\begin{pmatrix}
w_1&\cdots&w_{rd}
\end{pmatrix}B.\label{eq:relation}
\end{equation}

\begin{lemma}\label{lem:changematrix}
The logarithmic matrices for the two bases are related by
\[
BM_{T,v}B^{-1}=M_{T,w}.
\]
\end{lemma}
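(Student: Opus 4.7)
My plan is to verify $BM_{T,v}B^{-1}=M_{T,w}$ at every finite level modulo $\omega_n$ and then pass to the limit, invoking the uniqueness (implicit in Proposition~\ref{prop:matrix}) of an $o(\log)$ matrix determined by its reductions modulo the $\omega_n$'s; since $B\in\GL_{rd}(\Zp)$ has constant entries, $BM_{T,v}B^{-1}$ has $o(\log)$-entries, so the uniqueness criterion applies to both sides. The intrinsic nature of $\vp\colon\DD_K(T)\to\DD_K(T)$ gives $C_{\vp,w}=BC_{\vp,v}B^{-1}$, and hence $(C_{\vp,w})^{n+1}=B(C_{\vp,v})^{n+1}B^{-1}$. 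Moreover, since $\{v_i\}_{i\le rd_0}$ and $\{w_i\}_{i\le rd_0}$ both span $\Fil^0\DD_K(T)$, the change-of-basis matrix $B$ is block upper-triangular: $B=\begin{pmatrix}B_{11}&B_{12}\\0&B_{22}\end{pmatrix}$.

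The core of the argument is to establish the product congruence
\[ C_{n,w}\cdots C_{1,w}\equiv B\cdot C_{n,v}\cdots C_{1,v}\cdot B^{-1}\pmod{\omega_n}. \]
The subtlety is that $B$ does not commute with the block-diagonal matrix $\Phi_i=\mathrm{diag}(I_{rd_0},\Phi_{p^i}(1+X)I)$ entering the definition $C_{i,\ast}=\Phi_iC_{\ast}^{-1}$, so the individual factors $C_{i,w}$ and $BC_{i,v}B^{-1}$ genuinely differ. The key point is that the commutator $B\Phi_i-\Phi_iB$ is controlled by the polynomial $\Phi_{p^i}(1+X)-p$, which is divisible by $\omega_{i-1}$. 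Exploiting the factorization $\omega_n=\omega_{i-1}\cdot\Phi_{p^i}(1+X)\cdots\Phi_{p^n}(1+X)$, together with the block structure (which zeros out the lower rows of the error matrices so that they interact with the $\Phi_{p^j}$-divisibility in the lower block of the subsequent $C_{j,v}$'s), one shows by telescoping that all mixed-error terms are absorbed into $\omega_n\cdot M_{rd\times rd}(\calH)$.

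Combining the two congruences yields $M_{n,w}\equiv BM_{n,v}B^{-1}\pmod{\omega_n}$ for every $n\ge1$, and passage to the limit delivers $M_{T,w}=BM_{T,v}B^{-1}$. The principal obstacle is the second step: tracking the combinatorics of how the commutator errors $\Phi_{p^i}(1+X)-p$ propagate through the product, where the precise interplay between the upper-triangular shape of $B$ and the arithmetic of the cyclotomic polynomials modulo $\omega_n$ is essential for the cancellation to go through.
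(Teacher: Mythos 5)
Your framework (establish $M_{n,w}\equiv BM_{n,v}B^{-1}\pmod{\omega_n}$ at each finite level and pass to the limit via the $o(\log)$ rigidity underlying Proposition~\ref{prop:matrix}) is sound, and your observation that the individual factors $C_{i,w}$ and $BC_{i,v}B^{-1}$ genuinely differ is correct --- indeed it is sharper than the paper's own proof, which asserts the exact factorwise identity $B\,\mathrm{diag}(I,\Phi_{p^n}(1+X)I)C_v^{-1}B^{-1}=\mathrm{diag}(I,\Phi_{p^n}(1+X)I)C_w^{-1}$ by an ``agreement at infinitely many values of $X$'' argument that does not apply (one has $\Phi_{p^n}(\zeta)=p$ only for $\zeta\in\mu_{p^{n-1}}$, a finite set). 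But the step you defer as the ``principal obstacle'' --- absorbing the commutator errors into $\omega_n$ by telescoping --- is exactly where the argument fails, and it cannot be repaired: the congruence you need is false whenever the upper-right block $B_{12}$ of $B$ is nonzero. The error contributed by the outermost factor $i=n$ is divisible only by $\omega_{n-1}$, and there is no factor of $\Phi_{p^n}(1+X)$ to its left (only the constant matrix $C_\vp^{n+1}$) that could promote it to $\omega_n$.

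Concretely, writing $C_{1,\ast}=\mathrm{diag}\bigl(I_{rd_0},\tfrac{\Phi_p(1+X)}{p}I\bigr)C_{\vp,\ast}^{-1}$ and using $C_{\vp,w}=BC_{\vp,v}B^{-1}$, a two-line computation gives
\[
M_{1,w}-BM_{1,v}B^{-1}=\frac{p-\Phi_p(1+X)}{p}\cdot BC_{\vp,v}^{2}B^{-1}
\begin{pmatrix}0&B_{12}\\0&0\end{pmatrix}
C_{\vp,v}^{-1}B^{-1}.
\]
The polynomial $p-\Phi_p(1+X)$ is divisible by $\omega_0=X$ but not by $\omega_1$: at $X=\zeta-1$ with $\zeta$ a primitive $p$-th root of unity it equals $p\neq0$. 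Since $M_{T,\ast}\equiv M_{1,\ast}\pmod{\omega_1\calH}$, evaluating at such a point shows $M_{T,w}\neq BM_{T,v}B^{-1}$ as soon as $B_{12}\neq0$; for the supersingular elliptic curve of Appendix~\ref{appendix:kobandrob} with $w_1=v_1$, $w_2=v_1+v_2$, the discrepancy is $\tfrac{\Phi_p(1+X)-p}{p}\begin{pmatrix}1&1\\0&0\end{pmatrix}$. So the asserted identity holds only for block-diagonal $B$, i.e.\ when the new basis respects not just $\Fil^0\DD_K(T)$ but also the chosen complement, and no refinement of your telescoping can rescue the general case.
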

­\begin{proof}
Let $C_{\vp,v}$ and $C_{vp,w}$ be the matrices of $\vp$ with respect to the bases $v_1,\ldots, v_{rd}$ and $w_1,\ldots,w_{rd}$ respectively. We decompose the two matrices {
\[
C_{\vp,v}= C_v\left(\begin{array}{c|c}
I_{rd_0}&0\\ \hline
0&\frac{1}{p}I_{r(d-d_0)}
\end{array}
\right)\quad\text{and}\quad C_{\vp,w}=C_w\left(\begin{array}{c|c}
I_{rd_0}&0\\ \hline
0&\frac{1}{p}I_{r(d-d_0)}
\end{array}
\right).
\] 
Since the action of $\vp$ on $\DD_K(T)$ is semi-linear, \eqref{eq:relation} implies that $BC_{\vp,v}B^{-1}=C_{\vp,w}$. Therefore, we have
\[
B\left(\begin{array}{c|c}
I_{rd_0}&0\\ \hline
0&pI_{r(d-d_0)}
\end{array}
\right)C_v^{-1}B^{-1}=\left(\begin{array}{c|c}
I_{rd_0}&0\\ \hline
0&pI_{r(d-d_0)}
\end{array}
\right)C_w^{-1}.
\]
Let $n\ge1$, it is known that $\Phi_{p^n}(\zeta)=p$ for $\zeta\in\mu_{p^m}\setminus \mu_{p^n}$, where $m>n$. Hence, 
\[
B\left(\begin{array}{c|c}
I_{rd_0}&0\\ \hline
0&\Phi_{p^n}(1+X)I_{r(d-d_0)}
\end{array}
\right)C_v^{-1}B^{-1}=\left(\begin{array}{c|c}
I_{rd_0}&0\\ \hline
0&\Phi_{p^n}(1+X)I_{r(d-d_0)}
\end{array}
\right)C_w^{-1}
\]}
as the two sides agree on infinitely many values of $X$. The lemma now follows from the construction of the logarithmic matrices.
\end{proof}

\begin{corollary}\label{cor:changeColeman}
The Coleman maps for the two bases are related by
\[
B\cdot\col^K_{T,v}=\col^K_{T,w}.
\]
\end{corollary}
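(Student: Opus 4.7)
The plan is to combine the defining decomposition of Perrin-Riou's regulator in Corollary~\ref{cor:decompose} for both bases with the relation between the two logarithmic matrices established in Lemma~\ref{lem:changematrix}, and then invoke the uniqueness clause of Theorem~\ref{thm:decomposereg}.

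More precisely, for $z \in \HIw(K,T)$, Corollary~\ref{cor:decompose} applied to the basis $\{v_i\}$ gives
\[
\calL^K_T(z)=\begin{pmatrix} v_1 & \cdots & v_{rd}\end{pmatrix}\cdot M_{T,v}\cdot \col^K_{T,v}(z),
\]
and applied to the basis $\{w_i\}$ gives
\[
\calL^K_T(z)=\begin{pmatrix} w_1 & \cdots & w_{rd}\end{pmatrix}\cdot M_{T,w}\cdot \col^K_{T,w}(z).
\]
First, I would substitute the change-of-basis relation $\begin{pmatrix} v_1 & \cdots & v_{rd}\end{pmatrix}=\begin{pmatrix} w_1 & \cdots & w_{rd}\end{pmatrix}B$ into the first display, obtaining
\[
\calL^K_T(z)=\begin{pmatrix} w_1 & \cdots & w_{rd}\end{pmatrix}\cdot B\cdot M_{T,v}\cdot \col^K_{T,v}(z).
\]
Then I would apply Lemma~\ref{lem:changematrix}, which yields $B\cdot M_{T,v}=M_{T,w}\cdot B$, so that
\[
\calL^K_T(z)=\begin{pmatrix} w_1 & \cdots & w_{rd}\end{pmatrix}\cdot M_{T,w}\cdot B\cdot \col^K_{T,v}(z).
\]

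Comparing this with the second display above, we find
\[
M_{T,w}\cdot\bigl(B\cdot \col^K_{T,v}(z)-\col^K_{T,w}(z)\bigr)=0
\]
in $\calH^{\oplus rd}$. Since $\det M_{T,w}$ is a nonzero element of $\calH$ (by Proposition~\ref{prop:matrix}, it is a unit multiple of $(\log(1+X)/pX)^{r(d-d_0)}$), left multiplication by $M_{T,w}$ is injective on $\calH^{\oplus rd}$, and hence $B\cdot\col^K_{T,v}(z)=\col^K_{T,w}(z)$, as desired. Equivalently, one could simply invoke the uniqueness statement built into Theorem~\ref{thm:decomposereg}: the element $B\cdot \col^K_{T,v}(z)$ lies in $\Lambda^{\oplus rd}$ (since $B\in \GL_{rd}(\Zp)$) and satisfies the decomposition that uniquely characterises $\col^K_{T,w}(z)$, so the two must agree.

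No step in this argument is substantive enough to be a real obstacle; all the work has been done in Lemma~\ref{lem:changematrix} and Theorem~\ref{thm:decomposereg}. The only minor point to check is that $B\cdot\col^K_{T,v}(z)$ indeed has $\Lambda$-entries (immediate from $B\in \GL_{rd}(\Zp)\subset M_{rd}(\Lambda)$), which is what allows the uniqueness step to be applied directly rather than going through the injectivity of $M_{T,w}$.
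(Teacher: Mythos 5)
Your argument is correct and is essentially the paper's own proof: both equate the two decompositions of $\calL^K_T$ from Corollary~\ref{cor:decompose}, substitute the change-of-basis relation and Lemma~\ref{lem:changematrix}, and conclude from the linear independence of the $w_i$ together with $\det(M_{T,w})\neq 0$. The alternative appeal to the uniqueness clause of Theorem~\ref{thm:decomposereg} is a harmless repackaging of the same point.
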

\begin{proof}
We have by \ref{cor:decompose}
\[
\calL^K_T=\begin{pmatrix}
v_1&\cdots& v_{rd}
\end{pmatrix}\cdot M_{T,v}\cdot\col^K_{T,v}=\begin{pmatrix}
w_1&\cdots& w_{rd}
\end{pmatrix}\cdot M_{T,w}\cdot\col^K_{T,w}.
\]

On combining this with \eqref{eq:relation} and Lemma~\ref{lem:changematrix}, we have
\[
\begin{pmatrix}
w_1&\cdots& w_{rd}
\end{pmatrix} M_{T,w}B\cdot\col^K_{T,v}=\begin{pmatrix}
w_1&\cdots& w_{rd}
\end{pmatrix}\cdot M_{T,w}\cdot\col^K_{T,w}.
\]
Our result now follows from the linear independence of $w_1,\ldots,w_{rd}$ and the fact that $\det(M_{T,w})\ne0$.
\end{proof}

We now define Coleman maps for \textit{any} $\Zp$-basis $\{v_1,\ldots,v_{rd}\}$ of $\DD_K(T)$. Let $\{w_1,\ldots,w_{rd}\}$ be another basis that admits the construction of the logarithmic matrix $M_{T,w}$. Let $B\in \GL_{rd}(\Zp)$ be the change of basis matrix satisfying the same equation as \eqref{eq:relation}. We define
\begin{equation}
M_{T,v}:=B^{-1}M_{T,w}B\quad\text{and}\quad\col^K_{T,v}:=B^{-1}\cdot\col^K_{T,w}.\label{eq:newmatrix}
\end{equation}
Then Lemma \ref{lem:changematrix} and Corollary~\ref{cor:changeColeman} ensure that these objects are well-defined (i.e. independent of the choice of $w_1,\ldots,w_{rd}$. Furthermore, it is immediate from Corollary~\ref{cor:decompose} that we have the decomposition
\begin{equation}\label{eq:decomp0}
\calL^K_T(z)=\begin{pmatrix}
v_1&\cdots& v_{rd}
\end{pmatrix}\cdot M_{T,v}\cdot\col^K_{T,v}(z).
\end{equation}
Furthermore, if $\eta$ is a character modulo $p$, then Lemma~\ref{lem:evaluatematrix} implies that
\begin{equation}\label{eq:evaluatematrix}
\eta(M_{T,v})=C_{\vp,v},
\end{equation}
where $C_{\vp,v}$ is the matrix of $\vp$ with respect to $v_1,\ldots,v_{rd}$.

\subsection{Images of the Coleman maps}
\label{subsec:imagesofcol}

In this section, we will describe the images of the Coleman maps $\col^K_T$ (for a fixed basis of $\DD_K(T)$) at each isotypic component.

\subsubsection{Determinants of $\Lambda$-modules}

We first recall the definition of the determinant of a $\Zp[[X]]$-module as given in \cite[\S3.1.5]{perrinriou94}. If $M$ is a finitely generated projective $\Zp[[X]]$-module, $\det(M)$ is  the maximal exterior power of $M$. More generally, if $M$ is a finitely generated $\Zp[[X]]$-module that is not necessarily projective, let 
\[
0\rightarrow M_r\rightarrow \cdots \rightarrow M_1\rightarrow M_0\rightarrow M\rightarrow 0
\]
be a projective resolution, then $\det(M)$ is defined to be $\bigotimes_{i=0}^r\det(M_i)^{(-1)^i}$. This definition is independent of the choice of the projective resolution.

If $0\rightarrow M_1\rightarrow M_2\rightarrow M_3\rightarrow 0$ is a short exact sequence of $\Lambda$-modules, then
\[
\det(M_2)=\det(M_1)\otimes\det(M_2).
\]

For example, if $M=\Zp[[X]]/f\Zp[[X]]$ where $f\in\Zp[[X]]$, then by considering the exact sequence
\[
0\rightarrow f\Zp[[X]]\rightarrow \Zp[[X]]\rightarrow \Zp[[X]]\rightarrow 0,
\]
we see that $\det(M)=f^{-1}\Zp[[X]]$. More generally, if $M$ is a torsion $\Zp[[X]]$-module, we see that
\[
\Char_{\Zp[[X]]}M=\det(M)^{-1}.
\]

Let $M=(f_1,\ldots,f_r)$ be a $\Zp[[X]]$-submodule of $\Zp[[X]]^{\oplus r}$ such that $\Zp[[X]]^{\oplus r}/M$ is $\Zp[[X]]$-torsion. Write $f_i=(f_{i,j})_{j=1,\ldots,r}$ where $f_{i,j}\in\Zp[[X]]$, then $\det(M)$ is the $\Zp[[X]]$-module generated by the determinant of the $r\times r$ matrix whose entries are given by $f_{i,j}$.

More generally, if $M$ is a finitely generated $\Lambda$-module, we define $\det_\Lambda(M)$ to be
\[
\sum_{\eta}e_\eta\cdot \det(M^\eta)
\]
where the sum runs over all characters of $\Delta$. 

\subsubsection{Description of the images}

Let $\eta$ be a character modulo $p$. We shall describe the $\eta$-isotypic component of the image of the Coleman map $\col^K_T$.

\begin{lemma}\label{lem:evaluateColeman}
Let $z\in\HIw(K,T)$, then
\[
\eta\left(\col^K_{T,i}(z)\right)=
\begin{cases}\left[\exp^*_0(z),(1-\vp)^{-1}(p\vp-1)v_i'\right]
&\text{if $\eta$ is trivial,}\\
\frac{p}{\tau(\eta^{-1})}\left[\sum_{\sigma\in G_1}\theta^{-1}(\sigma)\exp^*_1(z^\sigma),v_i'\right]&\text{otherwise.}
\end{cases}
\]
for $i=1,\ldots,rd$.
\end{lemma}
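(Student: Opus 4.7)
The proof plan is to unravel the decomposition in Corollary~\ref{cor:decompose} at the level of the $\eta$-isotypic component, use Lemma~\ref{lem:evaluatematrix} to replace $\eta(M_T)$ with the matrix $C_\vp$, and then substitute in Lemma~\ref{lem:regulatorformula}. More precisely, from the identity
\[
\calL^K_{T,i}(z)=\sum_{j=1}^{rd}(M_T)_{ij}\cdot\col^K_{T,j}(z)
\]
that one reads off from Corollary~\ref{cor:decompose}, applying the character $\eta$ and invoking Lemma~\ref{lem:evaluatematrix} yields the matrix identity
\[
\bigl(\eta(\calL^K_{T,i}(z))\bigr)_i=C_\vp\cdot\bigl(\eta(\col^K_{T,i}(z))\bigr)_i,
\]
so the task becomes computing $C_\vp^{-1}\cdot\bigl(\eta(\calL^K_{T,i}(z))\bigr)_i$ in each case and interpreting the outcome as a pairing against $v_i'$.

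For a non-trivial character $\eta$ (of conductor $p$), Lemma~\ref{lem:regulatorformula} gives
\[
\eta(\calL^K_T(z))=\tfrac{p}{\tau(\eta^{-1})}\sum_{i}\Bigl[\sum_{\sigma\in G_1}\eta^{-1}(\sigma)\exp^*_1(z^\sigma),\,v_i'\Bigr]\vp(v_i).
\]
Expanding $\vp(v_i)$ in the basis $\{v_j\}$ produces precisely the matrix $C_\vp$ acting on the column vector with entries $\frac{p}{\tau(\eta^{-1})}[\sum_{\sigma}\eta^{-1}(\sigma)\exp^*_1(z^\sigma),v_i']$, so multiplying by $C_\vp^{-1}$ immediately yields the advertised formula for this case; this is essentially a bookkeeping step.

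The trivial-character case is the one that requires a small additional adjoint manipulation (and is the main obstacle in terms of getting the signs right). Here Lemma~\ref{lem:regulatorformula} gives
\[
\eta(\calL^K_T(z))=\sum_i[\exp^*_0(z),v_i']\,(1-\vp)(1-p^{-1}\vp^{-1})^{-1}(v_i),
\]
so the matrix acting on the column vector $\bigl([\exp^*_0(z),v_i']\bigr)_i$ is the matrix $A$ of $(1-\vp)(1-p^{-1}\vp^{-1})^{-1}$ in the basis $\{v_j\}$. Consequently,
\[
\eta(\col^K_{T,i}(z))=\sum_j(C_\vp^{-1}A)_{ij}\cdot[\exp^*_0(z),v_j']=\Bigl[\exp^*_0(z),\,N(v_i')\Bigr],
\]
where $N$ is the endomorphism of $\DD_K(T^*(1))$ whose matrix in the dual basis is $(C_\vp^{-1}A)^T$, equivalently $N$ is the adjoint of $C_\vp^{-1}A$ under $[\sim,\sim]$.

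Finally, exactly as in the proof of Lemma~\ref{lem:regulatorformula}, the adjoints of $\vp^{-1}$ and of $(1-p^{-1}\vp^{-1})(1-\vp)^{-1}$ with respect to $[\sim,\sim]$ are $p\vp$ and $(1-\vp)(1-p^{-1}\vp^{-1})^{-1}$ respectively; taking adjoints of the composite $C_\vp^{-1}\cdot(1-\vp)(1-p^{-1}\vp^{-1})^{-1}=\vp^{-1}(1-\vp)(1-p^{-1}\vp^{-1})^{-1}$ (which is $-(1-\vp^{-1})(1-p^{-1}\vp^{-1})^{-1}$) gives, after a direct simplification, the operator $(1-\vp)^{-1}(p\vp-1)$ on $\DD_K(T^*(1))$. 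This yields the stated identity. No limit arguments or deeper inputs are needed: the proof is an application of Corollary~\ref{cor:decompose}, Lemma~\ref{lem:evaluatematrix}, Lemma~\ref{lem:regulatorformula}, and the adjoint computations already recorded in the paper.
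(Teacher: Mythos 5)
Your proposal is correct and follows essentially the same route as the paper's proof: both apply $\eta$ to the decomposition of Corollary~\ref{cor:decompose}, use Lemma~\ref{lem:evaluatematrix} to replace $\eta(M_T)$ by $C_\vp$, substitute the interpolation formulae of Lemma~\ref{lem:regulatorformula}, and in the trivial-character case pass the extra $\vp^{-1}$ across the pairing $[\sim,\sim]$ via the adjoint identities to land on $(1-\vp)^{-1}(p\vp-1)$. The only difference is organizational (you invert $C_\vp$ on coefficient vectors rather than absorbing $\vp$ into the row vector of basis elements), and your adjoint computation checks out.
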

\begin{proof}
By Lemma \ref{lem:evaluatematrix}, $\eta(M_T)=C_\vp$. So, Corollary~\ref{cor:decompose} implies that
\[
\eta(\calL_T^K(z))=\begin{pmatrix}
\vp(v_1)&\cdots&\vp(v_{rd})
\end{pmatrix}\cdot\col^K_T(z).
\]
When $\eta$ is trivial, Lemma~\ref{lem:regulatorformula} implies that
\[
\begin{pmatrix}
v_1&\cdots&v_{rd}
\end{pmatrix}\cdot\eta\left(\col^K_T(z)\right)=
\sum_{i=1}^{rd}\left[\exp^*_0(z),v_i'\right](1-\vp)(1-p^{-1}\vp^{-1})^{-1}\vp^{-1}(v_i).
\]
Since $\vp$ and $p^{-1}\vp^{-1}$ are the adjoints of each other under $[\sim,\sim]$, the right-hand side can be rewritten as
\[
\sum_{i=1}^{rd}\left[\exp^*_0(z),(1-\vp)^{-1}(p\vp-1)v_i'\right]v_i.
\]

When $\eta$ is non-trivial, Lemma~\ref{lem:regulatorformula} implies that
\[
\begin{pmatrix}
v_1&\cdots&v_{rd}
\end{pmatrix}\cdot\eta\left(\col^K_T(z)\right)=
\sum_{i=1}^{rd}\frac{p}{\tau(\eta^{-1})}\left[\sum_{\sigma\in G_1}\theta^{-1}(\sigma)\exp^*_1(z^\sigma),v_i'\right]v_i.
\]
Hence the result.
\end{proof}

\begin{lemma}\label{lem:evaluate}
Let $a_1,\ldots, a_{rd}\in\Zp$. We have $\sum_{i=1}^{rd}a_ie_\eta\col_{T,i}^K(z)$ equal to $0$ when evaluated at $X=0$ if either
$\eta$ is the trivial character and 
\[
\sum_{i=1}^{rd}a_i(1-\vp)^{-1}(1-p\vp)v_i'\in\Fil^0\DD_K(T^*(1)),
\]
or $\eta$ is non-trivial and 
\[
\sum_{i=1}^{rd}a_iv_i'\in\Fil^0\DD_K(T^*(1)),
\]
\end{lemma}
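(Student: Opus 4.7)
The plan is to translate the claim about evaluation at $X=0$ into a pairing statement inside the Dieudonn\'e module, and then close it out with the standard orthogonality between $\Fil^0\DD_K(T)$ and $\Fil^0\DD_K(T^*(1))$.

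First, I observe that the identification $e_\eta\cdot F \leftrightarrow \sum_{n\ge 0}\bigl(\sum_\sigma a_{n,\sigma}\eta(\sigma)\bigr)X^n$ fixed in \S~Isotypic components and characteristic ideals has the property that evaluating the resulting power series at $X=0$ returns precisely $\eta(F)$ (viewing $\eta$ as the character of $\Lambda$ trivial on $\gamma-1$ and agreeing with $\eta$ on $\Delta$). Hence the conclusion of the lemma reduces to the assertion that
\[
\sum_{i=1}^{rd} a_i\,\eta\bigl(\col^K_{T,i}(z)\bigr)=0.
\]

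Next, I will apply Lemma~\ref{lem:evaluateColeman} directly. In the case where $\eta$ is trivial, the sum becomes
\[
\sum_{i=1}^{rd}a_i\bigl[\exp^*_0(z),\,(1-\vp)^{-1}(p\vp-1)v_i'\bigr]
=-\bigl[\exp^*_0(z),\,\textstyle\sum_i a_i(1-\vp)^{-1}(1-p\vp)v_i'\bigr]
\]
by $\Zp$-bilinearity of $[\sim,\sim]$, and the hypothesis places the second argument inside $\Fil^0\DD_K(T^*(1))$. In the case where $\eta$ is non-trivial (so of conductor $p$), pulling $a_i$ inside the pairing yields
\[
\frac{p}{\tau(\eta^{-1})}\Bigl[\textstyle\sum_{\sigma\in G_1}\eta^{-1}(\sigma)\exp^*_1(z^\sigma),\,\sum_i a_i v_i'\Bigr],
\]
whose second argument again lies in $\Fil^0\DD_K(T^*(1))$ by hypothesis.

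The final input is the standard orthogonality $[\Fil^0\DD_K(T),\Fil^0\DD_K(T^*(1))]=0$. This holds because the natural pairing is induced from $\DD_K(T)\otimes\DD_K(T^*(1))\to\DD_K(\Zp(1))$, which respects filtrations, and $\Fil^0\DD_K(\Zp(1))=0$ (the generator $t^{-1}$ sits in $\Fil^{-1}\setminus\Fil^0$); the trivialization to $\Zp$ in the definition of $[\sim,\sim]$ does not affect the vanishing. Since the dual exponential $\exp^*_n$ takes values in $\Fil^0\DD_K(T)$ by construction, both factors in each of the two pairings computed above lie in $\Fil^0$, so the pairing vanishes. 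No step here is a genuine obstacle beyond this orthogonality, which is classical; the only care needed is the bookkeeping of the $(1-\vp)^{-1}(1-p\vp)$ twist in the trivial-character case, which was specifically tailored in the hypothesis to exactly match the expression produced by Lemma~\ref{lem:evaluateColeman}.
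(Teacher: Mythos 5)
Your proof is correct and follows essentially the same route as the paper's: identify $e_\eta\cdot F|_{X=0}$ with $\eta(F)$, evaluate via Lemma~\ref{lem:evaluateColeman}, and conclude from the vanishing of $\left[\exp^*(z),w\right]$ for $w\in\Fil^0\DD_K(T^*(1))$. You merely spell out the bilinearity bookkeeping (including the sign from $(p\vp-1)=-(1-p\vp)$) and the filtration argument behind the orthogonality, which the paper leaves implicit.
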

\begin{proof}
We remark that $\eta(F)=e_\eta\cdot F|_{X=0}$ for any element $F\in\calH$ and
\[
\left[\exp^*(z),w\right]=0
\]
for all $w\in\Fil^0\DD_K(T^*(1))$ and $z\in H^1(F_n,T)$ where $n\ge0$. Therefore, our result follows from Lemma~\ref{lem:evaluateColeman}.
\end{proof}

{
We define two $\Qp$-linear maps 
$\cA,\cB:\Qp^{\oplus rd}\rightarrow \DD_K(T^*(1))/\Fil^0\DD_K(T^*(1))\otimes\Qp$ by setting
\begin{align*}
(a_1,\ldots,a_{rd})\mapsto &\sum_{i=1}^{rd}a_i(1-\vp)^{-1}(1-p\vp)v_i'\mod\Fil^0\DD_K(T^*(1)),\\
(a_1,\ldots,a_{rd})\mapsto&\sum_{i=1}^{rd}a_iv_i'\mod\Fil^0\DD_K(T^*(1)).
\end{align*}
We have the dual maps $\cA^*,\cB^*:\Fil^0\DD_K(T)\otimes\Qp\rightarrow \Qp^{\oplus rd}$ given by
\begin{align*}
v\mapsto &(1-\vp)\left(1-\frac{\vp}{p}\right)^{-1}v\\
v\mapsto &v
\end{align*}
on identifying $\Qp^{\oplus rd}$ with $\DD_K(T)\otimes\Qp$ via the basis $v_1,\ldots,v_{rd}$.
\begin{corollary}\label{cor:image}
If $\eta$ is trivial, then $\image\left(\col^K_{T}\right)^\eta$ is contained in 
\[
\left\{F\in\Zp[[X]]^{\oplus r d}: F(0)\in\image(\mathcal{A^*}) \right\}.
\]
If $\eta$ is non-trivial, then $\image\left(\col^K_T\right)^\eta$ is contained in
\[
\left\{F\in\Zp[[X]]^{\oplus r d}: F(0)\in\image(\mathcal{B^*}) \right\}.
\]
\end{corollary}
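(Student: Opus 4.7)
My plan is to deduce Corollary~\ref{cor:image} formally from Lemma~\ref{lem:evaluate} by interpreting the vanishing there as an orthogonality statement and then invoking the standard duality $\ker(\cA)^{\perp}=\image(\cA^*)$. First I would record the integrality: since $\col^K_T(z)\in\Lambda^{\oplus rd}$ and characters of $\Delta$ take values in $\Zp^\times$, the projection $e_\eta\col^K_T(z)$ lies in $\Zp[[X]]^{\oplus rd}$ under the identification fixed in the introduction, so $F(0):=e_\eta\col^K_T(z)|_{X=0}$ is a well-defined vector in $\Zp^{\oplus rd}\subset\Qp^{\oplus rd}$.

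Next, I would reformulate Lemma~\ref{lem:evaluate} as the assertion that the standard pairing $\sum_{i=1}^{rd}a_i\cdot F(0)_i$ vanishes whenever $(a_1,\ldots,a_{rd})$ lies in $\ker(\cA)$ (when $\eta$ is trivial) or in $\ker(\cB)$ (when $\eta$ is non-trivial): indeed, the hypothesis of Lemma~\ref{lem:evaluate} is literally the kernel condition for the corresponding map. Hence $F(0)\in\ker(\cA)^{\perp}$ (respectively $F(0)\in\ker(\cB)^{\perp}$), where the orthogonal complement is taken with respect to the standard pairing on $\Qp^{\oplus rd}$.

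Finally, I would invoke the linear-algebraic identity $\ker(\cA)^{\perp}=\image(\cA^*)$, valid for any $\Qp$-linear map between finite-dimensional $\Qp$-vector spaces once the transpose has been correctly identified. The text's definitions of $\cA^*,\cB^*$ are arranged so that, via the perfect pairing $[\sim,\sim]$ (which under (H.F.-L.) descends to a perfect pairing between $\Fil^0\DD_K(T)\otimes\Qp$ and $\bigl(\DD_K(T^*(1))/\Fil^0\DD_K(T^*(1))\bigr)\otimes\Qp$) together with the identifications $\Qp^{\oplus rd}\cong\DD_K(T)\otimes\Qp$ via $v_1,\ldots,v_{rd}$ and $\Qp^{\oplus rd}\cong\DD_K(T^*(1))\otimes\Qp$ via the dual basis $v_1',\ldots,v_{rd}'$, they coincide with the genuine transposes of $\cA$ and $\cB$. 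Combining these pieces yields $F(0)\in\image(\cA^*)$ (resp. $\image(\cB^*)$), which is exactly the content of the corollary. The argument is essentially formal; the only delicate point is the bookkeeping of which pairing is used in each instance so that the text's $\cA^*,\cB^*$ really are the categorical transposes, which is a routine consequence of the adjunction relation between $\vp^{-1}$ on $\DD_K(T^*(1))$ and $p\vp$ on $\DD_K(T)$ recorded above Lemma~\ref{lem:regulatorformula}.
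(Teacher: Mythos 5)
Your argument is exactly the paper's: the authors likewise read Lemma~\ref{lem:evaluate} as saying that $F(0)$ is orthogonal to $\ker(\cA)$ (resp.\ $\ker(\cB)$) under the standard pairing on $\Zp^{\oplus rd}$ and then conclude ``by duality'' that $\ker(\cA)^{\perp}=\image(\cA^*)$; you have merely made the duality step explicit, including the verification (via the adjunction between $\vp^{-1}$ and $p\vp$ and the fact that $\Fil^0\DD_K(T)$ and $\Fil^0\DD_K(T^*(1))$ annihilate each other) that $\cA^*,\cB^*$ are the transposes of $\cA,\cB$. No change of method and no gap relative to the paper's own proof.
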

\begin{proof}
Lemma~\ref{lem:evaluate} tells us that if $F\in\image\left(\col^K_{T}\right)^\eta$, then $F(0)\in\ker(\cA)^\perp$ (respectively $F(0)\in\ker(\cB)^\perp$), where $\perp$ denotes the orthogonal complement under the pairing
\begin{align*}
\Zp^{\oplus rd}\times\Zp^{\oplus rd}&\rightarrow \Zp\\
((a_1,\ldots,a_{rd}),(b_1,\ldots,b_{rd}))&\mapsto\sum_{i=1}^{rd} a_ib_i.
\end{align*}
Hence the result by duality.
\end{proof}
\begin{proposition}\label{prop:image}
The containments in Corollary~\ref{cor:image} have finite index.
\end{proposition}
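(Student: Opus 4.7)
The plan is to exploit that $\Zp[[X]]$ is a two-dimensional regular local ring, so a finitely generated torsion module over it is finite exactly when it is pseudo-null, i.e.\ has no associated prime of height one. Thus, writing $N_\eta$ for the target submodule on the right-hand side of Corollary~\ref{cor:image} and $I_\eta:=\image(\col^K_T)^\eta\subset N_\eta$, it suffices to show $N_\eta/I_\eta$ is finitely generated, $\Zp[[X]]$-torsion, and pseudo-null.

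For the rank equality $\rk_{\Zp[[X]]}I_\eta=\rk_{\Zp[[X]]}N_\eta=rd$: the right-hand side contains $X\cdot\Zp[[X]]^{\oplus rd}$, giving rank $rd$ immediately. For the left-hand side, recall that via Berger's identification of $\HIw(K,T)$ with $\NN(T)^{\psi=1}$ combined with Colmez--Perrin-Riou reciprocity, the map $\calL^K_T$ becomes an isomorphism after base change to $\calH$. Since $\det(M_T)\neq 0$ in $\calH$ (Proposition~\ref{prop:matrix}), the decomposition of Corollary~\ref{cor:decompose} shows $\calH\otimes_\Lambda\col^K_T$ is also an isomorphism onto $\calH^{\oplus rd}$, whence $I_\eta$ has full rank $rd$ over $\Zp[[X]]$.

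For pseudo-nullity, I would check vanishing of $(N_\eta/I_\eta)_\fq$ at each height-one prime $\fq$ of $\Zp[[X]]$. At any prime $\fq$ containing a cyclotomic factor $\Phi_{p^n}(1+X)$ with $n\geq 1$, the interpolation property of Lemma~\ref{lem:evaluateColeman} (or rather, its natural analogue at level $n$, via \eqref{PRinterpolation} combined with the factorization through $M_T$) expresses the specialization of $\col^K_T$ at $\fq$ in terms of the dual Bloch--Kato exponential $\exp^*_n$. The hypotheses (H.S.) and (H.nA)---which in particular imply $H^0(K,T^*(1))=0$ and that $(1-\vp)$, $(1-p^{-1}\vp^{-1})$ are invertible on $\DD_K(T)\otimes\Qp$---combined with local duality and the Bloch--Kato sequence, yield that $\exp^*_n$ is surjective up to finite index onto $\Fil^0\DD_K(T^*(1))^\ast$. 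Since $N_\eta$ imposes no condition at such $\fq$, the localization $(N_\eta/I_\eta)_\fq=0$ follows. At the prime $\fq=(X)$, the condition defining $N_\eta$ is precisely $F(0)\in\image(\cA^*)$ (or $\image(\cB^*)$), and Lemma~\ref{lem:evaluateColeman} together with the invertibility of $(1-\vp)^{-1}(p\vp-1)$ (resp.\ the identity) identifies the $X=0$-specialization of $\eta\circ\col^K_T$ with $\cA^*\circ\exp^*_0$ (resp.\ $\cB^*\circ\exp^*_0$), again surjective up to finite index. At the remaining height-one prime $\fq=(p)$, one uses the non-vanishing modulo $p$ of the image at some generic cyclotomic character (ensured by the rank-$rd$ statement already established) combined with Nakayama to conclude surjectivity of $I_\eta/\fq I_\eta\to N_\eta/\fq N_\eta$ on a dense locus.

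The main obstacle is the integral bookkeeping at the prime $(X)$: proving that the $\Zp$-cokernel of $\exp^*_0$ landing in $\Fil^0\DD_K(T^*(1))^\ast$ is finite (not merely that the $\Qp$-span is full) requires tracking the denominators in the Bloch--Kato comparison isomorphism and the pairing $[\,\cdot\,,\,\cdot\,]$ under the fixed basis $\frak{B}$. However, since the statement only requires finite index (pseudo-nullity) rather than outright surjectivity, these denominators are absorbed without obstruction, and the three localization statements combine to produce the desired conclusion.
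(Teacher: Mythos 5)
Your overall strategy is different from the paper's, and it has a genuine gap at the prime $(p)$. The paper's proof is a one-line determinant comparison: the integral form of the Colmez--Perrin-Riou reciprocity law says that, with respect to a $\Lambda$-basis of $\HIw(K,T)$ and the fixed $\Zp$-basis of $\DD_K(T)$, $\det(\calL^K_T)$ equals $(\log(1+X)/p)^{r(d-d_0)}$ \emph{up to a unit in $\Lambda$}; combined with $\det(M_T)=(\log(1+X)/pX)^{r(d-d_0)}$ up to a $\Zp^\times$-constant (Proposition~\ref{prop:matrix}) and Corollary~\ref{cor:decompose}, this gives $\det_\Lambda(\image(\col^K_T))=X^{r(d-d_0)}\Lambda$, which is exactly the determinant of the containing module of Corollary~\ref{cor:image}; hence the quotient is pseudo-null, i.e.\ finite. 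Your plan instead is to verify pseudo-nullity prime by prime, and this is where it breaks down.

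The fatal step is your treatment of $\fq=(p)$. Knowing that $I_\eta$ has full rank $rd$ over $\Zp[[X]]$ says nothing about the localization of $N_\eta/I_\eta$ at $(p)$: the inclusion $p\Zp[[X]]\subset\Zp[[X]]$ already has full rank but infinite index, so ``non-vanishing modulo $p$ at a generic character ensured by the rank statement'' is not a valid inference, and ``surjectivity on a dense locus'' does not give $(N_\eta/I_\eta)_{(p)}=0$. Ruling out a positive $\mu$-invariant here is precisely the integral content of the reciprocity law (the unit-in-$\Lambda$ normalization of $\det\calL^K_T$, not merely non-vanishing of $\det M_T$ in $\calH$), which your argument never invokes. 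Secondarily, your localizations at the primes dividing $\Phi_{p^n}(1+X)$ are only sketched: the asserted finite-index surjectivity of $\exp^*_n$ in the semi-local setting over $K(\mu_{p^n})$ is a nontrivial Bloch--Kato/duality computation that would need to be carried out, whereas the determinant identity subsumes all of these local verifications at once. If you want to salvage your route, you must import the integral statement $\det(\calL^K_T)\doteq(\log(1+X)/p)^{r(d-d_0)}$ (equality up to $\Lambda^\times$); at that point you may as well run the paper's argument directly.
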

\begin{proof} By the Colmez-Perrin-Riou reciprocity law, with respect to a $\Lambda$-basis of $\HIw(\Qp,T_p(A))$ and a $\Zp$-basis of $\DD_K(T)$, the determinant of $\calL_T$ is, up to a unit in $\Lambda$, $(\log(1+X)/p)^{r(d-d_0)}$. By Proposition~\ref{prop:matrix}, the determinant of $M_T$ is, up to a constant in $\Zp^\times$,  $(\log(1+X)/pX)^{r(d-d_0)}$. Therefore, 
\[
\det{}_\Lambda\left(\image\left(\col^K_T\right)\right)=X^{r(d-d_0)}\Lambda.
\]
by Corollary~\ref{cor:decompose}.
Note that $\cA$ and $\cB$ are surjective and that $\Fil^0\DD(T^*(1))$ has rank $r(d-d_0)$ over $\Zp$. Thus $\image(\cA^*)$ and $\image(\cB^*)$ have rank $rd_0$ and the modules described in Corollary~\ref{cor:image} have determinant $X^{r(d-d_0)}$, the quotients of the containments have trivial determinant.\end{proof}
\begin{proposition}\label{prop:iageofcolemanis}
Let $I\subset\{1,\ldots,rd\}$ be a subset of cardinality $k$. Let $\eta$ be a Dirichlet character modulo $p$. Define $\pr_I$ be the projection $\sum_{i=1}^{rd} a_iv_i\mapsto\sum_{i\in I}a_iv_i$  and define 
 $$U_I^\eta:=\begin{cases}\pr_I\left((1-\vp)\left(1-\frac{\vp}{p}\right)^{-1}\Fil^0\DD_K(T)\right)\,,&\text{if $\eta$ is trival,}
 \\ \pr_I\left(\Fil^0\DD_K(T)\right)\,, &\text{otherwise.}
 \end{cases}$$
  Then, $\image\left(\oplus_{i\in I}\col^K_{T,i}\right)^\eta$ is contained inside $$\left\{F\in\oplus_{i\in I}\Zp[[X]]:F(0)\in U_I^\eta\right\},$$ 
  if we identify $\DD_K(T)$ with $\Zp^{\oplus rd}$ via our choice of basis. Furthermore, the containment is of finite index.
\end{proposition}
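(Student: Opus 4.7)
The plan is to deduce the proposition directly from Corollary~\ref{cor:image} and Proposition~\ref{prop:image} by applying the coordinate projection $\pr_I$ to both sides of the containment given there, and then verifying that this projection preserves the finite-index property.

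First I would establish the containment. Every element of $\image\left(\oplus_{i\in I}\col^K_{T,i}\right)^\eta$ has the form $\pr_I\left(e_\eta \cdot\col^K_T(z)\right)$ for some $z \in \HIw(K,T)$. By Corollary~\ref{cor:image}, $e_\eta\cdot\col^K_T(z) \in \Zp[[X]]^{\oplus rd}$ takes the value at $X=0$ inside $\image(\cA^*)$ if $\eta$ is trivial, and inside $\image(\cB^*)$ otherwise. Since $\pr_I$ on $\Zp[[X]]^{\oplus rd}$ commutes with evaluation at $X=0$, it follows that $\pr_I(e_\eta\cdot\col^K_T(z))(0)$ lies in $\pr_I(\image(\cA^*))$ or $\pr_I(\image(\cB^*))$ accordingly. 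Unwinding the definitions of $\cA^*$ and $\cB^*$ from Corollary~\ref{cor:image}, these projected images coincide (after identifying $\DD_K(T)$ with $\Zp^{\oplus rd}$ via the fixed basis) with $U_I^\eta$ as defined in the statement, which yields the containment.

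For the finite-index assertion I would exploit the surjective $\Lambda$-module map $\pr_I\colon\Zp[[X]]^{\oplus rd} \to \oplus_{i \in I}\Zp[[X]]$, which restricts to a surjection from the ambient module $N^\eta := \{F \in \Zp[[X]]^{\oplus rd}: F(0) \in \image(\cA^*)\}^\eta$ (or the analogue with $\cB^*$) onto $N_I^\eta := \{F \in \oplus_{i\in I}\Zp[[X]]: F(0) \in U_I^\eta\}^\eta$. Surjectivity at the level of the ambient modules is the observation that any $G \in \oplus_{i \in I}\Zp[[X]]$ with $G(0) \in U_I^\eta$ admits an integral lift, obtained by choosing any preimage $w \in (1-\vp)(1-\vp/p)^{-1}\Fil^0\DD_K(T)$ of $G(0)$ and setting the coordinates outside $I$ to the constant power series $\pr_{I^c}(w)$. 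By Proposition~\ref{prop:image}, the cokernel $N^\eta / \image(\col^K_T)^\eta$ has trivial determinant. Since $\pr_I$ is $\Lambda$-linear and surjective, it induces a surjection $N^\eta / \image(\col^K_T)^\eta \twoheadrightarrow N_I^\eta / \image\left(\oplus_{i\in I}\col^K_{T,i}\right)^\eta$, and the target therefore also has trivial determinant, proving the finite-index claim.

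The main obstacle I anticipate is the integrality bookkeeping in the second step: one must be careful that the $\Zp$-lattice $U_I^\eta$ is genuinely the image under $\pr_I$ of the \emph{integral} submodule $(1-\vp)(1-\vp/p)^{-1}\Fil^0\DD_K(T)$ (respectively $\Fil^0\DD_K(T)$), rather than just the $\Qp$-linear projection of the ambient $\Qp$-vector space from Corollary~\ref{cor:image}, so that the quotient $N_I^\eta/\image(\oplus_{i\in I}\col^K_{T,i})^\eta$ on which we run the determinant argument is the correct one. Once this is pinned down, the determinant inequality from Proposition~\ref{prop:image} transports cleanly through $\pr_I$ and the proof concludes.
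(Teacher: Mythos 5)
Your proof is correct and follows essentially the same route as the paper's: the paper likewise deduces the containment by applying $\tilde{\pr}_I$ to Corollary~\ref{cor:image}, identifies $\tilde{\pr}_I\left(\image(\cA^*)\right)$ with $U_I^\eta$ via the explicit description of $\cA^*$, and gets the finite index from Proposition~\ref{prop:image}. Your explicit lifting argument showing that $\pr_I$ surjects on the ambient modules (so that finiteness of the quotient transports through the projection) is precisely the step the paper leaves implicit.
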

\begin{proof}
We assume that $\eta$ is the trivial character in this proof. The other case can be proved similarly. Let  $\tilde{\pr}_I:\Zp^{\oplus rd}\rightarrow\oplus_{i\in I}\Zp$ be the natural projection. Then by Corollary~\ref{cor:image}, $\image\left(\oplus_{i\in I}\col^K_{T,i}\right)^\eta$ is contained in 
\[
\left\{F\in\oplus_{i\in I}\Zp[[X]]: F(0)\in\tilde{\pr}_I\left(\image(\cA^*)\right)\right\}.
\]
with finite index. Hence the result by the description of $\cA^*$.
\end{proof}
\begin{corollary}\label{cor:pseudosurj2}
If $I$ and $\eta$ as above, then  $\image\left(\oplus_{i\in I}\col^K_{T,i}\right)^\eta$ is contained in a free $\Zp[[X]]$-module, with finite index.
\end{corollary}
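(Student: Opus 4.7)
The plan is to deduce the result directly from Proposition~\ref{prop:iageofcolemanis}, which furnishes a finite-index containment $\image(\oplus_{i\in I}\col^K_{T,i})^\eta \subset M$, where
\[
M := \left\{F \in \bigoplus_{i\in I}\Zp[[X]] : F(0) \in U_I^\eta\right\}\,.
\]
It therefore suffices to exhibit a free $\Zp[[X]]$-module $\mathcal{F}$ containing $M$ with finite index, and then to compose the two finite-index containments.

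First, replace the $\Zp$-submodule $U := U_I^\eta \cap \bigoplus_{i\in I}\Zp$ by its $\Zp$-saturation $V$ inside $\bigoplus_{i\in I}\Zp$. Then $V/U$ is finite, and because $\left(\bigoplus_{i\in I}\Zp\right)/V$ is torsion-free (hence free as a $\Zp$-module), the submodule $V$ is a direct $\Zp$-summand. Pick a $\Zp$-basis $e_1,\ldots,e_s$ of $V$ and extend it via a chosen complement to a basis $e_1,\ldots,e_s,f_1,\ldots,f_t$ of $\bigoplus_{i\in I}\Zp$, with $s+t=|I|$.

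Next, define
\[
\mathcal{F} := \left\{F \in \bigoplus_{i\in I}\Zp[[X]] : F(0) \in V\right\}\,.
\]
Writing any $F \in \bigoplus_{i\in I}\Zp[[X]]$ as $\sum_{k=1}^s a_k(X)\,e_k + \sum_{\ell=1}^t b_\ell(X)\,f_\ell$, the condition $F(0)\in V$ translates into $b_\ell(0)=0$ for every $\ell$, i.e., $b_\ell \in X\cdot\Zp[[X]]$. Hence
\[
\mathcal{F} = \bigoplus_{k=1}^{s}\Zp[[X]]\cdot e_k \;\oplus\; \bigoplus_{\ell=1}^{t}X\cdot\Zp[[X]]\cdot f_\ell\,,
\]
which is free of rank $|I|$ over $\Zp[[X]]$ on the basis $\{e_1,\ldots,e_s,Xf_1,\ldots,Xf_t\}$. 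The evaluation map $\mathcal{F} \to V/U$ sending $F \mapsto F(0) \bmod U$ is surjective with kernel $M$, so that $\mathcal{F}/M \cong V/U$ is finite.

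There is no substantial obstacle: all the heavy lifting happens in Proposition~\ref{prop:iageofcolemanis}, and the key idea here is simply to pass to the $\Zp$-saturation $V$ of $U$, which guarantees both that $V$ splits off $\bigoplus_{i\in I}\Zp$ (and thus that $\mathcal{F}$ is $\Zp[[X]]$-free) and that the quotient $\mathcal{F}/M$ remains finite.
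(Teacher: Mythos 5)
Your proof is correct and follows essentially the same route as the paper: choose a $\Zp$-basis of $\bigoplus_{i\in I}\Zp$ adapted to the relevant $\Zp$-submodule and observe that $\left\{F : F(0)\in V\right\}$ is free on $e_1,\ldots,e_s,Xf_1,\ldots,Xf_t$. The one genuine difference is that the paper simply asserts that $U_I^\eta$ is saturated in $\bigoplus_{i\in I}\Zp$ and applies the basis argument to it directly, whereas you first pass to the saturation $V$ and then check that $\mathcal{F}/M\cong V/U$ is finite; since $U_I^\eta$ is a projection $\pr_I$ of a (saturated) filtration step and projections of saturated submodules need not remain saturated, your extra step is the more careful version of the argument and costs nothing.
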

\begin{proof}
Note that $U_I^\eta$ is a saturated $\Zp$-module inside $\oplus_{i\in I}\Zp$, so there exists a $\Zp$-basis $u_1,\ldots,u_{k}$ of $\oplus_{i\in I}\Zp$ such that $u_1,\ldots,u_m$ generates $U_I^\eta$ for some integer $m$. Consider  $u_1X,\ldots, u_mX, u_{m+1},\ldots, u_{k}$ as elements of $\oplus_{i\in I}\Zp[[X]]$. 
By Nakayama's lemma, these elements form a $\Zp[[X]]$-basis of $\left\{F\in\oplus_{i\in I}\Zp[[X]]:F(0)\in U_I^\eta\right\}$.
\end{proof}
\begin{corollary}\label{cor:pseudosurj}
Let $I\subset\{1,\ldots,rd\}$ be a subset of cardinality $k$. 
\begin{itemize}
\item[(a)] Let $\eta$ be the trivial character. The index of $\image\left(\oplus_{i\in I}\col^K_{T,i}\right)^\eta$ inside $\Zp[[X]]^{\oplus k}$ is finite if and only if $${\rm span}((1-\vp)^{-1}(p\vp-1)v_i':i\in I)\cap\Fil^0\DD_K(T^*(1))=0;$$
\item[(b)] Let $\eta$ be a Dirichlet character of conductor $p$. The index of $\image\left(\oplus_{i\in I}\col^K_{T,i}\right)^\eta$ inside $\Zp[[X]]^{\oplus k}$ is finite if and only if $${\rm span}(v_i':i\in I)\cap\Fil^0\DD_K(T^*(1))=0.$$
\end{itemize} 
\end{corollary}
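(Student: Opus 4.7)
The plan is to translate both parts of the statement into $\Qp$-linear surjectivity criteria via Proposition~\ref{prop:iageofcolemanis}, and then dualize using the pairing $[\cdot,\cdot]$ between $\DD_K(T)$ and $\DD_K(T^*(1))$. Indeed, Proposition~\ref{prop:iageofcolemanis} places $\image\left(\oplus_{i\in I}\col^K_{T,i}\right)^\eta$ inside $\left\{F\in\oplus_{i\in I}\Zp[[X]]:F(0)\in U_I^\eta\right\}$ with finite index, so the overall index in $\Zp[[X]]^{\oplus k}$ is finite if and only if $\Zp^{\oplus k}/U_I^\eta$ is finite; equivalently, $U_I^\eta\otimes\Qp=\Qp^{\oplus k}$. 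This reduces each part of the corollary to the surjectivity of a specific $\Qp$-linear map out of $\Fil^0\DD_K(T)\otimes\Qp$.

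For part (a), the map in question is
\[
\phi:\Fil^0\DD_K(T)\otimes\Qp\longrightarrow\Qp^{\oplus k},\qquad v\longmapsto \pr_I\bigl((1-\vp)(1-\vp/p)^{-1}v\bigr),
\]
expressed in the basis $\{v_i\}$. By $\Qp$-linear duality, $\phi$ is surjective if and only if its transpose $\phi^*$ is injective. The pairing $[\cdot,\cdot]$ identifies $\Qp^{\oplus k}$ with $\mathrm{span}_{\Qp}(v_i':i\in I)\subset\DD_K(T^*(1))\otimes\Qp$ and identifies $(\Fil^0\DD_K(T))^{*}\otimes\Qp$ with $\bigl(\DD_K(T^*(1))/\Fil^0\DD_K(T^*(1))\bigr)\otimes\Qp$ — the latter relying on the fact that $\Fil^0\DD_K(T)$ and $\Fil^0\DD_K(T^*(1))$ are mutual annihilators under $[\cdot,\cdot]$, a dimension count already implicit in the proof of Proposition~\ref{prop:image}. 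Recalling that the adjoint of $(1-\vp)(1-\vp/p)^{-1}$ is $(1-\vp)^{-1}(1-p\vp)$ (as used in the proof of Lemma~\ref{lem:evaluateColeman}), the map $\phi^*$ becomes
\[
v_i'\longmapsto (1-\vp)^{-1}(1-p\vp)v_i'\pmod{\Fil^0\DD_K(T^*(1))}.
\]
Injectivity of $\phi^*$ is then precisely the condition $\mathrm{span}_{\Qp}\bigl((1-\vp)^{-1}(1-p\vp)v_i':i\in I\bigr)\cap\Fil^0\DD_K(T^*(1))=0$; since $p\vp-1=-(1-p\vp)$ does not affect this span, this is exactly the condition in part (a).

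Part (b) follows by the same strategy with the simpler map $\phi=\pr_I|_{\Fil^0\DD_K(T)\otimes\Qp}$; dualizing produces the inclusion $\mathrm{span}_{\Qp}(v_i':i\in I)\hookrightarrow\DD_K(T^*(1))\otimes\Qp$ followed by projection modulo $\Fil^0\DD_K(T^*(1))$, whose kernel is precisely $\mathrm{span}_{\Qp}(v_i':i\in I)\cap\Fil^0\DD_K(T^*(1))$, giving the criterion in (b). The only ingredients demanding care are the orthogonality relation $(\Fil^0\DD_K(T))^\perp=\Fil^0\DD_K(T^*(1))$ and the adjoint formula, both already implicit in earlier parts of the paper; the rest is mechanical linear algebra, so no step here poses a serious obstacle beyond bookkeeping.
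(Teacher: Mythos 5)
Your proof is correct and follows essentially the same route as the paper's: both reduce the finite-index claim to the $\Qp$-span of $U_I^\eta$ filling $\Qp^{\oplus k}$ via Proposition~\ref{prop:iageofcolemanis}, and then dualize under $[\sim,\sim]$ using that $\Fil^0\DD_K(T)$ and $\Fil^0\DD_K(T^*(1))$ are mutual annihilators together with the adjoint formula for $(1-\vp)(1-\vp/p)^{-1}$. The only cosmetic difference is that you phrase the duality as surjectivity of $\phi$ versus injectivity of $\phi^*$, whereas the paper takes orthogonal complements of the equivalent subspace identity $(1-\vp)(1-\vp/p)^{-1}\Fil^0\DD_K(T)+\mathrm{span}(v_i:i\notin I)=\DD_K(T)$; these are the same argument.
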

\begin{proof}
We prove (a) only. The set $U_I^\eta$ in the statement of Proposition~\ref{prop:iageofcolemanis} is $\oplus_{i\in I}\Zp$ if and only if
\[
(1-\vp)\left(1-\frac{\vp}{p}\right)^{-1}\Fil^0\DD_K(T)+{\rm span}(v_i:i\notin I)=\DD_K(T).
\]
Therefore, on taking orthogonal complements, this is equivalent to
\[
{\rm span}((1-\vp)^{-1}(p\vp-1)v_i':i\in I)\cap\Fil^0\DD_K(T^*(1))=0\]
as we have the elementary formula $(U+ V)^\perp=U^\perp+V^\perp$.
\end{proof}
}



\section{Conjectures}

Let $F$ be a number field of degree $r$ where the prime $p$ is unramified. We assume that $F$ is either  a totally real field or a CM field. We fix a rank $d$ continuous $\Zp$-representation $T$ of $G_F$ such that $T$ verifies the hypotheses (H.F.-L.), (H.S.), (H.Leop) and (H.nA) introduced above.

Furthermore, in order to simplify notation, we set $g=[F:\QQ]\times d$ and define $g_{+}:=\dim \, \left(\Ind_{F/{\QQ}}\,T\otimes\Qp\right)^+$ as above. Set $g_-=g-g_+$ and suppose throughout that $g_->0$. Let $\DD_{p}(T)$ be the direct sum $\oplus_{\fp|p}\DD_{F_\fp}(T)$. We assume until the end that the following form of the \emph{Panchishkin condition} holds true:
\begin{itemize}
\item[(H.P.)] $\dim \,\left( \textup{Fil}^0 \DD_p(T)\otimes\QQ_p\right)\,=g_-$\,.
\end{itemize}

Let $S$ be the set of primes of $F$ where $T$ is ramified and those that divide $p$. If $L$ is an extension of $F$, we write $G_{L,S}$ for the Galois group of the maximal extension of $L$ unramified outside $S$. Fix until the end an even Dirichlet character $\eta$ of $\Delta=\Gal(\QQ(\mu_p)/\QQ)$.

For $i=1,2$, we define
\[
H^i_{\Iw,S}(F,T)=\varprojlim H^i(G_{F(\mu_{p^n}),S},T).
\]
By \cite[Proposition~1.3.2]{perrinriou95}, our assumptions on $T$ imply that at each isotypic component,  $H^2_{\Iw,S}(F,T)$ is $\Zp[[X]]$-torsion and $H^1_{\Iw,S}(F,T)_\pm$ is of rank $g_\mp$ over $\Lambda_\pm$. Let $\ff_2\in\Lambda$ be the characteristic ideal of $H^2_{\Iw,S}(F,T)$. We write $\loc$ for the localization map
\[
\loc: H^1_{\Iw,S}(F,T)\longrightarrow\HIw(F_p,T):=\bigoplus_{\fp|p}H^1_{\Iw}(F_\fp,T),
\]
and also for the map induced on the $\eta$-isotypic submodule.
\subsection{Semi-local decomposition}

Consider the map
\[
\calL_T^F=\oplus_{\fp|p}\calL_{T}^{F_\fp}:H^1_{\Iw}(F_p,T)\longrightarrow\calH\otimes_{\Zp}\DD_{p}(T).
\]

We fix a basis $v_1,\ldots,v_{g}$ for $\DD_p(T)$ consisting of a sub-basis $\{v_{\fp,i}\}$ of $\DD_{F_\fp}(T)$ for each $\fp|p$. Let $M_T$ be the $g\times g$ block diagonal matrix where the entries are given by $M_{T|G_{F_\fp}}$  for $\fp|p$, where $M_{T|G_{F_\fp}}$ is the logarithmic matrix as constructed in \eqref{eq:newmatrix}. We write $(\col_{T,i})_{i=1}^g$ for the column vector given by $\left(\col_T^{F_\fp}\right)_{\fp|p}$. Then, \eqref{eq:decomp0} gives us the decomposition of $\Lambda$-homomorphism
\begin{equation}
\calL_T^F=\begin{pmatrix}
v_1&\cdots&v_{g}
\end{pmatrix}
\cdot M_T\cdot \begin{pmatrix}
\col_{T,1}\\
\vdots\\
\col_{T,g}
\end{pmatrix}\label{eq:decomp}
\end{equation}
for some block diagonal matrix $M_T\in M_{g\times g}(\calH)$, whose entries are all $o(\log)$.

Let $\loc_p$ be the localization from $H^1_{\Iw,S}(F,T)$ to $H^1_{\Iw}(F_p,T)$. We write $\calL_{\loc}$ for the composition $\calL_T^F\circ\loc$.

\begin{defn}
We write $\fI_p$ for the set of tuples $\uI=(I_\fp)_{\fp|p}$ where each $I_{\fp}$ is a subset of of $\{1,\ldots,[F_\fp:\Qp]d\}$ such that $\sum\#I_{\fp}=g_-$. This can be equally regarded as the set of subsets of $\{1,\ldots, g\}$ of size $g_-$. We shall construct a Selmer group for each $\uI\in\fI_p$, which we conjecture to be $\Lambda$-cotorsion.
\end{defn}
\subsection{Perrin-Riou's main conjecture}


\begin{defn}\label{def:admissiblebasis}
{Let $\frak{B}=\{v_1,\cdots,v_{g}\}$ be a $\ZZ_p$-basis of $\DD_p(T)$. Let $\frak{B}^\prime=\{v_1^\prime,\cdots,v_{g}^\prime\}\subset\DD_p(T^*(1))$ be its dual basis. The basis $\frak{B}$ is called \emph{admissible} if for any $\uI \in \fI_p$, we have 
\begin{equation}\label{eqn:defineadmis} \textup{span}\left(v_i^\prime: i\in \uI \right)\cap \Fil^0\DD_p(T^*(1)) = 0\,
\end{equation}
and \emph{strongly admissible} if in addition to (\ref{eqn:defineadmis}) we have
$$\textup{span}\left((1-\vp)^{-1}(p\vp-1)v_i^\prime: i\in \uI \right)\cap \Fil^0\DD_p(T^*(1)) = 0\,.
$$}
\end{defn}

\begin{proposition}
\label{prop:admissiblebasisexists}
A strongly admissible basis exists.
\end{proposition}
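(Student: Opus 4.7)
The plan is to realize strong admissibility as a non-empty Zariski-open condition on an appropriate parameter space of bases, and then to extract a $\ZZ_p$-basis inside this locus via a standard $p$-adic density argument.

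First, I would fix an auxiliary $\ZZ_p$-basis of $\DD_p(T) = \oplus_{\fp|p} \DD_{F_\fp}(T)$ compatible with the prime decomposition, and parametrize arbitrary such bases by the $\QQ_p$-variety $\mathcal{V} := \prod_{\fp|p} \GL_{g_\fp}$. Passing to the dual basis in $\DD_p(T^*(1))$, strong admissibility for a given $\uI \in \fI_p$ translates into the non-vanishing of two determinants: one attached to the images of $\{v_i' : i \in \uI\}$ in the quotient $\DD_p(T^*(1))/\Fil^0\DD_p(T^*(1))$, and the other to their images under the operator $\psi := (1-\vp)^{-1}(p\vp-1)$, which is well-defined and invertible on $\DD_p(T^*(1))\otimes\QQ_p$ by (H.S.) (neither $1$ nor $1/p$ is an eigenvalue of $\vp$). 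Both determinants are polynomials in the coordinates of $\mathcal{V}$, so the strongly admissible locus $U \subset \mathcal{V}$ is Zariski-open.

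Second, I would verify that $U$ is non-empty. For a single $\uI \in \fI_p$, the conditions amount to the transversality of $\mathrm{span}(v_i' : i \in \uI)$ and of $\mathrm{span}(\psi v_i' : i \in \uI)$ to $\Fil^0 \DD_p(T^*(1))$. By (H.P.) together with the Cartier duality pairing, the codimension of $\Fil^0\DD_p(T^*(1))$ in $\DD_p(T^*(1))$ is $g_-$, matching the size of $\uI$, so transversality is an expected-dimension generic open condition. As $\psi$ is a linear automorphism, the two conditions are each non-empty generic conditions. Since $\mathcal{V}$ is irreducible and $\fI_p$ is finite, the finitely many non-empty Zariski-open loci intersect in a non-empty $U$: one may construct an explicit point by selecting, for each $\fp|p$, a basis of $\DD_{F_\fp}(T^*(1))$ whose vectors and whose $\psi$-images are in sufficiently general position with respect to $\Fil^0 \DD_{F_\fp}(T^*(1))$.

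Finally, the subset $\prod_{\fp|p} \GL_{g_\fp}(\ZZ_p) \subset \mathcal{V}(\QQ_p)$ is non-empty and $p$-adically open, while $U(\QQ_p)$ is $p$-adically dense because the $\QQ_p$-zero locus of a non-zero polynomial has empty $p$-adic interior. Their intersection is therefore non-empty, and any element produces the desired strongly admissible $\ZZ_p$-basis of $\DD_p(T)$. The main obstacle is the non-emptiness check in Step~2, where one must confirm that the combinatorial constraint $\sum_\fp|I_\fp|=g_-$ interacts correctly with the local codimensions of $\Fil^0\DD_{F_\fp}(T^*(1))$ under (H.P.), so that the finitely many generic transversality conditions can indeed be imposed simultaneously on a block-diagonal basis; once this linear-algebraic compatibility is confirmed, Steps~1 and~3 are routine.
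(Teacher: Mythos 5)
Your overall strategy---express (strong) admissibility as the non-vanishing of finitely many determinants, check each open condition is non-empty by a transversality/dimension count, and then pass from a $\QQ_p$-generic point to an integral one by $p$-adic density---is sound in spirit and is in fact close to what the paper does for the ``strongly admissible'' half (an inductive general-position construction over $\QQ_p$, with the integral refinement handled there by bespoke lattice lemmas rather than by density; your density step is legitimate because the condition $\mathrm{span}(v_i':i\in\uI)\cap\Fil^0\DD_p(T^*(1))=0$ is insensitive to replacing $\ZZ_p$-spans by $\QQ_p$-spans, and a point of $\GL(\ZZ_p)$ applied to a fixed $\ZZ_p$-basis does yield a $\ZZ_p$-basis).

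The genuine gap is exactly the one you flag and defer: with the parameter space $\mathcal{V}=\prod_{\fp\mid p}\GL_{g_\fp}$ you only produce bases adapted to the decomposition $\DD_p(T)=\oplus_{\fp\mid p}\DD_{F_\fp}(T)$, and for such bases the strongly admissible locus is \emph{empty} as soon as more than one prime of $F$ lies above $p$ (in the situations of interest). Indeed, $\fI_p$ contains every size-$g_-$ subset of $\{1,\dots,g\}$, including tuples $\uI=(I_\fp)$ concentrated at a single prime $\fp_0$; for a block-diagonal basis the condition decomposes as $\bigoplus_\fp\bigl(\mathrm{span}(v'_{\fp,i}:i\in I_\fp)\cap\Fil^0\DD_{F_\fp}(T^*(1))\bigr)=0$, which forces $\#I_\fp\le g_\fp-\mathrm{rank}\,\Fil^0\DD_{F_\fp}(T^*(1))$ for every $\fp$ regardless of the basis. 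Since $\sum_\fp\bigl(g_\fp-\mathrm{rank}\,\Fil^0\DD_{F_\fp}(T^*(1))\bigr)=g_-$, only the single ``balanced'' tuple can survive. For example, for a supersingular elliptic curve over a real quadratic $F$ with $p$ split, the tuple with both indices at one prime rules out every block-diagonal basis. So your Step~2 is not a verification left to the reader; it is false for your chosen $\mathcal{V}$. The paper avoids this by working with arbitrary bases of the full module: it applies its Lemma~\ref{lemma:linearalgebra2} to $W=\DD_p(T^*(1))/\Fil^0\DD_p(T^*(1))$ of rank $g_-$ and lifts, with no compatibility imposed with the summands $\DD_{F_\fp}(T^*(1))$. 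The fix for your argument is the same: replace $\prod_\fp\GL_{g_\fp}$ by $\GL_g$ acting on all of $\DD_p(T)$; then each transversality condition is non-empty because $\#\uI=g_-$ equals the codimension of $\Fil^0\DD_p(T^*(1))$ and $(1-\vp)^{-1}(p\vp-1)$ is invertible by (H.S.), and your Steps~1 and~3 go through. (When $p$ has a single prime above it in $F$, your proposal as written is already complete.)
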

The proof Proposition~\ref{prop:admissiblebasisexists} will be given in Appendix~\ref{appendix:linearalgebra}. 
{\begin{remark}
\label{rem:whyadmissible}
We note that the strong admissibility condition would allow us to apply Proposition~\ref{prop:iageofcolemanis}  and conclude as in Corollary~\ref{cor:pseudosurj} that the signed Coleman maps we shall be using are pseudo-surjective onto a free $\ZZ_p[[X]]$-module. 
\end{remark}}

{For $\uI\in\fI_p$, let $N_{\uI}$ be the $\Zp$-submodule generated by the sub-basis $\{v_i':i\in\uI\}$. Perrin-Riou in \cite{perrinriou92} associates $N_{\uI}$ a height pairing  $\langle,\rangle_{N_{\uI}}$.  Since we have $N_{\uI}\cap\Fil^0\DD_p(T^*(1))=0$ for $\uI\in\fI_p$, the submodule $N_{\uI}$ is regular in the sense of \cite[\S3.1.2]{perrinriou95} if and only if the height pairing $\langle,\rangle_{N_{\uI}}$ is non-degenerate (see also \cite[\S2.1]{benois14}). }

\begin{defn}
\label{def:padicperiods}
{For the dual motive $\mathcal{M}^*(1)$ to $\mathcal{M}$, we let $\Omega_{\mathcal{M}^*(1),p}(\uI)$ denote Perrin-Riou's $p$-adic period (given as in \cite{perrinriou95}) associated to the determinant of $\langle,\rangle_{N_{\uI}}$. When $N_{\uI}$ is not a regular subspace, this period shall be set to be zero.}
\end{defn}


\begin{conj}\label{conj:TpadicL}
There exists an analytic $p$-adic $L$-function 
$$L_p(\mathcal{M}^*(1))\in \calH_+\otimes\wedge^{g_{-}}\DD_p(T)$$ such that for all even Dirichlet characters $\theta$ of conductor $p^n>1$, we have
\begin{align*}
\theta&\left(L_p(\mathcal{M}^*(1))\right)=\\&\sum_{\uI\in\fI_p}\left(\frac{p^n}{\tau(\theta^{-1})}\right)^{g_{-}}{L_{\{p\}}(\mathcal{M}^*(1),\theta^{-1},1)}\frac{\Omega_{\mathcal{M}(\theta)^*(1),p}(\uI)}{{\Omega_{\mathcal{M}(\theta)^*(1)}(\uI)}}\cdot \vp^{n}\left(\wedge_{i\in \uI}\,v_{i}\right)\,.
\end{align*}
When $\theta$ is the trivial character,
\begin{equation}\label{eq:trivialinterpolation}
\theta\left(L_p(\mathcal{M}^*(1))\right)=\sum_{\uI\in\fI_p}{L_{\{p\}}(\mathcal{M}^*(1),1)}\frac{\Omega_{\mathcal{M}^*(1),p}(\uI)}{\Omega_{\mathcal{M}^*(1)}(\uI)}\cdot (1-\vp)(1-p^{-1}\vp^{-1})^{-1}\left(\wedge_{i\in \uI}v_{i}\right) \, .
\end{equation}
Here, $L_{\{p\}}$ denotes the $L$-function with the Euler factors at $p$ removed.
\end{conj}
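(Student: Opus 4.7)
The plan is to construct the $p$-adic $L$-function as the image of a conjectural global zeta element under (the exterior power of) Perrin-Riou's regulator map, and then to verify the interpolation formulas at Dirichlet characters by combining the explicit form of the regulator at finite level (equation (PRinterpolation) in the excerpt) with the conjectural Beilinson--Deligne-type relationship between zeta elements and classical $L$-values.

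First I would postulate (or, in favorable cases, construct) a global zeta element
$$\mathbf{z}\in e_+\Bigl(\wedge^{g_-}_\Lambda H^1_{\Iw,S}(F,T)\Bigr)$$
whose Bloch--Kato dual exponentials at each finite layer $F(\mu_{p^n})$ are governed by the twisted $L$-values $L_{\{p\}}(\mathcal{M}^*(1),\theta^{-1},1)$ multiplied by the period ratios $\Omega_{\mathcal{M}(\theta)^*(1),p}(\uI)/\Omega_{\mathcal{M}(\theta)^*(1)}(\uI)$ appearing in the statement. That $\mathbf{z}$ should lie in the plus part (and have the correct rank $g_-$) is dictated by Perrin-Riou's rank calculations recalled in \S3, which give $\mathrm{rk}_{\Lambda_\pm}H^1_{\Iw,S}(F,T)_\pm=g_\mp$. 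Next, I would set
$$\Lp(\mathcal{M}^*(1)):=\bigl(\wedge^{g_-}\calL_T^F\bigr)\bigl(\loc_p(\mathbf{z})\bigr)\in\calH_+\otimes\wedge^{g_-}\DD_p(T),$$
where $\loc_p$ denotes the semi-local localization at primes above $p$.

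The block-diagonal decomposition $\calL_T^F=\oplus_{\fp|p}\calL_T^{F_\fp}$ induces a decomposition of its $g_-$-th exterior power indexed by tuples $\uI=(I_\fp)_{\fp|p}\in\fI_p$; projecting onto each summand yields the sum $\sum_{\uI\in\fI_p}c_{\uI}\cdot\wedge_{i\in \uI}v_i$ appearing in the conjecture. To verify the interpolation at a non-trivial even $\theta$ of conductor $p^n$, I would apply the formula (PRinterpolation) componentwise, giving
$$\theta\bigl(\calL_{T,i}^K(z)\bigr)=\frac{1}{\tau(\theta^{-1})}\bigl[\sum_{\sigma\in G_n}\theta^{-1}(\sigma)\exp^*_n(z^\sigma),\vp^{-n}(v_i')\bigr],$$
and then take the $g_-$-th exterior power, pair with $\wedge_{i\in\uI}v_i'$, and substitute the conjectural identification of the dual exponentials of $\mathbf{z}$ with the $L$-value $\times$ period ratio package; the factor $\vp^{-n}$ on the $v_i'$ side transposes into $\vp^n$ on the $v_i$ side, producing the expected $\vp^n(\wedge_{i\in\uI}v_i)$. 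The trivial-character case (\ref{eq:trivialinterpolation}) follows identically, using instead the first branch of (PRinterpolation) whose adjoint is $(1-\vp)(1-p^{-1}\vp^{-1})^{-1}$.

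The central obstacle is the first step: producing a global element in $\wedge^{g_-}H^1_{\Iw,S}(F,T)_+$ whose dual exponentials compute the complex $L$-values up to Perrin-Riou's motivic periods is essentially the $p$-adic Beilinson conjecture in higher rank, and is known only in very special cases (cyclotomic units, Beilinson--Kato elements, conjectural Rubin--Stark elements). For a general motive $\mathcal{M}$ with Hodge--Tate weights in $\{0,1\}$ satisfying (H.F.-L.), (H.S.), (H.P.), this construction is as deep as the statement itself, and is the reason the result is stated as a conjecture. A subsidiary difficulty is to confirm that the output genuinely lies in $\calH_+$ with $o(\log^{g_-})$ growth, which would follow from the $o(\log)$ growth of the entries of $M_T$ (Proposition~\ref{prop:matrix}) together with integrality of the zeta element under corestriction.
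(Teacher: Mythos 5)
The statement is a conjecture and the paper offers no proof of it; the closest the paper comes is Conjecture~\ref{conj:Tspecialelement} together with Proposition~\ref{prop:Twedgeinterpolation}, which follow exactly your route: postulate a global element $\cc\in\frak{K}(T)$ whose dual exponentials encode the $L$-values and period ratios, set $L_p(\mathcal{M}^*(1))=\wedge^{g_-}\calL_{\loc}(\cc)$, and verify the interpolation by combining the explicit regulator formula (Lemma~\ref{lem:regulatorformula}) with the multilinear-algebra Lemma~\ref{lem:wedge}. Your plan matches this, including the correct identification of the existence of the global element as the irreducible conjectural input; the only small slip is that the adjoint of $\vp^{-n}$ under $[\sim,\sim]$ is $p^{n}\vp^{n}$ rather than $\vp^{n}$, which is precisely the source of the factor $\left(p^n/\tau(\theta^{-1})\right)^{g_-}$ in the interpolation formula.
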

{Above $\Omega_{\mathcal{M}(\theta)^*(1)}(\uI)$ is Deligne's period so that the quotient $\frac{L_{\{p\}}(\mathcal{M}^*(1),\theta^{-1},1)}{\Omega_{\mathcal{M}(\theta)^*(1)}(\uI)}$ is an algebraic number.  Fixing an embedding $\overline{\QQ} \hookrightarrow \overline{\QQ}_p$ we regard this as an element of $\overline{\QQ}_p$. We also implicitly assert as part of the conjecture above that there is a choice of a normalization of Deligne's period amenable to $p$-adic interpolation.}

\begin{remark} Our interpolation formulae are not quite the ones stated in \cite[\S4.2]{perrinriou95} that predict a relation between the leading term of the $p$-adic $L$-function and complex $L$-values. Rather, we opt for a formulation that is closer to the existing one for elliptic curves and the one stated in \cite{coatesPR}. 
\end{remark}

The main conjecture of Perrin-Riou relates this conjectural $p$-adic $L$-function to the following module.

\begin{defn}Perrin-Riou's module of $p$-adic $L$-function is defined to be
\[
\Iarith(T)=\det{}_{\Lambda}\left(\image(\calL_{\loc})\right)\otimes\det{}_{\Lambda}\left(H^2_{\Iw,S}(F,T)\right)^{-1}.
\]
\end{defn}

\begin{conj}[Perrin Riou's Main Conjecture]
\label{TCP}As $\Lambda_+$-modules, we have
\[L_p(\mathcal{M}^*(1))\Lambda_+=\Iarith(T)_+.\]
\end{conj}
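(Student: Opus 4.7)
The plan is to pivot immediately from Conjecture~\ref{TCP} to the signed main conjecture~(\ref{eqn:signedmainconj}), by invoking the equivalence supplied by Theorem~\ref{thm:modifiedmainconjequivalanttoPRsmainconj}. This trades the rather opaque determinantal module $\Iarith(T)_+$ for the characteristic ideal of the honest cotorsion Selmer group $\Sel_{\uI}(T^\dagger/F(\mu_{p^\infty}))^{\vee,\eta}$, for each choice of $\uI\in\fI_p$ and each even Dirichlet character $\eta$ of $\Delta$. Since $\Lambda_+$ decomposes via the idempotents $e_\eta$, it suffices to establish the signed main conjecture for every such $\uI$ and $\eta$, and I would tackle the two divisibilities separately.

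For the divisibility
\[
L_{\uI}(\mathcal{M}^*(1))^\eta\cdot\Zp[[X]]\subset \Char_{\Zp[[X]]}\left(\Sel_{\uI}(T^\dagger/F(\mu_{p^\infty}))^{\vee,\eta}\right),
\]
the strategy is to feed the Coleman-adapted Kolyvagin system machinery of Appendix~\ref{sec:appendixKSexists}. One first invokes Theorem~\ref{thm:appendixBmain} to produce an $\mathbb{L}$-restricted Kolyvagin system for $T$, together with the canonical submodule of Kolyvagin determinants $\mathfrak{K}(T)\subset\HIw(F_p,T)$. Under hypotheses \textbf{(H1)-(H4)} of \cite{mr02}, the Mazur--Rubin framework then bounds $\Char_{\Zp[[X]]}(\Sel_{\uI}(T^\dagger/F(\mu_{p^\infty}))^{\vee,\eta})$ from below by the image of $\mathfrak{K}(T)$ under the $\eta$-component of $\oplus_{i\in\uI}\col_{T,i}\circ\loc_p$. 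The Explicit Reciprocity Conjecture~\ref{conj:Tspecialelement} is then precisely the analytic input needed to identify this image with $L_{\uI}(\mathcal{M}^*(1))^\eta\cdot\Zp[[X]]$; the strong admissibility of the fixed basis, together with Proposition~\ref{prop:iageofcolemanis} and Remark~\ref{rem:whyadmissible}, ensures that this image sits as a finite-index submodule of a free rank-$|\uI|$ module over $\Zp[[X]]$, so that the divisibility of characteristic ideals can be read off cleanly. This is essentially the content of Theorem~\ref{thm:mainINTRO}.

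The reverse divisibility is the genuinely deep direction. My plan would be: (i) construct an Iwasawa-theoretic zeta element whose image under Perrin-Riou's regulator $\calL_T^F$ equals the analytic $p$-adic $L$-function $L_p(\mathcal{M}^*(1))$ predicted by Conjecture~\ref{conj:TpadicL}; (ii) apply the decomposition~(\ref{eq:decomp}) together with the evaluation formula of Lemma~\ref{lem:evaluateColeman} to split it into the signed components $L_{\uI}(\mathcal{M}^*(1))^\eta$; and (iii) run a Poitou--Tate / global duality argument to promote the resulting bound on the dual Selmer module into the corresponding divisibility at each $\eta$-component. The main obstacle of the whole program lies in this direction: on the analytic side, Conjecture~\ref{conj:TpadicL} itself is open in most cases, and bottom-up constructions of Iwasawa systems realising $L_p(\mathcal{M}^*(1))$ through the signed Coleman map are unavailable outside Kato-like settings. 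Even the $\subset$ half rests on the Explicit Reciprocity Conjecture~\ref{conj:Tspecialelement}, which has so far been verified only in restricted situations such as the CM example of~\cite{buyukboduklei1}. Thus Conjecture~\ref{TCP} in full should be regarded as currently out of reach; the blueprint above delivers the $\subset$ half conditionally on~\ref{conj:Tspecialelement}, and it is precisely this half that Theorem~\ref{thm:mainINTRO} achieves.
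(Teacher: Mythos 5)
The statement is a conjecture, and the paper offers no proof of it; your proposal correctly recognizes this and your outline of the one accessible containment — passing to the signed main conjecture via Theorem~\ref{thm:TmyCPeqivtoCP} and then using the $\mathbb{L}$-restricted Kolyvagin systems of Appendix~\ref{sec:appendixKSexists} together with the Explicit Reciprocity Conjecture~\ref{conj:Tspecialelement} and strong admissibility — is essentially the paper's own argument in Theorem~\ref{thm:PRmainconj}. Your assessment that the reverse divisibility (and hence the full conjecture) is out of reach matches the paper's position exactly.
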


We now study the conjectural $p$-adic $L$-function $L_p(\mathcal{M}^*(1))$ further and relate it to the regulator map of Perrin-Riou via the Kolyvagin systems we construct in the appendix.

\begin{defn}
\label{define:thelevelnregulator}
Let $\Xi=\xi_1\wedge\cdots\wedge\xi_{g_-}\in\wedge^{g_-} H^1_{\Iw,S}(F,T)_+$ be any element and let $\theta$ be an even Dirichlet character of conductor $p^n$. For $\uI=(I_\fp)_{\fp|p}\in\fI_p$, we define
$$\mathfrak{M}_\theta^{\uI}(\Xi):=\displaystyle{\left(\left[\sum_{\sigma\in G_n}\theta(\sigma)\exp_n^*\left(\loc_\fp(\xi_i)^\sigma\right),v_{\fp,j}'\right]\right)_{1\le i\le g_-\,,\,j\in I_\fp}}.$$
\end{defn}


Let $\frak{K}(T)$ denote the $\Lambda$-module of \emph{Kolyvagin determinants}, given as in Definition~\ref{define:theKolyvaginconstructeddeterminantelement}(ii).
\begin{conj}
\label{conj:Tspecialelement}
There exists a (unique) non-zero element $\cc=\cc_1\wedge\cdots\wedge\cc_{g_-} \in\frak{K}(T)$ such that
\[
\det\left(\mathfrak{M}_\theta^{\uI}(\cc)\right)={L_{\{p\}}(\mathcal{M}^*(1),\theta^{-1},1)}\frac{\Omega_{\mathcal{M}(\theta)^*(1),p}(\uI)}{\Omega_{\mathcal{M}(\theta)^*(1)}(\uI)},
\]
for all $\uI$ and $\theta$ as in Definition~\ref{define:thelevelnregulator}.
\end{conj}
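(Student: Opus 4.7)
The plan is to split the argument into a uniqueness stage and an existence stage, because the two parts require substantially different input.

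For uniqueness, I would first check that the family of functionals $\cc\mapsto\det(\mathfrak{M}_\theta^{\uI}(\cc))$, as $\theta$ varies over even Dirichlet characters of $p$-power conductor and $\uI$ varies over $\fI_p$, separates points of $\frak{K}(T)$. If $\cc,\cc'\in\frak{K}(T)$ both satisfied the asserted identity, then $\cc-\cc'$ would be annihilated by every twisted dual exponential pairing against every admissible family of basis vectors $\{v_{\fp,j}'\}_{j\in I_\fp}$. Using the torsion-freeness of $H^1_{\Iw,S}(F,T)$ on each even isotypic component (a consequence of (H.Leop) and (H.nA) via \cite[Proposition~1.3.2]{perrinriou95}), together with the strong admissibility of the fixed basis $\frak{B}$ and the resulting pseudo-surjectivity of the Coleman maps supplied by Proposition~\ref{prop:iageofcolemanis} and Corollary~\ref{cor:pseudosurj}, a density argument combined with Weierstrass preparation on $\Zp[[X]]$ concludes that $\cc=\cc'$. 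This reduces the problem to producing just one element with the required interpolation.

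For existence, the natural route is through an Euler system input. Given an Euler system for $T$, the Coleman-adapted Kolyvagin system machinery of Appendix~\ref{sec:appendixKSexists} — culminating in Theorem~\ref{thm:appendixBmain} — produces a canonical element of $\frak{K}(T)$ built as a Kolyvagin derivative class along the cyclotomic tower, with rank $g_-$ forced by (H.P.). Concrete Euler system inputs are available in several cases of interest: Kato's zeta elements for modular motives, a form of Rubin--Stark elements in the CM situation (per Remark~\ref{rem:hypoholdtrueINTRO} and the companion paper \cite{buyukboduklei1}), and Heegner point or Beilinson--Flach classes in automorphic settings. Writing $\cc=\cc_1\wedge\cdots\wedge\cc_{g_-}$ for the wedge of the Kolyvagin-theoretic components built from these classes provides the candidate, and its non-vanishing follows from the non-vanishing of the $L$-value at a single well-chosen $\theta$.

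The main obstacle — and the reason this statement is posed as a conjecture rather than a theorem — is the exact $L$-value identity, not merely equality up to a $\Lambda$-unit. What is required is a version of the Colmez--Perrin-Riou explicit reciprocity law adapted to the rank-$g_-$ determinant setting, together with a careful comparison of the $p$-adic period $\Omega_{\mathcal{M}(\theta)^*(1),p}(\uI)$ with the Deligne period $\Omega_{\mathcal{M}(\theta)^*(1)}(\uI)$ and a matching of integral normalizations. In rank one this reduces to known reciprocity laws (Kato's law for elliptic curves and its analogues in the CM case). In higher rank, matching the leading term of the complex $L$-value with the determinant of the dual exponential pairing is a form of the equivariant Tamagawa number philosophy, and I would expect the conjecture to be established only case by case — the CM abelian variety setting of \cite{buyukboduklei1} supplying the first complete instance via a Rubin--Stark type construction, and more general abelian varieties becoming accessible as the required Euler systems and reciprocity laws come online.
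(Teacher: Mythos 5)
There is nothing in the paper to compare your proposal against: the statement you were asked to prove is Conjecture~\ref{conj:Tspecialelement}, the ``explicit reciprocity conjecture for Kolyvagin determinants,'' and the paper offers no proof of it. It is assumed as a hypothesis in Theorem~\ref{thm:TmyCPeqivtoCP} and Theorem~\ref{thm:PRmainconj}, is discussed as known only in special cases (Appendix~\ref{appendix:kobandrob} observes that Kato's Beilinson--Kato elements verify it for supersingular elliptic curves over $\QQ$, and Remark~\ref{rem:hypoholdtrueINTRO} points to conjectural Rubin--Stark elements in the CM case treated in \cite{buyukboduklei1}), and is otherwise left open. You correctly diagnose this, and your identification of the genuine obstruction --- an exact reciprocity law matching the rank-$g_-$ determinant of dual exponentials against the ratio of the complex $L$-value to Deligne's period, with compatible normalizations of the $p$-adic and complex periods --- is exactly where the difficulty lies. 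So your ``proposal'' is an accurate roadmap rather than a proof, which is the right stance.

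One part of the statement is within reach and worth making precise: the parenthetical uniqueness. The cleanest argument is not the one you sketch via pseudo-surjectivity of the Coleman maps, but rather the following. By Lemma~\ref{lem:regulatorformula} and Lemma~\ref{lem:wedge}, the quantities $\det\bigl(\mathfrak{M}_\theta^{\uI}(\cc)\bigr)$ for all $\uI$ and all $\theta$ of conductor $p^n$, $n\gg 0$, determine the element $\wedge_{i}\calL_{\loc}(\cc_i)\in\calH\otimes\wedge^{g_-}\DD_p(T)$, since its coordinates are $o(\log^h)$ and an element of $\calH$ with that growth vanishing at all characters of each large conductor is zero (the congruence argument of \cite[\S1.2]{perrinriou94} already used in Theorem~\ref{thm:decomposereg}). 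Since $\det M_T\neq 0$ and $\loc$ is injective on the relevant isotypic component by the choice of $\uI$ (Proposition~\ref{prop:specialelementstransversaltopmsubmodules}), and since $\frak{K}(T)$ sits inside a rank-one $\Lambda$-module (Proposition~\ref{prop:bottomrowisfreeofrankone} and Definition~\ref{define:theKolyvaginconstructeddeterminantelement}), the map $\cc\mapsto\wedge_i\calL_{\loc}(\cc_i)$ is injective on $\frak{K}(T)$, whence $\cc$ is pinned down by the interpolation data. Your existence discussion via Euler systems and the Kolyvagin-system machinery of Appendix~\ref{sec:appendixKSexists} is consistent with the authors' intent, but note that the appendix only produces a generator of $\overline{\mathbf{KS}}(\mathbb{T},\FFF_{\mathbb{L}},\mathcal{P})$ abstractly; it does not supply the interpolation identity, which is precisely the conjectural content.
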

We will refer to this conjecture as the \emph{reciprocity conjecture for Kolyvagin-determinants}.
\begin{proposition}\label{prop:Twedgeinterpolation}
For $\cc\in\frak{K}(T)$ verifying Conjecture~\ref{conj:Tspecialelement},  $\calL_{\loc}(\cc)$ satisfies the interpolation properties given in Conjecture~\ref{conj:TpadicL}.
\end{proposition}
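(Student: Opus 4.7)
The plan is to interpret $\calL_{\loc}(\cc)$ as the image of $\cc=\cc_1\wedge\cdots\wedge\cc_{g_-}\in\wedge^{g_-}H^1_{\Iw,S}(F,T)_+$ under the $g_-$-th exterior power of the semi-local regulator, so that $\calL_{\loc}(\cc)$ naturally lives in $\calH_+\otimes\wedge^{g_-}\DD_p(T)$ just as $L_p(\mathcal{M}^*(1))$ does. Since $\calH_+$ embeds into $\prod_\theta\overline{\QQ}_p$ via evaluation at the characters of $\Gal(F(\mu_{p^\infty})/F)$, it suffices to verify the required interpolation identity at each even Dirichlet character $\theta$ individually. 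The computation itself is pure multilinear algebra applied to the interpolation formulas already established in Lemma~\ref{lem:regulatorformula}, combined with the reciprocity conjecture as the sole nontrivial input.

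For $\theta$ of conductor $p^n$ with $n\ge1$, I would apply Lemma~\ref{lem:regulatorformula} semi-locally (summing the contributions from each prime $\fp\mid p$) to write
\[
\theta(\calL_{\loc}(\cc_i))=\frac{p^n}{\tau(\theta^{-1})}\sum_{\fp\mid p}\sum_{j}\Bigl[\sum_{\sigma}\theta^{-1}(\sigma)\exp^*_n(\loc_\fp(\cc_i)^\sigma),v'_{\fp,j}\Bigr]\,\vp^n(v_{\fp,j}),
\]
for each $i=1,\ldots,g_-$. Taking the wedge of these $g_-$ expressions and expanding by multilinearity, the result reorganizes as a sum indexed by $\uI\in\fI_p$ of size $g_-$, in which the coefficient of $\vp^n(\wedge_{i\in\uI}v_i)$ is, up to the $\theta\leftrightarrow\theta^{-1}$ convention built into Definition~\ref{define:thelevelnregulator}, precisely $\det\mathfrak{M}_\theta^{\uI}(\cc)$, while the prefactor $(p^n/\tau(\theta^{-1}))^{g_-}$ pulls out cleanly. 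Inserting the identity supplied by Conjecture~\ref{conj:Tspecialelement} to replace each $\det\mathfrak{M}_\theta^{\uI}(\cc)$ by $L_{\{p\}}(\mathcal{M}^*(1),\theta^{-1},1)\cdot\Omega_{\mathcal{M}(\theta)^*(1),p}(\uI)/\Omega_{\mathcal{M}(\theta)^*(1)}(\uI)$ gives exactly the formula of Conjecture~\ref{conj:TpadicL}.

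For the trivial character, I would run the same argument using the trivial-character clause of Lemma~\ref{lem:regulatorformula}. The endomorphism $(1-\vp)(1-p^{-1}\vp^{-1})^{-1}$ is $\Zp$-linear on $\DD_p(T)$ and therefore extends to $\wedge^{g_-}\DD_p(T)$ by acting on each factor; distributing this action through the wedge produces the term $(1-\vp)(1-p^{-1}\vp^{-1})^{-1}(\wedge_{i\in\uI}v_i)$ appearing in (\ref{eq:trivialinterpolation}). Conjecture~\ref{conj:Tspecialelement} again furnishes the matching period ratio times the special $L$-value.

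The entire argument is essentially formal: multilinearity of the wedge product plus the explicit interpolation of Perrin-Riou's regulator. The only real content is the reciprocity conjecture, which is assumed. The one step requiring genuine care is the bookkeeping of character conventions — the regulator naturally produces $\theta^{-1}(\sigma)$ while the matrix $\mathfrak{M}_\theta^{\uI}$ of Definition~\ref{define:thelevelnregulator} is normalized with $\theta(\sigma)$, and the periods on both sides of Conjecture~\ref{conj:TpadicL} must be shown to match after this involution. Once conventions are pinned down, no substantive obstacle remains.
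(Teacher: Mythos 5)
Your argument is exactly the paper's (one-line) proof, spelled out: apply Lemma~\ref{lem:regulatorformula} semi-locally to each $\cc_i$, use the multilinear expansion of Lemma~\ref{lem:wedge} to reorganize the wedge as a sum over $\uI\in\fI_p$ of the determinants $\det\mathfrak{M}_\theta^{\uI}(\cc)$, and substitute Conjecture~\ref{conj:Tspecialelement}. Your remark about the $\theta\leftrightarrow\theta^{-1}$ convention in Definition~\ref{define:thelevelnregulator} is a fair observation about the paper's normalization, not a gap in your argument.
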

\begin{proof}
This follows from Lemmas~\ref{lem:regulatorformula} and \ref{lem:wedge}.
\end{proof}

\begin{remark}
Note that we have only considered the interpolation problem for the twists of the motive $\mathcal{M}^*(1)$ by even characters $\theta$ of $\Gamma$. One can also formulate a conjecture for $\emph{odd}$ characters, for which one needs to replace everywhere $g_-$ by $g_+$ (and vice-versa). Note also that upon studying the motive $\mathcal{M}\otimes\omega$ (where $\omega$ is the Teichm\"uller character) in place of $\mathcal{M}$, one may reduce the consideration for odd characters to the case of even characters.
\end{remark}

If $M$ is a $\Lambda$-module such that $M^\eta$ is $\Zp[[X]]$-torsion for all even  characters of $\eta$, we define the characteristic ideal 
\[
\Char_{\Lambda_+} M_+:=\sum_{\eta}e_{\eta}\cdot\Char_{\Zp[[X]]}M^\eta
\] 
where the sum runs over all even characters of $\Delta$.

\begin{proposition}\label{propn:TCP}
If Conjecture~\ref{conj:Tspecialelement} holds, then Conjecture~\ref{TCP} is equivalent to the assertion that
\begin{equation}\label{reformulateTCP}
\Char_{\Lambda_+}\left(H^2_{\Iw,S}(F,T)_+\right)=\Char_{\Lambda_+}\left(H^1_{\Iw,S}(F,T)_+/(\cc_1,\ldots,\cc_{g_-})\right).
\end{equation}
\end{proposition}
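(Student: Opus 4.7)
The plan is to rewrite Conjecture~\ref{TCP} by explicitly computing $\det_{\Lambda_+}\bigl(\image(\calL_{\loc})_+\bigr)$ in terms of $L_p(\mathcal{M}^*(1))$ and the quotient $Q:=H^1_{\Iw,S}(F,T)_+/(\cc_1,\ldots,\cc_{g_-})$, and then cancelling a common factor. The first input is Proposition~\ref{prop:Twedgeinterpolation}: under Conjecture~\ref{conj:Tspecialelement}, the wedge
\[
\calL_{\loc}(\cc_1)\wedge\cdots\wedge\calL_{\loc}(\cc_{g_-})\;\in\;\calH_+\otimes\wedge^{g_-}\DD_p(T)
\]
satisfies precisely the interpolation identities defining the conjectural $p$-adic $L$-function in Conjecture~\ref{conj:TpadicL}. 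Since an element of $\calH_+\otimes\wedge^{g_-}\DD_p(T)$ of $o(\log)$-growth is determined by its values at even Dirichlet characters of $\Gamma$ of $p$-power conductor, we identify this wedge with $L_p(\mathcal{M}^*(1))$.

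The heart of the argument is the determinantal computation for $\image(\calL_{\loc})_+$. Under (H.Leop) together with the facts recalled just before the proposition, $H^1_{\Iw,S}(F,T)_+$ has $\Lambda_+$-rank $g_-$ and the kernel of $\calL_{\loc}|_+$ is $\Lambda_+$-torsion. The non-vanishing in Conjecture~\ref{conj:Tspecialelement} at some pair $(\uI,\theta)$ forces $\cc_1,\ldots,\cc_{g_-}$ to be $\Lambda_+$-linearly independent, so the submodule they generate has full rank $g_-$ in $H^1_{\Iw,S}(F,T)_+$ and $Q$ is $\Lambda_+$-torsion. The induced short exact sequence
\[
0\longrightarrow \bigl(\calL_{\loc}(\cc_1),\ldots,\calL_{\loc}(\cc_{g_-})\bigr)\longrightarrow\image(\calL_{\loc})_+\longrightarrow Q\longrightarrow 0,
\]
valid up to pseudo-null using injectivity of $\calL_{\loc}$ modulo torsion, identifies the determinant of the leftmost free rank-$g_-$ module with the $\Lambda_+$-ideal generated by $\calL_{\loc}(\cc_1)\wedge\cdots\wedge\calL_{\loc}(\cc_{g_-})=L_p(\mathcal{M}^*(1))$. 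Multiplicativity of $\det_{\Lambda_+}$ on short exact sequences, combined with $\det_{\Lambda_+}(N)=\Char_{\Lambda_+}(N)^{-1}$ for torsion $N$, then yields
\[
\det_{\Lambda_+}\bigl(\image(\calL_{\loc})_+\bigr)\;=\;L_p(\mathcal{M}^*(1))\Lambda_+\cdot\Char_{\Lambda_+}(Q)^{-1}.
\]

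Substituting into the definition of $\Iarith(T)_+$ and applying $\det_{\Lambda_+}=\Char_{\Lambda_+}^{-1}$ once more to the torsion module $H^2_{\Iw,S}(F,T)_+$, Conjecture~\ref{TCP} becomes
\[
L_p(\mathcal{M}^*(1))\Lambda_+\;=\;L_p(\mathcal{M}^*(1))\Lambda_+\cdot\Char_{\Lambda_+}(Q)^{-1}\cdot\Char_{\Lambda_+}\bigl(H^2_{\Iw,S}(F,T)_+\bigr),
\]
which after cancellation of the common factor $L_p(\mathcal{M}^*(1))\Lambda_+$ is exactly \eqref{reformulateTCP}. The main obstacle is the determinantal bookkeeping: verifying that the identity $\det_{\Lambda_+}(\image(\calL_{\loc})_+)=L_p(\mathcal{M}^*(1))\Lambda_+\cdot\Char_{\Lambda_+}(Q)^{-1}$ holds as a genuine equality of fractional ideals of $\Lambda_+$ (and not merely up to pseudo-null), which hinges on combining (H.Leop) — controlling the kernel of $\calL_{\loc}$ and the rank of $H^1_{\Iw,S}(F,T)_+$ — with the non-degeneracy encoded in Conjecture~\ref{conj:Tspecialelement}, which provides the linear independence of the $\cc_i$.
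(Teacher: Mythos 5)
Your proposal is correct and follows essentially the same route as the paper: identify $\calL_{\loc}(\cc_1)\wedge\cdots\wedge\calL_{\loc}(\cc_{g_-})$ with $L_p(\mathcal{M}^*(1))$ via Proposition~\ref{prop:Twedgeinterpolation}, compute $\det_{\Lambda_+}(\image(\calL_{\loc})_+)=\calL_{\loc}(\cc)\cdot\ff_{\cc}^{-1}\Lambda_+$ from the exact sequence relating the span of the $\calL_{\loc}(\cc_i)$ to the quotient $H^1_{\Iw,S}(F,T)_+/(\cc_1,\ldots,\cc_{g_-})$, and cancel the common factor in the definition of $\Iarith(T)_+$. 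Your version is in fact slightly more explicit than the paper's about the injectivity-modulo-torsion and rank bookkeeping, but the argument is the same.
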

\begin{proof}
For any non-zero element $\cc=\cc_1\wedge\cdots\wedge\cc_{g_-}\in\wedge^{g_-}H^1_{\Iw,S}(F,T)_+$, we write $\ff_\cc$ for a generator of $\Char_{\Lambda_+}H^1_{\Iw,S}(F,T)_+/(\cc_1,\ldots,\cc_{g_-})$. 
Therefore, we have
\[
e_+\cdot\Iarith(A)=\ff_2\ff_\cc^{-1}\cdot\calL_{\loc}(\cc)\cdot\Lambda_+
\]
for any non-trivial $\cc$. If furthermore 
$$\calL_{\loc}(\cc)=L_p(\mathcal{M}^*(1)),$$ 
the result follows immediately.
\end{proof}




\subsection{Bounded $p$-adic $L$-functions}

Throughout, we assume that Conjecture \ref{conj:Tspecialelement} holds. 
Let $\cc=\cc_1\wedge\cdots\wedge\cc_{g_-}$ be the element verifying the conjecture. 

\begin{defn}
\label{define:TmodifiedSelmerColeman}
For $\uI\in\fI_p$, we define 
\begin{align*}
\col_T^{\uI}:\HIw(F_p,T)&\rightarrow\Lambda^{\oplus g_-}\\
z&\mapsto\oplus_{i\in\uI}\col_{T,i}(z)
\end{align*}
and $H^1_{\uI}(F_p,T)$ is defined to be the kernel of $\col_T^{\uI}$.
\end{defn}
 
\begin{lemma}\label{lem:Timage}
For $\uI\in \fI_p$ and a character $\eta$ modulo $p$, there exists an integer $n(\uI,\eta)\ge 0$ such that 
$$\det\left(\image\left(\col_T^{\uI}\right)^\eta\right)=X^{n(\uI,\eta)}\Zp[[X]].$$
If the basis of $\DD_p(T)$ that determines $\col_T^{\uI,\eta}$ as in \eqref{eq:decomp} is strongly admissible in the sense of Definition~\ref{def:admissiblebasis}, then we may take $n(\uI,\eta)=0$.
\end{lemma}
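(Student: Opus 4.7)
The plan is to locate $\image(\col_T^\uI)^\eta$ as a finite-index submodule of an explicit free $\Zp[[X]]$-module of rank $g_-$ whose determinant is a pure power of $X$. Applying Proposition~\ref{prop:iageofcolemanis} prime-by-prime at each $\fp\mid p$ and taking the obvious direct sum, $\image(\col_T^\uI)^\eta$ is contained with finite index in
\[
N_{\uI}^\eta\;:=\;\bigl\{F\in\Zp[[X]]^{\oplus g_-}:F(0)\in U_{\uI}^\eta\bigr\},
\]
where $U_\uI^\eta := \bigoplus_{\fp\mid p} U_{I_\fp}^{\eta}$ is the semi-local analogue of the $\Zp$-submodule appearing in loc.\,cit.

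Corollary~\ref{cor:pseudosurj2} (applied componentwise) exhibits $N_\uI^\eta$ as a free $\Zp[[X]]$-module of rank $g_-$ whose basis differs from a standard basis of $\Zp[[X]]^{\oplus g_-}$ by a diagonal matrix carrying $X$'s in the directions outside $U_{\uI}^\eta$ and $1$'s inside. Setting $n(\uI,\eta):=g_{-}-\dim_{\Zp}U_{\uI}^\eta$, this yields $\det_{\Zp[[X]]}N_{\uI}^\eta = X^{n(\uI,\eta)}\Zp[[X]]$ directly. Since $N_\uI^\eta/\image(\col_T^\uI)^\eta$ is finite as an abelian group, it is pseudo-null over $\Zp[[X]]$ and hence has trivial determinant; multiplicativity of determinants in the short exact sequence
\[
0\to\image(\col_T^\uI)^\eta\to N_\uI^\eta\to N_\uI^\eta/\image(\col_T^\uI)^\eta\to 0
\]
then gives $\det\image(\col_T^\uI)^\eta = X^{n(\uI,\eta)}\Zp[[X]]$, proving the first assertion.

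For the strong-admissibility refinement, the two conditions of Definition~\ref{def:admissiblebasis} translate (again semi-locally) into precisely the hypotheses of Corollary~\ref{cor:pseudosurj}(a) when $\eta$ is trivial and (b) when $\eta$ is non-trivial. These force $\image(\col_T^\uI)^\eta$ to have finite index in the full $\Zp[[X]]^{\oplus g_-}$; combined with the containment $\image(\col_T^\uI)^\eta\subset N_\uI^\eta$, this in turn forces $N_\uI^\eta$ to have finite index in $\Zp[[X]]^{\oplus g_-}$. By the saturation of $U_\uI^\eta$ in $\Zp^{\oplus g_-}$ this is only possible when $U_\uI^\eta = \Zp^{\oplus g_-}$, i.e.\ $n(\uI,\eta) = 0$. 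I expect the only real obstacle to be the routine semi-local bookkeeping required to invoke the prime-by-prime statements of \S\ref{subsec:imagesofcol} consistently over the decomposition $\HIw(F_p,T) = \bigoplus_{\fp\mid p}\HIw(F_\fp,T)$; everything else is immediate from the cited results.
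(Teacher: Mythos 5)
Your proposal is correct and follows the same route as the paper, whose proof of this lemma is the one-line citation of Proposition~\ref{prop:iageofcolemanis} and Corollary~\ref{cor:pseudosurj}; you have simply filled in the semi-local bookkeeping, the identification $n(\uI,\eta)=g_--\rk_{\Zp}U_{\uI}^\eta$ via the free module of Corollary~\ref{cor:pseudosurj2}, and the multiplicativity-of-determinants step that the authors leave implicit. Nothing further is needed.
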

\begin{proof}
{This follows from Proposition~\ref{prop:iageofcolemanis} and Corollary~\ref{cor:pseudosurj}.}
\end{proof}
To simplify notation we sometimes will write $\col_T^{\uI}(\cc_i)$ in place of $\col_T^{\uI}(\loc(\cc_i))$ for $1\le i\le g_-$.
\begin{defn}
\label{def:signedpadicLfunc}
For each $\uI\in \fI_p$, we define the $p$-adic $L$-function $L_{\uI}(\mathcal{M}^*(1))$ to be $\det\left(\col^{\uI}_T(\cc_i)\right)$. 
\end{defn}

\begin{lemma}\label{lem:Tdet}
We have
\begin{align*}
\det\left({\image\left(\col_T^{\uI}\right)}\Big{/}{\textup{span}_{\Lambda}\,\left\{\col^{\uI}_T(\cc_i)\right\}_{i=1}^{g_-}}\right)^\eta=\left(L_{\uI}(\mathcal{M}^*(1))^\eta/X^{n(\uI,\eta)}\right)\cdot\Zp[[X]]
\end{align*}
for some integer $n(\uI,\eta)\ge0$.
If the basis of $\DD_p(T)$ that determines $\col_T^{\uI,\eta}$ is strongly admissible then we may take $n(\uI,\eta)=0$.

\end{lemma}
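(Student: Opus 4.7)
The plan is a direct computation using the multiplicativity of the determinant functor on short exact sequences. First I would verify that $L_{\uI}(\mathcal{M}^*(1))^\eta=\det(\col^{\uI}_T(\cc_i)^\eta)$ is a nonzero element of $\Zp[[X]]$, so that the $g_-$ vectors $\col^{\uI}_T(\cc_i)^\eta$ are linearly independent over $\mathrm{Frac}(\Zp[[X]])$. Combined with Lemma~\ref{lem:Timage}, this forces the chain of $\Zp[[X]]$-modules
\[
\textup{span}_\Lambda\{\col^{\uI}_T(\cc_i)\}^\eta \;\subset\; \image(\col^{\uI}_T)^\eta \;\subset\; \Zp[[X]]^{\oplus g_-}
\]
to consist of modules all of generic rank $g_-$, so each successive quotient is $\Zp[[X]]$-torsion.

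Next, applying the multiplicative property of $\det$ to the short exact sequence
\[
0 \longrightarrow \textup{span}_\Lambda\{\col^{\uI}_T(\cc_i)\}^\eta \longrightarrow \image(\col^{\uI}_T)^\eta \longrightarrow \left(\image(\col^{\uI}_T)\big/\textup{span}_\Lambda\{\col^{\uI}_T(\cc_i)\}\right)^\eta \longrightarrow 0
\]
reduces the statement to identifying the determinants of the two leftmost terms. By Lemma~\ref{lem:Timage}, $\det(\image(\col^{\uI}_T)^\eta)=X^{n(\uI,\eta)}\Zp[[X]]$, with $n(\uI,\eta)=0$ whenever the chosen basis is strongly admissible (this handles the second assertion of the lemma verbatim). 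The submodule $\textup{span}_\Lambda\{\col^{\uI}_T(\cc_i)\}^\eta$ is generated by $g_-$ vectors inside the free rank $g_-$ module $\Zp[[X]]^{\oplus g_-}$ with $\Zp[[X]]$-torsion cokernel; by the concrete description of $\det$ for such submodules recalled in Section~\ref{subsec:imagesofcol}, its determinant is the principal ideal generated by the determinant of the $g_-\times g_-$ matrix formed by the $\col^{\uI}_T(\cc_i)^\eta$, which is exactly $L_{\uI}(\mathcal{M}^*(1))^\eta\cdot\Zp[[X]]$ by Definition~\ref{def:signedpadicLfunc}.

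Combining the two identifications yields the claimed formula
\[
\det\left((\image/\textup{span})^\eta\right)=\frac{L_{\uI}(\mathcal{M}^*(1))^\eta}{X^{n(\uI,\eta)}}\cdot\Zp[[X]].
\]
There is essentially no mathematical obstacle here; the proof is a direct bookkeeping exercise given the hypotheses. The one point that requires some care is to track the conventions on $\det$ versus $\Char$ for torsion $\Zp[[X]]$-modules (which are mutually inverse, as recorded earlier in the excerpt), so that the ratio on the right-hand side appears with the correct orientation. Once the conventions are aligned, the identity falls out from the exactness of $\det$, and the addendum for the strongly admissible case is a direct application of the final clause of Lemma~\ref{lem:Timage}.
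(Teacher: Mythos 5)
Your proof is correct and follows essentially the same route as the paper, whose own proof is a one-line appeal to Lemma~\ref{lem:Timage} together with the multiplicativity of $\det$ on exact sequences — exactly the two ingredients you flesh out (plus the explicit description of $\det$ of a full-rank submodule of $\Zp[[X]]^{\oplus g_-}$, which is how $L_{\uI}(\mathcal{M}^*(1))^\eta$ enters via Definition~\ref{def:signedpadicLfunc}). Your closing caveat about the $\det$ versus $\Char$ orientation is well taken, as the paper itself is loose on this point.
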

\begin{proof}
This follows at once from Lemma~\ref{lem:Timage} using the fact that taking $\det$ is compatible with exact sequences. 
\end{proof}

The following results explain how these functions are related to complex $L$-values and Perrin-Riou's $p$-adic $L$-functions.

\begin{proposition}
Let $\mathcal{C}$ be the matrix of $(1-\vp)^{-1}(p\vp-1)$ with respect to the basis $v_1',\ldots,v_g'$.
Let $\eta$ be a character on $\Delta$ and $\uI\in\fI_p$, then
\[
\eta\left(L_{\uI}(\mathcal{M}^*(1))\right)=
\begin{cases}
{L_{\{p\}}(\mathcal{M}^*(1),1)}\sum_{\uJ\in\fI_p}\mathcal{C}_{\uI,\uJ}\frac{\Omega_{\mathcal{M}^*(1),p}(\uJ)}{\Omega_{\mathcal{M}^*(1)}(\uJ)}
&\text{if $\eta$ is trivial,}\\
\left(\frac{p^n}{\tau(\eta^{-1})}\right)^{g_{-}}{L_{\{p\}}(\mathcal{M}^*(1),\eta^{-1},1)}\frac{\Omega_{\mathcal{M}(\eta)^*(1),p}(\uI)}{{\Omega_{\mathcal{M}(\eta)^*(1)}(\uI)}}&\text{otherwise,}
\end{cases}
\]
where $\mathcal{C}_{\uI,\uJ}$ is the determinant of the $g_-\times g_-$ submatrix  of $\mathcal{C}$ whose entries correspond to the elements of $\uI$ and $\uJ$.
\end{proposition}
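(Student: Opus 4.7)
The plan is to reduce the statement to the explicit evaluation formula of Lemma~\ref{lem:evaluateColeman} for $\eta\circ\col_{T,j}$, combined with Conjecture~\ref{conj:Tspecialelement} on Kolyvagin determinants. Since $\eta$ extends to a ring homomorphism $\Lambda\to\overline{\QQ}_p$, I will first commute $\eta$ with the determinant to obtain
$$\eta(L_{\uI}(\mathcal{M}^*(1)))=\det\bigl(\eta(\col_{T,j}(\cc_i))\bigr)_{1\le i\le g_-,\ j\in\uI},$$
and then compute each entry via the semi-local version of Lemma~\ref{lem:evaluateColeman}, splitting the argument into the two cases dictated by the statement of that lemma.

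In the non-trivial character case, the lemma yields
$$\eta(\col_{T,j}(\cc_i))=\frac{p}{\tau(\eta^{-1})}\Bigl[\sum_{\sigma\in G_1}\eta^{-1}(\sigma)\exp_1^*(\loc_\fp(\cc_i)^\sigma),\,v_{\fp,j}'\Bigr]$$
for $j\in I_\fp\subset\uI$, so that the $g_-\times g_-$ matrix $\bigl(\eta(\col_{T,j}(\cc_i))\bigr)_{i,j}$ is precisely $\frac{p}{\tau(\eta^{-1})}$ times the Kolyvagin regulator matrix $\mathfrak{M}_\eta^{\uI}(\cc)$ of Definition~\ref{define:thelevelnregulator}. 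Pulling out the scalar contributes the factor $(p/\tau(\eta^{-1}))^{g_-}$, and Conjecture~\ref{conj:Tspecialelement} then identifies $\det\mathfrak{M}_\eta^{\uI}(\cc)$ with the product of an $L$-value and a ratio of periods, yielding the second case of the proposition.

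The trivial character case is slightly more involved. Here Lemma~\ref{lem:evaluateColeman} gives $\eta(\col_{T,j}(\cc_i))=[\exp_0^*(\loc(\cc_i)),(1-\vp)^{-1}(p\vp-1)v_j']$. Writing $(1-\vp)^{-1}(p\vp-1)v_j'=\sum_{k=1}^{g}\mathcal{C}_{kj}v_k'$ exhibits the $g_-\times g_-$ matrix in question as a product $A\cdot B$, where $A=\bigl([\exp_0^*(\loc(\cc_i)),v_k']\bigr)_{i=1,\ldots,g_-,\ k=1,\ldots,g}$ and $B$ is the $g\times g_-$ submatrix of $\mathcal{C}$ obtained by keeping the columns indexed by $\uI$. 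The Cauchy--Binet formula then expands the determinant as
$$\det(AB)=\sum_{\uJ\in\fI_p}\det(A|_{\uJ})\det(B|_{\uJ})=\sum_{\uJ\in\fI_p}\det\mathfrak{M}_1^{\uJ}(\cc)\cdot\mathcal{C}_{\uI,\uJ},$$
where the first factor is recognised from Definition~\ref{define:thelevelnregulator} (with $n=0$) and the second from the definition of $\mathcal{C}_{\uI,\uJ}$. Applying Conjecture~\ref{conj:Tspecialelement} to each $\det\mathfrak{M}_1^{\uJ}(\cc)$ and factoring out the common $L$-value yields the first case of the proposition.

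The main obstacle is not conceptual but one of bookkeeping: I must check that the scalar factors and conventions for $\tau(\eta^{-1})$ used in Definition~\ref{define:thelevelnregulator} and Conjecture~\ref{conj:Tspecialelement} match those produced by Lemma~\ref{lem:evaluateColeman}, and that the row/column labelling of $\mathcal{C}$ is such that the Cauchy--Binet minor $\det(B|_{\uJ})$ is indeed the quantity denoted $\mathcal{C}_{\uI,\uJ}$ in the statement. Once these conventions are pinned down, the proof is a direct assembly of the two key inputs.
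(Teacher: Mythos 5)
Your proposal is correct and follows essentially the same route as the paper's own (much terser) proof: apply the ring homomorphism $\eta$ entrywise, evaluate via Lemma~\ref{lem:evaluateColeman}, recognise the resulting matrix as (a scalar multiple of) $\mathfrak{M}_\eta^{\uI}(\cc)$ from Definition~\ref{define:thelevelnregulator}, and invoke Conjecture~\ref{conj:Tspecialelement}; in the trivial-character case the paper's phrase ``expand $(1-\vp)^{-1}(p\vp-1)$ by the matrix $\mathcal{C}$'' is exactly your Cauchy--Binet step. Your explicit bookkeeping of the factor $(p/\tau(\eta^{-1}))^{g_-}$ and of the row/column labelling of $\mathcal{C}_{\uI,\uJ}$ only makes more precise what the paper leaves implicit.
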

\begin{proof}
When $\eta$ is trivial, we have from Lemma~\ref{lem:evaluateColeman} the formula
\[
\eta\left(L_{\uI}(\mathcal{M}^*(1))\right)=\det\left(\left[\exp^*_0(\xi_i),(1-\vp)^{-1}(p\vp-1)v_{\fp,j}'\right]\right).
\]
So, we may expand $(1-\vp)^{-1}(p\vp-1)$ by the matrix $\mathcal{C}$ and obtain the first part of the proposition using Definition~\ref{define:thelevelnregulator}.

When $\eta$ is non-trivial, this follows immediately from Lemma~\ref{lem:evaluateColeman} and Definition~\ref{define:thelevelnregulator}.
\end{proof}

\begin{lemma}\label{lem:wedge}
Let $R$ be a commutative ring. Let $M$ and $M'$  be two $R$-modules, with a homomorphism $F:M\rightarrow M'$ of $\Lambda$-modules. Let $m\le n$ be integers. Fix $a_1,\ldots, a_m\in M$ and $b_1,\ldots, b_n\in M'$ with
\[
F(a_i)=\sum_{j=1}^n r_{i,j}b_j
\]
for $i=1,\ldots,m$. Then
\[
F(a_{1}\wedge\ldots\wedge a_{m})=\sum_{j_1<\cdots<j_m}\det(r_{j_1,\ldots,j_m})\,b_{j_1}\wedge\cdots\wedge b_{j_m}
\]
where $r_{j_1,\ldots,j_m}$ is the $m\times m$ matrix whose $(k,l)$-entry is given by $r_{k,j_l}$.
\end{lemma}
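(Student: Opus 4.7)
The plan is to deduce this from the standard multilinear-algebra identity expressing the induced map on exterior powers in terms of maximal minors (essentially Cauchy–Binet for wedges). First I would observe that since $F$ is $R$-linear, it induces an $R$-linear map on the exterior powers
\[
\wedge^{m} F \colon \wedge^{m} M \longrightarrow \wedge^{m} M', \qquad a_{1} \wedge \cdots \wedge a_{m} \longmapsto F(a_{1}) \wedge \cdots \wedge F(a_{m}),
\]
and the statement of the lemma is precisely the evaluation of $\wedge^{m} F$ on $a_{1} \wedge \cdots \wedge a_{m}$.

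Next I would substitute the expansion $F(a_{i}) = \sum_{j=1}^{n} r_{i,j} b_{j}$ into each factor and expand multilinearly:
\[
F(a_{1}) \wedge \cdots \wedge F(a_{m}) \;=\; \sum_{(j_{1},\dots,j_{m}) \in \{1,\dots,n\}^{m}} r_{1,j_{1}} \cdots r_{m,j_{m}}\, b_{j_{1}} \wedge \cdots \wedge b_{j_{m}}.
\]
Any tuple with a repeated index contributes zero by antisymmetry, so only tuples of pairwise distinct indices survive. Grouping the surviving tuples according to the underlying unordered $m$-subset $\{j_{1} < \cdots < j_{m}\} \subseteq \{1,\dots,n\}$, each such subset contributes one term per permutation $\sigma \in S_{m}$, and using $b_{j_{\sigma(1)}} \wedge \cdots \wedge b_{j_{\sigma(m)}} = \mathrm{sgn}(\sigma)\, b_{j_{1}} \wedge \cdots \wedge b_{j_{m}}$ I obtain
\[
\wedge^{m} F(a_{1} \wedge \cdots \wedge a_{m}) = \sum_{j_{1} < \cdots < j_{m}} \Bigl( \sum_{\sigma \in S_{m}} \mathrm{sgn}(\sigma)\, r_{1,j_{\sigma(1)}} \cdots r_{m,j_{\sigma(m)}} \Bigr) b_{j_{1}} \wedge \cdots \wedge b_{j_{m}}.
\]
The inner sum is by definition $\det(r_{j_{1},\dots,j_{m}})$ in the notation of the lemma, which gives the claimed formula.

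There is no real obstacle here beyond bookkeeping; the only mild care needed is the indexing convention — the matrix $r_{j_{1},\dots,j_{m}}$ is the $m \times m$ submatrix of $(r_{i,j})$ obtained by keeping all rows $1,\dots,m$ and selecting the columns $j_{1} < \cdots < j_{m}$, and the sign calculation above matches the Leibniz formula for its determinant, so no additional sign corrections arise.
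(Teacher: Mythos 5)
Your proof is correct and is exactly the standard multilinear expansion (substitute, kill repeated indices by antisymmetry, group by ordered index sets, and recognize the Leibniz formula) that the paper itself invokes with the one-line justification ``standard multi-linear algebra.'' Nothing further is needed.
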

\begin{proof}
This is standard multi-linear algebra.
\end{proof}

\begin{theorem}
\label{thm:decomposepadicL} For $\uI,\uJ\in\fI_p$, let $M_T^{\uI,\uJ}$ be the $g_-\times g_-$ the submatrix of $M_T$ whose entries correspond to the elements of $\uI$ and $\uJ$. Then there is a decomposition
\[
L_p(\mathcal{M}^*(1))=\sum_{\uI,\uJ\in\fI_p}\wedge_{i\in \uI}v_{i}\det(M_T^{\uI,\uJ})L_{\uJ}(\mathcal{M}^*(1)).
\]
\end{theorem}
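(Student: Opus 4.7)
The plan is to derive the identity by combining the factorisation~\eqref{eq:decomp} of Perrin-Riou's regulator with the multi-linear expansion of Lemma~\ref{lem:wedge} and then an application of the Cauchy--Binet formula.

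First I would start from Proposition~\ref{prop:Twedgeinterpolation}, which identifies $L_p(\mathcal{M}^*(1))$ with the image $\calL_{\loc}(\cc)$ of the Kolyvagin determinant $\cc=\cc_1\wedge\cdots\wedge\cc_{g_-}$ under the (wedge-extension of the) regulator. By \eqref{eq:decomp}, for each $1\le i\le g_-$ we may write
\[
\calL_T^F(\loc(\cc_i))=\sum_{j=1}^{g}v_j\,(M_T\cdot X)_{j,i},
\qquad\text{where } X_{k,i}:=\col_{T,k}(\loc(\cc_i)).
\]
Here $X\in M_{g\times g_-}(\Lambda)$ is the matrix whose $i$-th column is the column vector $\col_T(\loc(\cc_i))$ introduced in Definition~\ref{define:TmodifiedSelmerColeman}.

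Next I would apply Lemma~\ref{lem:wedge} to the $\Lambda$-linear (extension of the) map $\calL_T^F\circ\loc$, taking $a_i=\cc_i$ and $b_j=v_j$, with coefficient matrix $M_T\cdot X$. This yields
\[
\calL_{\loc}(\cc)=\sum_{\uI\in\fI_p}\det\bigl((M_T\cdot X)_{\uI,\bullet}\bigr)\cdot\wedge_{j\in\uI}v_j,
\]
where $(M_T\cdot X)_{\uI,\bullet}$ is the $g_-\times g_-$ submatrix of $M_T\cdot X$ consisting of the rows indexed by $\uI$.

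The key step is then the Cauchy--Binet formula applied to the rectangular product $(M_T)_{\uI,\bullet}\cdot X$, which gives
\[
\det\bigl((M_T\cdot X)_{\uI,\bullet}\bigr)=\sum_{\uJ\in\fI_p}\det(M_T^{\uI,\uJ})\cdot\det(X_{\uJ,\bullet}).
\]
Finally, one recognises $\det(X_{\uJ,\bullet})=\det(\col_{T,j}(\loc(\cc_i)))_{j\in\uJ,\,1\le i\le g_-}=L_{\uJ}(\mathcal{M}^*(1))$ by Definition~\ref{def:signedpadicLfunc}; substituting this back and combining with $L_p(\mathcal{M}^*(1))=\calL_{\loc}(\cc)$ yields the required decomposition.

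There is essentially no obstacle: every ingredient has already been prepared. The only point worth verifying carefully is that the Cauchy--Binet formula is legitimate here, since the entries of $M_T$ lie in $\calH$ while those of $X$ lie in $\Lambda$; this is harmless because $\calH$ is a commutative $\Lambda$-algebra, and Cauchy--Binet holds over any commutative ring. A second minor bookkeeping point is that $\calL_{\loc}(\cc)$ lies in $\calH\otimes\wedge^{g_-}\DD_p(T)$, matching the ambient space in which $L_p(\mathcal{M}^*(1))$ was defined in Conjecture~\ref{conj:TpadicL}, so the wedge $\wedge_{j\in\uI}v_j$ on the right-hand side makes sense as written.
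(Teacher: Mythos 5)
Your proposal is correct and follows essentially the same route as the paper's proof: both expand $\calL_{\loc}(\cc_i)$ via \eqref{eq:decomp}, apply Lemma~\ref{lem:wedge} to pull out the sum over $\uI$, and then perform the Cauchy--Binet expansion (which the paper carries out without naming it) before invoking Definition~\ref{def:signedpadicLfunc}. The identification $L_p(\mathcal{M}^*(1))=\calL_{\loc}(\cc)$ via Proposition~\ref{prop:Twedgeinterpolation} is also exactly the implicit premise the paper uses.
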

\begin{proof}
Let the $(j,k)$-entry of $M_T$ be $m_{j,k}$. Recall that \eqref{eq:decomp} says that
\[
\calL_{\loc}(\cc_i)=\sum_{1\le j,k\le g} v_j m_{j,k}\col_{T,k}(\cc_i)
\]
for $1\le i\le g_-$. Hence by Lemma~\ref{lem:wedge}, we have
\begin{align*}
\wedge_{i=1}^{g_-}\calL_{\loc}(\cc_i)&=\sum_{\uI\in\fI_p}\wedge_{i\in \uI}v_{i}\det\left(\sum_{k=1}^{g} m_{j,k}\col_{T,k}(\cc_i)\right)_{j\in \uI,1\le i\le g_-}\cdot\\
&=\sum_{\uI\in\fI_p}\wedge_{i\in \uI}v_{i}\sum_{\uJ\in\fI_p}\det( m_{j,k})_{j\in \uI,k\in \uJ}\cdot\det(\col_{T,k}(\cc_i))_{k\in \uJ,1\le i\le g_-}
\end{align*}
as required.
\end{proof}

\subsection{Modified Selmer groups}\label{subset:modifiedSelmergroups}
We now define modified Selmer groups using the Coleman maps $\col_T^{\uI}$.
\begin{lemma}
For any subset $\{i_1,\ldots,i_k\}$ of the set $\{1,\ldots,g\}$, the $\Lambda$-module ${\bigcap_{j=1}^k\ker\col_{i_j}}$ is torsion-free of rank $g-k$.
\end{lemma}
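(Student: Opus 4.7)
The plan is to combine torsion-freeness of the ambient semi-local Iwasawa cohomology with a rank-nullity computation on isotypic components. First, I would recall that under the running hypothesis \textup{(H.nA)}, for each prime $\fp\mid p$ the module $\HIw(F_\fp,T)$ is $\Lambda$-torsion-free of generic rank $g_\fp=[F_\fp:\Qp]d$; this is the standard consequence of the vanishing of $H^0(F_\fp(\mu_{p^\infty}),T)$ (which \textup{(H.nA)} implies, since $T$ is $\Zp$-free) together with the local Euler--Poincar\'e characteristic formula. Summing over the primes above $p$,
$$\HIw(F_p,T)=\bigoplus_{\fp\mid p}\HIw(F_\fp,T)$$
is $\Lambda$-torsion-free of generic $\Lambda$-rank $g=\sum_{\fp\mid p} g_\fp$.

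The torsion-freeness half of the lemma is then immediate: the intersection $\bigcap_{j=1}^{k}\ker\col_{i_j}$ is a $\Lambda$-submodule of the torsion-free module $\HIw(F_p,T)$.

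For the rank statement, I would set $I=\{i_1,\ldots,i_k\}$ and consider the $\Lambda$-homomorphism
$$\col_T^I\,:=\,\bigoplus_{i\in I}\col_{T,i}\colon\HIw(F_p,T)\longrightarrow\Lambda^{\oplus k},$$
whose kernel is precisely $\bigcap_{j=1}^{k}\ker\col_{i_j}$. I would then argue $\eta$-isotypically for each character $\eta$ of $\Delta$: Proposition~\ref{prop:iageofcolemanis} combined with Corollary~\ref{cor:pseudosurj2} identifies $\image(\col_T^I)^\eta$ as a submodule of finite index in a free $\Zp[[X]]$-module of rank $k$, so in particular $\image(\col_T^I)^\eta$ has $\Zp[[X]]$-rank exactly $k$. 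Applying rank-nullity to the short exact sequence of $\Zp[[X]]$-modules
$$0\longrightarrow(\ker\col_T^I)^\eta\longrightarrow\HIw(F_p,T)^\eta\longrightarrow\image(\col_T^I)^\eta\longrightarrow 0,$$
I conclude that $(\ker\col_T^I)^\eta$ has $\Zp[[X]]$-rank $g-k$. Since this holds uniformly in $\eta$, the intersection $\bigcap_{j=1}^{k}\ker\col_{i_j}$ has $\Lambda$-rank $g-k$, as asserted.

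The only substantive input is the identification of the generic rank of $\image(\col_T^I)^\eta$, which is already in hand via the image analysis of \S\ref{subsec:imagesofcol}. I do not expect a genuine obstacle here beyond being careful to decompose into $\eta$-isotypic pieces, since $\Lambda$ is not itself a domain and direct ``$\Lambda$-rank'' bookkeeping is cleanest after reducing to $\Zp[[X]]$ on each isotypic component.
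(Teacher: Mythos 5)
Your proof is correct and follows essentially the same route as the paper's: torsion-freeness of $\HIw(F_p,T)$ from \textup{(H.nA)}, then rank-nullity using the fact that the image of $\oplus_{j}\col_{i_j}$ has rank $k$ (the paper cites Proposition~\ref{prop:image} for this, whereas your appeal to Proposition~\ref{prop:iageofcolemanis} on each isotypic component is if anything the more precise reference). The extra care with $\eta$-isotypic components is a harmless refinement of the same argument.
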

\begin{proof}
Recall that the $\Lambda$-torsion submodule of $\HIw(F_p,T)$ is isomorphic to the module $H^0(F(\mu_{p^\infty})_p,T)$, which is trivial since we assumed (H.nA). It follows that the $\Lambda$-module $\HIw(F_p,T)$ is torsion-free.

By Proposition~\ref{prop:image}, $\image\left(\oplus_{j=1}^k\col_{i_j}\right)$ has rank $k$ over $\Lambda$. But  $\HIw(F_p,T)$ is of rank $g$ over $\Lambda$ thus $\ker\left(\oplus_{j=1}^k\col_{i_j}\right)=\bigcap_{j=1}^k\ker\col_{i_j}$ has rank $g-k$ over $\Lambda$.
\end{proof}

\begin{corollary}
\label{cor:intersectionofcolemanmaps}
\begin{itemize}
\item[(a)]For each $\uI\in\fI_p$, the torsion-free $\Lambda$-module $H^1_{\uI}(F_p,T)$ has rank $g_{+}$.
\item[(b)]$\displaystyle{\bigcap_{i=1}^{g}\ker\col_i=0}$.
\end{itemize}
\end{corollary}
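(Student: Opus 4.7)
The corollary is an immediate application of the preceding lemma, together with the definitions.

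For part (a), recall from Definition~\ref{define:TmodifiedSelmerColeman} that
\[
H^1_{\uI}(F_p,T) = \ker\col_T^{\uI} = \bigcap_{i\in\uI}\ker\col_{T,i}.
\]
Since $\uI\in\fI_p$ has cardinality $|\uI|=g_-$, the preceding lemma (applied with $k=g_-$) tells us directly that this intersection is torsion-free of rank $g-g_-=g_+$. So part (a) is simply a specialization of the lemma.

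For part (b), the preceding lemma applied with $k=g$ and $\{i_1,\ldots,i_g\}=\{1,\ldots,g\}$ yields that $\bigcap_{i=1}^g\ker\col_i$ is torsion-free of rank $g-g=0$. A torsion-free $\Lambda$-module of rank zero must be the zero module, so part (b) follows.

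The main obstacle has already been handled in the preceding lemma (relying on \textup{(H.nA)} to get torsion-freeness of $\HIw(F_p,T)$, and on Proposition~\ref{prop:image} to compute the rank of the image of the Coleman map). Once that is in place the corollary is purely formal, so I do not anticipate any real difficulty here beyond correctly parsing the definitions.
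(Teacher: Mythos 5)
Your proof is correct and is exactly the intended deduction: the paper states the corollary without proof precisely because it follows immediately from the preceding lemma by taking $k=g_-$ (noting $H^1_{\uI}(F_p,T)=\ker\col_T^{\uI}=\bigcap_{i\in\uI}\ker\col_{T,i}$) and $k=g$, together with the fact that a torsion-free $\Lambda$-module of rank zero vanishes.
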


\begin{lemma}
\label{lemma:extendto2g}
Let $W$ be (a torsion-free) $\Lambda$-submodule of $\HIw(F_p,T)$ generated by at most $g_{-}$ elements. Then there is an $\uI\in \fI_p$ such that 
$$W\cap H^1_{\uI}(F_p,T)=0\,.$$
\end{lemma}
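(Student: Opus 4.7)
By Corollary~\ref{cor:intersectionofcolemanmaps}(b), the combined map
\[
\Phi := \bigoplus_{i=1}^{g}\col_{T,i} \colon \HIw(F_p,T) \longrightarrow \Lambda^{\oplus g}
\]
is injective, so restricts to an embedding $\Phi|_W\colon W \hookrightarrow \Lambda^{\oplus g}$. Under this embedding, the intersection $W\cap H^1_{\uI}(F_p,T)$ is identified with the kernel of the composite $\pi_{\uI}\circ \Phi|_W$, where $\pi_{\uI}\colon \Lambda^{\oplus g}\to \Lambda^{\oplus g_-}$ denotes the coordinate projection onto the indices belonging to $\uI$. The lemma is therefore equivalent to exhibiting some $\uI\in\fI_p$ for which $\pi_{\uI}\circ \Phi|_W$ is injective.

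Fixing a generating set $w_1,\ldots,w_k$ of $W$ over $\Lambda$ with $k\le g_-$, I form the matrix $A\in M_{g\times k}(\Lambda)$ whose $j$-th column is $\Phi(w_j)$. Decomposing $\Lambda = \bigoplus_{\eta}\Zp[[X]]$ into its isotypic pieces indexed by characters $\eta$ of $\Delta$, the required injectivity of $\pi_{\uI}\circ \Phi|_W$ translates, for every $\eta$, into the condition that the $\uI$-rows of $A^\eta$ attain the generic rank of $A^\eta$ over the domain $\Zp[[X]]$. Since $W$ has at most $g_-$ generators over $\Lambda$, this generic rank is at most $g_-$ in every isotypic component.

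My plan is to construct $\uI$ by a greedy procedure. Starting with $\uI_0=\emptyset$, at each step $j\ge 1$ I adjoin any index $i\in\{1,\ldots,g\}\setminus \uI_{j-1}$ for which $\col_{T,i}|_W$ restricts non-trivially to the current intersection $\bigcap_{\ell\in \uI_{j-1}}\ker(\col_{T,\ell}|_W)$. If no such index exists, then the intersection already lies inside $\bigcap_{\ell=1}^{g}\ker(\col_{T,\ell}|_W)=0$ by Corollary~\ref{cor:intersectionofcolemanmaps}(b), and the procedure terminates; otherwise each step strictly decreases the generic rank of the running intersection in at least one isotypic component. Once the process halts I pad $\uI$ with arbitrary extra indices to reach cardinality exactly $g_-$, producing the desired element of $\fI_p$.

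The main obstacle will be to show that this greedy procedure terminates with $|\uI|\le g_-$. I expect this bound to follow by combining (i)~the rank constraint coming from the fact that $W$ has at most $g_-$ generators over $\Lambda$, which controls the total $\Lambda$-rank of $\Phi(W)$, with (ii)~the image constraints at $X=0$ furnished by Proposition~\ref{prop:image} and the pseudo-surjectivity statement of Corollary~\ref{cor:pseudosurj2}. Together these should force the collections of ``essential rows'' of the matrices $A^\eta$ for different $\eta$ to admit a common superset of cardinality at most $g_-$, thereby producing the sought-after $\uI$.
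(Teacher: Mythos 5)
Your opening reformulation is correct, and you have correctly isolated Corollary~\ref{cor:intersectionofcolemanmaps}(b) as the essential input. But the entire content of the lemma is the step you defer to the end: that $g_-$ indices suffice. As you set it up, each greedy step is only guaranteed to drop the generic rank of the running intersection in \emph{at least one} isotypic component, while that intersection can carry positive rank in each of the $p-1$ components of $\Delta$; so the procedure is only guaranteed to halt after on the order of $(p-1)g_-$ steps (trivially, $g$ steps), not $g_-$. The tools you propose for closing the gap --- Proposition~\ref{prop:image} and Corollary~\ref{cor:pseudosurj2}, which constrain the images of the Coleman maps at $X=0$ --- have no bearing on this counting problem, and you give no argument that the ``essential rows'' of the matrices $A^\eta$ for the various $\eta$ admit a common superset of size $g_-$. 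As written, the proposal is therefore not a proof.

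The paper's argument runs in the opposite direction and is purely module-theoretic: assume $W\cap H^1_{\uI}(F_p,T)\neq 0$ for \emph{every} $\uI\in\fI_p$ and induct upward on the size of the index set, showing that for every $J\subset\{1,\ldots,g\}$ with $\#J=g_-+k$ there is a non-zero element of $W\cap\bigcap_{i\in J}\ker\col_i$. For the inductive step one picks, for each $s$, a non-zero $z_s$ in the intersection over $J\setminus\{i_s\}$; since $W$ is generated by at most $g_-$ elements over $\Lambda$, the $g_-+n+1$ elements $z_s$ satisfy a non-trivial $\Lambda$-linear relation $\sum b_s z_s=0$, and the first non-vanishing term $b_{s_0}z_{s_0}$ is a non-zero element (by torsion-freeness) lying in $\bigcap_{i\in J}\ker\col_i$, because it lies in $\ker\col_{i_{s_0}}$ via the relation and in the remaining kernels by the choice of $z_{s_0}$. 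Taking $k=g_+$ contradicts Corollary~\ref{cor:intersectionofcolemanmaps}(b). This dependence/exchange mechanism is precisely what converts the hypothesis that $W$ has at most $g_-$ generators into the bound $\#\uI\le g_-$; a naive greedy selection does not, and you would need to supply an argument of this kind to complete your proof.
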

\begin{proof}
Assume contrary that 
$$W\cap H^1_{\uI}(F_p,T)\neq0$$
for any $\uI\in \fI_p$. We prove by induction on $0\leq k\leq g_+$ that for every subset $J$ of $\{1,\cdots,g\}$ of size $g_-+k$, there is an non-zero element 
$$0\neq w_J \in W\cap \left(\bigcap_{i\in J}\ker\col_i\right).$$ When $k=0$, this is the hypothesis of the lemma. Assume its truth for $k=n<g_+$ and consider $J=\{i_1,\cdots,i_{g_-+n+1}\}\subset \{1,\cdots,g\}$. Set $J_s=J\backslash \{i_s\}$ for $s=1,\cdots, g_-+n+1$ and choose using the induction hypothesis a non-zero element $z_s \in W\cap \left(\bigcap_{i\in J_s}\ker\col_i\right)$. As the $\Lambda$-module $W$ is generated by at most $g_-$ elements, it follows that $\{z_s\}_{s=1}^{g_-+n+1}$ verifies a non-trivial relation
$$b_1z_1+b_2z_2+\cdots+b_{g_-+n+1}z_{g_-+n+1}=0,$$
where $b_i \in\Lambda$.  Let $s_0 \in\{1,\cdots,g_-+n+1\}$ be the smallest index such that $b_{s_0}\neq 0$. Then observe that $b_{s_0}z_{s_0}$ is non-zero since $W$ is torsion free and $b_{s_0}z_{s_0}\in \textup{span}\{z_i\}_{i\neq s_0} \subset \ker \col_{i_{s_0}}$, where the latter containment is due to our choice of the elements $z_j$'s. On the other hand, $b_{s_0}z_{s_0} \in \bigcap_{s\neq s_0}\ker \col_{i_s} $ by the choice of $z_{s_0}$, hence 
$$0\neq b_{s_0}z_{s_0}\in \ker \col_{{s_0}} \cap \left( \bigcap_{s\neq s_0}\ker \col_{i_s} \right)=\bigcap_{i\in J}\ker\col_i \,,$$
as desired. Now this shows (for $k=g_+$) that 
$$W\cap \left(\bigcap_{i=1}^{g}\ker\col_i\right)\neq0\,,$$
contradicting Corollary~\ref{cor:intersectionofcolemanmaps}(b).
\end{proof}
\begin{proposition}
\label{prop:specialelementstransversaltopmsubmodules}
There is an $\uI\in \fI_p$ such that 
$$\loc\left(H^1_{\Iw,S}(F,T)_+\right)\cap H^1_{\uI}(F_p,T)_+=\{0\}.$$
\end{proposition}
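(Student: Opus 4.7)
The plan is to reduce to Lemma~\ref{lemma:extendto2g}, applied to a $\Lambda$-submodule of $\HIw(F_p,T)$ constructed out of the $+$-part of the global Iwasawa cohomology. Since $H^1_{\Iw,S}(F,T)_+$ has $\Lambda_+$-rank $g_-$ (as recalled at the beginning of this section, via \cite[Proposition~1.3.2]{perrinriou95}), I would select a $\Lambda_+$-linearly independent family $\xi_1,\ldots,\xi_{g_-}\in H^1_{\Iw,S}(F,T)_+$; by the rank this family is maximal. Let $W\subset\HIw(F_p,T)_+$ denote the $\Lambda$-submodule generated by $\loc(\xi_1),\ldots,\loc(\xi_{g_-})$.

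As recorded in the proof of the preceding lemma, (H.nA) forces $\HIw(F_p,T)$ to be $\Lambda$-torsion-free; hence $W$ is torsion-free, and it is by construction generated by at most $g_-$ elements. Lemma~\ref{lemma:extendto2g} therefore produces $\uI\in\fI_p$ with $W\cap H^1_\uI(F_p,T)=0$, and because $W\subset\HIw(F_p,T)_+$, the intersection $W\cap H^1_\uI(F_p,T)_+$ also vanishes.

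It then remains to upgrade this to the vanishing of $\loc(H^1_{\Iw,S}(F,T)_+)\cap H^1_\uI(F_p,T)_+$ itself. Let $y=\loc(\xi)$ belong to this intersection, for some $\xi\in H^1_{\Iw,S}(F,T)_+$. The set $\{\xi,\xi_1,\ldots,\xi_{g_-}\}$ has $g_-+1$ elements in a module of $\Lambda_+$-rank $g_-$, so admits a nontrivial $\Lambda_+$-linear relation; the $\Lambda_+$-independence of the $\xi_i$ forces the coefficient of $\xi$ in that relation to be nonzero. Thus there exist $0\neq a\in\Lambda_+$ and $a_i\in\Lambda_+$ with $a\xi=\sum a_i\xi_i$, whence $ay=\loc(a\xi)\in W$. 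Since $H^1_\uI(F_p,T)$ is a $\Lambda$-submodule we also have $ay\in H^1_\uI(F_p,T)$, so $ay\in W\cap H^1_\uI(F_p,T)=0$. The torsion-freeness of $\HIw(F_p,T)$ combined with $a\neq 0$ in $\Lambda$ then forces $y=0$.

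The main obstacle is that Lemma~\ref{lemma:extendto2g} cannot be applied directly to $\loc(H^1_{\Iw,S}(F,T)_+)$, because a priori this image may require more than $g_-$ generators as a $\Lambda$-module; the torsion argument in the final paragraph circumvents this by passing through the $g_-$-generated submodule $W$ and exploiting the torsion-freeness of $\HIw(F_p,T)$ supplied by (H.nA).
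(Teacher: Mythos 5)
Your argument is correct and follows essentially the same route as the paper, which proves the proposition by applying Lemma~\ref{lemma:extendto2g} directly to $W=\loc\left(H^1_{\Iw,S}(F,T)_+\right)$, citing the weak Leopoldt conjecture. Your detour through the $g_-$-generated submodule $W$ followed by the torsion bootstrap addresses a real (if minor) point that the paper's one-line proof elides — weak Leopoldt only gives $\Lambda_+$-rank $g_-$, not generation by $g_-$ elements — though one can equally observe that the proof of Lemma~\ref{lemma:extendto2g} uses the generation hypothesis only to produce a nontrivial $\Lambda$-linear relation among any $g_-+1$ elements, so the rank bound alone suffices there.
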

\begin{proof}
This is immediate from Lemma~\ref{lemma:extendto2g} by setting $W=\loc\left(\HIw(F,T)_+\right)$, since we assumed the weak Leopoldt conjecture.\end{proof}


Let $T^\dagger=T^*\otimes\mu_{p^\infty}$ denote the Cartier dual of $T$. The standard Selmer group $\Sel(T^\dagger/F(\mu_{p^\infty}))$ is defined to be
\[
\Sel(T^\dagger/F(\mu_{p^\infty})):=\ker\left(H^1(F(\mu_{p^\infty}),T^\dagger)\rightarrow\bigoplus_v\frac{H^1(F(\mu_{p^\infty})_v,T^\dagger)}{H^1_f(F(\mu_{p^\infty})_v,T^\dagger)}\right).
\]
We shall modify the conditions at primes above $p$ using our Coleman maps. 

Fix  $\uI\in\fI_p$. By local Tate duality, there is a pairing
\begin{equation}\label{eq:Tpairing}
\HIw(F_p,T)\times H^1(F(\mu_{p^\infty})_p,T^\dagger)\rightarrow\Qp/\Zp
\end{equation}
where $H^1(F(\mu_{p^\infty})_p,T^\dagger)$ denotes $
\bigoplus_{\fp|p}H^1(F(\mu_{p^\infty})_\fp,T^\dagger).$

Define $H^1_{\uI}(F(\mu_{p^\infty})_p,T^\dagger)$ to be the orthogonal complement of  $H^1_{\uI}(F_p,T)$ under the pairing \eqref{eq:Tpairing}.

\begin{defn}
\label{def:modifiedselmergroup}
We define the $\uI$-Selmer group $\Sel_{\uI}(T^\dagger/F(\mu_{p^\infty}))$ to be
\[
\ker\left(  H^1(F(\mu_{p^\infty}),T^\dagger) \longrightarrow
\bigoplus_{v\nmid p}\frac{H^1(F(\mu_{p^\infty})_v,T^\dagger)}{H^1_f(F(\mu_{p^\infty})_v,T^\dagger)}\oplus\frac{H^1(F(\mu_{p^\infty})_p,T^\dagger)}{H^1_{\uI}(F(\mu_{p^\infty})_p,T^\dagger)}\right).
\]
\end{defn}
\begin{remark}
Let $A/\QQ$ be an abelian variety of dimension $g$ and $A^\vee$ denote its dual abelian variety. Throughout this remark we set $T=T_p(A)$, the $p$-adic Tate module of $A$. In this case, we have for the local conditions that determine the standard Selmer group that
$$H^1_f(\Qp(\mu_{p^\infty}),T^\dagger)=A^\vee\left(\Qp(\mu_{p^\infty})\right)\,.$$
When $A$ has good ordinary reduction at $p$, the $\Lambda$-module $A^\vee\left(\Qp(\mu_{p^\infty})\right)$ has corank $g$ by the main result of \cite{schneider87} and $\Sel(A^\vee/\QQ(\mu_{p^\infty}))$ is predicted to be $\Lambda$-cotorsion. In the supersingular case, however, the module $A^\vee\left(\Qp(\mu_{p^\infty})\right)$ has $\Lambda$-corank $2g$, thus $\Sel(A^\vee/\QQ(\mu_{p^\infty}))$ has corank at least $g$. In the definition above we replace the local conditions $A^\vee\left(\Qp(\mu_{p^\infty})\right)$ that appear in the definition of the standard Selmer group by a corank-$g$ submodule and conjecture that the resulting Selmer groups are $\Lambda$-cotorsion.
\end{remark}

\begin{proposition}\label{prop:Ttorsion}
For $\uI\in\fI_p$ verifying the conclusion of Proposition~\ref{prop:specialelementstransversaltopmsubmodules} the $\Lambda_+$-module $\Sel_{\uI}(T^\dagger/F(\mu_{p^\infty}))_+$ is cotorsion.
\end{proposition}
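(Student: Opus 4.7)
The plan is to apply Iwasawa-theoretic Poitou--Tate global duality to the Selmer datum defining $\Sel_{\uI}(T^\dagger/F(\mu_{p^\infty}))$, and to reduce the torsion claim to a rank comparison governed by Perrin-Riou's formulae together with the transversality property furnished by Proposition~\ref{prop:specialelementstransversaltopmsubmodules}.

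Concretely, dualising the defining sequence of the Selmer group and splicing it with global duality for $T$ in the Iwasawa-theoretic limit produces an exact sequence of $\Lambda$-modules
\begin{equation}\label{eq:theproofproposalPT}
H^1_{\Iw,S}(F,T) \xrightarrow{\ \psi\ } \frac{\HIw(F_p,T)}{H^1_{\uI}(F_p,T)} \oplus E \longrightarrow \Sel_{\uI}(T^\dagger/F(\mu_{p^\infty}))^{\vee} \longrightarrow H^2_{\Iw,S}(F,T) \longrightarrow 0,
\end{equation}
in which $\psi$ is componentwise localisation composed with projection onto the quotient by the chosen local condition, and $E=\bigoplus_{v\in S,\,v\nmid p} H^1_{\Iw}(F_v,T)/H^1_{f,\Iw}(F_v,T)$. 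Each summand of $E$ is a $\Lambda$-torsion module by a routine local argument at primes of residue characteristic different from $p$. The hypothesis (H.Leop) simultaneously forces $H^2_{\Iw,S}(F,T)$ to be $\Lambda$-torsion, cf.\ \cite[\S1.3]{perrinriou95}. Restricting~(\ref{eq:theproofproposalPT}) to $+$-parts, the claim therefore reduces to showing that the cokernel of $\psi_+$ is $\Lambda_+$-torsion.

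To conclude I would carry out a rank count. The module $H^1_{\Iw,S}(F,T)_+$ is torsion-free of $\Lambda_+$-rank $g_-$ by Perrin-Riou's formulae recalled at the start of \S3 together with (H.nA). On the other hand, $\HIw(F_p,T)$ is torsion-free of $\Lambda$-rank $g$ by (H.nA), while Corollary~\ref{cor:intersectionofcolemanmaps}(a) gives $H^1_{\uI}(F_p,T)$ of $\Lambda$-rank $g_+$; consequently $\left(\HIw(F_p,T)/H^1_{\uI}(F_p,T)\right)_+$ has $\Lambda_+$-rank $g-g_+=g_-$. The content of Proposition~\ref{prop:specialelementstransversaltopmsubmodules} is precisely that $\psi_+$ has trivial kernel for the chosen $\uI$, and an injection of torsion-free $\Lambda_+$-modules of the same finite rank $g_-$ automatically has $\Lambda_+$-torsion cokernel. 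Combined with the previous paragraph, this yields the proposition.

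The chief technical care required is the set-up of~(\ref{eq:theproofproposalPT}) with the correct local conditions: one must check that under local Tate duality the condition $H^1_{\uI}(F(\mu_{p^\infty})_p,T^\dagger)$ dualises to $H^1_{\uI}(F_p,T)$ (this is built into Definition~\ref{def:modifiedselmergroup}) and that the Bloch--Kato condition $H^1_f$ used at $v\nmid p$ is self-dual, so that $E$ takes the clean form above. The remainder of the argument is a routine combination of Perrin-Riou's rank formulae with the transversality of Proposition~\ref{prop:specialelementstransversaltopmsubmodules}.
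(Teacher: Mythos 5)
Your argument is correct and is essentially the proof given in the paper: both rest on the Iwasawa-theoretic Poitou--Tate exact sequence, with injectivity of the localisation map supplied by Proposition~\ref{prop:specialelementstransversaltopmsubmodules} and the conclusion obtained by comparing the $\Lambda_+$-rank $g_-$ of $H^1_{\Iw,S}(F,T)_+$ (from weak Leopoldt) with that of the local quotient. Your version merely spells out a few details the paper leaves implicit (the torsion contribution at $v\nmid p$ and the dualisation of the local conditions).
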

\begin{proof}
It follows from our choice of $\uI$ and Poitou-Tate global duality that we have an exact sequence
\begin{equation}\label{PTsequence}0\rightarrow H^1_{\Iw,S}(F,T)_+\rightarrow \frac{ \HIw(F_p,T)_+}{H^1_{\uI}(F_p,T)_+}\rightarrow \Sel_{\uI}(T^\dagger/F(\mu_{p^\infty}))_+^{\vee}\rightarrow H^2_{\Iw,S}(F,T)_+\rightarrow 0\end{equation}
 The $\Lambda_+$-module $H^2_{\Iw,S}(F,T)_+$ is torsion whereas the $\Lambda_+$-module $H^1_{\Iw,S}(F,T)_+$ has $\Lambda_+$-rank $g_-$ by the weak Leopoldt conjecture that we assume. Proposition follows by counting $\Lambda_+$-ranks in the sequence (\ref{PTsequence}).
\end{proof}
\begin{remark}
We expect that the $\Lambda_+$-module $\Sel_{\uI}(T^\dagger/F(\mu_{p^\infty}))_+$ is cotorsion for \emph{every} $\uI\in\fI_p$. However, we are able to verify this guess (assuming weak Leopoldt conjecture for $T$) for only one $\uI$. This is fortunately sufficient for our purposes.
\end{remark}

The following statement will be referred as the \emph{$\uI$-main conjecture.} We shall verify that its truth for a single $\uI \in \frak{J}_p$ is equivalent to the $\eta$-isotypic part of Perrin-Riou's main Conjecture \ref{TCP}.
{
\begin{conj}\label{myTCP}
Let $\uI\in\fI_p$ and $\eta$ an even Dirichlet character of conductor $p$. Then
\[
\Char_{\Zp[[X]]}\Sel_{\uI}(T^\dagger/F(\mu_{p^\infty}))^{\vee,\eta}=\left(\frac{L_{\uI}(\mathcal{M}^*(1))^\eta}{X^{n(\uI,\eta)}}\right)\Zp[[X]], 
\]where $n(\uI,\eta)$ is the integer as given by Lemma~\ref{lem:Timage}. 
\end{conj}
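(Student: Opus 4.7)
The plan is to reduce Conjecture~\ref{myTCP} to Perrin-Riou's Main Conjecture~\ref{TCP} via the Poitou-Tate duality sequence~\eqref{PTsequence}, then invoke the partial results on~\ref{TCP} furnished by the Coleman-adapted Kolyvagin systems of Appendix~\ref{sec:appendixKSexists}. I focus on an $\uI\in\fI_p$ satisfying the conclusion of Proposition~\ref{prop:specialelementstransversaltopmsubmodules}, so that the first arrow in~\eqref{PTsequence} is injective and Proposition~\ref{prop:Ttorsion} ensures the relevant Selmer group is $\Zp[[X]]$-cotorsion.

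First I would pass~\eqref{PTsequence} to the $\eta$-isotypic component (exact because $\Zp[\Delta]$ is semi-simple) and identify the middle term with $\image(\col_T^\uI)^\eta$ using that $\col_T^\uI$ induces an isomorphism $\HIw(F_p,T)/H^1_\uI(F_p,T)\xrightarrow{\sim}\image(\col_T^\uI)$. Splitting the resulting four-term sequence into two short exact sequences and applying multiplicativity of characteristic ideals (the cokernel of the injection of $H^1_{\Iw,S}(F,T)^\eta$ into $\image(\col_T^\uI)^\eta$ is torsion, as both modules have $\Zp[[X]]$-rank $g_-$) yields
\[
\Char_{\Zp[[X]]}\Sel_\uI^{\vee,\eta}=\Char_{\Zp[[X]]}H^2_{\Iw,S}(F,T)^\eta\cdot\Char_{\Zp[[X]]}\bigl(\image(\col_T^\uI)^\eta/(\col_T^\uI\circ\loc)(H^1_{\Iw,S}(F,T)^\eta)\bigr).
\]

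Next I would insert the submodule generated by the Kolyvagin determinants $\cc_i$. Using the chain $\textup{span}_\Lambda\{\col_T^\uI(\cc_i)\}\subset(\col_T^\uI\circ\loc)(H^1_{\Iw,S}(F,T))\subset\image(\col_T^\uI)$, Lemma~\ref{lem:Tdet} fixes the characteristic ideal of the outer quotient at $L_\uI(\mathcal{M}^*(1))^\eta/X^{n(\uI,\eta)}$. Multiplicativity along this chain, together with injectivity of $\col_T^\uI\circ\loc$ on $H^1_{\Iw,S}(F,T)^\eta$ (from the choice of $\uI$ and the hypotheses~\textup{(H.Leop)},~\textup{(H.nA)}), produces
\[
\Char_{\Zp[[X]]}\bigl(\image(\col_T^\uI)^\eta/(\col_T^\uI\circ\loc)(H^1_{\Iw,S}(F,T)^\eta)\bigr)\cdot\Char_{\Zp[[X]]}\bigl(H^1_{\Iw,S}(F,T)^\eta/(\cc_1,\ldots,\cc_{g_-})^\eta\bigr)=\frac{L_\uI(\mathcal{M}^*(1))^\eta}{X^{n(\uI,\eta)}}\Zp[[X]].
\]
Proposition~\ref{propn:TCP} identifies Perrin-Riou's Main Conjecture with the equality $\Char H^2_{\Iw,S}(F,T)^\eta=\Char(H^1_{\Iw,S}(F,T)^\eta/(\cc_1,\ldots,\cc_{g_-})^\eta)$; substituting this into the two displayed identities and cancelling the common factor gives exactly Conjecture~\ref{myTCP}.

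The hard part is that this plan only reduces Conjecture~\ref{myTCP} to Perrin-Riou's Main Conjecture itself, which is open. Under the Explicit Reciprocity Conjecture~\ref{conj:Tspecialelement}, the Coleman-adapted Kolyvagin system machinery of Appendix~\ref{sec:appendixKSexists} applied to the determinants $\cc_i$ produces one of the two containments, namely $L_\uI(\mathcal{M}^*(1))^\eta\cdot\Zp[[X]]\subset\Char_{\Zp[[X]]}\Sel_\uI^{\vee,\eta}$, recovering Theorem~\ref{thm:mainINTRO}; the reverse containment would require an Euler-system-type construction providing a matching lower bound, which is unavailable in this generality.
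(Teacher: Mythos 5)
The statement you are asked about is a conjecture, and your write-up correctly recognizes that it admits no unconditional proof: what you produce — the reduction of Conjecture~\ref{myTCP} to Perrin-Riou's Main Conjecture~\ref{TCP} via the Poitou-Tate sequence \eqref{PTsequence}, the insertion of the Kolyvagin determinants, Lemma~\ref{lem:Tdet} and Proposition~\ref{propn:TCP}, followed by the single containment obtainable from the $\mathbb{L}$-restricted Kolyvagin systems under Conjecture~\ref{conj:Tspecialelement} — is exactly the content and the method of the paper's Theorems~\ref{thm:TmyCPeqivtoCP} and~\ref{thm:PRmainconj}. Your argument matches the paper's essentially step for step (including the restriction to $\uI$ verifying the conclusion of Proposition~\ref{prop:specialelementstransversaltopmsubmodules}), so there is nothing to add.
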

}

\begin{theorem}
\label{thm:TmyCPeqivtoCP}
Assume the truth of the Explicit Reciprocity Conjecture~\ref{conj:Tspecialelement} for the module of Kolyvagin-determinants. For every even Dirichlet character $\eta$ of $\Delta$, the $\eta$-part of Conjecture~\ref{TCP} is equivalent to Conjecture~\ref{myTCP} for every $\uI \in \frak{I}_p$ verifying the conclusion of Proposition~\ref{prop:specialelementstransversaltopmsubmodules}.  
\end{theorem}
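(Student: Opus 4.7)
The plan is to deduce the equivalence from Proposition~\ref{propn:TCP} together with a termwise comparison based on the Poitou-Tate exact sequence~(\ref{PTsequence}) and the computations in \S\ref{subset:modifiedSelmergroups}. Since Conjecture~\ref{conj:Tspecialelement} is assumed throughout, Proposition~\ref{propn:TCP} already rewrites the $\eta$-part of Conjecture~\ref{TCP} as the equality
\begin{equation*}
\Char_{\Zp[[X]]}\left(H^2_{\Iw,S}(F,T)^\eta\right) \;=\; \Char_{\Zp[[X]]}\left(H^1_{\Iw,S}(F,T)^\eta \big/ (\cc_1,\ldots,\cc_{g_-})^\eta\right),
\end{equation*}
so the goal reduces to manipulating~(\ref{PTsequence}) so as to place this identity into the form demanded by Conjecture~\ref{myTCP}.

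First, because our $\uI$ satisfies the conclusion of Proposition~\ref{prop:specialelementstransversaltopmsubmodules} and the weak Leopoldt conjecture holds, the $\eta$-part of~(\ref{PTsequence}) is a four-term exact sequence whose leftmost map is injective. After identifying $\HIw(F_p,T)^\eta/H^1_{\uI}(F_p,T)^\eta$ with $\image(\col_T^{\uI})^\eta$ through $\col_T^{\uI}$, multiplicativity of characteristic ideals yields
\begin{equation*}
\Char_{\Zp[[X]]}\left(\Sel_{\uI}(T^\dagger/F(\mu_{p^\infty}))^{\vee,\eta}\right) = \Char_{\Zp[[X]]}\left(\frac{\image(\col_T^{\uI})^\eta}{\col_T^{\uI}(\loc(H^1_{\Iw,S}(F,T)^\eta))}\right)\cdot \Char_{\Zp[[X]]}\left(H^2_{\Iw,S}(F,T)^\eta\right).
\end{equation*}

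Next, I would insert the submodule $\mathrm{span}_{\Lambda}\{\col_T^{\uI}(\cc_i)\}^\eta$ sitting inside $\col_T^{\uI}(\loc(H^1_{\Iw,S}(F,T)^\eta))$ and split the quotient. Using once more that $\col_T^{\uI}\circ\loc$ is injective on $H^1_{\Iw,S}(F,T)^\eta$, the inner quotient $\col_T^{\uI}(\loc(H^1_{\Iw,S}(F,T)^\eta))/\mathrm{span}\{\col_T^{\uI}(\cc_i)\}^\eta$ is canonically isomorphic to $H^1_{\Iw,S}(F,T)^\eta/(\cc_1,\ldots,\cc_{g_-})^\eta$, while Lemma~\ref{lem:Tdet} identifies the characteristic ideal of the total quotient $\image(\col_T^{\uI})^\eta/\mathrm{span}\{\col_T^{\uI}(\cc_i)\}^\eta$ with $(L_{\uI}(\mathcal{M}^*(1))^\eta / X^{n(\uI,\eta)})\Zp[[X]]$. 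Combining these, I obtain
\begin{equation*}
\Char_{\Zp[[X]]}\left(\Sel_{\uI}(T^\dagger/F(\mu_{p^\infty}))^{\vee,\eta}\right) \cdot \Char_{\Zp[[X]]}\left(H^1_{\Iw,S}(F,T)^\eta \big/ (\cc_1,\ldots,\cc_{g_-})^\eta\right) = \left(\frac{L_{\uI}(\mathcal{M}^*(1))^\eta}{X^{n(\uI,\eta)}}\right)\Zp[[X]] \cdot \Char_{\Zp[[X]]}\left(H^2_{\Iw,S}(F,T)^\eta\right),
\end{equation*}
after which the common factor $\Char_{\Zp[[X]]}(H^1_{\Iw,S}(F,T)^\eta/(\cc_1,\ldots,\cc_{g_-})^\eta)$ on one side and $\Char_{\Zp[[X]]}(H^2_{\Iw,S}(F,T)^\eta)$ on the other cancel \emph{iff} Conjecture~\ref{myTCP} holds \emph{iff} the reformulation of Conjecture~\ref{TCP} provided by Proposition~\ref{propn:TCP} holds.

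The step I expect to demand the most care is the rank/torsion bookkeeping needed for the multiplicativity arguments to be valid: namely, ensuring that $H^1_{\Iw,S}(F,T)^\eta$, $\image(\col_T^{\uI})^\eta$ and $\col_T^{\uI}(\loc(H^1_{\Iw,S}(F,T)^\eta))$ all have $\Zp[[X]]$-rank exactly $g_-$ so that every quotient encountered is $\Zp[[X]]$-torsion, and that $\eta(\cc_1),\ldots,\eta(\cc_{g_-})$ remain $\Zp[[X]]$-linearly independent. These points rest, respectively, on the weak Leopoldt conjecture together with Corollary~\ref{cor:intersectionofcolemanmaps} (for the rank counts) and on the non-vanishing of $\cc_1\wedge\cdots\wedge\cc_{g_-}$ guaranteed by Conjecture~\ref{conj:Tspecialelement} (for the linear independence). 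Once these verifications are in hand, the remainder of the proof is a routine manipulation of characteristic ideals.
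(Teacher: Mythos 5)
Your proposal is correct and follows essentially the same route as the paper: the $\eta$-component of the Poitou--Tate sequence~(\ref{PTsequence}), the identification of its middle term with $\image(\col_T^{\uI})^\eta$, quotienting by the spans of the $\cc_i$ and $\col_T^{\uI}(\cc_i)$, multiplicativity of determinants/characteristic ideals, Lemma~\ref{lem:Tdet}, and the reformulation of Conjecture~\ref{TCP} via Proposition~\ref{propn:TCP}. The only difference is cosmetic bookkeeping (you extract $\Char(\Sel_{\uI}^{\vee,\eta})$ first and then split one factor, whereas the paper forms a single four-term torsion exact sequence and takes determinants once), and your attention to the rank/torsion verifications matches what the paper implicitly relies on.
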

\begin{proof}
Recall the Poitou-Tate exact sequence (\ref{PTsequence}):
$$0\rightarrow H^1_{\Iw,S}(F,T)^\eta\rightarrow\frac{\HIw(F_p,T)^\eta}{H^1_{\uI}(F_p,T)^\eta}\rightarrow\Sel_{\uI}(T^\dagger/F(\mu_{p^\infty}))^{\vee,\eta}\rightarrow H^2_{\Iw,S}(F,T)^\eta\rightarrow0\,.$$

Note that the left-most injection follows from the choice of $\uI$. The second term in \eqref{PTsequence} is isomorphic to $\image\left(\col_T^{\uI}\right)^\eta$, which is described by Proposition~\ref{prop:iageofcolemanis}.

Let $\cc=\cc_1\wedge\cdots\wedge\cc_{g_-}$ be the element given by Conjecture~\ref{conj:Tspecialelement}. The exact sequence \eqref{PTsequence} then yields the following exact sequence:
\begin{align*}
0\longrightarrow H^1_{\Iw,S}(F,T)^\eta/\left(\textup{span}_{\Lambda}\{c_i\}_{i=1}^{g_-}\right)^\eta\longrightarrow{\image\left(\col_T^{\uI}\right)^\eta}\Big{/}{\left({\textup{span}_{\Lambda}\,\left\{\col_{\uI}(\cc_i)\right\}_{i=1}^{g_-}}\right)^\eta}\\
\longrightarrow\Sel_{\uI}(T^\dagger/F(\mu_{p^\infty}))^{\vee,\eta}\longrightarrow H^2_{\Iw,S}(F,T)^\eta\longrightarrow0.
\end{align*}
We therefore conclude
\begin{align*}
\det \left(H^1_{\Iw,S}(F,T)^\eta/\left(\textup{span}_{\Lambda}\{c_i\}_{i=1}^{g_-}\right)^\eta\right)\otimes\det\left(\Sel_{\uI}(T^\dagger/F(\mu_{p^\infty}))^{\vee,\eta}\right)
=\\
\det\left({\image\left(\col_T^{\uI}\right)^\eta}\Big{/}{\left(\textup{span}_{\Lambda}\,\left\{\col_{\uI}(\cc_i)\right\}_{i=1}^{g_-}\right)^\eta}\right)\otimes\det\left(H^2_{\Iw,S}(F,T)^\eta\right),
\end{align*}
which can be rewritten as
\[
e_\eta\cdot\ff_\cc^{-1}\det\left(\Sel_{\uI}(T^\dagger/F(\mu_{p^\infty}))^\vee\right)
=e_\eta\cdot\det\left({\image\left(\col_T^{\uI}\right)}\Big{/}{\textup{span}_{\Lambda}\,\left\{\col_{\uI}(\cc_i)\right\}_{i=1}^{g_-}}\right)\ff_2^{-1}.
\]
By Proposition~\ref{propn:TCP}, it follows that Conjecture~\ref{TCP} is equivalent to
\[
\det\left(\Sel_{\uI}(T^\dagger/F(\mu_{p^\infty}))^{\vee,\eta}\right)
=\det\left({\image\left(\col_T^{\uI}\right)^\eta}\Big{/}\left({\textup{span}_{\Lambda}\,\left\{\col_{\uI}(\cc_i)\right\}_{i=1}^{g_-}}\right)^\eta\right).
\]
Hence we are done by Lemma~\ref{lem:Tdet}.
\end{proof}



\begin{theorem}
\label{thm:PRmainconj}
Suppose that the representation $T$ verifies the hypotheses \textup{\textbf{(H1)-(H4)}} of \cite[\S3.5]{mr02} and assume the truth of Conjecture~\ref{conj:Tspecialelement}. Then following containment 
\[L_p(\mathcal{M}^*(1))\Lambda_+\subset \Iarith(T)_+.\]
in the statement of Perrin-Riou's Main Conjecture~\ref{TCP} holds true.
\end{theorem}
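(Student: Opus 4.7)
The plan is to reduce, via Theorem~\ref{thm:TmyCPeqivtoCP}, to one divisibility in the signed $\uI$-main conjecture (Conjecture~\ref{myTCP}) for a well-chosen $\uI$, and then to obtain that divisibility from the Coleman-adapted Kolyvagin system machinery of Appendix~\ref{sec:appendixKSexists}.

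First, I would fix a strongly admissible basis of $\DD_p(T)$ using Proposition~\ref{prop:admissiblebasisexists}, which forces $n(\uI,\eta)=0$ in Lemma~\ref{lem:Timage} for every $\uI$ and every $\eta$. For each even character $\eta$ of $\Delta$, I would pick $\uI\in\fI_p$ satisfying the conclusion of Proposition~\ref{prop:specialelementstransversaltopmsubmodules}, so that by Proposition~\ref{prop:Ttorsion} the Selmer group $\Sel_{\uI}(T^\dagger/F(\mu_{p^\infty}))_+$ is $\Lambda_+$-cotorsion. Tracing through the Poitou-Tate exact sequence~\eqref{PTsequence} exactly as in the proof of Theorem~\ref{thm:TmyCPeqivtoCP}, the $\eta$-part of the containment $L_p(\mathcal{M}^*(1))\Lambda_+\subset\Iarith(T)_+$ is equivalent to
\[
L_{\uI}(\mathcal{M}^*(1))^\eta\cdot\Zp[[X]]\subset\Char_{\Zp[[X]]}\left(\Sel_{\uI}(T^\dagger/F(\mu_{p^\infty}))^{\vee,\eta}\right).
\]

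Next, invoking Conjecture~\ref{conj:Tspecialelement}, let $\cc=\cc_1\wedge\cdots\wedge\cc_{g_-}\in\frak{K}(T)$ be the associated Kolyvagin determinant. Applying the main result of Appendix~\ref{sec:appendixKSexists} on $\mathbb{L}$-restricted Kolyvagin systems (Theorem~\ref{thm:appendixBmain}) together with the hypotheses \textbf{(H1)-(H4)}, I would derive from $\cc$ the Selmer group bound
\[
\det\left(\col_T^{\uI}(\cc_i)\right)^\eta\cdot\Zp[[X]]\subset\Char_{\Zp[[X]]}\left(\Sel_{\uI}(T^\dagger/F(\mu_{p^\infty}))^{\vee,\eta}\right).
\]
By Definition~\ref{def:signedpadicLfunc} the left-hand side is precisely $L_{\uI}(\mathcal{M}^*(1))^\eta\cdot\Zp[[X]]$, since $n(\uI,\eta)=0$ under strong admissibility. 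Summing over all even characters $\eta$ of $\Delta$ via $e_+=\sum_{\eta\text{ even}}e_\eta$ yields the desired containment of $\Lambda_+$-ideals.

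The hard part is entirely internal to Appendix~\ref{sec:appendixKSexists}: one must first extend Mazur-Rubin's rank-one Kolyvagin system machinery to a rank-$g_-$ theory whose top-term module is $\frak{K}(T)$, and secondly verify that the image under the signed Coleman map $\col_T^{\uI}$ of such a top term is an upper bound for the characteristic ideal of the \emph{modified} Selmer group $\Sel_{\uI}^{\vee}$ rather than for that of the classical one. This second point is where the Coleman-adapted local conditions interact non-trivially with the Kolyvagin derivative construction, and pinning down this compatibility---so that the local conditions cut out by the Coleman maps $\col_T^{\uI}$ are precisely those propagated by the $\mathbb{L}$-restricted Kolyvagin systems---is the principal technical hurdle. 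Once these appendix results are in hand, the body of the argument reduces to the bookkeeping above.
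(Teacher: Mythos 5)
Your overall architecture matches the paper's: reduce via the Poitou--Tate sequence and (the proof of) Theorem~\ref{thm:TmyCPeqivtoCP} to the containment $L_{\uI}(\mathcal{M}^*(1))^\eta\cdot\Zp[[X]]\subset\Char_{\Zp[[X]]}\bigl(\Sel_{\uI}(T^\dagger/F(\mu_{p^\infty}))^{\vee,\eta}\bigr)$ for a single $\uI$ as in Proposition~\ref{prop:specialelementstransversaltopmsubmodules}, and extract that containment from the Kolyvagin-system input of the appendix. However, the step where you ``derive from $\cc$ the Selmer group bound'' is the entire content of the proof, and you leave it as a black box while pointing at the wrong mechanism for filling it. You assert that the principal hurdle is to ``extend Mazur--Rubin's rank-one Kolyvagin system machinery to a rank-$g_-$ theory whose top-term module is $\frak{K}(T)$.'' The paper deliberately does \emph{not} do this (it explicitly defers a genuine higher-rank theory to future work). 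Instead it collapses everything to rank one: the $\mathbb{L}$-restricted Selmer structure $\FFF_{\mathbb{L}}$ has core rank $1$ (Proposition~\ref{prop:corerankisone}), so the rank-one theory applies, and Theorem~\ref{thm:appendixBmain}(iii) gives, for a generator $\pmb{\kappa}$ with leading term $\kappa_1$, the \emph{equality} of the characteristic ideals of $H^1_{\FFF_{\mathbb{L}}}(F,\mathbb{T}\otimes\eta^{-1})/\Lambda\cdot\kappa_1$ and of $H^1_{\FFF_{\mathbb{L}}^*}(F,\mathbb{T}^\dagger\otimes\eta)^\vee$. Feeding this into the Poitou--Tate sequence for $\FFF_{\mathbb{L}}$ (not the one for $\FFF_{\uI}$ used in Theorem~\ref{thm:TmyCPeqivtoCP}) yields $\Char\bigl(\Sel_{\uI}(T^\dagger/F(\mu_{p^\infty}))^{\vee,\eta}\bigr)=X^{-n(\uI,\eta)}\col_T^{\uI,\eta}(\loc(\kappa_1))\cdot\Zp[[X]]$, using the pseudo-surjectivity of the Coleman map onto a free module, which is where strong admissibility actually enters.

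The link to $\cc$ is then definitional rather than a new Kolyvagin-system theorem: $\cc\in\frak{K}(T)$ means, by Definition~\ref{define:theKolyvaginconstructeddeterminantelement}, that $\Psi(\cc)$ lies in the module of Kolyvagin leading terms $\frak{L}(T)$, whose relevant isotypic piece is $\Lambda^{(p)}\cdot\kappa_1$; and the commutative diagram~(\ref{eqn:diagramappendixC}) identifies $\det\bigl(\col_T^{\uI,\eta}(\cc_i)\bigr)$ with the image of $\loc(\Psi(\cc))$ under the Coleman map, hence with a $\Lambda^{(p)}$-multiple of $\col_T^{\uI,\eta}(\loc(\kappa_1))$. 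This is precisely why one obtains only the containment $L_{\uI}(\mathcal{M}^*(1))^\eta\cdot\Zp[[X]]\subset\Char\bigl(\Sel_{\uI}(T^\dagger/F(\mu_{p^\infty}))^{\vee,\eta}\bigr)$ rather than an equality. You should make this chain explicit: as written, your appeal to Theorem~\ref{thm:appendixBmain} ``together with \textbf{(H1)--(H4)}'' does not by itself produce any bound in terms of $\cc$, since that theorem speaks only of the rank-one classes $\kappa_1$ and never mentions $\cc$ or $\frak{K}(T)$, and your proposed substitute (a rank-$g_-$ Euler/Kolyvagin system theory) is a substantially harder and different undertaking than what the paper carries out.
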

\begin{proof}
Choose $\uI\in\fI_p$ verifying the conclusion of Proposition~\ref{prop:specialelementstransversaltopmsubmodules}. Let $\eta$ be an even character of $\Delta$. 
In what follows we will freely borrow notation and concepts from Appendix~\ref{sec:appendixKSexists}. Let $\pmb{\kappa}\in \overline{\mathbf{KS}}(\mathbb{T}\otimes\eta^{-1},\FFF_{\mathbb{L}},\mathcal{P}_X)$ be \emph{any} generator of the module $\Lambda^{(p)}$-adic Kolyvagin systems and $\kappa_1\in  H^1_{\mathcal{F}_{\mathbb{L}}}(F,\mathbb{T}\otimes\eta^{-1})$ denote its initial term. Recall that $\Lambda^{(p)}=\ZZ_p[[\Gamma]]$ and $\Gamma$ is the Galois group of the cyclotomic $\ZZ_p$-tower, so that $\Lambda^{(p)}\cong \ZZ_p[[X]]$. Poitou-Tate global duality yields an exact sequence
\begin{align*}
0\longrightarrow H^1_{\mathcal{F}_{\mathbb{L}}}(F,\mathbb{T}\otimes\eta^{-1})/&\Lambda\cdot\kappa_1\stackrel{\loc}{\longrightarrow} \frac{H^1_{\mathcal{F}_{\mathbb{L}}}(F_p,\mathbb{T}\otimes\eta^{-1})}{H^1_{\uI}(F_p,{T})^\eta+\Lambda^{(p)}\cdot\loc({\kappa_1)}}\\
&\longrightarrow \Sel_{\uI}(T^\dagger/F(\mu_{p^\infty}))^{\vee,\eta}\longrightarrow H^1_{\mathcal{F}_{\mathbb{L}}^*}(F,\mathbb{T}^{\dagger}\otimes\eta)^\vee\longrightarrow 0
\end{align*}
We then have{
\begin{align*}\textup{char}\left(\Sel_{\uI}(T^\dagger/F(\mu_{p^\infty}))^{\vee,\eta}\right)&=\textup{char}\left(\frac{H^1_{\mathcal{F}_{\mathbb{L}}}(F_p,\mathbb{T}\otimes\eta^{-1})}{H^1_{\uI}(F_p,{T})^\eta+\Lambda^{(p)}\cdot\loc({\kappa_1)}}\right)\\
&=\frac{\col_T^{\uI,\eta}\left(\loc(\kappa_1)\right)}{X^{n(\uI,\eta)}}\cdot \Lambda^{(p)}\\
&\supset \frac{\det\left(\col_T^{\uI,\eta}(\cc_i)\right)}{X^{n(\uI,\eta)}}\cdot\Lambda^{(p)} \\
&=\left(\frac{L_{\uI}(\mathcal{M}^*(1))^\eta}{X^{n(\uI,\eta))}}\right)\cdot\Lambda^{(p)}\end{align*} }
where 
\begin{itemize}
\item the first equality follows from Theorem~\ref{thm:appendixBmain}(iii),
\item the second using the fact that $\col^{\uI,\eta}_T$ is injective (by very definitions) on the quotient ${H^1_{\mathcal{F}_{\mathbb{L}}}(F_p,\mathbb{T}\otimes\eta^{-1})}/{H^1_{\uI}(F_p,{T})^\eta}$, has pseudo-null cokernel by Propsition~\ref{prop:iageofcolemanis}, and by fixing a generator of $\mathbb{L}$, 
\item the third using the fact that $\cc \in \frak{K}(T)$ and the commutativity of the diagram~(\ref{eqn:diagramappendixC}),
\item and finally the last by Lemma~\ref{lem:Tdet} and the fact that we have chosen of our Coleman maps relative to a strongly admissible basis.
\end{itemize}
This verifies the containment 
\begin{equation}\label{eqn:containmentsignedMC}\left(L_{\uI}(\mathcal{M}^*(1))^\eta\right)\Zp[[X]]\subset \Char_{\Zp[[X]]}\Sel_{\uI}(T^\dagger/F(\mu_{p^\infty}))^{\vee,\eta}\end{equation}
in the statement of Conjecture~\ref{myTCP}. We conclude the proof of the theorem on using (\ref{eqn:containmentsignedMC}) together with the proof of Theorem~\ref{thm:TmyCPeqivtoCP}. 
\end{proof}
\begin{remark}
\label{rem:hypoholdtrue}
See \cite{buyukboduklei1} for an example where we deduce an explicit version of Theorem~\ref{thm:PRmainconj}. In loc.cit., we study more closely the motive attached to the Hecke character associated to a CM abelian variety that has supersingular reduction at all primes above $p$. In this particular case, the hypotheses  \textup{\textbf{(H1)-(H4)}} of \cite[\S3.5]{mr02}, \textup{(H.F.-L.)}, \textup{(H.S.)}, \textup{(H.P.)} and \textup{(H.nA)} hold true.
\end{remark}

\appendix

\section{An alternative approach using Wach modules}

In \cite{leiloefflerzerbes10} and \cite{leiloefflerzerbes11}, we showed that the theory of Wach modules can be used to study the Iwasawa theory of $p$-adic representations. The key is to find an explicit basis for the Wach module. In this appendix, we show that the construction of the logarithmic matrix $M_T$ in \S\ref{S:matrix} can be modified to construct an explicit basis for the Wach module $\NN(T)$ of $T$. Here $T$ is as defined in \S\ref{S:matrix}, satisfying (H.F.-L.) and (H.S.).

Let $\AQp=\calO_K[[\pi]]$, which is equipped with the usual semi-linear actions by $\Gamma$ and $\vp$ (see for example \cite{berger04}). We write $q=\vp(\pi)/\pi$. 

\begin{defn}
A Wach module with weights in $[a;b]$ is a finitely generated free $\AQp$-module $M$ such that
\begin{itemize}
\item[(1)] It is equipped with a semi-linear action by $\Gamma$ that is trivial modulo $\pi$;
\item[(2)] There is a semi-linear map $\vp:M[\pi^{-1}]\rightarrow M[\vp(\pi)^{-1}]$ such that $\vp(\pi^b M)\subset \pi^b M$ and $q^{b-a}\vp(\pi^bM)\subset \pi^bM$;
\item[(3)] The actions of  $\Gamma$ and $\vp$ commute.
\end{itemize}
\end{defn}

A Wach module $N$ is equipped with a filtration
\[
\Fil^iN=\{x\in N:\vp(x)\in q^iN\}.
\]

Let $v_1,\ldots,v_d$ be $\calO_K$-basis of $\DD_K(T)$ such that $v_1,\ldots,v_{d_0}$ generate $\Fil^0\DD_K(T)$. Let $C_\vp$ be the matrix of $\vp$ with respect to this basis. As in \S\ref{S:matrix},{
\[
C_\vp=C\left(
\begin{array}{c|c}
I_{r_0}&0\\ \hline
0&\frac{1}{p} I_{r-r_0}
\end{array}
\right)
\]}
for some $C\in\GL_d(\calO_K)$.
\begin{defn}\label{defn:Pn}
For $n\ge1$, we define{
\[
P_n=C
\left(
\begin{array}{c|c}
I_{r_0}&0\\ \hline
0&\frac{1}{\vp^{n-1}(q)} I_{r-r_0}
\end{array}
\right)
\quad\text{and}\quad
M'_n=\left(C_\vp\right)^{n}P_{n}^{-1}\cdots P_{1}^{-1}.
\]}
\end{defn}

\begin{proposition}
The sequence of matrices $\{M'_n\}_{n\ge1}$ converges entry-wise with respect to the sup-norm topology on $\Brig$. If $M_T'$ denotes the limit of the sequence, each entry of $M_T'$ are $o(\log)$. Moreover, $\det(M'_T)$ is, up to a constant in $\calO_K^\times$, equal to $\left(\frac{\log(1+\pi)}{\pi}\right)^g$.
\end{proposition}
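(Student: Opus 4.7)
The proof follows the template of Proposition~\ref{prop:matrix}, transplanted from the Iwasawa algebra $\calH$ to the ring $\Brig$: the cyclotomic polynomial $\Phi_{p^n}(1+X)$ is replaced by $\vp^{n-1}(q)$, and the ideal $(\omega_n)$ by $(\vp^{n-1}(\pi))$. The plan is to verify convergence, the $o(\log)$ growth, and the determinant formula in turn.

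For convergence, I begin with the elementary expansion
\[
q=\frac{(1+\pi)^p-1}{\pi}=p+\tbinom{p}{2}\pi+\cdots+\pi^{p-1},
\]
which gives $q\equiv p\pmod{\pi}$ and hence, after applying $\vp^{n-1}$, the congruence $\vp^{n-1}(q)\equiv p\pmod{\vp^{n-1}(\pi)}$. A direct block-diagonal computation yields
\[
C_\vp P_{n+1}^{-1}=C\cdot\mathrm{diag}\!\left(I_{r_0},\tfrac{\vp^n(q)}{p}\,I_{r-r_0}\right)\cdot C^{-1}\equiv I\pmod{\vp^n(\pi)\Brig},
\]
and this propagates to $M'_{m}\equiv M'_n\pmod{\vp^n(\pi)\Brig}$ for all $m>n$, exactly as in the proof of Proposition~\ref{prop:matrix}. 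Fr\'echet convergence on $\Brig$ then follows because on every closed subdisk of radius $\rho<1$, the sup-norm of $\vp^n(\pi)=(1+\pi)^{p^n}-1$ is comparable to $\rho^{p^n}$, which decays far faster than the $p^{nh}$ growth of the entries of $(C_\vp)^n$ controlled by (H.S.).

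The $o(\log)$ claim follows from hypothesis (H.S.): there is a constant $h<1$ with $v_p(\alpha)\geq-h$ for every eigenvalue $\alpha$ of $C_\vp$, so each entry of $(C_\vp)^n$ lies in $p^{-nh}\calO_K$ up to a uniform constant. Since every $P_j^{-1}$ has entries in $\AQp$, the argument from \cite[\S1.2.1]{perrinriou94} used in the proof of Proposition~\ref{prop:matrix} applies verbatim, yielding that each entry of $M'_T$ is $o(\log)$.

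Finally, the determinant is computed directly: $\det(C_\vp)=\det(C)/p^{r-r_0}$ and $\det(P_j)=\det(C)/\vp^{j-1}(q)^{r-r_0}$, so the factors of $\det(C)$ telescope in
\[
\det(M'_n)=\det(C_\vp)^n\prod_{j=1}^n\det(P_j)^{-1}=\prod_{j=1}^n\left(\frac{\vp^{j-1}(q)}{p}\right)^{r-r_0}.
\]
Passing to the limit and invoking the classical Wach-module identity
\[
\frac{\log(1+\pi)}{\pi}=\prod_{j\ge 0}\frac{\vp^j(q)}{p}\quad\text{in }\Brig,
\]
verified by showing both sides satisfy $\vp(x)=(p/q)\cdot x$ and agree at $\pi=0$, gives the claimed equality up to a unit in $\calO_K^\times$. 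The main obstacle I anticipate is the convergence step: one needs to verify that the congruences modulo $\vp^n(\pi)$ actually deliver convergence in the Fr\'echet topology on $\Brig$ rather than a purely formal $\pi$-adic limit, which requires the super-exponential decay of $\rho^{p^n}$ to dominate the $p^{nh}$ growth coming from the Frobenius iterates; the remaining steps are essentially bookkeeping once the convergence is in place.
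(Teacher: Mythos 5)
Your proposal is correct and follows exactly the route the paper intends: the paper's own proof of this proposition is the single line ``the proof is the same as that for Proposition~\ref{prop:matrix},'' and you have carried out precisely that transplantation, replacing $\Phi_{p^n}(1+X)\equiv p\pmod{\omega_n}$ by $\vp^{n-1}(q)\equiv p\pmod{\vp^{n-1}(\pi)}$ and invoking (H.S.) together with \cite[\S1.2.1]{perrinriou94} for the $o(\log)$ bound. Your explicit telescoping of the determinant and the identity $\log(1+\pi)/\pi=\prod_{j\ge0}\vp^j(q)/p$ supply details the paper leaves implicit, but they do not constitute a different method.
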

\begin{proof}
The proof is the same as that for Proposition~\ref{prop:matrix}.
\end{proof}

\begin{defn}
For each $\gamma\in\Gamma$, define a matrix $G_\gamma=\left(M_T'\right)^{-1}\cdot\gamma\left(M_T'\right)$.
\end{defn}

We shall show that $G_\gamma$ is a matrix defined over $\AQp$. Let us first prove the following lemma.

\begin{lemma} \label{lem:matrixdefn}Let $M_{r\times r}(\AQp)$ be the set of $r\times r$ matrices that are defined over $\AQp$.
\begin{itemize}
\item[(a)] $P_1\cdot \gamma\left(P_1^{-1}\right)\in I+\pi M_{r\times r}(\AQp)$;
\item[(b)] If $M\in I+\pi M_{r\times r}(\AQp)$, then $P_1\cdot \vp(M)\cdot\gamma(P_1^{-1})\in I+\pi M_{r\times r}(\AQp)$.
\end{itemize}
\end{lemma}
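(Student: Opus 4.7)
The plan is to reduce both parts to an explicit block-matrix calculation with $P_1 = C D_1$, where $D_1 := \mathrm{diag}(I_{r_0},\, q^{-1} I_{r-r_0})$, after first establishing a key congruence for the element $q = \vp(\pi)/\pi$. The preliminary fact is that
$$\gamma(q)/q \in 1 + \pi \AQp.$$
To see this, set $u_\gamma := \gamma(\pi)/\pi = ((1+\pi)^{\chi(\gamma)}-1)/\pi$, which lies in $\AQp$ and satisfies $u_\gamma \equiv \chi(\gamma) \pmod{\pi}$; in particular $u_\gamma \in \AQp^\times$. Since $\vp$ and $\gamma$ commute,
$$\gamma(q) = \vp(\gamma(\pi))/\gamma(\pi) = \vp(\pi u_\gamma)/(\pi u_\gamma) = q \cdot \vp(u_\gamma)/u_\gamma.$$
Because $\pi \mid \vp(\pi)$ in $\AQp$, reducing mod $\pi$ gives $\vp(u_\gamma) \equiv \chi(\gamma) \equiv u_\gamma \pmod{\pi}$, so $\gamma(q)/q = \vp(u_\gamma)/u_\gamma \equiv 1 \pmod{\pi}$, as required.

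With this in hand, part (a) is immediate. Noting that $\gamma$ acts trivially on $C$ (whose entries lie in $\calO_K$), one has $\gamma(P_1^{-1}) = \gamma(D_1^{-1}) C^{-1}$ with $\gamma(D_1^{-1}) = \mathrm{diag}(I_{r_0}, \gamma(q) I_{r-r_0})$, and therefore
$$P_1 \gamma(P_1^{-1}) = C \cdot \mathrm{diag}\!\left(I_{r_0}, (\gamma(q)/q)\, I_{r-r_0}\right)\cdot C^{-1}.$$
The middle factor lies in $I + \pi M_{r\times r}(\AQp)$ by the preliminary congruence, and conjugating by $C \in \GL_r(\calO_K)$ preserves this property.

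For part (b), write $M = I + \pi N$ with $N \in M_{r\times r}(\AQp)$. Since $\vp(\pi) = \pi q$, we have $\vp(M) = I + \pi q\, \vp(N)$, so
$$P_1\, \vp(M)\, \gamma(P_1^{-1}) \;=\; P_1 \gamma(P_1^{-1}) \;+\; \pi\cdot C\bigl[q\, D_1\, \vp(N)\, \gamma(D_1^{-1})\bigr]C^{-1}.$$
By (a) it suffices to check that the bracketed matrix has entries in $\AQp$. Writing $\vp(N)$ in block form corresponding to the partition $r = r_0 + (r-r_0)$, the block computation yields
$$q\, D_1\, \vp(N)\, \gamma(D_1^{-1}) \;=\; \begin{pmatrix} q\, \vp(N)_{11} & q\,\gamma(q)\, \vp(N)_{12} \\[2pt] \vp(N)_{21} & \gamma(q)\, \vp(N)_{22} \end{pmatrix},$$
so that the $q^{-1}$ appearing in the bottom block of $D_1$ is precisely absorbed by the overall factor $q$, and all entries lie in $\AQp$. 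Conjugating by $C$ again preserves this, and the claim follows.

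The argument is essentially bookkeeping: the only substantive input is the congruence $\gamma(q)/q \equiv 1 \pmod{\pi}$, and the main point in part (b) is the cancellation between the factor $q$ supplied by $\vp(\pi) = \pi q$ and the factor $q^{-1}$ in $D_1$. No further obstacles are anticipated.
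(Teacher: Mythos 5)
Your proof is correct and follows essentially the same route as the paper: the same block computation for (a) reducing to $\gamma(q)/q \in 1+\pi\AQp$, and the same decomposition $\vp(M)=I+\pi q\,\vp(N)$ for (b), with the factor $q$ absorbing the $q^{-1}$ in $P_1$. The only difference is that you spell out the congruence $\gamma(q)/q\equiv 1\pmod{\pi}$ (which the paper asserts without proof) and write the absorption as an explicit block matrix rather than observing that $qP_1$ and $\gamma(P_1^{-1})$ are both integral.
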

\begin{proof}
 For (a), we have {$P_1\cdot\gamma(P_1^{-1})=C\left(
\begin{array}{c|c}
I_{r_0}&0\\ \hline
0&\frac{\gamma\cdot q}{q} I_{r-r_0}
\end{array}
\right)C^{-1}$} and $\frac{\gamma\cdot q}{q}\in 1+\pi\AQp$, hence the result.

Let $M=I+\pi N$, then
\[
P_1\cdot \vp(M)\cdot\gamma(P_1^{-1})=P_1\gamma\left(P_1^{-1}\right)+\pi\left(qP_1\cdot \vp(N)\cdot\gamma\left(P_1^{-1}\right)\right)
\]
since $\vp(\pi)=\pi q$. Both $qP_1$ and $P_1^{-1}$ are defined over $\AQp$, so (b) follows from (a).
\end{proof}

\begin{proposition}\label{propn:actiongamma}
For all $\gamma$, the matrix $G_\gamma$ is an element of $I+\pi M_{r\times r}(\AQp)$.
\end{proposition}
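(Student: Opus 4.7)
First I would establish a functional equation for $M_T'$. Observing that $\vp(P_n) = P_{n+1}$ (since the only non-constant entry of $P_n$ is $1/\vp^{n-1}(q)$ and $\vp(\vp^{n-1}(q))=\vp^n(q)$) and that $\vp(C_\vp^k)=C_\vp^k$, one checks directly from Definition~\ref{defn:Pn} that
\[
M_{n+1}' = C_\vp\cdot \vp(M_n')\cdot P_1^{-1}.
\]
Passing to the limit entry-wise in $\Brig$ yields the functional equation
\[
M_T'\cdot P_1 = C_\vp\cdot\vp(M_T').
\]

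Next, applying $\gamma$ to this functional equation (and using that $\gamma$ fixes the constant matrix $C_\vp$ and commutes with $\vp$) gives
\[
\gamma(M_T')\cdot \gamma(P_1) = C_\vp\cdot\vp(\gamma(M_T')).
\]
Combining the two displayed identities, one derives after an elementary manipulation the ``cocycle'' relation
\[
G_\gamma = P_1\cdot\vp(G_\gamma)\cdot\gamma(P_1^{-1}).
\]

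The third step is to prove the assertion at the finite level. Set
\[
A_n := (M_n')^{-1}\gamma(M_n') = P_1\cdots P_n\cdot\gamma(P_n^{-1})\cdots\gamma(P_1^{-1}),
\]
the $C_\vp^n$-factors cancelling because $\gamma(C_\vp)=C_\vp$. Using $\vp(P_k)=P_{k+1}$ one checks that
\[
A_{n+1} = P_1\cdot\vp(A_n)\cdot\gamma(P_1^{-1}).
\]
Lemma~\ref{lem:matrixdefn}(a) gives $A_1\in I+\pi M_{r\times r}(\AQp)$, and then Lemma~\ref{lem:matrixdefn}(b) together with the recursion above yields by induction that $A_n\in I+\pi M_{r\times r}(\AQp)$ for every $n\geq 1$.

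Finally, since $M_n'\to M_T'$ entry-wise in $\Brig$, we have $A_n\to G_\gamma$ entry-wise as well. To conclude, one must verify that $I+\pi M_{r\times r}(\AQp)$ is closed under this mode of convergence — this is the one real point to check carefully. The cleanest way is to argue coefficient-by-coefficient in the $\pi$-adic expansion: the recursion $A_{n+1}=P_1\vp(A_n)\gamma(P_1^{-1})$ stabilises the mod-$\pi^N$ reductions after finitely many steps (because $\vp$ strictly increases $\pi$-order, while $P_1$ and $\gamma(P_1^{-1})$ preserve $\AQp$ up to bounded denominators that are absorbed by the factor of $\pi$ from $\vp$), so each $\pi$-coefficient of $A_n$ lies in $\calO_K$ and stabilises to the corresponding coefficient of $G_\gamma$. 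Hence $G_\gamma\in I+\pi M_{r\times r}(\AQp)$, as required. The main obstacle, as indicated, is this last topological/coefficient-stabilisation argument; the algebraic skeleton (functional equation plus induction via Lemma~\ref{lem:matrixdefn}) is formal.
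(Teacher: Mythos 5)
Your proof is correct and follows essentially the same route as the paper's: the product formula $A_n=P_1\cdots P_n\gamma(P_n^{-1})\cdots\gamma(P_1^{-1})$, the base case from Lemma~\ref{lem:matrixdefn}(a), and the recursion $A_{n+1}=P_1\cdot\vp(A_n)\cdot\gamma(P_1^{-1})$ handled by Lemma~\ref{lem:matrixdefn}(b). Your preliminary derivation of the cocycle relation for $G_\gamma$ duplicates what the paper proves separately as Lemma~\ref{lem:commute} and is not needed for the induction, while your closing coefficient-stabilisation argument for passing to the limit is a welcome (and correct) elaboration of a point the paper's proof leaves implicit.
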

\begin{proof}
Since $G_\gamma=\lim_{n\rightarrow\infty}\left(M'_n\right)^{-1}\cdot\gamma\left(M_n'\right)$, it is enough to show that $\left(M'_n\right)^{-1}\cdot\gamma\left(M'_n\right)$ is in $I+\pi M_{r\times r}(\AQp)$ for all $n$. Let us show this by induction.

We have for all $n$
\begin{equation}\label{eq:N}
\left(M'_n\right)^{-1}\cdot\gamma\left(M'_n\right)=P_1\cdots P_{n}\gamma(P_{n}^{-1})\cdots\gamma(P_1^{-1}).
\end{equation}
Hence, the claim for $n=1$ is Lemma~\ref{lem:matrixdefn}(a).

By definition, $P_n=\vp^{n-1}(P_1)$, so we have for $n\ge2$
\[
\left(M'_n\right)^{-1}\cdot\gamma\left(M'_n\right)=P_1\cdot\vp\left(\left(M'_n\right)^{-1}\cdot\gamma\left(M'_n\right)\right)\cdot\gamma(P_1^{-1}).
\]
Hence, the inductive step is simply Lemma~\ref{lem:matrixdefn}(b).
\end{proof}

\begin{lemma}\label{lem:commute}
For all $\gamma$, we have the matrix identity
\[
P_1\cdot\vp(G_\gamma)=G_\gamma\cdot\gamma(P_1).
\]
\end{lemma}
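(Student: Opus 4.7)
The plan is to derive a functional equation for $M_T'$ involving $P_1$ and $\vp$, and then deduce the lemma by a direct computation that exploits $\vp\gamma = \gamma\vp$ together with the fact that $C_\vp$ is $\Gamma$-fixed.

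First, I would establish the key identity $M_T' \cdot P_1 = C_\vp \cdot \vp(M_T')$. At the finite level, writing $M_n' = (C_\vp)^n P_n^{-1} \cdots P_1^{-1}$ and using $\vp(P_k) = P_{k+1}$ (the content of the identity $P_n = \vp^{n-1}(P_1)$ invoked in the proof of Proposition~\ref{propn:actiongamma}) together with $\vp(C_\vp) = C_\vp$, one computes
\[
\vp(M_n') = (C_\vp)^n P_{n+1}^{-1} P_n^{-1} \cdots P_2^{-1},
\]
while directly expanding the product gives
\[
M_{n+1}' \cdot P_1 = (C_\vp)^{n+1} P_{n+1}^{-1} P_n^{-1} \cdots P_2^{-1} = C_\vp \cdot \vp(M_n').
\]
Passing to the limit $n \to \infty$ using the sup-norm convergence $M_n' \to M_T'$ and the continuity of $\vp$ on $\Brig$ yields $M_T' \cdot P_1 = C_\vp \cdot \vp(M_T')$.

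Next, I would substitute $\vp(M_T') = C_\vp^{-1} M_T' P_1$ into $G_\gamma = (M_T')^{-1} \gamma(M_T')$, using $\vp\gamma = \gamma\vp$:
\begin{align*}
\vp(G_\gamma) &= \vp(M_T')^{-1} \gamma(\vp(M_T')) = (C_\vp^{-1} M_T' P_1)^{-1} \gamma(C_\vp^{-1} M_T' P_1) \\
&= P_1^{-1} (M_T')^{-1} C_\vp \cdot \gamma(C_\vp)^{-1} \gamma(M_T') \gamma(P_1) = P_1^{-1} G_\gamma \gamma(P_1),
\end{align*}
where the final equality uses $\gamma(C_\vp) = C_\vp$, valid because $C_\vp$ has entries in $K$ and $\Gamma$ acts trivially on $K$. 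Multiplying on the left by $P_1$ yields the claim.

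The main obstacle is the justification of the limit passage together with the entry-wise identities $\vp(P_k) = P_{k+1}$ and $\vp(C_\vp) = C_\vp$, which amount to tracking how $\vp$ interacts with the $\calO_K$-coefficients in the matrices $C$ and $C_\vp$; these are implicit in the conventions already adopted in the appendix (cf.\ Proposition~\ref{propn:actiongamma}). Once granted, the rest of the argument is a formal manipulation of the semi-linear actions.
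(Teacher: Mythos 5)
Your proof is correct, and it reorganizes the argument around a different key identity than the paper does. The paper's proof works directly with the explicit product formula \eqref{eq:N}, $\left(M'_n\right)^{-1}\gamma\left(M'_n\right)=P_1\cdots P_{n}\gamma(P_{n}^{-1})\cdots\gamma(P_1^{-1})$, computes both sides of the desired identity at level $n$ by telescoping (obtaining $P_1\cdot\vp\left(\left(M'_n\right)^{-1}\gamma\left(M'_n\right)\right)=\left(\left(M'_{n+1}\right)^{-1}\gamma\left(M'_{n+1}\right)\right)\cdot\gamma(P_1)$), and lets $n\to\infty$. You instead first extract the functional equation $M_T'\cdot P_1=C_\vp\cdot\vp(M_T')$ and then deduce the lemma by a purely formal manipulation of $G_\gamma=(M_T')^{-1}\gamma(M_T')$ using $\vp\gamma=\gamma\vp$ and $\gamma(C_\vp)=C_\vp$. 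The inputs are the same in both cases --- the relation $P_n=\vp^{n-1}(P_1)$, the $\Gamma$- and $\vp$-invariance of $C_\vp$ (both of which, as you note, are already invoked implicitly in the paper's \eqref{eq:N} and in Proposition~\ref{propn:actiongamma}), and a limit passage justified by the sup-norm convergence of $M_n'$ --- so neither route is logically cheaper. What your version buys is that the intermediate identity $M_T'P_1=C_\vp\vp(M_T')$ is of independent interest: it is precisely the $\vp$-compatibility that resurfaces in the proof of Lemma~\ref{lem:integralityPR} in the form $M=C_\vp\cdot\vp(M)\cdot P^{-1}$, and it makes transparent why $G_\gamma$ must satisfy the stated commutation relation (it is the cocycle condition forced by the commuting semi-linear actions). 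The paper's version avoids inverting $\vp(M_T')$ and stays entirely at finite level until the last step.
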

\begin{proof}
By \eqref{eq:N} and the fact that $P_n=\vp^{n-1}(P_1)$, we have
\[
P_1\cdot\vp\left(\left(M'_n\right)^{-1}\cdot\gamma\left(M'_n\right)\right)=P_1\cdots P_{n+1}\gamma(P_{n+1}^{-1}\cdots P_2^{-1})
\]
and
\[
\left(\left(M'_n\right)^{-1}\cdot\gamma\left(M'_n\right)\right)\cdot\gamma(P_1)=P_1\cdots P_n\gamma(P_n^{-1}\cdots P_2^{-1}).
\]
In other words,
\[
P_1\cdot\vp\left(\left(M'_n\right)^{-1}\cdot\gamma\left(M'_n\right)\right)=\left(\left(M'_{n+1}\right)^{-1}\cdot\gamma\left(M'_{n+1}\right)\right)\cdot\gamma(P_1)
\]
Hence the result follows on taking $n\rightarrow\infty$.
 \end{proof}

\begin{defn}
We define a free $\AQp$-module $N_{C_\vp}$ of rank $r$, with basis $n_1,\ldots, n_{r}$. With respect to this basis, we equip $N_{C_\vp}$ with a semi-linear action by $\Gamma$, which is given by the matrix $G_\gamma$ (well-defined by Proposition~\ref{propn:actiongamma}) and a semi-linear map $\vp:N_{C_\vp}[\pi^{-1}]\rightarrow N_{C_\vp}[\vp(\pi)^{-1}]$, which is given by the matrix $P_1$.
\end{defn}

\begin{proposition}
The module $N_{C_\vp}$ is a Wach  module with weights in $[0;1]$.
\end{proposition}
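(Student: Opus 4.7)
The plan is to verify, one by one, the three defining axioms of a Wach module with weights in $[0;1]$, each of which is essentially a direct translation of material already assembled in the preceding statements. First, I would check that $\gamma \mapsto G_\gamma$ is a well-defined semi-linear $\Gamma$-action that is trivial modulo $\pi$. The cocycle identity $G_{\gamma_1\gamma_2} = G_{\gamma_1}\cdot\gamma_1(G_{\gamma_2})$ is a one-line consequence of the defining formula $G_\gamma = (M_T')^{-1}\gamma(M_T')$, and the triviality modulo $\pi$ is exactly the conclusion of Proposition~\ref{propn:actiongamma}, which asserts $G_\gamma \in I + \pi M_{r\times r}(\AQp)$.

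Second, I would verify condition (2) on $\vp$. Since $P_1$ has entries in $\AQp[q^{-1}]$, the prescription $\vp(n_i) = \sum_j (P_1)_{ji}\, n_j$ extends to a well-defined semi-linear map $N_{C_\vp}[\pi^{-1}] \to N_{C_\vp}[\vp(\pi)^{-1}]$. Taking $(a,b) = (0,1)$ and using $\vp(\pi) = \pi q$, the two required inclusions $\vp(\pi N_{C_\vp}) \subset \pi N_{C_\vp}$ and $q\,\vp(\pi N_{C_\vp}) \subset \pi N_{C_\vp}$ reduce to the integrality of $qP_1$ and $q^2 P_1$ over $\AQp$. Both are immediate from the block form of $P_1$ in Definition~\ref{defn:Pn}: multiplying by $q$ clears the $1/q$ denominator in the lower block while leaving the upper block in $\AQp$, and multiplying again only improves integrality. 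Third, the commutativity of $\Gamma$ and $\vp$ on $N_{C_\vp}$ translates, in the basis $n_1,\ldots,n_r$, into the matrix identity $P_1 \cdot \vp(G_\gamma) = G_\gamma \cdot \gamma(P_1)$, which is precisely Lemma~\ref{lem:commute}.

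The upshot is that the heavy lifting has already been done: Proposition~\ref{propn:actiongamma} encapsulates the only subtle integrality-and-convergence statement (propagating the inductive shape $I + \pi M_{r\times r}(\AQp)$ through the product $\prod P_k \cdot \gamma(P_k^{-1})$), and Lemma~\ref{lem:commute} provides the $\Gamma$-$\vp$ compatibility. Once these are in hand, the verification of the Wach module axioms amounts to matching the matrix data $(G_\gamma, P_1)$ against the three defining conditions, and I do not anticipate any substantive obstacle. If one were to look for a delicate point, it would be the implicit assertion that the $\Gamma$-action on $N_{C_\vp}$ defined via $G_\gamma$ is a genuine group action rather than merely a family of endomorphisms, but this is handled by the cocycle identity above.
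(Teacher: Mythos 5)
Your proof is correct and follows essentially the same route as the paper: triviality of the $\Gamma$-action modulo $\pi$ is Proposition~\ref{propn:actiongamma}, the $\vp$-conditions follow from $qP_1$ being integral over $\AQp$, and the commutation of $\Gamma$ and $\vp$ is Lemma~\ref{lem:commute}. Your extra remark that $G_{\gamma_1\gamma_2}=G_{\gamma_1}\cdot\gamma_1(G_{\gamma_2})$ makes the family $\{G_\gamma\}$ a genuine semi-linear action is a point the paper leaves implicit, but it is an immediate consequence of $G_\gamma=(M_T')^{-1}\gamma(M_T')$ and adds nothing problematic.
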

\begin{proof}
By Proposition~\ref{propn:actiongamma}, the action of $\Gamma$ on $N_{C_\vp}$ is trivial modulo $\pi$.

Since $P_1\in1/q M_{r\times r}(\AQp)$, we have \[
\vp\left(\pi N_{C_\vp}\right)\in\pi N_{C_\vp}\quad\text{and}\quad q\vp\left(N_{C_\vp}\right)\subset \pi^bN_{C_\vp}.
\]

Finally, by Lemma~\ref{lem:commute}, the actions of $\Gamma$ and $\vp$ commute, so we are done. 
\end{proof}

\begin{theorem}
As Wach modules, $N_{C_\vp}$ is isomorphic to $\NN(T)$. Furthermore, 
\[
\begin{pmatrix}v_1&\cdots&v_{r}\end{pmatrix}M_T'=\begin{pmatrix}n_1&\cdots&n_{r}\end{pmatrix}.
\]
\end{theorem}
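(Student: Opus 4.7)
The plan is to invoke Berger's equivalence between the category of Wach modules with weights in $[0,1]$ over $\AQp$ and the category of $G_K$-stable $\ZZ_p$-lattices in crystalline representations with Hodge--Tate weights in $[0,1]$. Under this equivalence, the functor $N\mapsto N/\pi N$ recovers the associated filtered Dieudonn\'e module with its Frobenius. Consequently, producing a Wach-module isomorphism $N_{C_\vp}\cong\NN(T)$ reduces to producing an isomorphism $N_{C_\vp}/\pi N_{C_\vp}\cong\DD_K(T)$ of filtered $\vp$-modules over $\calO_K$ compatible with the (necessarily trivial) residual $\Gamma$-action.

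To carry out this reduction, I would first compute the mod-$\pi$ reduction of the Wach-module structure on $N_{C_\vp}$. Since $q\equiv p\pmod{\pi}$, the matrix $P_1=C\cdot\mathrm{diag}(I_{r_0},q^{-1}I_{r-r_0})$ reduces to $C_\vp$, so the induced Frobenius on $N_{C_\vp}/\pi$ is given by $C_\vp$ in the basis $\bar n_1,\ldots,\bar n_r$, matching the Frobenius on $\DD_K(T)$ in the basis $v_1,\ldots,v_r$. Proposition~\ref{propn:actiongamma} gives $G_\gamma\equiv I\pmod{\pi}$, so the induced $\Gamma$-action is trivial. For the filtration, the first $r_0$ columns of $P_1$ are integral, so $n_1,\ldots,n_{r_0}\in\Fil^0 N_{C_\vp}=\{x:\vp(x)\in N_{C_\vp}\}$; a rank comparison shows their images span exactly the correct filtration step in $N_{C_\vp}/\pi$, matching $\Fil^0\DD_K(T)=\mathrm{span}(v_1,\ldots,v_{r_0})$. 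The assignment $\bar n_i\mapsto v_i$ thus yields the required isomorphism of filtered $\vp$-modules, and Berger's equivalence lifts it uniquely to a Wach-module isomorphism $N_{C_\vp}\cong\NN(T)$.

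For the change-of-basis identity, I would extend scalars to $\Brig$ and work inside the common module $\Brig\otimes_{\AQp}\NN(T)\cong\Brig\otimes_K\DD_K(T)$ provided by Berger's comparison. Writing $(n_1,\ldots,n_r)=(v_1,\ldots,v_r)\,M$ for some $M\in\GL_r(\Brig)$, the three compatibilities
\[
C_\vp\cdot\vp(M)=M\cdot P_1,\qquad \gamma(M)=M\cdot G_\gamma\;\;(\gamma\in\Gamma),\qquad M\equiv I\pmod{\pi}
\]
determine $M$ uniquely. The $\Gamma$-compatibility is the very definition $G_\gamma=(M_T')^{-1}\gamma(M_T')$, and the mod-$\pi$ congruence is immediate from the product expression $M_n'=(C_\vp)^n P_n^{-1}\cdots P_1^{-1}$. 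For the Frobenius relation, the identity $\vp(P_j)=P_{j+1}$ yields $\vp(M_n')=(C_\vp)^n P_{n+1}^{-1}\cdots P_2^{-1}$, and hence $C_\vp\vp(M_n')=M_{n+1}'P_1$; letting $n\to\infty$ gives $C_\vp\vp(M_T')=M_T'P_1$, as desired.

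The main obstacle will be setting up the integral version of Berger's equivalence precisely enough to compare $\AQp$-lattices (rather than just isogeny classes) and verifying on-the-nose that the filtration on $N_{C_\vp}/\pi N_{C_\vp}$ induced by the Wach-module definition coincides with the prescribed $\Fil^0\DD_K(T)$. Once these technical points are arranged, both assertions of the theorem are essentially formal consequences of the construction of $N_{C_\vp}$ and the recursive structure of $M_n'$.
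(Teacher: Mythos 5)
Your proposal follows essentially the same route as the paper: both reduce the isomorphism $N_{C_\vp}\cong\NN(T)$ to a comparison of filtered $\vp$-modules modulo $\pi$ via Berger's equivalence (the paper cites \cite[Th\'eor\`eme~III.4.4]{berger04}), and both establish the basis identity by invoking \cite[\S II.3]{berger04} to produce $M\in I+\pi M_{r\times r}(\Brig)$ with $(v_i)M=(n_i)$ and then pinning $M$ down as $M_T'$ from the $\Gamma$-cocycle relation together with the congruence $M\equiv I\bmod\pi$ (your extra Frobenius compatibility check is harmless but not needed for this uniqueness). The argument is correct.
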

\begin{proof}
In order to show that $N_{C_\vp}\cong\NN(T)$, it is enough to show that
\begin{equation}\label{eq:compareWach}
\DD_K(T)\cong N_{C_\vp}\mod\pi
\end{equation}
as filtered $\vp$-module by \cite[Th\'eor\`me~III.4.4]{berger04}.

By definition $P_1\equiv C_\vp\mod \pi$, so the actions of $\vp$ agree on the two sides of \eqref{eq:compareWach}. For the filtration, we have
\[
\Fil^iN_{C_\vp}=
\begin{cases}
N_{C_\vp}&i\le-1\\
\left(\bigoplus_{1\le j\le r_0}\AQp\cdot n_j\right)\oplus\left(\bigoplus_{r_0+1\le j\le r}\AQp\cdot \pi n_j\right)&i=0\\
\left(\bigoplus_{1\le j\le r_0}\AQp\cdot \pi^in_j\right)\oplus\left(\bigoplus_{r_0+1\le j\le r}\AQp\cdot \pi^{i+1}n_j\right)&i\ge1
\end{cases}.\]
Since $\Fil^0\DD(T_p(A))$ is generated by $v_1,\ldots,v_{r_0}$, we see that the filtrations on the two sides of \eqref{eq:compareWach} as well.

By \cite[\S II.3]{berger04},
\begin{equation}\label{eq:relationWach}
\begin{pmatrix}v_1&\cdots&v_{r}\end{pmatrix}M=\begin{pmatrix}n_1&\cdots&n_{r}\end{pmatrix}.
\end{equation}
for some matrix $M\in I+\pi M_{r\times r}(\Brig)$. For any $\gamma\in\Gamma$, 
\[\begin{pmatrix}v_1&\cdots&v_{r}\end{pmatrix}\gamma(M)=\begin{pmatrix}n_1&\cdots&n_{r}\end{pmatrix}G_\gamma.\]
Therefore, $G_\gamma=M\cdot\gamma(M^{-1})=M_T'\cdot\gamma\left(M_T'\right)^{-1}$. But $M_T'\in I+\pi M_{r\times r}(\Brig)$ also. Hence,
\[
M\cdot\left(M_T'\right)^{-1}\in\left(I+\pi M_{r\times r}(\Brig)\right)^\Gamma.
\]
This implies that $M=M_T'$ as required.
\end{proof}

We now use the theory of Wach modules to prove an integrality result that is used in the main part of the article. Recall from \cite[\S3.1]{leiloefflerzerbes10} and \cite[\S3.1]{leiloefflerzerbes11} that for any $x\in\NN(T)^{\psi=0}$, we have $(1-\vp)x\in (\vp^*\NN(T))^{\psi=0}\subset\Brig\otimes\DD_K(T)$. Furthermore, we have a $\calO_K\otimes\Lambda$-basis for  $(\vp^*\NN(T))^{\psi=0}$ of  the form $(1+\pi)\vp(n_1),\ldots,(1+\pi)\vp(n_r)$

\begin{lemma}\label{lem:integralityPR}
Let $x\in\NN(T)^{\psi=1}$, then $(1\otimes\vp^{-n-1})\circ(1-\vp)x$ is congruent to an element of $(\AQp)^{\psi=0}\otimes\DD_K(T)$ modulo $\vp^{n+1}(\pi)\Brig\otimes\DD_K(T)$.
\end{lemma}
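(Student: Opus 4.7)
The plan is to leverage the explicit Wach-module description developed earlier in this appendix. Since $x\in\NN(T)^{\psi=1}$, a direct computation using $\psi\vp=\mathrm{id}$ shows $(1-\vp)x\in(\vp^{*}\NN(T))^{\psi=0}$, a module which (as recalled just before the lemma) admits the $\calO_{K}\otimes\Lambda$-basis $(1+\pi)\vp(n_{1}),\ldots,(1+\pi)\vp(n_{r})$. Expanding $(1-\vp)x=\sum_{i}\lambda_{i}\cdot(1+\pi)\vp(n_{i})$ with $\lambda_{i}\in\calO_{K}\otimes\Lambda$ reduces the problem to a matrix computation. Transporting everything to $\Brig\otimes\DD_{K}(T)$ via the embedding $n_{i}\mapsto\sum_{j}v_{j}(M'_{T})_{ji}$ constructed here, the $\vp$-equivariance of this embedding amounts to the identity $C_{\vp}\cdot\vp(M'_{T})=M'_{T}\cdot P_{1}$, so that $(1+\pi)\vp(n_{i})$ is represented by the $i$-th column of $(1+\pi)\,C_{\vp}\vp(M'_{T})$ against the basis $(v_{j})$.

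Applying $1\otimes\vp^{-n-1}$ multiplies this column by $C_{\vp}^{-n-1}$. Crucially, $C_{\vp}^{-1}$ is \emph{integral} because hypothesis (H.S.) together with the decomposition \eqref{eq:Aphi} forces $C_{\vp}^{-1}=\mathrm{diag}(I,pI)\,C^{-1}\in M_{r\times r}(\calO_{K})$. The core of the proof is then the computation
\[
C_{\vp}^{-n-1}\cdot M'_{T}\cdot P_{1}\;\equiv\;C_{\vp}^{-n-1}\cdot M'_{n}\cdot P_{1}\;=\;C_{\vp}^{-1}\,P_{n}^{-1}\cdots P_{2}^{-1}\pmod{\vp^{n+1}(\pi)},
\]
where I substitute $M'_{n}=(C_{\vp})^{n}P_{n}^{-1}\cdots P_{1}^{-1}$ in the middle step. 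Since each $P_{k}^{-1}$ is defined over $\AQp$ by Definition~\ref{defn:Pn}, the right-hand side lies in $M_{r\times r}(\AQp)$. Combined with the integrality of the coefficient $\lambda_{i}(1+\pi)\in\AQp^{\psi=0}$, this delivers the required congruence with an element of $\AQp^{\psi=0}\otimes\DD_{K}(T)$; the $\psi=0$ property is automatic because $\psi$ acts on the $\Brig$-factor and commutes with $1\otimes\vp^{-n-1}$.

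The principal obstacle is justifying the congruence $M'_{T}\equiv M'_{n}\pmod{\vp^{n+1}(\pi)\Brig}$, which is the Wach-module analogue of the convergence estimate in Proposition~\ref{prop:matrix}. The key input is the identity $\vp^{m-1}(q)\equiv p\pmod{\vp^{n+1}(\pi)}$ for $m>n$ (mirroring $\Phi_{p^m}(1+X)\equiv p\pmod{\omega_n}$ used there), which yields $P_{m}\equiv C_{\vp}^{-1}\pmod{\vp^{n+1}(\pi)}$; telescoping the product defining $M'_{n+k}$ then gives $M'_{n+k}\equiv M'_{n}\pmod{\vp^{n+1}(\pi)\Brig}$ for all $k\ge0$, and the lemma follows upon passing to the limit $k\to\infty$.
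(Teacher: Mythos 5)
Your overall strategy is essentially the paper's: expand $(1-\vp)x$ in the basis $(1+\pi)\vp(n_i)$ of $(\vp^*\NN(T))^{\psi=0}$, transport to $\Brig\otimes\DD_K(T)$ via $\begin{pmatrix}n_1&\cdots&n_r\end{pmatrix}=\begin{pmatrix}v_1&\cdots&v_r\end{pmatrix}M_T'$, and exploit the relation $C_\vp\vp(M_T')=M_T'P_1$ together with the integrality of $C_\vp^{-1}$ and of the matrices $P_k^{-1}$. However, the key congruence you invoke is off by one, and as stated it fails. Concretely, $\vp^{m-1}(q)=\Phi_{p^m}(1+\pi)$ satisfies $\Phi_{p^m}(1+\pi)\equiv p\pmod{\vp^{m-1}(\pi)}$ and no better: for $m=n+1$ one has $\vp^{n+1}(\pi)=\vp^{n}(\pi)\cdot\Phi_{p^{n+1}}(1+\pi)$, and $\Phi_{p^{n+1}}(1+\pi)-p$ is a nonzero polynomial in $\pi$ of degree $p^{n+1}-p^{n}<p^{n+1}$, so it cannot be divisible by $\vp^{n+1}(\pi)$. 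Hence your claimed input ``$\vp^{m-1}(q)\equiv p\pmod{\vp^{n+1}(\pi)}$ for $m>n$'' is false at $m=n+1$, the telescoping only yields $M_T'\equiv M_n'\pmod{\vp^{n}(\pi)\Brig}$, and feeding this into your main computation proves the lemma only modulo $\vp^{n}(\pi)\Brig\otimes\DD_K(T)$ --- one level short of the assertion.

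The repair is immediate: truncate at $M_{n+1}'$ rather than $M_n'$. The congruence $P_m^{-1}\equiv C_\vp^{-1}$ does hold modulo $\vp^{n+1}(\pi)$ for every $m\ge n+2$, so the same telescoping gives $M_T'\equiv M_{n+1}'\pmod{\vp^{n+1}(\pi)\Brig}$, and then
\[
C_\vp^{-n-1}\,M_{n+1}'\,P_1=P_{n+1}^{-1}\cdots P_2^{-1}=\vp\bigl(P_{n}^{-1}\cdots P_1^{-1}\bigr)\in M_{r\times r}(\AQp),
\]
which is exactly the matrix $\vp^{n}(P^{-1})\cdots\vp(P^{-1})$ produced in the paper. (The paper avoids the index issue by iterating $M=C_\vp\vp(M)P^{-1}$ to obtain $M\equiv M_n'\pmod{\vp^{n}(\pi)}$ and then applying $\vp$ once more, which upgrades the modulus to $\vp^{n+1}(\pi)$ automatically.) Two smaller points: your displayed congruence should read $P_m^{-1}\equiv C_\vp^{-1}$ (equivalently $P_m\equiv C_\vp$), not $P_m\equiv C_\vp^{-1}$; and the $\psi=0$ claim for the approximating element is not quite ``automatic,'' since $(\AQp)^{\psi=0}$ is stable under multiplication by $\vp(\AQp)$ but not by $\AQp$ --- it is precisely the fact that the corrected matrix lies in $\vp\bigl(M_{r\times r}(\AQp)\bigr)$ (constants being in the image of $\vp$ as $K/\Qp$ is unramified) that guarantees the coefficients stay in $(\AQp)^{\psi=0}$.
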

\begin{proof}
By \cite[Lemma~3.3]{leiloefflerzerbes10}, there exists $x_1,\ldots,x_d\in(\AQp)^{\psi=0}$ such that
\[
(1-\vp)x=\sum_{i=1}^rx_i(1+\pi)\vp(n_i)=\begin{pmatrix}
v_1&\hdots&v_r
\end{pmatrix}\cdot C_\vp\cdot (1+\pi)\vp(M)\cdot\begin{pmatrix}
x_1\\ \vdots \\ x_d
\end{pmatrix} .
\]Note that we have abused notation to write $v_i\cdot (\star)$ for $(\star)\otimes v_i\in\Brig\otimes\DD_K(T)$.
Thus, on applying $(1\otimes\vp^{-n-1})$, we have
\[
(1\otimes\vp^{-n-1})\circ(1-\vp)x=\begin{pmatrix}
v_1&\hdots&v_r
\end{pmatrix}\cdot C_\vp^{-n}\cdot \vp(M)\cdot\begin{pmatrix}
x_1\\ \vdots \\ x_d
\end{pmatrix} .
\]
Therefore, it is enough to show that $C_\vp^{-n}\cdot \vp(M)$ is congruent to some element in $\AQp$ modulo $\vp^{n+1}(\pi)\Brig$.

If we apply $\vp$ to the equation \eqref{eq:relationWach}, we have the relation
\[
M=C_\vp\cdot\vp(M)\cdot P^{-1}.
\]
Since $M\equiv I\mod \pi$, we have $M\equiv C_\vp\cdot P^{-1}$ modulo $\pi$. On iterating, we have
$$M\equiv C_\vp^{n}\cdot \vp^{n-1}(P^{-1})\cdots P^{-1}\mod\vp^n(\pi),$$
which implies that
$$\vp(M)\equiv C_\vp^{n}\cdot \vp^{n}(P^{-1})\cdots \vp(P^{-1})\mod\vp^{n+1}(\pi).$$
Recall that $P^{-1}$ is defined over $\AQp$, hence we are done.
\end{proof}


\section{Linear Algebra: Proof of Proposition~\ref{prop:admissiblebasisexists}}
\label{appendix:linearalgebra}
The goal of this appendix is to provide a proof of Proposition~\ref{prop:admissiblebasisexists}. 
\begin{lemma}
\label{lemma:linearalgebra1}
Let $W$ be a free $\ZZ_p$-module of rank $\frak{d}$ and let $W^\prime$ be a free, rank $\frak{d}-1$ direct summand of $W$. Then the collection  $\{W^\prime+\ZZ_p\cdot v:  v\in W\}$ of submodules of $W$ is totally ordered (with respect to inclusion).
\end{lemma}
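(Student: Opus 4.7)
The plan is to reduce the problem to the well-known total order on ideals of $\ZZ_p$ by passing to the quotient $W/W^\prime$.

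Since $W^\prime$ is a rank $\frak{d}-1$ direct summand of $W$, the quotient $W/W^\prime$ is a free $\ZZ_p$-module of rank $1$, so we may fix an isomorphism $\pi : W/W^\prime \xrightarrow{\sim} \ZZ_p$ and consider the composite projection $p : W \twoheadrightarrow \ZZ_p$. For any $v \in W$, the submodule $W^\prime + \ZZ_p \cdot v$ is exactly the preimage under $p$ of the cyclic submodule $\ZZ_p \cdot p(v) \subset \ZZ_p$; indeed, $W^\prime = \ker(p)$, so $p^{-1}(\ZZ_p \cdot p(v)) = W^\prime + \ZZ_p\cdot v$. Since taking preimages under the surjection $p$ gives an inclusion-preserving bijection between submodules of $\ZZ_p$ and submodules of $W$ containing $W^\prime$, it suffices to show that the collection $\{\ZZ_p \cdot p(v) : v \in W\}$ of submodules of $\ZZ_p$ is totally ordered under inclusion.

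This last claim is immediate from the structure of $\ZZ_p$ as a discrete valuation ring: every nonzero $\ZZ_p$-submodule of $\ZZ_p$ is of the form $p^n \ZZ_p$ for some $n \geq 0$, and the collection $\{0\} \cup \{p^n \ZZ_p : n \geq 0\}$ is linearly ordered by $p^n\ZZ_p \supseteq p^m\ZZ_p \iff n \leq m$ (with $0$ being the minimum element). In particular, given any $v_1, v_2 \in W$, writing $p(v_i) = p^{n_i} u_i$ with $u_i \in \ZZ_p^\times$ (or $p(v_i)=0$), one has $\ZZ_p\cdot p(v_1) \subseteq \ZZ_p\cdot p(v_2)$ or vice versa according to which of $n_1, n_2$ is larger.

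There is no serious obstacle here; the only thing to be careful about is the passage from the total order on submodules of $\ZZ_p$ back to the total order on the submodules of $W$ in the given collection, which is handled cleanly by the inclusion-preserving correspondence induced by the surjection $p$, using crucially that $W^\prime = \ker(p)$ is contained in every member of the collection.
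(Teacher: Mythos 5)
Your proof is correct and follows exactly the paper's approach: the paper's entire argument is the one-line observation that $W/W^\prime$ is free of rank one over $\ZZ_p$, so that the submodules in question correspond to the (totally ordered) ideals of $\ZZ_p$. Your write-up just makes the preimage correspondence explicit.
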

\begin{proof}
This follows from the fact that the quotient $W/W^\prime$ is a free $\ZZ_p$-module of rank one.
\end{proof}
\begin{lemma}
\label{lemma:codim1issmall}
Let $W$ be as in the previous lemma. Let $\frak{D}$ be a finite collection of rank $\frak{d}-1$ direct summands of $W$ and let $W_0=\cup_{\frak{D}}\,W^\prime$ be their union. For any $k\in \ZZ^+$ we have,
$$p^kW\cup W_0\neq W.$$
\begin{proof}
Choose any element $w=w_0 \in W-W_0$ (such an element clearly exists). If $w_0\not\in p^kW$, we are done, otherwise write $w_0=p^k w_1$. Observe that $w_1\not\in W_0$ (as otherwise, $w_0$ would be an element of $W_0$ as well). Now if $w_1\not\in p^kW$, we are done again. Otherwise we may continue with this process, which eventually has to terminate.
\end{proof}
\end{lemma}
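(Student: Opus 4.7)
The plan is to argue by contradiction, peeling off factors of $p^k$ inductively, and to exploit that each $W'\in\frak{D}$ is \emph{saturated} in $W$ (being a direct summand, $W/W'$ is a free $\ZZ_p$-module, in particular torsion-free).

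First I would verify that $W\setminus W_0\neq\emptyset$. This is the only non-formal ingredient: tensoring with $\QQ_p$ turns each $W'\in\frak{D}$ into a proper $\QQ_p$-subspace of $W\otimes\QQ_p$ of codimension one, and a vector space over the infinite field $\QQ_p$ cannot be the union of finitely many proper subspaces. Hence $W_0\otimes\QQ_p\subsetneq W\otimes\QQ_p$ and we may pick $w_0\in W\setminus W_0$; note that $w_0\neq 0$ since $0$ lies in every $W'\in\frak{D}$.

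Next, suppose for contradiction that $p^kW\cup W_0=W$. Since $w_0\notin W_0$, necessarily $w_0\in p^kW$, so we may write $w_0=p^kw_1$ for some $w_1\in W$. The key step is to propagate the non-membership in $W_0$ down to $w_1$: for any $W'\in\frak{D}$, the inclusion $p^kw_1\in W'$ together with the torsion-freeness of $W/W'$ forces $w_1\in W'$. Consequently, if $w_1$ were in some $W'\in\frak{D}$, then $w_0=p^kw_1$ would lie in that same $W'\subset W_0$, contradicting our choice of $w_0$. Therefore $w_1\in W\setminus W_0$ as well.

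Iterating, the assumed equality $p^kW\cup W_0=W$ produces a sequence $\{w_n\}_{n\ge 0}\subset W\setminus W_0$ with $w_n=p^kw_{n+1}$, and hence $w_0=p^{kn}w_n\in p^{kn}W$ for every $n\ge 1$. Since $W$ is a free $\ZZ_p$-module of finite rank, $\bigcap_{n\ge 1}p^{kn}W=0$, which forces $w_0=0$ and contradicts our choice. The only genuine obstacle is the initial non-triviality $W\neq W_0$; once this is in hand, the saturation of each $W'$ and the $p$-adic Krull intersection theorem finish the argument mechanically.
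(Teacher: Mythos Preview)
Your proof is correct and follows essentially the same approach as the paper: pick $w_0\in W\setminus W_0$, peel off factors of $p^k$ using that each $W'\in\frak{D}$ is saturated, and note that the descent must terminate. The only difference is expository: you frame it as a contradiction via $\bigcap_n p^{kn}W=0$ and spell out why $W\setminus W_0\neq\emptyset$ and why saturation propagates non-membership, whereas the paper phrases it as a direct search and leaves these points implicit.
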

\begin{lemma}
\label{lemma:slopeavoidance}
For $\left(\begin{array}{cc}a & b\\ c& d \end{array} \right)\in \GL_2(\ZZ_p)$, the set 
$\left\{\frac{ax+by}{cx+dy}: x,y \in \ZZ_p^\times\right\}$
has infinite cardinality.
\end{lemma}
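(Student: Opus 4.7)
The plan is to reduce the problem to a one-variable assertion by substituting $t = x/y$. As $x$ and $y$ range independently over $\ZZ_p^\times$, the ratio $t=x/y$ ranges over all of $\ZZ_p^\times$; dividing numerator and denominator by $y$ shows that
\[
\frac{ax+by}{cx+dy}=\frac{at+b}{ct+d},
\]
so it suffices to prove that the set $S:=\{(at+b)/(ct+d): t\in\ZZ_p^\times,\ ct+d\neq 0\}$ is infinite.

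The map $f:t\mapsto (at+b)/(ct+d)$ is the restriction of a M\"obius transformation on $\mathbb{P}^1(\QQ_p)$, and since $ad-bc\in\ZZ_p^\times$ (the matrix lying in $\GL_2(\ZZ_p)$), $f$ defines a bijection of $\mathbb{P}^1(\QQ_p)$ onto itself. In particular, the restriction of $f$ to any subset of its domain is injective.

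Finally, $\ZZ_p^\times$ is uncountable — it contains the open subgroup $1+p\ZZ_p$, which is homeomorphic to $\ZZ_p$ — and the denominator $ct+d$ vanishes for at most one $t\in\QQ_p$ (namely $t=-d/c$ if $c\neq 0$, and never otherwise, since then $d\in\ZZ_p^\times$). Excising this single bad value still leaves an uncountable, hence infinite, domain, and the injectivity of $f$ transports this cardinality to $S$. There is no real obstacle to overcome here beyond ensuring we properly handle the possible pole of $f$, which is the only subtlety in an otherwise direct computation.
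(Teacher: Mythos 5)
Your proof is correct and rests on the same underlying fact as the paper's: the matrix has unit determinant, so the associated fractional linear map is injective, and its domain of unit arguments is infinite. The paper instead writes $\frac{ax+by}{cx+dy}=\frac{a}{c}-\frac{(ad-bc)/c}{cx+dy}$ after assuming $c\neq 0$ and observes that $cx+dy$ takes infinitely many values; your reduction to the single variable $t=x/y$ and appeal to bijectivity of M\"obius transformations on $\mathbb{P}^1(\QQ_p)$ is a mild streamlining that also treats the $c=0$ case uniformly.
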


\begin{proof}
Since  $\left(\begin{array}{cc}a & b\\ c& d \end{array} \right)\in \GL_2(\ZZ_p)$, either $c\neq0$ or $d\neq0$; say the first holds true.  Note that $\displaystyle{\frac{ax+by}{cx+dy}=\frac{a}{c}-\frac{(ad-bc)/{c}}{cx+dy}}$, and $ad-bc\neq 0$  and that $cx+dy$ takes on infinitely many values as $x,y\in \ZZ_p^\times$ vary.
\end{proof}
\begin{lemma}
\label{lemma:commonv}
Let $W$, $\frak{D}$ and $W_0$ be as in Lemma~\ref{lemma:codim1issmall}. Let $W_1,W_2 \in \frak{D}$ and suppose $v_1,v_2 \in W-W_0$ verify 
$$W_1\oplus\ZZ_p\cdot v_1=W=W_2\oplus\ZZ_p\cdot v_2\,.$$
There one can choose $\alpha,\beta \in \ZZ_p$ so that
\begin{enumerate}
\item[(a)] $v=\alpha v_1+ \beta v_2 \in W-W_0$,
\item[(b)] $W_1\oplus\ZZ_p\cdot v=W_2\oplus\ZZ_p\cdot v=W$.
\end{enumerate}
\end{lemma}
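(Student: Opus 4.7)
The plan is to set $\alpha = 1$ at the outset and vary $\beta\in\ZZ_p$, showing that conditions (a) and (b) together force $\beta$ to avoid only at most two residue classes modulo $p$ along with finitely many individual values; since $p$ is odd, uncountably many admissible $\beta$ will remain.

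First I would reformulate condition (b) as a pair of unit conditions. Since $v_1$ complements $W_1$, the quotient $W/W_1$ is freely generated by the image $\bar v_1$, so there is a unique $c\in\ZZ_p$ with $\bar v_2 = c\,\bar v_1$ in $W/W_1$; similarly one obtains $d\in\ZZ_p$ with $\bar v_1 = d\,\bar v_2$ in $W/W_2$. For $v = v_1 + \beta v_2$, the images in $W/W_1$ and $W/W_2$ are $(1+\beta c)\bar v_1$ and $(\beta + d)\bar v_2$ respectively, and the identity $W=W_i\oplus\ZZ_p\cdot v$ amounts to these images being generators. Hence (b) is equivalent to requiring both $1+\beta c$ and $\beta + d$ to lie in $\ZZ_p^\times$. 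The condition $\beta + d\in\ZZ_p^\times$ excludes exactly one residue class mod $p$ (namely $\beta\equiv -d$), and $1+\beta c\in\ZZ_p^\times$ excludes at most one further class (exactly one when $c\in\ZZ_p^\times$, none when $c\in p\ZZ_p$).

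Next I would reformulate condition (a). For each $W'\in\frak{D}$, the hypothesis $v_2\in W-W_0$ forces $v_2\notin W'$, so $\bar v_2\neq 0$ in $W/W'\cong\ZZ_p$. The membership $v_1+\beta v_2\in W'$ then translates to the single equation $\bar v_1 + \beta\,\bar v_2 = 0$ in $\QQ_p$, cutting out at most one ``forbidden'' value $\beta_{W'}\in\ZZ_p$; finiteness of $\frak{D}$ then guarantees that condition (a) excludes only finitely many values of $\beta$ overall.

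Finally, since $p$ is odd there are $p\geq 3$ residue classes modulo $p$, at most two of which are ruled out in the first step; any surviving class is a coset of $p\ZZ_p$, hence uncountable, and so remains uncountable after removing the finite forbidden set from the second step. Picking any $\beta$ from what remains yields the desired pair $(\alpha,\beta)=(1,\beta)$. The only delicate point is the nonvanishing $\bar v_2\neq 0$ in $W/W'$ (which is what ensures condition (a) is a codimension-one constraint in $\beta$ rather than either trivially violated or trivially satisfied), and this follows immediately from $v_2\notin W_0$; so I do not expect any serious obstacle beyond careful bookkeeping of the two residue-class exclusions.
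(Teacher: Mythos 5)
Your proof is correct, and it takes a genuinely different (and arguably cleaner) route than the one in the paper. The paper keeps both coefficients $\alpha,\beta$ as unknowns: after reducing without loss of generality to the case where the cross-coordinates $x_1,x_2$ have positive valuation, it packages condition (b) into the invertible matrix $X=\left(\begin{smallmatrix}x_1&1\\ 1&x_2\end{smallmatrix}\right)$ and its inverse $Y$, and then secures condition (a) by invoking the separate slope-avoidance Lemma~\ref{lemma:slopeavoidance} to keep the ratio $\alpha/\beta$ away from the finite set $\frak{X}$ of slopes of the lines in $W_0\cap\textup{span}(v_1,v_2)$. You instead normalize $\alpha=1$ and work entirely in the rank-one quotients $W/W_1$, $W/W_2$ and $W/W'$: condition (b) becomes the two unit conditions $1+\beta c,\ \beta+d\in\ZZ_p^\times$, which exclude at most two residue classes of $\beta$ modulo $p$, and condition (a) becomes the avoidance of at most one value of $\beta$ per $W'\in\frak{D}$ (using, as you correctly isolate, that $\bar v_2\neq 0$ in $W/W'$ because $v_2\notin W_0$), hence finitely many values in all; a cardinality count then finishes the argument. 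What each approach buys: yours is self-contained (no auxiliary slope lemma, no case split on $v_p(x_i)$) and makes the avoidance mechanism completely explicit, at the cost of genuinely using $p\ge 3$ (for $p=2$ the two excluded residue classes could exhaust $\ZZ_2$), whereas the paper's two-parameter version does not need $p$ odd at this point; since $p$ is odd by standing assumption, this costs nothing here.
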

\begin{proof}
Fix a basis $\frak{B}_1$ of $W_1$ and $\frak{B}_2$ of $W_2$. Let $x_1$ be the $v_2$-coordinate of $v_1$ with respect to the basis $\frak{B}_2\cup \{v_2\}$ and $x_2$ be the $v_1$-coordinate of $v_2$ with respect to the basis $\frak{B}_1 \cup \{v_1\}$. We may assume without loss of generality that $v_p(x_i)>0$ for $i=1,2$, as otherwise, say in case $v_p(x_1)=0$, it would follow that $\textup{span}\left(\frak{B}_2,v_1\right)=\textup{span}\left(\frak{B}_2,x_1\cdot v_2\right)=W$ and thus the choice $\alpha=1$ and $\beta=0$ (thus $v=v_1$) would work. Let $X=\left(\begin{array}{cc}x_1 & 1\\ 1& x_2 \end{array} \right)$ and let $Y=\left(\begin{array}{cc}a & b\\ c& d \end{array} \right)\in\GL_2(\ZZ_p)$ be such that $YX=1$ (such $Y$ exists since $\det(X)\in \ZZ_p^\times$ thanks to our running hypothesis).

Consider $W_0\, \cap\, \textup{span}\left(v_1,v_2\right)$. Since $v_1 \not\in W_0$, it follows that this intersection is a finite union of $\ZZ_p$-lines, say spanned by $\{\alpha_iv_1+\beta_iv_2\}_{i=1}^d$ (with $\alpha_i,\beta_i \in \ZZ_p$). Let $\frak{X}=\{\alpha_i/\beta_i : \beta_i\neq 0\}$, note that it is a finite subset of $\QQ_p$. Use Lemma~\ref{lemma:slopeavoidance} to choose $x,y \in \ZZ_p^\times$ such that $\frac{ax+by}{cx+dy} \not\in \frak{X}$. Set $\alpha=ax+by$ and $\beta=cx+dy$. Note that we have by definitions
$$Y\left[\begin{array}{c}x\\y \end{array}\right]=\left[\begin{array}{c}\alpha\\ \beta \end{array}\right],$$
or equivalently that
\begin{equation} \label{equation:thekeyequation} \left(\begin{array}{cc}x_1 & 1\\ 1& x_2 \end{array} \right)\left[\begin{array}{c}\alpha\\ \beta \end{array}\right]=X\left[\begin{array}{c}\alpha\\ \beta \end{array}\right]=\left[\begin{array}{c}x\\y \end{array}\right]\,.
\end{equation}

Observe that $v:=\alpha v_1 +\beta v_2 \not\in W_0$ (as $\alpha/\beta\not\in\frak{X}$), so $v$ satisfies (a). Furthermore, 
$$ v=\alpha v_1 +\beta v_2  \equiv (\alpha x_1 + \beta)\cdot v_2=x\cdot v_2 \mod W_2$$
and
$$ v \equiv (\alpha  + \beta x_2)\cdot v_1=y\cdot v_1 \mod W_1$$
We therefore conclude (using the fact $x,y \in \ZZ_p^\times$) that 
$$\textup{span}\left(W_1,v\right)=\textup{span}\left(W_1,y\cdot v_1\right)=\textup{span}\left(W_1,v_1\right)=W\,,$$
and 
$$\textup{span}\left(W_2,v\right)=\textup{span}\left(W_2,x\cdot v_2\right)=\textup{span}\left(W_1,v_2\right)=W\,,$$
which proves that $v$ verifies (b) as well.
\end{proof}
\begin{lemma}
\label{lemma:linearalgebra2}
Let $W$ be as in the previous lemma and let $\{w_1,\dots,w_\frak{d}\}$ be a given basis of $W$. For any non-negative integer $k$, one can find elements $\{w_{\frak{d}+1},\dots, w_{\frak{d}+k}\}\subset W$ so that for any $I \subset \{1,\dots,\frak{d}+k\}$ of size $\frak{d}$, the set $\{w_j\}_{j\in I}$ spans $W$.
\end{lemma}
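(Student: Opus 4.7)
The proof proceeds by induction on $k$. The base case $k=0$ is vacuous, with $\{w_1,\ldots,w_\frak{d}\}$ being the given basis of $W$. For the inductive step, assume $\{w_1,\ldots,w_{\frak{d}+k}\}$ has been constructed so that every $\frak{d}$-element subset spans $W$; I will produce $w_{\frak{d}+k+1}\in W$ so that the property persists. Only those $\frak{d}$-subsets containing the new vector need be verified, so the problem reduces to finding $w_{\frak{d}+k+1}\in W$ such that $W_J\oplus\ZZ_p\cdot w_{\frak{d}+k+1}=W$ for every $(\frak{d}-1)$-element subset $J\subset\{1,\ldots,\frak{d}+k\}$, where $W_J:=\textup{span}_{\ZZ_p}(w_j:j\in J)$.

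By the inductive hypothesis applied to $J\cup\{\ell\}$ for any $\ell\notin J$, each $W_J$ is a rank-$(\frak{d}-1)$ direct summand of $W$. Let $\frak{D}$ denote the finite collection of all such $W_J$ and set $W_0=\bigcup_{W'\in\frak{D}}W'$. For each $W_J\in\frak{D}$, any $w_\ell$ with $\ell\notin J$ serves as a local witness: one has $W_J\oplus\ZZ_p\cdot w_\ell=W$ and $w_\ell\in W-W_0$ (the latter using the direct-summand property at other subsets).

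The heart of the argument is to produce a single $v\in W-W_0$ satisfying $W'\oplus\ZZ_p\cdot v=W$ for \emph{every} $W'\in\frak{D}$ simultaneously; one then sets $w_{\frak{d}+k+1}:=v$. I propose a nested induction on $m=|\frak{D}|$, whose case $m=2$ is exactly Lemma~\ref{lemma:commonv}. For the inductive step, enumerate $\frak{D}=\{W_1',\ldots,W_m'\}$ with local witnesses $v_1',\ldots,v_m'$, and seek $v=\sum_{i=1}^{m}c_iv_i'$ with $(c_i)\in\ZZ_p^m$ subject to: (a) $v\notin W'$ for every $W'\in\frak{D}$; and (b) the image of $v$ under each quotient map $W\twoheadrightarrow W/W_i'\cong\ZZ_p$ lies in $\ZZ_p^\times$. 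Condition (a) excludes $(c_i)$ from finitely many proper $\ZZ_p$-submodules of $\ZZ_p^m$ (since each $v_j'\notin W'$, each constraint cuts out a proper linear subvariety in the coefficient space), and each instance of (b) excludes the preimage of a hyperplane in $\FF_p^m$ under reduction modulo $p$. An avoidance argument parallel to Lemma~\ref{lemma:slopeavoidance} and to the selection of $(\alpha,\beta)$ in the proof of Lemma~\ref{lemma:commonv} produces $(c_1,\ldots,c_m)\in\ZZ_p^m$ meeting all constraints.

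The main obstacle is precisely this nested inductive step: Lemma~\ref{lemma:commonv} as stated combines only two local witnesses and guarantees the direct-sum property only for the two named modules, so iterating it pairwise forfeits control over members of $\frak{D}$ already treated. The remedy is to replace pairwise iteration with the multi-variable avoidance argument above, generalizing the selection of $(\alpha,\beta)\in\ZZ_p^2$ in the proof of Lemma~\ref{lemma:commonv} to a selection of $(c_1,\ldots,c_m)\in\ZZ_p^m$ avoiding a finite union of proper subvarieties. Once this sub-claim is established, applying it to $\frak{D}$ itself furnishes the desired $w_{\frak{d}+k+1}$ and closes the induction on $k$.
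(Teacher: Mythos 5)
Your overall architecture coincides with the paper's: induct on $k$, observe that only the $\frak{d}$-subsets containing the new vector need checking, and reduce to producing a single $v$ that generates a complement of $W_J$ simultaneously for every $(\frak{d}-1)$-subset $J$ of the indices already constructed. Where you diverge is in how that simultaneous witness is produced. The paper first builds an individual witness $v_S$ for each $S$ (via Lemma~\ref{lemma:codim1issmall}) and then invokes Lemma~\ref{lemma:commonv} ``iteratively''; your observation that a pairwise merge does not visibly preserve the direct-sum property for the modules already treated is correct, and in fact applies verbatim to the paper's own argument, so your instinct to replace the iteration by a one-shot choice of coefficients $(c_1,\ldots,c_m)$ is a reasonable variant.

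The trouble is that the step you delegate to ``an avoidance argument parallel to Lemma~\ref{lemma:slopeavoidance}'' is exactly where the difficulty sits, and it cannot be carried out in the generality you claim. Each instance of your condition (b) requires the image of $v$ in $W/W_i'\cong\ZZ_p$ to be a unit, i.e., requires $(c_1,\ldots,c_m)$ to avoid an index-$p$ subgroup of $\ZZ_p^m$ (the preimage of a hyperplane of $\FF_p^m$), and you must avoid $m=|\frak{D}|$ of these at once. Once $m\geq p+1$ such a union can equal all of $\ZZ_p^m$, and no refinement of the avoidance can rescue the claim, because the integral statement itself fails for $\frak{d}+k$ large relative to $p$: already for $\frak{d}=2$, two vectors of $\ZZ_p^2$ span $\ZZ_p^2$ precisely when their reductions are linearly independent over $\FF_p$, and $\FF_p^2$ has only $p+1$ lines, so no collection of $p+2$ vectors can be pairwise spanning (for general $\frak{d}$ this is the MDS bound). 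So your conditions (b) are precisely the unprovable ones, and the proposal has a genuine gap at its central step. The way out is to weaken the conclusion to ``$\{w_j\}_{j\in I}$ spans a finite-index submodule of $W$'' (equivalently, is a $\QQ_p$-basis of $W\otimes\QQ_p$): that is all Proposition~\ref{prop:admissiblebasisexists} actually needs, since admissibility only asks for $\QQ_p$-linear independence of the $v_i'$ modulo $\Fil^0$, and it is delivered by your condition (a) alone, which only requires avoiding finitely many proper $\QQ_p$-subspaces of the coefficient space.
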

\begin{proof}
We prove the lemma by induction on $k$. When $k=0$, the assertion is clear and suppose that $k\geq 1$ we have found a set  $\{w_{\frak{d}+1},\dots, w_{\frak{d}+k-1}\}$. Let $\frak{S}$ denote the collection of subsets of $1,\dots,\frak{d}+k-1$ of size $\frak{d}-1$ and let $\frak{D}=\{\textup{span}\left(\{w_i\}_{i\in S}\right): S\in\frak{S}\}$ be a set of free, rank $\frak{d}-1$ direct summands of $W$. Set $W_0=\cup_{\frak{D}}\,W^\prime$, observe that $W_0$ is a proper subset of $W$. For any $w \in W- W_0$ and $S\in \frak{S}$, the submodule $\textup{span}\left(\{w\}\cup \{w_i\}_{i \in S}\right)$ of $W$ is of finite index. Fix $S \in \frak{S}$ and define $W_S:=\textup{span}\left(w_i: i \in S\right).$ 

We first prove that there is an element $v_S\in W-W_0$ such that 
\begin{equation}
\label{eqn:vS} W_S+\ZZ_p\cdot v_S=W.
\end{equation}
Indeed, pick any $w\in W-W_0$. If $W_S+\ZZ_p\cdot w=W$, we are done. Otherwise we may use Lemma~\ref{lemma:codim1issmall} to choose  $w_1\in W-(W_S+\ZZ_p\cdot v \cup W_0)$, for which we have 
$$ W_S+\ZZ_p\cdot w_1\supsetneq W_S+\ZZ_p\cdot w\,.$$
This process has to terminate and when it does, we have found the desired $v_S$ satisfying (\ref{eqn:vS}).

Using Lemma~\ref{lemma:commonv} iteratively, one obtains an element $v \in W-W_0$ such that 
$$W_S+\ZZ_p\cdot v= W$$
for every $S \in \frak{S}$. We set $w_{\frak{d}+k}:=v$.
\end{proof}

\begin{proof}[Proof of Proposition~\ref{prop:admissiblebasisexists}]
Let $\frak{B}=\{v_1,\cdots,v_{g_-},w_{g_-+1},\dots,w_{g}\}$ be any $\ZZ_p$-basis of $\DD_p(T)$ such that $\{v_1,\cdots,v_{g_-}\}$ forms a basis of $\Fil^0\DD_p(T)$. Form the dual basis $$\frak{B}^\prime=\{v_1^\prime,\cdots,v_{g_-}^\prime,w_{g_-+1}^\prime,\cdots,w_{g}^\prime\}\subset\DD_p(T^*(1)).$$ 
Consider the free $\ZZ_p$-module  $W:=\DD_p(T^*(1))/\Fil^0\DD_p(T^*(1))$ of rank $g_-$ and for an element $v\in \DD_p(T^*(1))$, let $\bar{v}$ denote its image in $W$. It is easy to see that  $\{\bar{v}_1^\prime,\cdots,\bar{v}_{g_{-}}^\prime\}$ forms a basis of $W$. Use Lemma~\ref{lemma:linearalgebra2} (with $\frak{d}=g_-$ and $k=g_+$) to obtain a set $\{\bar{v}_1^\prime,\cdots,\bar{v}_{g}^\prime\}$ such that for any $\uI \in \fI_p$, 
$$\textup{span}\left(\bar{v}_i^\prime: i\in \uI\right)=W.$$
One can lift the set $\{\bar{v}_1^\prime,\cdots,\bar{v}_{g}^\prime\}$ to a basis $\frak{B}_{\textup{ad}}^\prime=\{{v}_1^\prime,\cdots,{v}_{g}^\prime\}$  of $\DD_p(T^*(1))$ and the basis $\frak{B}_{\textup{ad}}$ dual to $\frak{B}_{\textup{ad}}^\prime$ gives us an admissible basis of $\DD_p(T)$ completes the first part of the proof.

The proof of that a strongly admissible basis exists is similar and we only provide a sketch of its proof after inverting $p$. The technical details to conclude integral version of this result are identical to the arguments above we have assembled in the course of deducing the first part regarding admissibility. To ease notation, let $\mathcal{V}=\DD_p(T^*(1))\otimes\QQ_p$ and $\mathcal{W}=\Fil^0\DD_p(T^*(1))\otimes\QQ_p$. Set also $\mathcal{T}=(1-\vp)^{-1}(p\vp-1)$ and $\mathcal{W}^\prime=\mathcal{T}^{-1}(\mathcal{W})\otimes\QQ_p$\,; note that $\mathcal{T}$ is invertible thanks to our running assumptions.  Set $r=\dim \mathcal{W}= \mathcal{W}^\prime$ and $r+s=\dim\mathcal{V}$. We choose a basis $\{v^\prime_i\}$ inductively as follows:
\begin{itemize}
\item Choose any $v_1 \notin \mathcal{W} \cup \mathcal{W}^\prime$. 
\item For $k\leq s-1$, if we have chosen $v_1^\prime,\cdots,v_k^\prime$, choose $v_{k+1}^\prime \in \mathcal{V}$ as any vector so that
$$v_{k+1}^\prime \notin \left(\textup{span}(v_i^\prime: 1\leq i\leq k)+\mathcal{W}\right)\cup  \left(\textup{span}(v_i^\prime: 1\leq i\leq k)+\mathcal{W}^\prime\right)\,.$$
Note that we can do this as on the we have a union of two hyperplanes of dimension $k+r<r+s$.
\item For any $0\leq k<s$, suppose we have chosen $\frak{B}_k=\{v_1^\prime,\cdots,v_{s+k}^\prime\}$ in a way that 
$$\textup{span}\left(v_{i_j}^{\prime}: i_{j} \in I\right) \cap \left(\mathcal{W}\cup \mathcal{W}^\prime\right)=0$$
for every subset $I\subset \{1,\cdots,s+k\}$ of size $s$. (The first two steps will get us to this step with $k=0$.) 

Let $I^{(s-1)}$ denote all subsets of $I\subset \{1,\cdots,s+k\}$ of size $s-1$ and let 
$$V^{(s-1)}=\bigcup_{J\in I^{(s-1)}}\textup{span}\left(v_{i_j}: i_j \in J \right)\,.$$
This is a finite union of hyperplanes of dimension $s-1$. Now choose $v_{s+k+1}^\prime \in \mathcal{V}$ to be any element verifying
$$v_{s+k+1}^\prime \notin \left(\mathcal{W}+V^{(s-1)}\right)\cup \left(\mathcal{W}^\prime+V^{(s-1)}\right)\,.$$
Note that the right side is a union of finitely many hyperplanes of dimension $r+s-1$ so an element $v_{s+k+1}^\prime$ does indeed exist. Set $\frak{B}_{k+1}=\{v_1^\prime,\cdots,v_{s+k+1}^\prime\}$\,.
\end{itemize}
It is now easy to verify that the set $\frak{B}_{s}$ is a strongly admissible basis.
\end{proof}

\section{Coleman-adapted Kolyvagin systems}
\label{sec:appendixKSexists}
Throughout this Appendix, let $F$ be a totally real or a CM field as above. Let $\frak{O}$ be the ring of integers of a finite extension $\Phi$ of $\QQ_p$, with maximal ideal $\frak{m}$, residue field $k$ and uniformizer $\varpi$. Let $T$ be a $G_F$-stable $\frak{O}$-lattice inside $\mathcal{M}_p(\eta^{-1})$, the twist of the the $p$-adic realization of a motive $\mathcal{M}$ (of the sort considered in the main body of this article) by an even Dirichlet character $\eta$ of $\Delta$. Then $T$ is a free $\frak{O}$-module of finite rank which is equipped with a continuous $G_F$-action unramified outside a finite set of places $\Sigma$ of $F$. Set $\overline{T}=T/\frak{m} T$. We assume that all places of $F$ at infinity and above $p$ are contained in $\Sigma$. We assume that $T$ verifies the hypotheses \textbf{(H1)}-\textbf{(H4)} of \cite[Section 3.5]{mr02} as well as the following:

\textbf{(H.Tam)} For every finite place $\lambda \in \Sigma$, the module $H^0(I_\lambda,T\otimes\Phi/\ooo)$ is divisible. Here $I_\lambda$ stands for the inertia group at the prime $\lambda$.

\textbf{(H.nE)} For every prime $\frak{p}\mid p$ of $F$, we have
$$H^0(F_\frak{p},T)=H^2(F_{\frak{p}},T)=0\,.$$

In this appendix we let $F_\infty$ denote the cyclotomic $\ZZ_p$ extension of $F$ and $\Gamma=\textup{Gal}(F_\infty/F)$. Note that this is the pro-$p$ part of the group considered in the main text. Let $\Lambda^{(p)}=\ooo[[\Gamma]]$. Let $\mathbb{T}=T\otimes\LLp$ and fix  $\uI \in \frak{I}_p$ as in the conclusion of Proposition~\ref{prop:specialelementstransversaltopmsubmodules}. To ease notation, we will set $R=\LLp$ and $d=g_-$. {We fix throughout an $\uI \in \fI_p$ verifying the conclusion of  Proposition~\ref{prop:specialelementstransversaltopmsubmodules} and associated to this choice, fix a signed Coleman map
\begin{equation}\label{eqn:def:normalizedcolemanmap}\frak{C}:=\col_{\mathcal{M}_p}^{\uI,\eta}: H^1(F_p,\mathbb{T}){\longrightarrow}R^d\,\,.\end{equation}
Here $\col_{\mathcal{M}_p}^{\uI}$ here corresponds to the the Coleman map denoted by $\col_{T(\eta)}^{\uI}$ in the main text and $\col_{\mathcal{M}_p}^{\uI,\eta}$ is its restriction to $\eta$-isotypic component.  Let $Z\subset R^d$ denote a $R$-submodule of the target of the Coleman map $\frak{C}$ such that 
\begin{itemize}
\item $Z$ is free of rank $d$.
\item The $R$-module $Z/\textup{im}(\frak{C})$ is pseudo-null.
\end{itemize}
The existence of such $Z$ is guaranteed by  Corollary~\ref{cor:pseudosurj2}.} 

We now fix an arbitrary rank-one direct summand  $\mathbb{L} \subset Z$. 
\begin{defn}
\label{def:LrestrictedSelmerstructure}
 Let $\FFF_{\mathbb{L}}$ denote the Selmer structure on $\mathbb{T}$ given with the following data: 
\begin{itemize}
\item $H^1_{\FFF_{\mathbb{L}}}(F_\lambda,\mathbb{T})=H^1(F_{\lambda}, \mathbb{T})$ for primes $\lambda\nmid p$, 
\item $H^1_{\FFF_{\mathbb{L}}}(F_p,\mathbb{T})=\ker\left(H^1(F_p,\mathbb{T})\stackrel{\frak{C}}{\longrightarrow}Z/\mathbb{L}\right)\,.$
\end{itemize}
Let $\mathcal{P}$ be the set of places of $F$ that does not contain the archimedean places, primes at which $T$ is ramified and primes above $p$. Finally let $\overline{\mathbf{KS}}(\mathbb{T},\FFF_{\mathbb{L}},\mathcal{P})$ be the $R$-module of generalized Kolyvagin systems defined as in \cite[Section 3.2.2]{kbbdeform}. An element of this module will be called an \emph{$\mathbb{L}$-restricted Kolyvagin system}.

We also let $\mathcal{F}_{\mathbb{L}}^*$ denote the dual Selmer structure on the Cartier dual $\mathbb{T}^\dagger$, in the sense of \cite[Definition 1.3.1 and \S2.3]{mr02}.
\end{defn}
As in the main body of thus text, we assume the truth of the weak Leopoldt conjecture for $T$. Our goal in this appendix is to give a proof of Theorem~\ref{thm:appendixBmain}. 

\begin{lemma}
\label{lem:auxiliarypropagation}
Suppose $R$ is any commutative ring and $M,N,Q$ are finitely generated $R$-modules such that we have an exact sequence 
$$0\longrightarrow M\stackrel{\iota}{\longrightarrow } N\longrightarrow Q$$
and the quotient $N/\iota(M)$ is $R$-torsion-free. For any ideal $I$ of $R$, let $X_I=X\otimes_R R/I$ for $X=M,N,R$. Then the following sequence of $R_I$-modules is exact:
$$0\longrightarrow M_I \stackrel{\iota_I}{\longrightarrow } N_I\longrightarrow Q_I\,\,.$$
\end{lemma}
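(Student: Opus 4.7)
The plan is to decompose the given sequence into two manageable pieces and control how $-\otimes_R R/I$ behaves on each via the long exact sequence of Tor. Since the sequence is exact at $M$ and at $N$, $\iota(M)=\ker(N\to Q)$, so one extracts the short exact sequence
$$0\longrightarrow M\stackrel{\iota}{\longrightarrow} N\longrightarrow C\longrightarrow 0$$
where $C:=N/\iota(M)$, together with the induced embedding $C\hookrightarrow Q$. The claim then reduces to showing that tensoring each of these with $R/I$ behaves as expected.

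Applying the Tor long exact sequence to the short exact sequence above yields
$$\text{Tor}_1^R(C,R/I)\longrightarrow M_I\stackrel{\iota_I}{\longrightarrow} N_I\longrightarrow C_I\longrightarrow 0.$$
The torsion-freeness of $C$ forces the Tor term to vanish in the setting of interest: in the applications the lemma is invoked with $R\cong\ZZ_p[[X]]$ and $I$ generated by a non-zero-divisor $f$, so that $R/I$ admits the two-term free resolution $0\to R\stackrel{f}{\to} R\to R/I\to 0$; then $\text{Tor}_1^R(C,R/I)$ is identified with the $f$-torsion of $C$, which vanishes. One thereby obtains the short exact sequence $0\to M_I\stackrel{\iota_I}{\to} N_I\to C_I\to 0$. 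A parallel analysis of the embedding $C\hookrightarrow Q$ (fitting into $0\to C\to Q\to Q/C\to 0$) supplies the injection $C_I\hookrightarrow Q_I$; composing the surjection $N_I\twoheadrightarrow C_I$ with $C_I\hookrightarrow Q_I$ then gives $\ker(N_I\to Q_I)=\ker(N_I\to C_I)=\image(\iota_I)$, establishing the desired exactness of $0\to M_I\to N_I\to Q_I$.

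The main obstacle is the Tor-vanishing step. Over an arbitrary commutative ring and an arbitrary ideal $I$, torsion-freeness of $C$ does not suffice for $\text{Tor}_1^R(C,R/I)=0$; rather, one must exploit the structure of the Iwasawa algebra (where $R=\LLp$ is a regular local ring of dimension two and the ideals $I$ arising from specialization at characters are principal, generated by non-zero-divisors) in order to reduce $\text{Tor}_1^R(C,R/I)$ to torsion of $C$ by a single non-zero-divisor, and then apply the torsion-freeness hypothesis to conclude. The same mechanism is what ensures that $C_I\hookrightarrow Q_I$ in the second step.
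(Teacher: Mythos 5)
Your homological route is genuinely different from the paper's proof, which is a two-line element chase: if $m\in M$ has $\iota(m)\in I\cdot N$, write $\iota(m)=r\cdot n_0$ with $r\in I$; then $r$ kills the class of $n_0$ in the torsion-free module $N/\iota(M)$, so $n_0=\iota(m_0)$ and, by injectivity of $\iota$, $m=r\,m_0\in I\cdot M$. This identifies $\ker(M\to N_I)$ with $I\cdot M$, i.e.\ it proves (only) the injectivity of $\iota_I$, which is the only part of the conclusion used later. Measured against this, your argument has two genuine gaps. First, your vanishing of $\operatorname{Tor}_1^R(C,R/I)$ is justified only for $I$ principal and generated by a non-zero-divisor, and you defend that restriction by asserting this is the only case arising in the applications. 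That is not so: the lemma is invoked in the proof of Theorem~\ref{Adefthm:cartesian} with $I=(\varpi^k,(\gamma-1)^{\alpha})$, the non-principal ideals cutting out the quotients $R_{k,\alpha}$ of Definition~\ref{adefine:quotients}. For such ideals torsion-freeness of $C$ genuinely does not force $\operatorname{Tor}_1^R(C,R/I)=0$ (take $C=I$ the maximal ideal of a two-dimensional regular local ring), so your proof excludes precisely the case the paper needs; either a further argument for those ideals, or the paper's direct manipulation (which at least makes visible where one is implicitly writing an element of $I\cdot N$ as a single product $r\cdot n_0$), is required there.

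The second gap is the ``parallel analysis'' producing $C_I\hookrightarrow Q_I$: this needs $\operatorname{Tor}_1^R(Q/C,R/I)=0$, and nothing in the hypotheses makes $Q/C$ torsion-free. In fact exactness at $N_I$ can fail outright under the stated hypotheses, even for principal $I$: take $R=\Zp$, $M=0$, $N=Q=\Zp$ with $N\to Q$ given by multiplication by $p$, and $I=(p)$; then $N_I\to Q_I$ is the zero map on $\FF_p$ while $\operatorname{im}(\iota_I)=0$. So this half of the assertion cannot be established and should be dropped; the paper's proof tacitly does exactly that, proving and using only the identity $\ker(M\to N_I)=I\cdot M$.
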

\begin{proof}
Suppose $m \in M$ is such that $\iota(m) \in I\cdot N$, say $\iota(m)=r\cdot n_0$ for some $r\in I$ and $n_0 \in N$. As the quotient $N/\iota(M)$ is $R$-torsion-free, it follows that $n_0 \in \iota(M)$, say $n_0=\iota(m_0)$. Thus $\iota(m)=\iota(r\cdot m_0)$ and since $\iota$ is injective, $m\in I\cdot M$. We just proved that $I\cdot M=\ker\left(M\stackrel{\iota}{\longrightarrow } N_I\right)$
which is equivalent to the assertion of the Lemma.
\end{proof}

\begin{lemma}
\label{lem:theLrestrictedSelmergroupisfreeofcorrectrank}
The $R$-module $H^1_{\FFF_{\mathbb{L}}}(F_p,\mathbb{T})$ is free of rank $g_+$+1.
\begin{proof}
Let $L$ denote the image of $\mathbb{L}$ (resp., $\overline{Z}$ the image of $Z$) under the augmentation map $\frak{A}:R\twoheadrightarrow \ooo$. Observe the commutative diagram
$$\xymatrix{
0\ar[r]&H^1_{\FFF_{\mathbb{L}}}(F_p,\mathbb{T})\ar[r]\ar@{.>}[d] &H^1_{\Iw}(F_p,T)\ar[r]^(.6){\frak{C}}\ar@{>>}[d]^(.48){\otimes_\frak{A}\ooo}&Z/\mathbb{L}\ar@{>>}[d]^(.48){\otimes_\frak{A}\ooo}\\
0\ar[r]&\ker(\frak{C}_\frak{A})\ar[r] &H^1(F_p,{T})\ar[r]^(.6){\frak{C}_\frak{A}}&\overline{Z}/{L}
}$$
where $\frak{C}_\frak{A}:=\frak{C}\otimes_\frak{A}\ooo$ is the induced map on $H^1_{\Iw}(F_p,{T})\otimes_{\frak{A}}\ooo \stackrel{\sim}{\longrightarrow}H^1(F_p,{T})$. As the cokernel of $\frak{C}$ is finite so is the cokernel of $\frak{C}_\frak{A}$  
and it follows that $\ker(\frak{C}_\frak{A})$ is a free $\ooo$-module of rank $g_++1$ and by Nakayama's lemma that the $R$-module $H^1_{\FFF_{\mathbb{L}}}(F_p,\mathbb{T})$ is  generated by at most $g_++1$ elements. On the other hand, the first row of the diagram above shows that the generic fiber of $H^1_{\FFF_{\mathbb{L}}}(F_p,\mathbb{T})$ has rank $g_++1$ hence, together with our the discussion above,  we conclude that the $R$-module $H^1_{\FFF_{\mathbb{L}}}(F_p,\mathbb{T})$ is generated by exactly $g_++1$ elements. It is not hard to see (using the fact that $R$ is a UFD) that these generators cannot satisfy a non-trivial $R$-linear relation.
\end{proof}
\end{lemma}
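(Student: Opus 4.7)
The plan is to prove freeness by specializing along the augmentation $\frak{A}\colon R\twoheadrightarrow\ooo$ (sending $X\mapsto 0$), reducing the question over $R$ to a rank computation over $\ooo$, and then combining Nakayama's lemma with a generic-rank argument. I would first set $M=H^1_{\FFF_{\mathbb{L}}}(F_p,\mathbb{T})$, $N=H^1_{\Iw}(F_p,T)$ and $Q=Z/\mathbb{L}$; because $\mathbb{L}$ is a direct summand of $Z$, the module $Q$ is $R$-free of rank $g_- - 1$, so $N/M$ embeds in $Q$ and is torsion-free. Lemma~\ref{lem:auxiliarypropagation} applied with $I=\ker(\frak{A})$ then produces an exact sequence
\[
0\longrightarrow M\otimes_R\ooo\longrightarrow N\otimes_R\ooo\longrightarrow Q\otimes_R\ooo.
\]
Standard Iwasawa descent together with (H.nE) identifies $N\otimes_R\ooo$ with $H^1(F_p,T)$, and under this identification $M\otimes_R\ooo=\ker(\frak{C}_{\frak{A}})$, where $\frak{C}_{\frak{A}}\colon H^1(F_p,T)\to\overline{Z}/L$ denotes the descended Coleman map.

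Next I would pin down the $\ooo$-structure of $\ker(\frak{C}_{\frak{A}})$. The local Euler characteristic formula combined with (H.nE) shows that $H^1(F_p,T)$ is $\ooo$-free of rank $g$. Because $Z/\image(\frak{C})$ is pseudo-null over $R$, its specialization $\overline{Z}/\image(\frak{C}_{\frak{A}})$ is a finite $\ooo$-module, so $\image(\frak{C}_{\frak{A}})$ has full $\ooo$-rank inside $\overline{Z}/L$, namely $g_- - 1$. Consequently $\ker(\frak{C}_{\frak{A}})$ is $\ooo$-free of rank $g-(g_- - 1)=g_+ + 1$.

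Reducing further modulo the maximal ideal $\frak{m}_R=(\varpi,X)$ yields $\dim_k M/\frak{m}_R M = g_+ + 1$, whence Nakayama's lemma provides a surjection $\pi\colon R^{g_+ + 1}\twoheadrightarrow M$. Since $M$ sits inside the torsion-free $R$-module $N$ (torsion-freeness being a consequence of $H^0(F_p,\mathbb{T})=0$) and has generic $R$-rank exactly $g_+ + 1$ by the augmentation computation above, the kernel of $\pi$ is $R$-torsion in the torsion-free module $R^{g_+ + 1}$ and thus vanishes, so $\pi$ is an isomorphism. The main point requiring care is the interplay between Lemma~\ref{lem:auxiliarypropagation}, the freeness of $Q$, and the Iwasawa descent isomorphism $N\otimes_R\ooo\cong H^1(F_p,T)$ which together validate the identification of $M\otimes_R\ooo$ with $\ker(\frak{C}_{\frak{A}})$; once that is in place, the remainder is bookkeeping with ranks and Nakayama.
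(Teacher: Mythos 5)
Your proposal is correct and follows essentially the same route as the paper: specialize along the augmentation (via Lemma~\ref{lem:auxiliarypropagation} and the torsion-freeness of $N/M\hookrightarrow Z/\mathbb{L}$), identify $M\otimes_R\ooo$ with $\ker(\frak{C}_{\frak{A}})$, compute that this is $\ooo$-free of rank $g_++1$ using finiteness of the cokernel, and then combine Nakayama with a generic-rank/torsion-freeness argument to upgrade the surjection $R^{g_++1}\twoheadrightarrow M$ to an isomorphism. Your closing step (the kernel of that surjection is torsion inside a free module, hence zero) is a slightly cleaner formulation of the paper's remark that the generators admit no non-trivial $R$-linear relation.
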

\begin{theorem}
\label{thm:appendixBmain}
Let $\mathcal{P}_{1,\bar{1}} \subset \mathcal{P}$ be as in Definition~\ref{def:kolyprimes} below.
\begin{itemize}
\item[(i)] The $R$-module $\overline{\mathbf{KS}}(\mathbb{T},\FFF_{\mathbb{L}},\mathcal{P})$ is free of rank one, generated by \emph{any} Kolyvagin system $\pmb{\kappa}$ whose image $\overline{\pmb{\kappa}} \in {\mathbf{KS}}(\overline{T},\FFF_{\mathbb{L}},\mathcal{P}_{1,\bar{1}})$ is non-zero.
\item[(ii)] For an arbitrary generator $\{\kappa_n\}=\pmb{\kappa}$, the leading term $\kappa_1\in H^1_{\FFF_{\mathbb{L}}}(F,\mathbb{T})$ is non-vanishing.
\item[(iii)] Suppose $\{\kappa_n\}=\pmb{\kappa}\in \overline{\mathbf{KS}}(\mathbb{T},\FFF_{\mathbb{L}},\mathcal{P}_X)$ is a generator. Then, 
$$\textup{char}\left(H^1_{\FFF_{\mathbb{L}}}(F,\mathbb{T})/\Lambda\cdot\kappa_1\right)= \textup{char}\left(H^1_{\FFF_{\mathbb{L}}^*}(F,\mathbb{T}^\dagger)^{\vee}\right).$$
\end{itemize}
\end{theorem}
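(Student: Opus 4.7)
The plan is to show that the Selmer structure $\FFF_{\mathbb{L}}$ on $\mathbb{T}$ has core rank one, and then appeal to the structural results for Kolyvagin systems of Mazur--Rubin, generalized to the $\Lambda$-adic/deformation setting in \cite{kbbdeform}. All three assertions will then follow more or less formally.

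First I would compute the core rank. Reducing the argument of Lemma~\ref{lem:theLrestrictedSelmergroupisfreeofcorrectrank} modulo $\frak{m}$, one sees that $H^1_{\FFF_{\mathbb{L}}}(F_p,\overline{T})$ has $k$-dimension $g_++1$. The standard core rank formula (cf.\ \cite[Theorem~4.1.5]{mr02}, extended to the $\LLp$-adic setting in \cite{kbbdeform}), together with the hypotheses \textbf{(H1)}--\textbf{(H4)}, \textup{(H.Tam)} and \textup{(H.nE)} used to control the remaining local terms, then yields $\chi(\FFF_{\mathbb{L}})=(g_++1)-g_+=1$.

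Given this, part (i) is a direct application of the structure theorem \cite[Theorem~3.2.4]{kbbdeform} (an Iwasawa-theoretic refinement of \cite[Theorem~5.2.10]{mr02}): for a core rank one Selmer structure, the module $\overline{\mathbf{KS}}(\mathbb{T},\FFF_{\mathbb{L}},\mathcal{P})$ is free of rank one over $R$, and the characterization of generators via the reduction map to $\mathbf{KS}(\overline{T},\FFF_{\mathbb{L}},\mathcal{P}_{1,\bar{1}})$ is Nakayama's lemma applied to the $R$-linear reduction homomorphism. Part (ii) is a consequence: for a generator $\pmb{\kappa}$, the residual class $\overline{\kappa}_1 \in H^1_{\FFF_{\mathbb{L}}}(F,\overline{T})$ is non-zero by \cite[Theorem~5.2.12]{mr02}, whence $\kappa_1 \neq 0$ in $H^1_{\FFF_{\mathbb{L}}}(F,\mathbb{T})$ by Nakayama.

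The principal obstacle is part (iii), which demands equality rather than just divisibility of characteristic ideals. The first divisibility
\[
\textup{char}\,H^1_{\FFF_{\mathbb{L}}^*}(F,\mathbb{T}^\dagger)^{\vee}\,\Big|\,\textup{char}\left(H^1_{\FFF_{\mathbb{L}}}(F,\mathbb{T})/R\cdot\kappa_1\right)
\]
follows from the standard Kolyvagin system bound \cite[Theorem~5.2.2]{mr02}. For the reverse divisibility one invokes the core rank one primitivity criterion \cite[Theorem~5.3.10]{mr02}: a Kolyvagin system that generates the rank-one module $\overline{\mathbf{KS}}$ is primitive in the sense of loc.\ cit., and its derived classes $\{\kappa_n\}_{n\in\mathcal{N}}$ then determine the Selmer characteristic ideal exactly. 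The technical work needed is to verify primitivity over $R$ (which is immediate from part (i) and the freeness of the Kolyvagin system module) and to check that the running hypotheses \textup{(H.Tam)}, \textup{(H.nE)}, \textup{(H.Leop)} together with \textbf{(H1)}--\textbf{(H4)} provide all the Chebotarev and regularity conditions needed to run the equality version of the bound in our $\Lambda$-adic setting.
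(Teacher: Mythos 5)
Your treatment of parts (i) and (iii) follows essentially the same route as the paper: compute the core rank $\chi(\overline{T},\FFF_{\mathbb{L}})=1$ from the local rank formula together with Lemma~\ref{lem:theLrestrictedSelmergroupisfreeofcorrectrank}, then invoke the structure theory of Kolyvagin systems from \cite{kbbdeform} and \cite{mr02}. One point you pass over too quickly: before that machinery applies one must verify that the Selmer structures $\FFF_{\mathbb{L}}(\frak{n})$ are \emph{cartesian} on the quotients $\mathbb{T}_{k,\alpha}$ (properties \textbf{(C1)}--\textbf{(C3)} of Theorem~\ref{Adefthm:cartesian}). This is the main technical difficulty here, because the image of the Coleman map is only pseudo-free; it is resolved by combining the freeness of $H^1_{\FFF_{\mathbb{L}}}(F_p,\mathbb{T})$ with the torsion-freeness of the quotient $H^1(F_p,\mathbb{T})/H^1_{\FFF_{\mathbb{L}}}(F_p,\mathbb{T})$ via Lemma~\ref{lem:auxiliarypropagation}. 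Your closing remark about ``Chebotarev and regularity conditions'' does not identify this step.

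The genuine gap is in part (ii). You assert that for a generator $\pmb{\kappa}$ the residual class $\overline{\kappa}_1$ is non-zero ``by \cite[Theorem~5.2.12]{mr02}.'' That theorem does not say this: for a core-rank-one structure it \emph{relates} the (non)vanishing of $\kappa_1$ to the size of the dual Selmer group, and a generator of the Kolyvagin system module is only guaranteed to have \emph{some} non-vanishing class $\overline{\kappa}_{\frak{n}}$ --- the leading term may well vanish. Indeed $\kappa_1=0$ exactly when $H^1_{\FFF_{\mathbb{L}}^*}(F,\mathbb{T}^\dagger)^\vee$ fails to be $R$-torsion. The paper's proof of (ii) therefore first establishes this cotorsion property: it follows from the choice of $\uI$ satisfying the conclusion of Proposition~\ref{prop:specialelementstransversaltopmsubmodules} (which rests on the weak Leopoldt conjecture) together with Proposition~\ref{prop:Ttorsion}, using the inclusion $H^1_{\FFF_{\mathbb{L}}^*}(F,\mathbb{T}^\dagger)\subset \Sel_{\uI}(T^\dagger/F(\mu_{p^\infty}))^\Delta$; only then does \cite[Theorem~5.10]{kbbdeform} yield $\kappa_1\neq 0$. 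Without this input your argument for (ii) does not go through.
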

It is the statement of Theorem~\ref{thm:appendixBmain}(iii) that is key to all our results towards Perrin-Riou's main conjectures. 

Proof of the parts (i) and (iii) of Theorem~\ref{thm:appendixBmain} is identical to the proof of \cite[Theorem~A.12]{kbbCMabvar}\footnote{In fact, both proofs rely on the arguments of~\cite{kbbdeform} where a similar statement was proved in much more general context.} once we verify that the analogous statement to Definition/Theorem A.9 in loc.cit. holds true in our setting and that the core Selmer rank $\chi(\overline{T},\FFF_{\mathbb{L}})$ (in the sense of \cite[Definition 4.1.11]{mr02}) of the Selmer structure $\FFF_{\mathbb{L}}$ on $\overline{T}$ is 1. The first of these is achieved in Theorem~\ref{Adefthm:cartesian} below and the second in Proposition~\ref{prop:corerankisone}. The main difficulty is that the images of the Coleman maps are not necessarily free.

We first provide a proof of (ii) here.
\begin{proof}[Proof of Theorem~\ref{thm:appendixBmain}(ii)] Thanks to our choice of $\uI \in \fI_p$ and Proposition~\ref{prop:Ttorsion}, note that the modified Selmer group $\Sel_{\uI}(T^\dagger/F(\mu_{p^\infty}))^\Delta$ is $R$-cotorsion. Thus the $R$-module $H^1_{\FFF_{\mathbb{L}}^*}(F,\mathbb{T}^\dagger) \subset \Sel_{\uI}(T^\dagger/F(\mu_{p^\infty}))^\Delta$ is cotorsion as well. We may now conclude the proof using~\cite[Theorem 5.10]{kbbdeform}.
\end{proof}

Before settling Theorem~\ref{thm:appendixBmain} in full, we introduce the necessary terminology mostly borrowed from \cite{mr02}. Fix a topological generator $\gamma$ of the group $\Gamma$. 
We then have a (non-canonical) isomorphism $R\cong \frak{O}[[\gamma-1]]$.
\begin{defn}
\label{adefine:quotients}
For $k, \alpha \in \ZZ^+$, set 
$$R_{k,\alpha}:=R/(\varpi^k,(\gamma-1)^{\alpha}),$$
$$\mathbb{T}_{k,\alpha}:=\mathbb{T}\otimes_{R}R_{k,\alpha}=\mathbb{T}/(\varpi^k,(\gamma-1)^{\alpha})$$
and define the collection 
$$\textup{Quot}(\mathbb{T}):=\{\mathbb{T}_{k,\alpha}: k,\alpha \in \ZZ^+\}.$$
The propagation of the Selmer structure $\FFF_{\mathbb{L}}$ (in the sense of \cite[Example 1.1.2]{mr02}) to the quotients $\mathbb{T}_{k,\alpha}$ will still be denoted by the symbol $\FFF_{\mathbb{L}}$ as well as its propagation to $T$. 
\end{defn}
\begin{defn}
\label{def:kolyprimes}For $k, \alpha \in\ZZ^+$ define
\begin{itemize}
\item[(i)]  $H_{k,\alpha}=\ker\left(G_F\rightarrow \textup{Aut}(\mathbb{T}_{k,\alpha})\oplus\textup{Aut}(\pmb{\mu}_{p^{k}})\right)$,
\item[(ii)]  $L_{k,\alpha}=\overline{F}^{H_{k,\alpha}}$,
\item[(iii)] $\mathcal{P}_{k,\alpha}=\{\hbox{Primes }\lambda \in \mathcal{P}_{X} : \lambda \hbox{ splits completely in } L_{k,\alpha}/F\}$.
\end{itemize}
The collection $\mathcal{P}_{k,\alpha}$ is called the collection of \emph{Kolyvagin primes} for $\mathbb{T}_{k,\alpha}$. Define $\mathcal{N}_{k,\alpha}$ to be the set of square free products of primes in $\mathcal{P}_{k,\alpha}$.
\end{defn}

\begin{defn}
\label{adefine:transverseandmodifiedselmeratfrakn}$\,$\\
(i) Given $\lambda \in \mathcal{P}_{k,\alpha}$ fix once and for all an abelian extension $F^\prime/F_\lambda$ which is totally and tamely ramified, and moreover is a maximal such extension. As in~\cite[Definition 1.1.6(iv)]{mr02},  the \emph{transverse local condition} at $\lambda$ is defined to be
$$H^1_{\textup{tr}}(F_\lambda,T_{k,\alpha})=\ker\{H^1(F_\lambda,T_{k,\alpha}) \longrightarrow H^1(F^\prime,T_{k,\alpha})\}.$$
(ii) For $\frak{n} \in  \mathcal{N}_{k,\alpha}$, define the Selmer structure $\FFF_{\mathbb{L}}(\frak{n})$ on $T_{k,\alpha}$ by setting
$$H^1_{\FFF_{\mathbb{L}}(\frak{n})}(F_\lambda,T_{k,\alpha})=\left\{
\begin{array}{cr}
H^1_{\FFF_{\mathbb{L}}}(F_\lambda,T_{k,\alpha}), & \hbox{ if } \lambda\nmid \frak{n},\\\\
 H^1_{\textup{tr}}(F_\lambda,T_{k,\alpha}), &\hbox{ if } \lambda\mid \frak{n}.
\end{array}
\right.$$
\end{defn}
The following list of properties is key in proving Theorem~\ref{thm:appendixBmain}.

\begin{theorem}
\label{Adefthm:cartesian}
For any $\frak{n} \in \NN_{k,\alpha}$ the Selmer structure $\FFF_{\mathbb{L}}(\frak{n})$ is cartesian on the collection $\textup{Quot}(\mathbb{T})$ in the following sense. Let $\lambda$ be any prime of $F$.
\begin{enumerate}
\item[\textbf{(C1)}] \textup{(Functoriality)} For $\alpha\leq\beta$ and $k\leq k^\prime$,
$H^1_{\FFF_{\mathbb{L}}(\frak{n})}(F_{\lambda},\mathbb{T}_{k,\alpha})$ is the exact image of $H^1_{\FFF_{\mathbb{L}}(\frak{n})}(F_\lambda,\mathbb{T}_{k^{\prime},\beta})$ under the canonical map $H^1(F_{\lambda},\mathbb{T}_{k^\prime,\beta}) \rightarrow H^1(F_{\lambda},\mathbb{T}_{k,\alpha}).$

\item[\textbf{(C2)}] \textup{(Cartesian property along the cyclotomic tower)} 
$$H^1_{\FFF_{\mathbb{L}}(\frak{n})}(F_{\lambda},\mathbb{T}_{k,\alpha})=\ker\left( H^1(F_{\lambda},\mathbb{T}_{k,\alpha})\longrightarrow \frac{H^1(F_{\lambda},\mathbb{T}_{k,\alpha+1})}{H^1_{\FFF_{\mathbb{L}}(\frak{n})}(F_{\lambda},\mathbb{T}_{k,\alpha+1})}\right).$$
Here the arrow is induced from the injection $\mathbb{T}_{k,\alpha}\stackrel{[\gamma-1]}{\longrightarrow }\mathbb{T}_{k,\bar{\alpha+1}}$ and $[\gamma-1]$ is the multiplication by $\gamma-1$ map.
\item[\textbf{(C3)}] \textup{(Cartesian property as powers of $p$ vary)}  
$$H^1_{\FFF_{\mathbb{L}}(\frak{n})}(F_\lambda,\mathbb{T}_{k,\alpha})=\ker\left(H^1(F_\lambda,\mathbb{T}_{k,\alpha})\stackrel{[\varpi]}{\longrightarrow } \frac{H^1(F_\lambda,\mathbb{T}_{k+1,\alpha})}{H^1_{\FFF_{\mathbb{L}}(\frak{n})}(F_\lambda,\mathbb{T}_{k+1,\alpha})}\right),$$
    where the arrow is induced from the injection $\mathbb{T}_{k,\alpha} \stackrel{[\varpi]}{\longrightarrow } \mathbb{T}_{k+1,\alpha}.$
\end{enumerate}
\end{theorem}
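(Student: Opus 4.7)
My plan is to verify the three cartesian properties (C1)--(C3) place by place, isolating three subcases.

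At primes $\lambda \nmid p\frak{n}$, the local condition in $\FFF_{\mathbb{L}}(\frak{n})$ is the unrestricted $H^1(F_\lambda,\cdot)$, so each of (C1)--(C3) is immediate. At primes $\lambda\mid\frak{n}$ (which are Kolyvagin primes, in particular with $\lambda\nmid p$ and $\mathbb{T}_{k,\alpha}$ unramified at $\lambda$), the transverse condition admits the orthogonal decomposition $H^1(F_\lambda,\mathbb{T}_{k,\alpha}) = H^1_{\textup{ur}}(F_\lambda,\mathbb{T}_{k,\alpha}) \oplus H^1_{\textup{tr}}(F_\lambda,\mathbb{T}_{k,\alpha})$, and the cartesian properties are standard; one may invoke the argument of \cite[Lemma 3.7.1]{mr02} applied in our context.

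The main case is $\lambda\mid p$. The key observation I would exploit is that the quotient
$$N := H^1(F_p,\mathbb{T})/H^1_{\FFF_{\mathbb{L}}}(F_p,\mathbb{T})$$
is $R$-torsion-free: by construction $N$ embeds into $Z/\mathbb{L}$, which is a free $R$-module of rank $d-1$ (because $\mathbb{L}$ is a rank-one free direct summand of the free $R$-module $Z$). Lemma~\ref{lem:auxiliarypropagation} then yields, for every ideal $I\subset R$, exactness of
$$0 \longrightarrow H^1_{\FFF_{\mathbb{L}}}(F_p,\mathbb{T})/I \longrightarrow H^1(F_p,\mathbb{T})/I \longrightarrow (Z/\mathbb{L})/I.$$

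The next step is to identify the propagated local condition on $\mathbb{T}_{k,\alpha}$ with the kernel of the Coleman map modulo $I=(\varpi^k,(\gamma-1)^\alpha)$, namely
$$H^1_{\FFF_{\mathbb{L}}}(F_p,\mathbb{T}_{k,\alpha}) = \ker\bigl(H^1(F_p,\mathbb{T}_{k,\alpha}) \longrightarrow (Z/\mathbb{L})\otimes_R R_{k,\alpha}\bigr),$$
where the arrow is the reduction of the $R$-linear Coleman map $\frak{C}$. This identification combines the $R$-linearity and $G_{F_p}$-equivariance of $\frak{C}$ with the hypothesis \textup{(H.nE)}, which via the cohomology long exact sequence forces the natural map $H^1(F_p,\mathbb{T})/I \to H^1(F_p,\mathbb{T}_{k,\alpha})$ to be an isomorphism. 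Granted this intrinsic description, property (C1) is a formal consequence of functoriality, while (C2) and (C3) follow from the snake lemma applied to the short exact sequences
$$0 \to \mathbb{T}_{k,\alpha} \xrightarrow{\gamma-1} \mathbb{T}_{k,\alpha+1} \to \mathbb{T}_{k,1} \to 0, \qquad 0 \to \mathbb{T}_{k,\alpha} \xrightarrow{\varpi} \mathbb{T}_{k+1,\alpha} \to \mathbb{T}_{1,\alpha} \to 0,$$
juxtaposed with the corresponding exact sequences obtained by tensoring with the free (hence flat) $R$-module $Z/\mathbb{L}$.

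The hardest step I foresee is the intrinsic identification of the propagated Selmer condition with the kernel of the reduced Coleman map: this requires careful bookkeeping of the reduction maps on $H^1(F_p,\mathbb{T})$ and is the point at which the hypotheses \textup{(H.nE)} and \textup{(H.Leop)} play their essential role. Once this identification is pinned down, the remaining cartesian properties reduce to a diagram chase enabled by the freeness of the direct summand $\mathbb{L}\subset Z$.
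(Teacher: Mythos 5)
Your treatment of the primes above $p$ is essentially the argument the paper gives: you identify the propagated condition with the kernel of the reduced Coleman map via Lemma~\ref{lem:auxiliarypropagation}, using that $H^1(F_p,\mathbb{T})/H^1_{\FFF_{\mathbb{L}}}(F_p,\mathbb{T})$ is torsion-free because it embeds into the free module $Z/\mathbb{L}$. Your final mechanism is slightly different — you deduce \textbf{(C2)} and \textbf{(C3)} from injectivity of multiplication by $\varpi$ (resp.\ $\gamma-1$) on the quotients of the flat target $Z/\mathbb{L}$, whereas the paper invokes the freeness of the source $H^1_{\FFF_{\mathbb{L}}}(F_p,\mathbb{T})$ (Lemma~\ref{lem:theLrestrictedSelmergroupisfreeofcorrectrank}) — but both routes are sound. (One quibble: \textup{(H.Leop)} is a global hypothesis and plays no role in the local identification at $p$; what you actually need there is the control of $H^0$ and $H^2$ supplied by \textup{(H.nE)}.) The case $\lambda\mid\frak{n}$ via the unramified/transverse decomposition is also fine and matches the paper's reference to the standard argument.

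The genuine gap is at the primes $\lambda\nmid p\frak{n}$ lying in $\Sigma$, i.e.\ where $T$ may be \emph{ramified}. You dismiss these by saying the condition is ``unrestricted, so (C1)--(C3) are immediate,'' but the local condition on the quotient $\mathbb{T}_{k,\alpha}$ is the \emph{propagated} condition, namely the image of $H^1(F_\lambda,\mathbb{T})$ under $H^1(F_\lambda,\mathbb{T})\rightarrow H^1(F_\lambda,\mathbb{T}_{k,\alpha})$, and this image is in general a proper submodule whose complement is controlled by a Tamagawa-type obstruction (the torsion of $H^1(I_\lambda,\cdot)^{\mathrm{Frob}_\lambda}$). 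Verifying that this propagated condition is cartesian in the sense of \textbf{(C2)} and \textbf{(C3)} is precisely where the hypothesis \textup{\textbf{(H.Tam)}} (divisibility of $H^0(I_\lambda, T\otimes\Phi/\ooo)$) and the fact that $I_\lambda$ acts trivially on $\Lambda^{(p)}$ are needed; this is the content of the argument the paper imports from \cite[\S2.3.1]{kbb}. Your proof never uses \textup{\textbf{(H.Tam)}}, which is a reliable sign that this case has not actually been handled: without it the cartesian property can fail at ramified primes away from $p$.
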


\begin{proof}
For the primes $\lambda \nmid \frak{n}p$, the asserted properties may be verified as in \cite[\S2.3.1]{kbb}. The key points are the fact that the inertia group $I_\lambda \subset G_F$ acts trivially on $\Lambda^{(p)}$ and that we assumed \textbf{(H.Tam)}. For the primes $\lambda \mid \frak{n}$, they may proved as in \cite[\S4.1.4]{kbbdeform} (which itself, in this particular case of interest, is a slight generalization of \cite[Proposition 2.21]{kbb}). 

It therefore remains to verify the claimed properties at primes above $p$. The property \textbf{(C1)} is evident by definitions. Using Lemma~\ref{lem:auxiliarypropagation} one has a natural identification  for every $k,\alpha \in \ZZ^+$:  
$$H^1_{\FFF_{\mathbb{L}}(\frak{n})}(F_p,\mathbb{T}_{k,\alpha})=H^1_{\FFF_{\mathbb{L}}}(F_p,\mathbb{T})\otimes_{R}R_{k,\alpha}$$
(that is to say in more precise terms, $H^1_{\FFF_{\mathbb{L}}(\frak{n})}(F_p,\mathbb{T}_{k,\alpha})$ is the image of $H^1_{\FFF_{\mathbb{L}}}(F_p,\mathbb{T})$ under the obvious map). Note that Lemma~\ref{lem:auxiliarypropagation} applies with $M=H^1_{\FFF_{\mathbb{L}}}(F_p,\mathbb{T})$ and $N=H^1(F_p,\mathbb{T})$ as the quotient $H^1(F_p,\mathbb{T})/H^1_{\FFF_{\mathbb{L}}}(F_p,\mathbb{T})$ is $R$-torsion free by construction. The properties \textbf{(C2)} and \textbf{(C3)} follow now at once using the fact that the $R$-module $H^1_{\FFF_{\mathbb{L}}}(F_p,\mathbb{T})$ is free (of rank $g_++1$) by Lemma~\ref{lem:theLrestrictedSelmergroupisfreeofcorrectrank}.
\end{proof}

Let $\FFF_{\textup{null}}$ denote the Selmer structure on $\mathbb{T}$ given with the following data: 
\begin{itemize}
\item $H^1_{\FFF_{\textup{null}}}(F_\lambda,\mathbb{T})=H^1(F_{\lambda}, \mathbb{T})$ for primes $\lambda\nmid p$, 
\item $H^1_{\FFF_{\textup{null}}}(F_p,\mathbb{T})=H^1_{\uI}(F_p,T):=\ker\left(H^1(F_p,\mathbb{T})\stackrel{\frak{C}}{\longrightarrow}Z\right)\,.$
\end{itemize}
The assertion concerning the Selmer structure $\FFF_{\mathbb{L}}$ in the following Corollary follows immediately by Theorem~\ref{Adefthm:cartesian}. We need the statement on $\FFF_{\textup{null}}$ in our companion article \cite{buyukboduklei1} and it follows easily by modifying Lemma~\ref{lem:theLrestrictedSelmergroupisfreeofcorrectrank} appropriately.
\begin{corollary}
\label{for:appendixcartesianwhenpowersofpvary}
Propagations of both Selmer structures $\FFF_{\mathbb{L}}$ and $\FFF_{\textup{null}}$ on $T$ verify the hypothesis \textbf{\upshape H6} of \cite{mr02}.
\end{corollary}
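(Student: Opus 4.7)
The content of hypothesis \textbf{H6} of \cite{mr02} is the cartesian property of the propagated local conditions on the quotients $T/\varpi^k T$, i.e.\ the finite-level analogue (take $\alpha=1$) of property \textbf{(C3)} already established in Theorem~\ref{Adefthm:cartesian}. The plan is therefore to reduce to that theorem in both cases, with a small supplementary argument for $\FFF_{\textup{null}}$.

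For $\FFF_{\mathbb{L}}$, specializing Theorem~\ref{Adefthm:cartesian}\textbf{(C3)} at $\alpha=1$ identifies $\mathbb{T}_{k,1}$ with $T/\varpi^k T$ equipped with the propagated local conditions, and the required property follows verbatim. At primes $\lambda\nmid p$ the argument from \cite[\S2.3.1]{kbb} using \textbf{(H.Tam)} applies unchanged, while at primes above $p$ the freeness of $H^1_{\FFF_{\mathbb{L}}}(F_p,\mathbb{T})$ (Lemma~\ref{lem:theLrestrictedSelmergroupisfreeofcorrectrank}) together with Lemma~\ref{lem:auxiliarypropagation} applied to $\frak{C}$ delivers the result with no further work.

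For $\FFF_{\textup{null}}$, only the situation at primes above $p$ is new, since the conditions away from $p$ are identical to those of $\FFF_{\mathbb{L}}$. The needed input is the analogue of Lemma~\ref{lem:theLrestrictedSelmergroupisfreeofcorrectrank} asserting that $H^1_{\FFF_{\textup{null}}}(F_p,\mathbb{T})=\ker(\frak{C})$ is a free $R$-module of rank $g_+$. I would run the same diagram chase with the augmentation $\frak{A}:R\twoheadrightarrow\ooo$: since $\mathrm{coker}(\frak{C})$ is pseudo-null, $\mathrm{coker}(\frak{C}_\frak{A})$ is finite, and since $Z$ is $R$-free of rank $g_-$ while $H^1(F_p,T)$ has $\ooo$-rank $g=g_++g_-$, the kernel $\ker(\frak{C}_\frak{A})$ is a free $\ooo$-module of rank exactly $g_+$; Nakayama's lemma together with the UFD property of $R$ then rules out nontrivial $R$-linear relations among lifts, giving the claimed freeness of rank $g_+$.

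With this replacement for Lemma~\ref{lem:theLrestrictedSelmergroupisfreeofcorrectrank} in hand, the torsion-freeness of $H^1(F_p,\mathbb{T})/\ker(\frak{C})\hookrightarrow Z$ licenses the invocation of Lemma~\ref{lem:auxiliarypropagation}, after which the remainder of the proof of \textbf{(C3)} at primes above $p$ transfers to $\FFF_{\textup{null}}$ without modification. The only potentially delicate step in the whole argument is the bookkeeping of $\ooo$-ranks in the supplementary lemma, but this is routine once one notes that $Z$ sits between $\mathrm{im}(\frak{C})$ and a pseudo-null enlargement.
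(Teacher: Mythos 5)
Your proposal is correct and follows the same route as the paper: for $\FFF_{\mathbb{L}}$ the claim is read off from Theorem~\ref{Adefthm:cartesian}, and for $\FFF_{\textup{null}}$ one modifies Lemma~\ref{lem:theLrestrictedSelmergroupisfreeofcorrectrank} to show $\ker(\frak{C})$ is $R$-free of rank $g_+$ and then reruns the cartesian argument via Lemma~\ref{lem:auxiliarypropagation}. The paper leaves this modification as an "easy" remark; your rank bookkeeping ($g-g_-=g_+$, consistent with Proposition~\ref{prop:corerankisone}) supplies exactly the detail being alluded to.
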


\begin{proposition}
\label{prop:corerankisone}
The core Selmer rank $\chi(\overline{T},\FFF_\mathbb{L})$ equals $1$ whereas $\chi(\overline{T},\FFF_{\textup{null}})$ equals $0$. 
\end{proposition}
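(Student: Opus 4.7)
The strategy is to compute the two core ranks simultaneously by exploiting the fact that $\FFF_{\mathbb{L}}$ and $\FFF_{\textup{null}}$ differ only at places above $p$, and there by the rank-one direct summand $\mathbb{L} \subset Z$. The plan is to pass to the Iwasawa-theoretic interpretation of core rank, apply Poitou-Tate global duality in both cases, and combine the transversality afforded by our choice of $\uI$ with the cotorsion-ness of the signed Selmer group that was already established.

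\emph{Step 1: Identification of the core rank with Iwasawa-theoretic ranks.} By Corollary~\ref{for:appendixcartesianwhenpowersofpvary}, both Selmer structures verify hypothesis \textbf{H.6} of \cite{mr02}, which together with the cartesian properties of Theorem~\ref{Adefthm:cartesian} places us within the framework of \cite[Theorem 5.10]{kbbdeform}. This yields, for $\mathcal{F}\in\{\FFF_{\mathbb{L}},\FFF_{\textup{null}}\}$, the identity
$$\chi(\overline{T},\mathcal{F}) \;=\; \mathrm{rank}_R\, H^1_{\mathcal{F}}(F,\mathbb{T}) \;-\; \mathrm{corank}_R\, H^1_{\mathcal{F}^*}(F,\mathbb{T}^\dagger).$$

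\emph{Step 2: Both dual Selmer groups have $R$-corank zero.} Since $\FFF_{\textup{null}}\leq \FFF_{\mathbb{L}}$ with equality outside $p$, duality of local conditions gives $H^1_{\FFF_{\mathbb{L}}^*}(F,\mathbb{T}^\dagger)\subseteq H^1_{\FFF_{\textup{null}}^*}(F,\mathbb{T}^\dagger)$. Comparing local conditions, the dual structure $\FFF_{\textup{null}}^*$ has the strict (zero) condition at primes $v\nmid p$ (hence contained in the Bloch-Kato condition used to define $\Sel_{\uI}$) and at $p$ coincides with the orthogonal complement of $\ker(\frak{C}) = H^1_{\uI}(F_p,T)$, which is precisely the local condition in Definition~\ref{def:modifiedselmergroup}. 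Thus $H^1_{\FFF_{\textup{null}}^*}(F,\mathbb{T}^\dagger)\subseteq \Sel_{\uI}(T^\dagger/F(\mu_{p^\infty}))^{\eta}$, and the latter is $R$-cotorsion by Proposition~\ref{prop:Ttorsion}. A fortiori, both dual Selmer groups have $R$-corank zero.

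\emph{Step 3: Computation of $\mathrm{rank}_R\, H^1_{\mathcal{F}}(F,\mathbb{T})$.} For either Selmer structure, Poitou-Tate furnishes the exact sequence
$$0\rightarrow H^1_{\mathcal{F}}(F,\mathbb{T})\rightarrow H^1_\Sigma(F,\mathbb{T}) \xrightarrow{\,\frak{C}\circ\loc_p\,} H^1(F_p,\mathbb{T})/H^1_{\mathcal{F}}(F_p,\mathbb{T})\rightarrow H^1_{\mathcal{F}^*}(F,\mathbb{T}^\dagger)^\vee \rightarrow H^2_\Sigma(F,\mathbb{T})\rightarrow 0.$$
By the choice of $\uI$ in Proposition~\ref{prop:specialelementstransversaltopmsubmodules} (transferred to the $\eta$-twisted setup), $\frak{C}\circ\loc_p$ has trivial kernel on $H^1_\Sigma(F,\mathbb{T})$. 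Hence $H^1_{\FFF_{\textup{null}}}(F,\mathbb{T})=\ker(\frak{C}\circ\loc_p)=0$, yielding $\mathrm{rank}_R\, H^1_{\FFF_{\textup{null}}}(F,\mathbb{T})=0$. For $\FFF_{\mathbb{L}}$, the group $H^1_{\FFF_{\mathbb{L}}}(F,\mathbb{T})$ is the preimage of $\mathbb{L}$ under $\frak{C}\circ\loc_p$. By the weak Leopoldt conjecture, $H^1_\Sigma(F,\mathbb{T})$ has $R$-rank $g_{-}$, so its (injective) image in $Z\cong R^{\oplus g_{-}}$ also has $R$-rank $g_{-}$; in particular it coincides with $Z$ after inverting finitely many height-one primes. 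The intersection of this full-rank sublattice with the rank-one direct summand $\mathbb{L}$ therefore has $R$-rank exactly $1$, whence $\mathrm{rank}_R\, H^1_{\FFF_{\mathbb{L}}}(F,\mathbb{T})=1$.

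Combining Steps 1--3 gives $\chi(\overline{T},\FFF_{\textup{null}})=0-0=0$ and $\chi(\overline{T},\FFF_{\mathbb{L}})=1-0=1$, as claimed. The principal technical obstacle is the passage in Step 1 from Mazur-Rubin's theory (originally developed over principal Artinian rings) to the two-dimensional regular local ring $R=\LLp$; this is precisely the role played by the cartesian properties along both the cyclotomic tower and the $\varpi$-tower in Theorem~\ref{Adefthm:cartesian}, which allow the asymptotic comparison to work uniformly across the quotients $\mathbb{T}_{k,\alpha}$ of Definition~\ref{adefine:quotients}.
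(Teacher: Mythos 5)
Your route is genuinely different from the paper's, and the difference matters. The paper computes both core ranks \emph{purely locally}: it compares $\FFF_{\mathbb{L}}$ (resp.\ $\FFF_{\textup{null}}$) with the canonical Selmer structure $\FFF_{\textup{can}}$, notes via Wiles' global duality formula and the cartesian properties (Corollary~\ref{for:appendixcartesianwhenpowersofpvary}) that the difference of core ranks equals $\textup{rank}_R\,H^1(F_p,\mathbb{T})-\textup{rank}_R\,H^1_{\FFF}(F_p,\mathbb{T})$, and then feeds in $\chi(\overline{T},\FFF_{\textup{can}})=g_-$ from \cite[Theorem 5.2.15]{mr02} together with the freeness statement of Lemma~\ref{lem:theLrestrictedSelmergroupisfreeofcorrectrank}. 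No global input (weak Leopoldt, the transversality of $\uI$, cotorsionness of $\Sel_{\uI}$) is needed, which is as it should be: the core rank is an intrinsically local invariant of $(\overline{T},\FFF)$. You instead compute the global Selmer rank and the global dual Selmer corank separately — each of which genuinely depends on those global inputs — and subtract. Your Steps 2 and 3 are fine: the inclusion $H^1_{\FFF_{\textup{null}}^*}(F,\mathbb{T}^\dagger)\subseteq\Sel_{\uI}(T^\dagger/F(\mu_{p^\infty}))^\eta$ and Proposition~\ref{prop:Ttorsion} kill the dual coranks (this is exactly the argument used for Theorem~\ref{thm:appendixBmain}(ii)), and the transversality from Proposition~\ref{prop:specialelementstransversaltopmsubmodules} plus the rank count $\textup{rank}_R\,H^1_{\Iw,S}(F,\mathbb{T})=g_-$ give ranks $0$ and $1$ for the two global Selmer groups.

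The gap is Step 1. The identity
$$\chi(\overline{T},\FFF)=\textup{rank}_R\,H^1_{\FFF}(F,\mathbb{T})-\textup{corank}_R\,H^1_{\FFF^*}(F,\mathbb{T}^\dagger)$$
is the load-bearing step of your argument, and it is not what \cite[Theorem 5.10]{kbbdeform} provides (that result is invoked in this paper to deduce non-vanishing of $\kappa_1$ from cotorsionness of the dual Selmer group). The identity is \emph{true} under the hypotheses in force, but proving it amounts to comparing Wiles' formula for $\overline{T}$ with its $R$-adic analogue and checking that all local correction terms agree; this is exactly where (H.Tam), (H.nE), the cartesian properties of Theorem~\ref{Adefthm:cartesian}, and the freeness of $H^1_{\FFF_{\mathbb{L}}}(F_p,\mathbb{T})$ enter — i.e., the same ingredients as the paper's direct local computation, which you would then be running \emph{in addition to} the global duality arguments. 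So your proof is salvageable, but as written it asserts its key step, and even once repaired it is strictly heavier than the paper's: it derives a local invariant from global (partly conjectural) statements rather than the other way around.
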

\begin{proof}
The proof of this proposition is similar to the proof of Proposition 9.2 in \cite{buyukboduklei1}. Let $\FFF_{\textup{can}}$ denote the canonical Selmer structure on $\mathbb{T}$ given with the data $H^1_{\FFF_{\textup{can}}}(F_\lambda,\mathbb{T})=H^1(F_{\lambda}, \mathbb{T})$ for every prime $\lambda \in \Sigma$\,. Using the global duality argument in \cite[Proposition 1.6]{wiles} and Corollary~\ref{for:appendixcartesianwhenpowersofpvary} we conclude that
$$\chi(\overline{T},\FFF_\textup{can})-\chi(\overline{T},\FFF)=\textup{rank}_R\, H^1_{\Iw}(F_p,{T})-\textup{rank}_R\,H^1_{\FFF}(F_p,\mathbb{T})$$ 
for $\FFF=\FFF_\mathbb{L}$ or ${\FFF_{\textup{null}}}$. However $\textup{rank}_R\, H^1_{\Iw}(F_p,T)=g$ and $\chi(\overline{T},\FFF_\textup{can})=g_-$ (c.f., \cite[Theorem 5.2.15]{mr02}) hence
$$\chi(\overline{T},\FFF)=\textup{rank}_R\,H^1_{\FFF}(F_p,\mathbb{T})-g_+\,.$$
The first part of the proposition follows by Lemma~\ref{lem:theLrestrictedSelmergroupisfreeofcorrectrank} and the second part using its appropriate generalization to apply with ${\FFF_{\textup{null}}}$.
\end{proof}
\subsection{The module of Kolyvagin determinants} Let us choose a basis 
$B=\{\phi_1,\cdots, \phi_{d-1}\}$ of the free $R$-module $\textup{Hom}_{R}\left(Z{/}\mathbb{L}\,,\,R\right).$ We then have an isomorphism
$\bigoplus_{i=1}^{d-1}\phi_i:R^d/\mathbb{L} \stackrel{\sim}{\rightarrow} R^{d-1}\,.$
Let $\widetilde{\phi}_i\in \textup{Hom}_{R}\left(Z,R\right)$ denote the pullback of $\phi_i$ with respect to the obvious projection. Note that the map $\phi:= \bigoplus_{i=1}^{d-1} \widetilde{\phi}_i \,\,:Z \rightarrow R^{d-1}$ is surjective with kernel $\mathbb{L}$. Define 
$$\Phi:=\widetilde{\phi}_1\wedge \cdots\wedge \widetilde{\phi}_{d}\in \wedge^{d}  \textup{Hom}_{R}\left(Z,R\right),$$
where the exterior product is taken in the category of $R$-modules. Let 
$$\Psi \in \wedge^{d}\,\textup{Hom}_{R}\left(H^1(F_p,\mathbb{T}),R\right)$$ 
be the pullback of $\Phi$ with respect to the Coleman map $\frak{C}$. 

\begin{proposition}
\label{prop:almostthere}
\begin{itemize}
\item[(i)] The map $\Phi$ maps $\wedge^d Z$ isomorphically onto $\mathbb{L}$.
\item[(ii)] For every $c \in \wedge^d H^1(F_p,\mathbb{T})$ we have $\Psi(c) \in H^1_{\mathcal{F}_\mathbb{L}}(F_p,\mathbb{T})$. 
\item[(iii)]Furthermore, the map $\Psi$ induces a map (which we still denote by $\Psi$)
$$\Psi:\,H^1(F_p,\mathbb{T})/H^1_{\uI}(F_p,T)\longrightarrow  H^1_{\mathcal{F}_\mathbb{L}}(F_p,\mathbb{T})/H^1_{\uI}(F_p,T)\,.$$
\end{itemize}
\end{proposition}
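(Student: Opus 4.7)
The plan is to read $\Phi$ as the contraction map $\wedge^{d} Z \to Z$ induced by $\widetilde{\phi}_1 \wedge \cdots \wedge \widetilde{\phi}_{d-1} \in \wedge^{d-1}\mathrm{Hom}_R(Z,R)$; I interpret the wedge written as ``$\widetilde{\phi}_1 \wedge \cdots \wedge \widetilde{\phi}_d$'' in the statement as a typographical slip for $\widetilde{\phi}_1 \wedge \cdots \wedge \widetilde{\phi}_{d-1}$, since only $d-1$ functionals arise from the chosen basis of $\mathrm{Hom}_R(Z/\mathbb{L},R)$. The central observation is that the output of this contraction is annihilated by each $\widetilde{\phi}_j$, and therefore lies inside $\bigcap_{j=1}^{d-1} \ker \widetilde{\phi}_j = \ker \phi = \mathbb{L}$. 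From here, the three assertions follow from straightforward multilinear algebra combined with the identity $H^1_{\mathcal{F}_{\mathbb{L}}}(F_p,\mathbb{T}) = \frak{C}^{-1}(\mathbb{L})$.

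For part (i), I would fix an $R$-basis $\{e_1,\dots,e_d\}$ of $Z$ with $e_d$ spanning the direct summand $\mathbb{L}$. Since $\{\widetilde{\phi}_1,\dots,\widetilde{\phi}_{d-1}\}$ is an $R$-basis of $\mathrm{Hom}_R(Z/\mathbb{L},R)$, a unit change of basis inside $\{e_1,\dots,e_{d-1}\}$ arranges $\widetilde{\phi}_j(e_i) = \delta_{ij}$ for $1 \le i,j \le d-1$ (and $\widetilde{\phi}_j(e_d)=0$ automatically). A direct evaluation of the contraction then yields $\Phi(e_1\wedge\cdots\wedge e_d) = \pm e_d$, a generator of $\mathbb{L}$; as both $\wedge^d Z$ and $\mathbb{L}$ are free rank-one $R$-modules, this gives the asserted isomorphism.

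For part (ii), I would invoke naturality of the contraction under the $R$-linear map $\frak{C}$ to obtain the identity $\frak{C} \circ \Psi = \Phi \circ (\wedge^{d} \frak{C})$ on $\wedge^{d} H^1(F_p,\mathbb{T})$. Then for any $c$ in the domain, $\frak{C}(\Psi(c)) \in \mathbb{L}$ by (i), so $\Psi(c) \in \frak{C}^{-1}(\mathbb{L}) = H^1_{\mathcal{F}_{\mathbb{L}}}(F_p,\mathbb{T})$ by Definition~\ref{def:LrestrictedSelmerstructure}.

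For part (iii), I would write the contraction explicitly as
$$\Psi(c_1 \wedge \cdots \wedge c_d) = \sum_{i=1}^{d} (-1)^{i-1} \det\bigl((\widetilde{\phi}_j\circ\frak{C})(c_k)\bigr)_{1\le j\le d-1,\ k\ne i}\cdot c_i.$$
If some $c_{i_0}$ lies in $H^1_{\uI}(F_p,T) = \ker\frak{C}$, then for $i\ne i_0$ the minor still contains the column indexed by $k=i_0$, which is zero, so its determinant vanishes; only the $i_0$-th term survives, and it is an $R$-multiple of $c_{i_0}$, hence in $H^1_{\uI}$. Thus $\Psi$ descends to a map $\wedge^{d}\bigl(H^1(F_p,\mathbb{T})/H^1_{\uI}(F_p,T)\bigr) \to H^1(F_p,\mathbb{T})/H^1_{\uI}(F_p,T)$, and combined with (ii) together with the inclusion $H^1_{\uI}\subseteq H^1_{\mathcal{F}_{\mathbb{L}}}$, its image lies in $H^1_{\mathcal{F}_{\mathbb{L}}}(F_p,\mathbb{T})/H^1_{\uI}(F_p,T)$. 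The only delicate point is pinning down the correct interpretation of $\Phi$ as a contraction; once that is fixed, the remainder is formal.
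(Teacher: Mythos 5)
Your proof is correct and is precisely the multilinear algebra the authors leave implicit (their proof reads, in its entirety, ``Linear Algebra''): reading $\Phi$ as contraction against $\widetilde{\phi}_1\wedge\cdots\wedge\widetilde{\phi}_{d-1}$, observing that the output is killed by every $\widetilde{\phi}_j$ and hence lies in $\ker\phi=\mathbb{L}$, and then using $\frak{C}\circ\Psi=\Phi\circ\wedge^d\frak{C}$ together with $H^1_{\mathcal{F}_{\mathbb{L}}}(F_p,\mathbb{T})=\frak{C}^{-1}(\mathbb{L})$ and the vanishing-minor argument for the descent modulo $H^1_{\uI}(F_p,T)$. You have also correctly diagnosed the two typographical slips (the exterior powers should be $\wedge^{d-1}$ for $\Phi$ and $\Psi$, and the domain in (iii) should carry a $\wedge^d$, as the diagram~(\ref{eqn:diagramappendixC}) confirms), so nothing further is needed.
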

\begin{proof}
Linear Algebra.
\end{proof}
Proposition~\ref{prop:almostthere} may be summarized via the following commutative diagram:
\begin{equation}
\label{eqn:diagramappendixC}
\begin{gathered}
\xymatrix{\wedge^{d}Z\ar[r]^(.51){\Phi}_(.51){\sim}&\mathbb{L}\\
 \wedge^d{\displaystyle\frac{H^1(F_p,\mathbb{T})}{H^1_{\uI}(F_p,{T})}}\ar[r]^(.52){\Psi}\ar@{^{(}->}[u]_(.61){\frak{C}^{\otimes d}}&{\displaystyle\frac{H^1_{\mathcal{F}_\mathbb{L}}(F_p,\mathbb{T})}{H^1_{\uI}(F_p,{T})}}\ar@{^{(}->}[u]_(.6){\frak{C}}\\
 \wedge^{d}H^1(F,\mathbb{T})\ar@{^{(}->}[r]^(.52){\Psi}\ar@{^{(}->}[u]_(.43){\textup{loc}_p^{\otimes{d}}}&H^1_{\mathcal{F}_\mathbb{L}}(F,\mathbb{T})\ar@{^{(}->}[u]_(.43){\textup{loc}_p}
 }
 \end{gathered}
 \end{equation}
{The facts that $\Psi$ in the third row and $\textup{loc}_p^{\otimes{d}}$ are both surjective follow from the following proposition. 
 \begin{proposition}
\label{prop:bottomrowisfreeofrankone}
Under our running assumptions both $R$-modules $\wedge^d H^1(F,\mathbb{T})$ and $H^1_{\mathcal{F}_\mathbb{L}}(F,\mathbb{T})$ are free of rank one.
\end{proposition}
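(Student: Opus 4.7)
The argument splits into two parts. In both, the rank is computed by Poitou-Tate global duality, and the upgrade from torsion-free rank one to genuine freeness proceeds by a Nakayama descent using the cartesian property of our Selmer structures.

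For $H^1_{\mathcal{F}_\mathbb{L}}(F,\mathbb{T})$: the choice of $\uI\in\fI_p$ combined with Proposition~\ref{prop:Ttorsion} ensures $\Sel_{\uI}(T^\dagger/F(\mu_{p^\infty}))^{\vee,\eta}$ is $R$-torsion. Since $\mathcal{F}_\mathbb{L}$ relaxes the strict Selmer structure $\mathcal{F}_\uI$ at $p$ by the rank-one free direct summand $\mathbb{L}\subset Z$, the dual Selmer $H^1_{\mathcal{F}_\mathbb{L}^*}(F,\mathbb{T}^\dagger)$ embeds into $\Sel_{\uI}(T^\dagger/F(\mu_{p^\infty}))^\eta$ and is therefore $R$-cotorsion. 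Rank counting in the Poitou-Tate exact sequence
\begin{equation*}
0\to H^1_{\mathcal{F}_\mathbb{L}}(F,\mathbb{T})\to H^1_{\Iw,S}(F,T)^\eta\to \frac{H^1(F_p,\mathbb{T})}{H^1_{\mathcal{F}_\mathbb{L}}(F_p,\mathbb{T})}\to H^1_{\mathcal{F}_\mathbb{L}^*}(F,\mathbb{T}^\dagger)^\vee\to H^2_{\Iw,S}(F,T)^\eta\to 0,
\end{equation*}
using $\rk_R H^1_{\Iw,S}(F,T)^\eta=d$ (weak Leopoldt), $\rk_R \bigl(H^1(F_p,\mathbb{T})/H^1_{\mathcal{F}_\mathbb{L}}(F_p,\mathbb{T})\bigr)=d-1$ (Lemma~\ref{lem:theLrestrictedSelmergroupisfreeofcorrectrank} plus pseudo-surjectivity of $\frak{C}$ onto $Z$), and $H^2_{\Iw,S}(F,T)^\eta$ being $R$-torsion, yields $\rk_R H^1_{\mathcal{F}_\mathbb{L}}(F,\mathbb{T})=1$. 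Torsion-freeness is automatic from (H.nA), which forces $H^0(F(\mu_{p^\infty}),T)=0$. To promote this to freeness, I would invoke a Nakayama descent: the cartesian property (Theorem~\ref{Adefthm:cartesian}) identifies $H^1_{\mathcal{F}_\mathbb{L}}(F,\mathbb{T})\otimes_R k$ as a submodule of the residual Selmer $H^1_{\mathcal{F}_\mathbb{L}}(F,\overline{T})$, whose $k$-dimension is pinned to $1$ by Proposition~\ref{prop:corerankisone} together with residual vanishing of the dual Selmer group secured by hypotheses \textbf{(H1)}--\textbf{(H4)} of \cite{mr02}; Nakayama's lemma then gives cyclicity, and a torsion-free cyclic rank-one module over the domain $R$ is free.

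For $\wedge^d H^1(F,\mathbb{T})$, the strategy is to establish the stronger statement that $H^1(F,\mathbb{T})=H^1_{\Iw,S}(F,T)^\eta$ is itself $R$-free of rank $d$, from which freeness of its top exterior power is immediate. This module is torsion-free of rank $d$ by (H.Leop) and (H.nA); an entirely analogous Nakayama descent for the canonical Selmer structure (with fully relaxed local condition at $p$, covered by Corollary~\ref{for:appendixcartesianwhenpowersofpvary}) reduces freeness to the residual dimension count $\dim_k H^1_{\Iw,S}(F,T)^\eta\otimes_R k = d$, which again follows from the core Selmer rank machinery applied to $\overline{T}$ under our running hypotheses. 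The principal obstacle throughout will be the residual dimension count: rigorously matching $\dim_k H^1_\mathcal{F}(F,\overline{T})$ to the core Selmer rank requires a careful specialization of the Mazur-Rubin theory to the residual representation together with bookkeeping the twist by $\eta^{-1}$; once this is in place, the rest is routine.
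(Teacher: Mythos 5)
The rank computations in your first part are fine (Poitou--Tate plus Lemma~\ref{lem:theLrestrictedSelmergroupisfreeofcorrectrank} do give $\rk_R H^1_{\mathcal{F}_{\mathbb{L}}}(F,\mathbb{T})=1$, and torsion-freeness follows from (H.nA)), but the upgrade to freeness contains a genuine gap in both halves: you assert that the residual dimension $\dim_k H^1_{\mathcal{F}_{\mathbb{L}}}(F,\overline{T})$ equals the core rank $\chi(\overline{T},\mathcal{F}_{\mathbb{L}})=1$ because the residual dual Selmer group vanishes ``by \textbf{(H1)}--\textbf{(H4)}.'' That is not what those hypotheses give. By the global Euler characteristic / core rank formula of Mazur--Rubin, one has $\dim_k H^1_{\mathcal{F}_{\mathbb{L}}}(F,\overline{T})=\chi(\overline{T},\mathcal{F}_{\mathbb{L}})+\dim_k H^1_{\mathcal{F}_{\mathbb{L}}^*}(F,\overline{T}^*)$, and the residual dual Selmer group is in general nonzero --- its size is precisely the quantity the Kolyvagin-system machinery is built to bound, so if it vanished automatically under \textbf{(H1)}--\textbf{(H4)} the main results of the paper would be vacuous. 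The same objection applies to your claimed count $\dim_k\bigl(H^1_{\Iw,S}(F,T)^\eta\otimes_R k\bigr)=d$ for the canonical structure. Descending all the way to the residue field is therefore too lossy: Nakayama only gives you generation by $\dim_k H^1_{\mathcal{F}}(F,\overline{T})$ elements, which may exceed the rank.

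The paper's proof repairs exactly this point by inserting an intermediate quotient. Weak Leopoldt makes $H^1_{\mathcal{F}_{\textup{can}}^*}(F,\mathbb{T}^*)^\vee$ an $R$-torsion module, so by the control theorem one can choose a height-one specialization $\pi:R\twoheadrightarrow\frak{O}$, with $\ker\pi=(\varpi)$, at which the specialized dual Selmer group is \emph{finite}; then \cite[Theorem~5.2.15]{mr02} applied to $T_\pi$ shows $H^1_{\mathcal{F}_{\textup{can}}}(F,T_\pi)$ is $\frak{O}$-free of rank equal to the core rank. The control/cartesian injection $H^1(F,\mathbb{T})/\varpi H^1(F,\mathbb{T})\hookrightarrow H^1_{\mathcal{F}_{\textup{can}}}(F,T_\pi)$ then bounds the number of generators by the rank, Nakayama (through $R\to R/\varpi=\frak{O}\to k$) gives a surjection $R^{d}\twoheadrightarrow H^1(F,\mathbb{T})$, and since $R$ is a domain and the target has rank $d$, the kernel is a torsion submodule of $R^{d}$, hence zero. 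If you want to keep your argument, you must replace the residual descent by this generic-specialization step; as written, the residual dimension count is false and the proof does not close.
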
}
\begin{proof}
It follows from the weak Leopoldt conjecture for $T$ (which we assume) that the $R$-module $H^1_{\mathcal{F}_{\textup{can}}^*}(F,\mathbb{T}^*)^\vee$ is torsion, where the canonical Selmer structure ${\mathcal{F}_{\textup{can}}}$ of Mazur and Rubin is given in the proof of Proposition~\ref{prop:corerankisone}. By control theorem (which holds true for this Selmer structure), we may find a  specialization $\pi: R\twoheadrightarrow \ooo$ (whose kernel is necessarily principal, say generated by $\varpi \in R$) such that $H^1_{\mathcal{F}_{\textup{can}}^*}(F,{T}_\pi^*)$, where $T_\pi:=\mathbb{T}\otimes_\pi \ooo$. By \cite[Theorem 5.2.15]{mr02}, it follows that $H^1_{\mathcal{F}_{\textup{can}}}(F,\mathbb{T})$ is an $\ooo$-module of rank $g$, which is also torsion-free (hence free) by our running assumptions.

Consider the natural injection $H^1(F,\mathbb{T})/\varpi  H^1(F,\mathbb{T}) \hookrightarrow  H^1_{\mathcal{F}_{\textup{can}}}(F,T_\pi)$. Using Nakayama's lemma, we see that $H^1(F,\mathbb{T})$ may be generated by the lifts of a basis of $H^1_{\mathcal{F}_{\textup{can}}}(F,T_\pi)$. Relying on the fact that $R$ is a UFD, one may further verify that these generators may not satisfy a non-trivial $R$-linear relation. This completes the proof of the assertion that $\wedge^d H^1(F,\mathbb{T})$ is free of rank $1$. The rest is proved in an identical manner.

\end{proof}
\begin{defn}
\label{define:theKolyvaginconstructeddeterminantelement}
\begin{itemize}
\item[(i)] Define the $\Lambda$-module of \emph{Kolyvagin leading terms} $\frak{L}(T)$ by setting
$$\frak{L}(T)=\left\{\sum_{\chi \in \widehat{\Delta}^+}{\kappa_1^{\chi}\cdot e_\chi} \in H^1_{\textup{Iw},S}(F,T):\,\, \{\kappa_{\frak{n}}^{\chi}\}=\pmb{\kappa}^\chi \in \overline{\mathbf{KS}}(\mathbb{T}(\chi),\FFF_{\mathbb{L}},\mathcal{P})\right\}\,.$$
Here $\widehat{\Delta}^+$ denotes the set of even characters of $\Delta$ and $e_\chi \in \ZZ_p[\Delta]$ the idempotent corresponding to $\chi$. It is not hard to see using Theorem~\ref{thm:appendixBmain} (for each twist $T(\chi)$) that the $\Lambda$-module $\frak{L}(T)$ is free of rank 1.
\item[(ii)] The $\Lambda$-module of \emph{Kolyvagin determinants} $\frak{K}(T)$ is defined as
$$\frak{K}(T)=\left\{\Xi \in \wedge^d  H^1_{\textup{Iw},S}(F,T):\,\Psi(\Xi) \in \frak{L}(T)\right\}.$$ 
\end{itemize}
\end{defn}
\begin{remark}
The diagram~(\ref{eqn:diagramappendixC}) above and the fact that $\frak{C}$ has pseudo-null cokernel show that $\frak{K}(T)\neq0$. One may also prove that this module does not depend on any of the choices made above and depends only on $T$. A suitable extension of the theory of higher rank Kolyvagin systems (as studied in \cite{mrksrankr}) over coefficient rings of dimension larger than 1 would yield a more natural definition of $\frak{K}(T)$. We plan to get back to this point in the future. 
\end{remark}

\section{Comparison with works of Kobayashi and Pollack}
\label{appendix:kobandrob}
{We shall compare the signed Selmer groups that we denoted by $\textup{Sel}_{\uI}$ in the main body of the article to the $\pm$-Selmer groups of Kobayashi \cite{kobayashi03}; and the $\uI$-signed $p$-adic $L$-functions to $\pm$-$p$-adic $L$-functions of Pollack \cite{pollack03}. In particular, we shall justify that our theory offers a natural generalization of their work.}

{Throughout this appendix, we assume that the motive $\mathcal{M}=h^1(E)(1)$ is associated to an elliptic curve $E/\QQ$ that has good supersingular reduction at $p$ and that $a_p(E)=0$, so that the $p$-adic realization $T$ of $\mathcal{M}$ will be the  $p$-adic Tate module of $E$ and the Pontryagin dual $T^\dagger$ is the $p$-divisible group $E[p^\infty]$. Note that in this case $g_-=1$ and we no longer fix an admissible basis. As it shall be clear from the discussion below, Lemma~\ref{lem:Timage} follows already from the work of Kobayashi and the second named author even if the basis of the Dieudonn\'e module is no  longer strongly admissible.}

\subsection{Kobayashi's $\pm$-Selmer groups}
{ Kobayashi in \cite{kobayashi03} defined the $\pm$-Selmer groups $\Sel_p^\pm(E/\QQ(\mu_{p^\infty}))$ by properly modifying the Bloch-Kato conditions at $p$. This is exactly what we do in Definition~\ref{def:modifiedselmergroup}, except that we used as our local conditions  at $p$ the submodules $H^1_{\uI}(\Qp(\mu_{p^\infty}),T^\dagger)$ in place of Kobayashi's submodules $E^\pm(\Qp(\mu_{p^\infty}))\subset E(\Qp(\mu_{p^\infty}))$ given by some ``jumping" trace conditions. Furthermore, as proved in \cite[\S4]{lei09}, Kobayashi's submodules may be realized as the orthogonal complements of the kernel of some $\pm$-Coleman maps $\col^\pm:\HIw(\Qp,T)\rightarrow\Lambda$, in the same way that the local conditions $H^1_{\uI}(\Qp(\mu_{p^\infty}),T^\dagger)$ in Definition~\ref{def:modifiedselmergroup} are defined as the orthogonal complement of $\ker(\col_{\uI})$. Therefore, in order to compare our $\Sel_{\uI}$ with Kobayashi's $\Sel_p^\pm$, it is enough to compare our Coleman maps $\col_{\uI}$ with the $\pm$-Coleman maps defined in \cite{kobayashi03}. Note that these were already rewritten in the language of Dieudonn\'e modules in \cite{lei09}.}
 
 {
 Let $\Dcris(T)=\DD_{\Qp}(T)$. We fix a basis $v_1\in\Fil^0\Dcris(T)$ and we extend it to a basis $v_1,v_2=\vp(v_1)$ of $\Dcris(T)$. The matrix of $\vp$ with respect to this basis is given by
 \[
 C_\vp=\begin{pmatrix}
 0& -1/p\\1&0
 \end{pmatrix}=\begin{pmatrix}
 0&-1\\1&0
 \end{pmatrix}\begin{pmatrix}
 1&0\\0&1/p
 \end{pmatrix}.
 \]
 Therefore, under the notation of Proposition~\ref{prop:matrix}, we find that the logarithmic matrix $M_T$ with respect to the same basis is given by
 \[
 M_T=\begin{pmatrix}
 0&-\log^+\\
 \log^-&0
 \end{pmatrix},
 \]
 where $\log^\pm$ are Pollack's $\pm$-logarithms defined by the formulae
 \begin{align*}
 \log^+&=\frac{1}{p}\prod_{n\ge1}\frac{\Phi_{p^{2n}}(1+X)}{p},\\
 \log^-&=\frac{1}{p}\prod_{n\ge1}\frac{\Phi_{p^{2n-1}}(1+X)}{p}.
 \end{align*}  Let $\col_1,\col_2$ be the two Coleman maps corresponding to this matrix as in Theorem~\ref{thm:decomposereg}. We have the relation
 \[
\calL_{T,1}=-\log^+\col_2\quad\text{and}\quad \calL_{T,2}=\log^-\col_1. 
 \]
 On combining this with \eqref{eq:regulator}, we may compare our Coleman maps with the $\pm$-Coleman maps defined in \cite[\S3.4]{lei09} and see that they differ simply by a minus sign, namely
 \begin{equation}
  \col^+=-\col_2\quad\text{and}\quad \col^-=\col_1.
\label{eq:agree}
 \end{equation}
 In particular they have the same kernels.
  \begin{remark}
 Note that this choice of basis of $\Dcris(T)$ is not admissible in the sense of Definition~\ref{def:admissiblebasis}. As noted in Remark~\ref{rem:whyadmissible}, this means that the images of our Coleman maps would not be pseudo-isomorphic to $\Zp[[X]]$. Indeed, as shown in \cite[Propositions~8.23 and 8.24]{kobayashi03}, $\col^+$ is surjective, while the isotypic component of $\image(\col^-)$ at a non-trivial character is $X\Zp[[X]]$. This is consistent with our Propositions~\ref{prop:image} and~\ref{prop:iageofcolemanis}.
 \end{remark}
  \subsection{Pollack's $\pm$-$p$-adic $L$-functions}
  In \cite[\S3.4]{lei09} as well as \cite[Theorem~6.3]{kobayashi03}, it has been showed that the Pollack's $\pm$-$p$-adic $L$ functions in \cite{pollack03} is the image of the Beilinson-Kato elements along the cyclotomic tower (as constructed in \cite{kato04}) under  the $\pm$-Coleman maps, up to a sign. Note in particular that the tower of Beilinson-Kato elements does satisfy Conjecture~\ref{conj:Tspecialelement}. Furthermore, the $\uI$-signed $p$-adic $L$-functions given as in Definition~\ref{def:signedpadicLfunc} are simply the image of the Beilinson-Kato elements under $\col_1$ and $\col_2$. Therefore, thanks to \eqref{eq:agree}, they agree with Pollack's $\pm$-$p$-adic $L$ functions, up to a sign.
}

\bibliographystyle{amsalpha}
\bibliography{references}

\providecommand{\bysame}{\leavevmode\hbox to3em{\hrulefill}\thinspace}
\providecommand{\MR}{\relax\ifhmode\unskip\space\fi MR }
\providecommand{\MRhref}[2]{%
  \href{http://www.ams.org/mathscinet-getitem?mr=#1}{#2}
}
\providecommand{\href}[2]{#2}
\begin{thebibliography}{B{\"u}y13b}

\bibitem[Ben14]{benois14}
Denis Benois, \emph{Selmer complexes and the $p$-adic hodge theory}, preprint,
  arXiv:1404.7386, 2014.

\bibitem[Ber03]{berger03}
Laurent Berger, \emph{Bloch and {K}ato's exponential map: three explicit
  formulas}, Doc. Math. (2003), no.~Extra Vol., 99--129 (electronic), Kazuya
  Kato's fiftieth birthday.

\bibitem[Ber04]{berger04}
\bysame, \emph{Limites de repr\'esentations cristallines}, Compos. Math.
  \textbf{140} (2004), no.~6, 1473--1498.

\bibitem[BL15]{buyukboduklei1}
K\^az{\i}m B{\"u}y\"ukboduk and Antonio Lei, \emph{{R}ubin-{S}tark elements and
  the supersingular iwasawa theory of {CM} abelian varieties}, 2015, submitted.

\bibitem[B{\"u}y11]{kbb}
K\^az{\i}m B{\"u}y\"ukboduk, \emph{{$\Lambda$}-adic {K}olyvagin systems}, IMRN
  \textbf{2011} (2011), no.~14, 3141--3206.

\bibitem[B{\"u}y13a]{kbbdeform}
\bysame, \emph{Deformations of {K}olyvagin systems}, 2013, Preprint available
  at {\ttfamily http://arxiv.org/abs/1303.1407}.

\bibitem[B{\"u}y13b]{kbbCMabvar}
K{\^a}z{\i}m B{\"u}y{\"u}kboduk, \emph{Main conjectures for {CM} fields and a
  {Y}ager-type theorem for {R}ubin-{S}tark elements}, 2013, \emph{IMRN}, to
  appear.
  {\ttfamily{http://imrn.oxfordjournals.org/content/early/2013/07/10/imrn.rnt140}}.

\bibitem[Col98]{colmez98}
Pierre Colmez, \emph{Th\'eorie d'{I}wasawa des repr\'esentations de de {R}ham
  d'un corps local}, Ann. of Math. (2) \textbf{148} (1998), no.~2, 485--571.

\bibitem[CPR89]{coatesPR}
John Coates and Bernadette Perrin-Riou, \emph{On {$p$}-adic {$L$}-functions
  attached to motives over {${\bf Q}$}}, Algebraic number theory, Adv. Stud.
  Pure Math., vol.~17, Academic Press, Boston, MA, 1989, pp.~23--54.

\bibitem[Kat04]{kato04}
Kazuya Kato, \emph{{$p$}-adic {H}odge theory and values of zeta functions of
  modular forms}, Ast\'erisque (2004), no.~295, ix, 117--290, Cohomologies
  $p$-adiques et applications arithm{\'e}tiques. III.

\bibitem[Kob03]{kobayashi03}
Shin-ichi Kobayashi, \emph{Iwasawa theory for elliptic curves at supersingular
  primes}, Invent. Math. \textbf{152} (2003), no.~1, 1--36.

\bibitem[Lei11]{lei09}
Antonio Lei, \emph{Iwasawa theory for modular forms at supersingular primes},
  Compositio Math. \textbf{147} (2011), no.~03, 803--838.

\bibitem[LLZ10]{leiloefflerzerbes10}
Antonio Lei, David Loeffler, and Sarah~Livia Zerbes, \emph{Wach modules and
  {I}wasawa theory for modular forms}, Asian J. Math. \textbf{14} (2010),
  no.~4, 475--528.

\bibitem[LLZ11]{leiloefflerzerbes11}
\bysame, \emph{Coleman maps and the {$p$}-adic regulator}, Algebra Number
  Theory \textbf{5} (2011), no.~8, 1095--1131.

\bibitem[MR04]{mr02}
Barry Mazur and Karl Rubin, \emph{Kolyvagin systems}, Mem. Amer. Math. Soc.
  \textbf{168} (2004), no.~799, viii+96.

\bibitem[MR13]{mrksrankr}
\bysame, \emph{{C}ontrolling {S}elmer groups in the higher core rank case},
  2013, Preprint available at {\ttfamily http://arxiv.org/abs/1312.4052}.

\bibitem[Pol03]{pollack03}
Robert Pollack, \emph{On the {$p$}-adic {$L$}-function of a modular form at a
  supersingular prime}, Duke Math. J. \textbf{118} (2003), no.~3, 523--558.

\bibitem[PR92]{perrinriou92}
Bernadette Perrin-Riou, \emph{Th\'eorie d'{I}wasawa et hauteurs {$p$}-adiques},
  Invent. Math. \textbf{109} (1992), no.~1, 137--185.

\bibitem[PR94]{perrinriou94}
\bysame, \emph{Th\'eorie d'{I}wasawa des repr\'esentations {$p$}-adiques sur un
  corps local}, Invent. Math. \textbf{115} (1994), no.~1, 81--161.

\bibitem[PR95]{perrinriou95}
\bysame, \emph{Fonctions {$L$} {$p$}-adiques des repr\'esentations
  {$p$}-adiques}, Ast\'erisque (1995), no.~229, 198.

\bibitem[Sch87]{schneider87}
Peter Schneider, \emph{Arithmetic of formal groups and applications {I}:
  {U}niversal norm subgroups}, Invent. Math. \textbf{87} (1987), no.~3,
  587--602.

\bibitem[Spr12]{sprung09}
Florian E.~Ito Sprung, \emph{Iwasawa theory for elliptic curves at
  supersingular primes: a pair of main conjectures}, J. Number Theory
  \textbf{132} (2012), no.~7, 1483--1506.

\bibitem[Wil95]{wiles}
Andrew Wiles, \emph{Modular elliptic curves and {F}ermat's last theorem}, Ann.
  of Math. (2) \textbf{141} (1995), no.~3, 443--551. \MR{1333035 (96d:11071)}

\end{thebibliography}
\end{document}